\theoremstyle{plain}
\newtheorem{thm}{Theorem}[section]
\newtheorem{lem}[thm]{Lemma}
\newtheorem{prop}[thm]{Proposition}
\newtheorem{cor}[thm]{Corollary}
\newtheorem*{claim}{Claim}
\newtheorem{thmintro}{Theorem}
\newtheorem{corintro}[thmintro]{Corollary}
\newtheorem{ques}{Question}
\theoremstyle{definition}
\newtheorem{defn}[thm]{Definition}
\newtheorem{rem}[thm]{Remark}
\newcommand{\C}{C}
\newcommand{\CC}{C'}
\newcommand{\M}{M_0}
\newcommand{\Cone}{C_2}
\newcommand{\Ctwo}{C_4}
\newcommand{\Cthree}{C_1}
\newcommand{\Cfive}{C_3}
\newcommand{\Cseven}{C_6}
\newcommand{\Ceight}{C_7}
\newcommand{\Cd}{C_8}
\newcommand{\Cdd}{C_9}
\newcommand{\Kd}{K_d}
\newcommand{\Cddd}{D}
\newcommand{\T}{T_0}
\newcommand{\mc}{\mathcal}
\newcommand{\eps}{\epsilon}
\newcommand{\R}{\mathbb{R}}
\newcommand{\N}{\mathbb{N}}
\newcommand{\Z}{\mathbb{Z}}
\newcommand{\mb}{\partial_*}
\newcommand{\ms}{\partial_{x_0}} 
\newcommand{\ov}{\overline}
\newcommand{\dH}{d_{\mathrm{Haus}}}
\newcommand{\anz}{20}
\DeclareMathOperator{\diam}{diam}
\DeclarePairedDelimiter\abs{\lvert}{\rvert}
\newcommand{\cay}{\mathrm{Cay}(G, \mc S)}
\newcommand{\pres}{\ensuremath{\langle\mc S|\mc R\rangle} }
\newlength{\mytriwidth}
\title[Sigma-compactness and stationary measures]{Sigma-compactness of Morse boundaries in Morse local-to-global groups and applications to stationary measures}
\author{Vivian He}
 \address{Department of Mathematics, University of Toronto, Toronto, ON, Canada}
 \email{vivian.he@mail.utoronto.ca}
\author{Davide Spriano}
 \address{Mathematical Institute, University of Oxford, Oxford, OX2 6GG, UK}
 \email{spriano@maths.ox.ac.uk}
\author{Stefanie Zbinden}
   \address{Maxwell Institute and Department of Mathematics, Heriot-Watt University, Edinburgh, UK}
	\email{sz2020@hw.ac.uk}
\begin{document}

\begin{abstract}
We show that the Morse boundary of a Morse local-to-global group is $\sigma$-compact. Moreover, we show that the converse holds for small cancellation groups. As an application, we show that the Morse boundary of a non-hyperbolic, Morse local-to-global group that has contraction does not admit a non-trivial stationary measure. In fact, we show that any stationary measure on a \emph{geodesic boundary} of such a groups needs to assign measure zero to the Morse boundary. Unlike previous results, we do not need any assumptions on the stationary measures considered.
\end{abstract}

\maketitle

\section{Introduction}
Introduced in \cite{C:Morse}, generalizing the contracting boundary of \cite{CS:contracting}, the Morse boundary is a topological space that encodes information about the \emph{Morse directions at infinity} of a metric space. In this paper we study the relation between geometric properties of a group $G$ and topological properties of its Morse boundary $\mb G$. Morse directions are geodesic rays with properties akin to those of geodesic rays in a hyperbolic spaces, and accordingly the Morse boundary is a way to define a space with similar properties to the Gromov boundary for spaces that are not hyperbolic \cite{CharneyCordesMurray:quasi-mobius, CD:stable, CH:stable_asdim,C:Morse,FioravantiKarrer:connected,KarrerMiraftabZbinden:subgroups,Liu:dynamics,M:CAT0,zalloum:symbolic,Z:free_product}. However, this comes at a price: while the Gromov boundary of a (proper) geodesic space is always compact,  the Morse boundary is compact only when the space is hyperbolic, in which case it coincides with the Gromov boundary \cite{CD:stable}. Thus, effectively one only considers non-compact Morse boundaries. We investigate the \emph{strong $\sigma$-compactness} of the Morse boundary, which is the best compactness property that a non-compact topological space might hope to have. A topological space is strongly $\sigma$-compact if it is the direct limit of countably many compact spaces. We show strong $\sigma$-compactness of the Morse boundary for a large class of examples.
\begin{thmintro}\label{thmintro:MLTG_implies_sigma_cpt}
    Let $G$ be a group satisfying the Morse local-to-global property. Then the Morse boundary of $G$ is strongly $\sigma$-compact.
\end{thmintro}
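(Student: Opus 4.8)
The plan is to realize the Morse boundary $\mb G$ as the direct limit of a \emph{sequence} of compact spaces. Recall that $\mb G = \varinjlim_{N} \mb^{N} G$, the direct limit over the directed poset of Morse gauges of the subspaces $\mb^{N} G$ of $N$-Morse geodesic rays based at the identity, and that each $\mb^{N} G$ is compact because $G$ is a proper geodesic space. Since a countable directed poset always contains a cofinal chain, and since taking pointwise maxima turns a countable set of Morse gauges into a countable directed set, the theorem reduces to the following assertion: \emph{there is a countable set $\mathcal{N}$ of Morse gauges such that every Morse geodesic ray of $G$ based at the identity is $N$-Morse for some $N\in\mathcal{N}$.} From such an $\mathcal{N}$ one extracts an increasing sequence $N_1\le N_2\le\cdots$ with $\mb G=\bigcup_k \mb^{N_k} G$; identifying the direct-limit topology of this chain of strata with the topology of $\mb G$ is the one step that uses extra structure of the Morse boundary (that the strata embed topologically in one another and that compact subsets of $\mb G$ lie in a single stratum), and I expect this to be the delicate point of the write-up rather than the counting.

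To build $\mathcal{N}$ the idea is to combine local finiteness with the Morse local-to-global property. Fix a finite generating set and work in the locally finite Cayley graph. For each $\ell\in\N$ there are only finitely many geodesic edge-paths of length $\ell$ starting at the identity; call this finite set $\mathcal{S}_\ell$, and note that left translation by $G$ preserves geodesics and Morse gauges. For a subset $\mathcal{S}\sq\mathcal{S}_\ell$ let $N_{\mathcal{S}}$ denote the pointwise maximum of the optimal Morse gauges of the (finitely many) segments in $\mathcal{S}$, a Morse gauge. The Morse local-to-global property supplies, for every Morse gauge $N$, a scale $L(N)$ and a Morse gauge $M(N)$ such that any geodesic all of whose length-$L(N)$ subsegments are $N$-Morse is globally $M(N)$-Morse, and we may take $L(\cdot)$ monotone. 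Call a pair $(\ell,\mathcal{S})$ with $\mathcal{S}\sq\mathcal{S}_\ell$ \emph{admissible} if $\ell\ge L(N_{\mathcal{S}})$, and set $\mathcal{N}:=\{\,M(N_{\mathcal{S}}) : (\ell,\mathcal{S})\text{ admissible}\,\}$; there are countably many admissible pairs, so $\mathcal{N}$ is countable.

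To see that $\mathcal{N}$ captures every Morse ray, let $\gamma$ be an $N_0$-Morse geodesic ray based at the identity. Using the standard fact that the subsegments of an $N_0$-Morse geodesic are uniformly $N'$-Morse for a gauge $N'=N'(N_0)$ depending only on $N_0$, put $\ell:=L(N')$ and let $\mathcal{S}\sq\mathcal{S}_\ell$ be the set of translates-to-the-identity of the vertex-anchored length-$\ell$ subsegments of $\gamma$. Each element of $\mathcal{S}$ is $N'$-Morse, so $N_{\mathcal{S}}\le N'$ and hence $L(N_{\mathcal{S}})\le L(N')=\ell$, so $(\ell,\mathcal{S})$ is admissible. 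Every length-$\ell$ subsegment of $\gamma$ is a $G$-translate of a member of $\mathcal{S}$ and is therefore $N_{\mathcal{S}}$-Morse, so the Morse local-to-global property forces $\gamma$ to be $M(N_{\mathcal{S}})$-Morse, with $M(N_{\mathcal{S}})\in\mathcal{N}$.

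The conceptual content is the construction of $\mathcal{N}$: although there are uncountably many Morse gauges, the local-to-global hypothesis forces the Morse behaviour of any ray to be witnessed by one of only countably many bounded ``local pictures'' (short geodesic segments up to the $G$-action), each of which is then promoted to a single global gauge. The routine-but-necessary technicalities are the passage from arbitrary quasigeodesics and non-vertex basepoints to vertex-anchored geodesic subsegments, and the uniform-Morse-subsegment fact; and the genuine obstacle, as flagged above, is checking that the direct-limit topology of the resulting chain of compact strata agrees with the topology of $\mb G$, which is where I would expect to spend the most effort.
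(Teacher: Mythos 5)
Your proposal takes a genuinely different route from the paper's and the underlying idea is sound. The paper's proof is language-theoretic: for each Morse gauge $M$ it considers the language $L_M$ of geodesic words that are $B_M$-locally $M$-Morse, invokes the regularity of $L_M$ from \cite{CordesRussellSprianoZalloum:regularity}, and deduces countability from the countability of finite-state automata. You replace the appeal to regularity by the direct observation that there are only finitely many geodesic edge-paths of a given length based at the identity, hence only countably many ``local pictures'' $(\ell,\mathcal{S})$. This is more self-contained (you avoid a nontrivial citation) and isolates the phenomenon — finiteness of bounded local data modulo the $G$-action — that also lies behind the regularity theorem; both proofs ultimately count the same thing, but the counting is done at different levels.

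There is, however, a gap in your opening reduction. You reduce the theorem to: there is a countable set $\mathcal{N}$ of Morse gauges such that every Morse ray from the identity is $N$-Morse for some $N\in\mathcal{N}$. By Lemma~\ref{lem:def-of-sigma-compact} this is precisely $\sigma$-compactness, whereas the theorem asserts \emph{strong} $\sigma$-compactness: by Definition~\ref{def:strong-sigma-compact} one needs, for each Morse gauge $M$, a \emph{single} $N_n$ in the exhaustion with $\ms^M G\subseteq \ms^{N_n}G$. Whether the former implies the latter is listed as an open question later in the paper, so the reduction cannot be waved through; and the route you flag as ``delicate'' (matching the direct-limit topology of the chain with that of $\mb G$) does not close the gap with the tools you name, since the fact that compacta lie in a single $M$-stratum does not a priori put them in a single $N_k$-stratum of a chain that is not known to be cofinal. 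Fortunately your construction does prove the stronger statement; you just did not extract it. For a fixed $M=N_0$, the scale $\ell=L(N')$ depends only on $M$, and the set $\mathcal{S}_\gamma$ you associate to an $M$-Morse ray $\gamma$ ranges over the \emph{finite} collection of subsets of $\mathcal{S}_\ell$. Hence the pointwise maximum $N^\ast_\ell:=\max\{M(N_\mathcal{S}) : (\ell,\mathcal{S})\text{ admissible}\}$ is a single gauge depending only on $\ell$, and $\ms^M G\subseteq \ms^{N^\ast_\ell}G$. Taking $\{N^\ast_\ell:\ell\in\N\}$ and passing to cumulative pointwise maxima gives a strong exhaustion directly from Definition~\ref{def:strong-sigma-compact}; no topological argument is needed. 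Two small technical points should also be fixed in a final write-up: the ``optimal Morse gauge'' of a finite segment need not be continuous, so replace the pointwise infimum $f_\sigma$ by, say, $Q\mapsto\int_Q^{Q+1}f_\sigma$ (non-decreasing, continuous, $\geq f_\sigma$, and $\leq N'(Q+1)$ when $\sigma$ is $N'$-Morse), adjusting $\ell$ accordingly; and working with vertex-anchored subsegments only yields $(\ell-1)$-local Morseness via Lemma~\ref{new-monster}\ref{monster:subsegments}, so take $\ell=L(N')+1$.
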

Theorem~\ref{thmintro:MLTG_implies_sigma_cpt} focuses on groups with the \emph{Morse local-to-global} property (Definition~\ref{def_morse-local-to-global}). This is a class of groups introduced by Russell, Tran, and the second author \cite{russellsprianotran:thelocal} to address the following issue: although Morse geodesic are defined to mimic the behaviour of geodesics in hyperbolic spaces, there are groups with an abundance of Morse geodesic that behave fairly different from hyperbolic groups \cite{OlshankiiOsinSapir:lacunary}.

The Morse local-to-global property makes such pathological behaviour impossible, since it controls the structure of Morse geodesics. As evidence, for a Morse local-to-global group $G$ the following hold: if $\abs{\mb G}= \infty$ then $G$ contains a non-abelian free subgroup; there are combinations theorems for stable subgroups \cite{russellsprianotran:thelocal}; the Morse geodesics form regular languages and the growth series of stable subgroups is rational \cite{CordesRussellSprianoZalloum:regularity}; if stable subgroups of $G$ are separable then $G$ is product stable separable \cite{minehspriano:separability}; and other results. 

One should expect all ``normal'' groups to satisfy the Morse local-to-global property as evidenced by the rich class of groups proven to have the Morse local-to-global property, which we list here. 
    \begin{enumerate}[start= 0]
    \item Hyperbolic groups.
    \item Groups with empty Morse boundary, for example, wide groups, which contain solvable groups, Burnside groups, uniformly amenable groups.
    \item CAT(0) groups.
    \item Hierarchically hyperbolic groups, in particular Mapping Class groups, extension of lattice Veech groups,  Artin groups of extra large type and others.
    \item Fundamental groups of closed three manifolds. \label{item:pi_1_mfld}
    \item Coarsely injective groups \cite{SZ:injective}.
    \item Convex divisible domains \cite{IslamWeisman:morse}.
    \item Groups with a bounded, consistent bicombing, this is proven in upcoming work of Cornelia Dru\c{t}u and the second and third author.\label{item:bicombing}
    \item Groups hyperbolic relative to any of the above. \label{item:rel-hyp}
    \end{enumerate}

When not specified, the reference is \cite{russellsprianotran:thelocal}. By Theorem~\ref{thmintro:MLTG_implies_sigma_cpt}, all of the above have strongly $\sigma$-compact Morse boundary. This result is new for items \eqref{item:pi_1_mfld}, \eqref{item:bicombing} and \eqref{item:rel-hyp}, in particular yielding the following. 

\begin{corintro}
    Let $G = \pi_1(M)$ be the fundamental group of a closed three manifold. Then $\mb G$ is strongly $\sigma$-compact.
\end{corintro}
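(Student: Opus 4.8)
The plan is to derive this from Theorem~\ref{thmintro:MLTG_implies_sigma_cpt}: it suffices to check that $G=\pi_1(M)$ satisfies the Morse local-to-global property, which is exactly item~\eqref{item:pi_1_mfld} of the list above, established via the geometrisation theorem in \cite{russellsprianotran:thelocal}. For completeness I would recall the argument. The first ingredient is that the Morse local-to-global property is inherited by finite free products: a free product of finitely many groups is hyperbolic relative to its free factors, so if each factor is Morse local-to-global then so is the free product, by the relatively hyperbolic case (item~\eqref{item:rel-hyp}). Combined with the Kneser--Milnor prime decomposition, which writes $\pi_1(M)$ as a finite free product of fundamental groups of prime closed $3$-manifolds, this reduces the problem to the case where $M$ is prime.

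If $\pi_1(M)$ is finite, or $M=S^2\times S^1$, then $G$ is hyperbolic, respectively infinite cyclic, and we are done. Otherwise $M$ is closed, irreducible and aspherical, and I would run through its geometric decomposition. If $M$ is hyperbolic then $G$ is a hyperbolic group. If $M$ carries one of the flat, $\mathrm{Nil}$, or $\mathrm{Sol}$ geometries then $G$ is virtually solvable, hence wide. If $M$ carries the $\mathbb{H}^2\times\mathbb{R}$ or $\widetilde{\mathrm{SL}}_2$ geometry then $G$ is quasi-isometric to $\mathbb{H}^2\times\mathbb{R}$; since every asymptotic cone of $\mathbb{H}^2\times\mathbb{R}$ splits as $T\times\mathbb{R}$ for an $\mathbb{R}$-tree $T$ and therefore has no cut point, $G$ is again wide (in particular $\mb G=\emptyset$). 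Finally, if $M$ is non-geometric then its JSJ pieces are Seifert-fibered or atoroidal and there are no $\mathrm{Nil}$ or $\mathrm{Sol}$ pieces (these having been treated above), so by Behrstock--Hagen--Sisto $G$ is a hierarchically hyperbolic group. In every case $G$ lies in one of the classes listed before the corollary, hence $G$ is Morse local-to-global, hence so is $\pi_1(M)$; applying Theorem~\ref{thmintro:MLTG_implies_sigma_cpt} then yields that $\mb G$ is strongly $\sigma$-compact.

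I do not expect a genuine obstacle here: the corollary is essentially an immediate application of Theorem~\ref{thmintro:MLTG_implies_sigma_cpt} once the Morse local-to-global property is in hand, and in the paper one may simply invoke item~\eqref{item:pi_1_mfld}. The only points that require (routine) care are the two closure facts used above---stability of the Morse local-to-global class under finite free products, and the verification that each geometric and each non-geometric prime piece already appears on the list---and these reflect the robustness of the Morse local-to-global condition rather than anything specific to $\sigma$-compactness.
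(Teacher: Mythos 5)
Your proposal is correct and follows exactly the paper's route: the corollary is stated as an immediate consequence of Theorem~\ref{thmintro:MLTG_implies_sigma_cpt} together with the fact, listed as item~\eqref{item:pi_1_mfld} and cited from \cite{russellsprianotran:thelocal}, that fundamental groups of closed three-manifolds satisfy the Morse local-to-global property. The geometrisation and prime-decomposition case analysis you supply is a faithful (and, as far as I can tell, accurate) recollection of the argument from \cite{russellsprianotran:thelocal}, but it is not part of the present paper's proof, which simply invokes that reference.
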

This is a step forward in understanding the Morse boundary of three manifold groups, which remained fairly mysterious until recently  \cite{charneycordessisto:complete, Z:manifold}. 

The key tool in the proof of Theorem~\ref{thmintro:MLTG_implies_sigma_cpt} is the language theoretic characterizations of Morse geodesics coming from \cite{CordesRussellSprianoZalloum:regularity}. Regular languages on a given alphabet can be encoded with a finite amount of information, and thus there are countably many of them. This allows us to select countably many compact sets whose union covers the Morse boundary. 

It is natural to wonder whether the converse direction of Theorem \ref{thmintro:MLTG_implies_sigma_cpt} holds. A positive answer would be extremely interesting as it would suggest that many results about the algebra, geometry and algorithmic properties of a group could be deduced from the topology of the Morse boundary. We are able to show such a converse for small cancellation groups.
\begin{thmintro}\label{thmintro:small_cancellation}
     Let $G = \pres$ be a $C'(1/9)$ group. Then the following are equivalent
     \begin{enumerate}
         \item  $G$ is a Morse local-to-global group,
         \item $\mb G$ is $\sigma$-compact,
         \item $\mb G$ is strongly $\sigma$-compact.
         \end{enumerate}
\end{thmintro}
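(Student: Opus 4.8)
The plan is to prove the cycle of implications $(1)\Rightarrow(3)\Rightarrow(2)\Rightarrow(1)$, where the first two arrows are immediate: $(1)\Rightarrow(3)$ is exactly Theorem~\ref{thmintro:MLTG_implies_sigma_cpt}, and $(3)\Rightarrow(2)$ holds because strong $\sigma$-compactness trivially implies $\sigma$-compactness (a direct limit of countably many compacta is in particular a countable union of compacta). So all the content is in the converse direction $(2)\Rightarrow(1)$: \emph{if a $C'(1/9)$ group $G$ is not Morse local-to-global, then $\mb G$ is not $\sigma$-compact}. I would prove this contrapositive.

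The strategy for the contrapositive is to produce, from a failure of the Morse local-to-global property, a subspace of $\mb G$ that cannot be covered by countably many compact sets --- the natural candidate being a non-$\sigma$-compact subspace built from an uncountable ``tree-like'' family of Morse directions whose Morse gauges are unbounded in a controlled way. Concretely, for $C'(1/9)$ (or even $C'(1/6)$) groups the structure of geodesics is governed by Strebel's classification of geodesic bigons/polygons in the Cayley complex, so a geodesic that is \emph{locally} Morse but not \emph{globally} Morse must, by the Morse local-to-global machinery, force the existence of arbitrarily long ``pieces'' of relators being traversed, or more precisely an unbounded sequence of quasigeodesic configurations witnessing the failure. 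I would use such a sequence to build a Cantor-set-like family $\{\gamma_x\}_{x\in 2^\N}$ of Morse geodesic rays such that the Morse gauge of $\gamma_x$ depends on $x$ and is unbounded over the family, and such that the assignment $x\mapsto [\gamma_x]\in\mb G$ is a topological embedding of a space that is not $\sigma$-compact. The point is that in the direct-limit topology on $\mb G$, a compact set is contained in a single ``sublevel set'' $\mb^N G$ of geodesics with a fixed Morse gauge $N$ (this is essentially how compact subsets of Morse boundaries behave, cf.\ the stratification used in Theorem~\ref{thmintro:MLTG_implies_sigma_cpt}); if the family $\{\gamma_x\}$ meets every sublevel set in a nowhere-dense or otherwise ``small'' set while being uncountable, a Baire-category or cardinality argument shows it cannot be a countable union of such intersections.

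More carefully, here is the order of steps I would carry out. First, recall/establish the characterization that a compact subset $K\subseteq\mb G$ lies in $\mb^N G$ for some $N$, so that $\sigma$-compactness of $\mb G$ is equivalent to $\mb G=\bigcup_N \mb^N G$ being ``efficiently'' exhausted --- and crucially, that $\mb G$ is $\sigma$-compact if and only if it is strongly $\sigma$-compact, reducing (2) and (3) to the same statement (this makes $(2)\Rightarrow(3)$ essentially formal and is worth isolating as a lemma). Second, unpack the negation of Morse local-to-global for $G$: there is a Morse gauge $N$, a constant, and a sequence of $(N\text{-locally Morse})$ paths that are not uniformly Morse; using the small cancellation hypothesis and Strebel's structure theory, convert this into a concrete family of geodesic segments in $\cay$ each of which ``uses'' a long arc of some relator in a way that increases the Morse gauge. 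Third, splice these segments together along an infinite binary tree to get the family $\{\gamma_x\}_{x\in 2^\N}$ of genuine Morse rays, verifying (a) each $\gamma_x$ is Morse, (b) distinct $x$ give distinct boundary points, (c) $x\mapsto[\gamma_x]$ is continuous, hence an embedding of a Cantor set $C$, and (d) for each $N$, the set $\{x: [\gamma_x]\in\mb^N G\}$ is meager in $C$ (because membership in $\mb^N G$ forces all but finitely many splicing choices to be ``trivial''). Fourth, conclude by Baire category: $C$ is not the countable union of meager sets, so $\{[\gamma_x]\}$ is not contained in $\bigcup_N \mb^N G$ --- contradiction, unless $\mb G$ fails to be $\sigma$-compact.

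The main obstacle I expect is Step three, and within it part (d): arranging the splicing so that the family is simultaneously an \emph{embedded} copy of the Cantor set in the (subtle, direct-limit) topology of $\mb G$ \emph{and} has the property that each fixed-gauge stratum meets it in a small set. Getting a topological embedding requires control of the convergence in $\mb G$, which is delicate because the topology on the Morse boundary is not first-countable in general; one typically needs the splicing to be done so that two rays agreeing on a long prefix are close in \emph{every} $\mb^N G$ that contains them, and this forces careful bookkeeping of how the Morse gauge grows along the tree. I would likely need to invoke the precise small-cancellation geometry (Strebel's classification of geodesic $n$-gons for $C'(1/6)$, and the resulting ``fellow-traveling up to a piece'' statements) to guarantee that the spliced rays are geodesic and that their mutual Gromov products behave predictably; the $1/9$ rather than $1/6$ hypothesis presumably gives the extra room needed to control these estimates uniformly along an infinite splicing. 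A secondary technical point is making the reduction ``$\sigma$-compact $\Leftrightarrow$ strongly $\sigma$-compact for $\mb G$'' clean enough that the theorem's three-way equivalence follows formally once $(2)\Rightarrow(1)$ is in hand.
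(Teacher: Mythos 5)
Your proposal diverges from the paper's proof, and it contains one outright error and one substantial gap. The error: you propose to isolate as a lemma that ``$\mb G$ is $\sigma$-compact if and only if it is strongly $\sigma$-compact,'' making $(2)\Rightarrow(3)$ formal. This equivalence is \emph{not} known in general --- the paper explicitly lists it as an open question --- and the theorem's equivalence of $(2)$ and $(3)$ is \emph{established only through} $(1)$: the chain is $(1)\Rightarrow(3)$ via Theorem~\ref{thmintro:MLTG_implies_sigma_cpt}, $(3)\Rightarrow(2)$ trivially, and then the hard work is $(2)\Rightarrow(1)$. There is no abstract shortcut from $(2)$ to $(3)$.

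For $(2)\Rightarrow(1)$, the paper does not build a Cantor-set family or invoke Baire category. Its route is entirely through the intersection-function characterization of Morseness (Lemma~\ref{lem:4.14}) and the increasing partial small-cancellation (IPSC) condition: Lemma~\ref{prop:non-sigma-compact-condition} says that for $C'(1/6)$ groups, non-$\sigma$-compactness of $\mb G$ is \emph{equivalent} to IPSC, and Proposition~\ref{lem:sigma_cpct_implies_geosdesicMLTG} shows that failure of the geodesic MLTG property implies IPSC. The remaining work --- upgrading geodesic MLTG to full MLTG --- is done by replacing a locally Morse quasi-geodesic $\gamma$ with an auxiliary path $p$ (gluing local geodesics along maximal relator overlaps), proving $\rho_p(t)\leq \tfrac{2t}{3}$ for the intersection function, and using disk diagrams plus Strebel's classification of combinatorial geodesic bigons to show $p$ fellow-travels geodesics. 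The $C'(1/9)$ hypothesis (rather than $C'(1/6)$) enters precisely there: interior arcs of faces in a minimal diagram are pieces of length $< |\Pi|/9$, which together with the $\tfrac{2}{3}$ bound on exterior arcs is what verifies the combinatorial geodesic bigon conditions. Your proposal never engages with the intersection function or IPSC --- which is the decisive tool --- and your Step three (the splicing/embedding of the Cantor set and the meagerness of each stratum's preimage), which you yourself flag as the main obstacle, is left entirely unconstructed; in particular, nothing in your sketch ensures the spliced paths are geodesics rather than merely quasi-geodesics, nor that the assignment $x\mapsto[\gamma_x]$ is continuous into the direct-limit topology. As a smaller point, Baire category is more than you need: to defeat a proposed exhaustion $(M_n)$ you only need a single ray outside every $\ms^{M_n}G$, not an uncountable family.
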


Theorem~\ref{thmintro:small_cancellation} constitutes the main technical result of the paper. The missing implication is $(2) \Rightarrow (1)$. The proof is in three steps. Firstly, we use the characterization of Morse geodesics in term of their \emph{intersection function} (see Definition~\ref{def:intersection-function}) developed in \cite{arzhantseva2019negative} together with a characterization of $\sigma$-compactness from \cite{Z:small_cancellation} to show a Morse local-to-global property \emph{for geodesics}. Then we turn to the quasi-geodesic case and the second step is to use the small cancellation condition to estimate the intersection function of (a perturbation of) the chosen local Morse quasi-geodesic. The final step is to use disk diagrams to approximate the chosen local quasi-geodesic with a geodesic, and use the estimate on the intersection function together with Strebel's classification of combinatorial geodesic bigons (see Lemma~\ref{lemma:strebel}) to reduce the question to a local-to-global problem for a geodesic, which we solved in the first step.

\subsection{Applications to stationary measures}
As an application to Theorem~\ref{thmintro:MLTG_implies_sigma_cpt}, we study the stationary measure of the Morse boundary of certain Morse local-to-global groups. This endeavour sits in the larger program of quantitatively understand the relation between Morse directions and the ambient space. For instance in \cite{gekhtman2014dynamics} the author shows that the critical exponent of orbits of stable subgroups in Teichm\"uller space is smaller than the one of the entire mapping class group, essentially showing that stable subgroups grow exponentially less than the whole mapping class group. A similar results is then established in \cite{CordesRussellSprianoZalloum:regularity}, for the growth rate of (infinite-index) stable subgroups in the Cayley graphs of Morse local-to-global groups. Our result shows that not only stable subgroups, but the whole Morse boundary needs to be \say{small} in the whole space. 

\begin{thmintro}\label{thmintro:MCG}
    Let $\mc{G}$ be the mapping class group of a non-sporadic surface and let $\mu$ be a probability measure on $\mc{G}$ whose support generates $\mc{G}$ as a semigroup. Let $\Delta \mc{G}$ denote either a horofunction boundary of $\mc{G}$ or the HHG boundary of $\mc{G}$. Then the Morse boundary in $\Delta \mc{G}$ has measure zero with respect to any $\mu$-stationary measure on $\Delta(\mc{G})$.
\end{thmintro}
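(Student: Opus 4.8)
The plan is to deduce Theorem~\ref{thmintro:MCG} from the general principle announced in the abstract: a non-hyperbolic Morse local-to-global group that has contraction admits no non-trivial stationary measure on its Morse boundary, and more precisely any stationary measure on a geodesic boundary of such a group assigns the image of the Morse boundary measure zero. Accordingly I would first check that $\mc G = \mathrm{MCG}(S)$ meets the three hypotheses, then realize both of the listed boundaries as geodesic boundaries into which $\mb \mc G$ embeds $\mc G$-equivariantly, and finally transport a stationary measure back along this embedding to reach a contradiction.

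For the hypotheses: since $S$ is non-sporadic it carries two disjoint non-isotopic essential curves whose Dehn twists generate a $\Z^2 \leq \mc G$, so $\mc G$ is not hyperbolic. The group $\mc G$ is a hierarchically hyperbolic group (Behrstock--Hagen--Sisto), hence Morse local-to-global (item~(3) of the list above, \cite{russellsprianotran:thelocal}); in particular Theorem~\ref{thmintro:MLTG_implies_sigma_cpt} applies, so $\mb \mc G$ is strongly $\sigma$-compact, say $\mb \mc G = \bigcup_n K_n$ with each $K_n$ compact and the union increasing. Finally, Morse geodesics in a Cayley graph of $\mc G$ are strongly contracting — equivalently, stable (convex-cocompact) subgroups act by strongly contracting orbits, and pseudo-Anosov axes are strongly contracting — so $\mc G$ has contraction.

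Next, both boundaries are geodesic boundaries receiving $\mb \mc G$ equivariantly. Fixing a word metric, each geodesic ray of the Cayley graph defines a Busemann point of the horofunction boundary $\partial_h \mc G$, and this descends to a $\mc G$-equivariant continuous map $\mb \mc G \to \partial_h \mc G$ which is injective on Morse rays (two Morse rays with the same Busemann function are asymptotic). Likewise, the Morse boundary of an HHG embeds $\mc G$-equivariantly and continuously into its HHG boundary (Durham--Hagen--Sisto and follow-ups). In either case write $\iota\colon \mb \mc G \to \Delta \mc G$ for the resulting map; since each $\iota|_{K_n}$ is a homeomorphism onto a compact set, $\iota$ is a Borel isomorphism onto its image $\iota(\mb \mc G) = \bigcup_n \iota(K_n)$, a Borel $\mc G$-invariant subset of $\Delta \mc G$. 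Now let $\nu$ be a $\mu$-stationary probability measure on $\Delta \mc G$ with $\nu(\iota(\mb \mc G)) > 0$. Restricting $\nu$ to the invariant Borel set $\iota(\mb \mc G)$, normalizing, and pulling back along the equivariant Borel isomorphism $\iota$ produces a non-trivial $\mu$-stationary probability measure on $\mb \mc G$, contradicting the general principle; hence $\nu(\iota(\mb \mc G)) = 0$, which is the claim.

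The real content, and the step I expect to be hardest, is the general principle itself. Here the plan is to combine Theorem~\ref{thmintro:MLTG_implies_sigma_cpt} with the dynamics of a strongly contracting element: strong $\sigma$-compactness forces a putative stationary probability measure $\nu$ on $\mb \mc G$ to place positive mass on a single compact piece $\mb^{N} \mc G$ of a fixed Morse gauge, while non-hyperbolicity guarantees that $\mc G$ neither acts cocompactly on any such piece nor exhausts $\mb \mc G$ at any finite gauge. The delicate point is to play a strongly contracting element — whose weak north--south dynamics on $\mb \mc G$ drive mass toward the countable set of contracting fixed points — against this lack of cocompactness, so as to show that $\nu$ cannot be sustained as a probability measure: morally its mass is pushed off to infinity inside the non-compact $\sigma$-compact space $\mb \mc G$. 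A convenient way to package the dynamics is to use contraction to push $\nu$ forward to a stationary measure on the Gromov boundary of an acylindrically hyperbolic action of $\mc G$ and then invoke uniqueness and non-atomicity of such measures, together with the fact that on a non-hyperbolic group the random walk does not converge to a Morse direction.
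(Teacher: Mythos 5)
Your reduction of the theorem to the general principle (the paper's Theorem~\ref{thm:vanishing_measure}/Theorem~\ref{thmintro:general_vanishing}) is essentially correct and matches the paper's structure: $\mathrm{MCG}$ of a non-sporadic surface is non-hyperbolic (via the $\Z^2$ from disjoint twists), is HHG hence MLTG, and has contraction via pseudo-Anosov axes; and both the horofunction boundary (via Lemmas~\ref{lem:horofunctions_different_points} and~\ref{lem:horofunction-img-compact}) and the HHG boundary are geodesic boundaries. One small caveat: the paper does not pull $\nu$ back to a measure on $\mb\mc{G}$ and argue there; it applies the Maher--Tiozzo disjointness criterion directly to the subsets $\rho(\mc{M}_N(X)) \subset \Delta\mc{G}$, which avoids delicate questions about whether a restriction/pullback of $\nu$ is genuinely $\mu$-stationary on the non-locally-compact direct limit $\mb\mc{G}$.

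The genuine gap is in your sketch of the general principle itself, which is the bulk of the paper's Section~\ref{sec:measure}. Your proposed route is: push $\nu$ forward to the Gromov boundary of an acylindrically hyperbolic action, invoke uniqueness and non-atomicity of stationary measures there, and cite the ``fact that on a non-hyperbolic group the random walk does not converge to a Morse direction.'' This does not close the argument. First, that ``fact'' is not an available black box: it concerns \emph{harmonic} measures of random walks, whereas the theorem must hold for \emph{every} $\mu$-stationary measure, including ones that do not arise as hitting measures, and the theorem explicitly advertises that it needs no moment or random-walk hypotheses. Second, the image of the Morse boundary under the projection to a Gromov boundary of an acylindrical action is typically uncountable, so non-atomicity alone gives nothing, and the fibres of that projection are not single points, so vanishing of the pushforward on the image would not pass back. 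Third, only \emph{weak} north--south dynamics is known for the action on the Morse boundary; strong north--south dynamics would imply acylindrical hyperbolicity, which is precisely what one cannot assume here. The paper instead applies \cite[Lemma~4.5]{mahertiozzo:random} (Lemma~\ref{lem:-Tiozzo}) to the compact set $Y = \rho(\mc{M}_N(X))$, and the entire geometric content is the construction of the elements $g_i$ and the geodesics $\eta_i$ satisfying Proposition~\ref{prop:pentagon}: $\eta_i$ is assembled from long strongly contracting segments $\hat\gamma_A$ alternating with increasingly non-Morse segments $\hat\beta_{B_i}$, extracted via strong $\sigma$-compactness and non-hyperbolicity (Lemma~\ref{lem:bad_geodesic}). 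One then uses the MLTG property (Lemma~\ref{lem:q-is-quasi-geodesic-for-large-a}) and bounded geodesic image of the contracting pieces to show that at most two indices $i$ can be ``compatible'' with any given cone point, producing infinitely many pairwise disjoint translates $g_n^{-1}fY$. Your ``mass escapes to infinity'' picture captures the right moral, but without this explicit disjointness construction there is no quantitative mechanism forcing $\nu(Y)=0$.
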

Theorem~\ref{thmintro:MCG} implies that, in particular, the set of points of $\Delta(\mc{G})$ corresponding to axis of pseudo-Anosov elements. Theorem~\ref{thmintro:MCG} is obtained as a special case of a more general result stating that, if a group has contraction (see Definition~\ref{def:has-contraction}), the image of the Morse boundary has measure zero in any compact metric boundary that \say{encodes the geometry}. More precisely, we say that a metric space $B$ is a \emph{geodesic boundary} for $G\curvearrowright X$ if $G$ acts on $B$ by homeomorphisms and there is a $G$--equivariant map $\iota$ from the set of geodesic rays of $X$ to $B$ and so that 
\begin{itemize}
    \item for all Morse gauges $M$, the image under $\iota$ of all $M$--Morse geodesics starting at a given basepoint is compact, and,
    \item if $\gamma_1$ and $\gamma_2$ are geodesic rays representing different points in the Morse boundary, then $\iota(\gamma_1) \neq \iota (\gamma_2)$.
\end{itemize}
  A geodesic boundary for a group is a geodesic boundary for which the action $G \curvearrowright X$ is geometric. The two conditions on the geodesic rays are very natural and encode the fact that the boundary should be related to \emph{directions at infinity} and that it should not collapse the negatively-curved directions. 
\begin{thmintro}\label{thmintro:general_vanishing}
    Let $G$ be a non-hyperbolic group with the Morse local to-global property and suppose that $G$ acts geometrically on a space containing a contracting ray. Let $\mu$ be a probability measure on $G$. Then for any geodesic boundary $B$ of $G$ the image of the set of Morse rays has measure zero in $B$ with respect to any $\mu$--stationary measure. 
\end{thmintro}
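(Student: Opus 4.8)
The plan is to combine strong $\sigma$-compactness (Theorem~\ref{thmintro:MLTG_implies_sigma_cpt}) with a stationarity argument that exploits the existence of a contracting ray and the non-hyperbolicity. First I would set up the measure-theoretic framework: fix the geodesic boundary $B$ with its $G$--equivariant map $\iota$ from geodesic rays of $X$ to $B$, and let $\nu$ be any $\mu$--stationary measure on $B$. By the defining properties of a geodesic boundary, for each Morse gauge $M$ the image $\iota(\partial^M_{x_0} X)$ of the $M$--Morse rays from the basepoint is compact in $B$; since $G$ has the Morse local-to-global property, Theorem~\ref{thmintro:MLTG_implies_sigma_cpt} tells us $\mb G = \bigcup_M \partial^M_{x_0} X$ is a countable increasing union, so the image of the full set of Morse rays is a countable increasing union $\bigcup_n K_n$ of compact sets $K_n := \iota(\partial^{M_n}_{x_0} X)$. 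Hence $\nu$ assigns the Morse set measure $\lim_n \nu(K_n)$, and it suffices to show $\nu(K_n) = 0$ for every $n$, i.e. that each fixed Morse stratum has measure zero.

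The heart of the argument is then a contraction/drift dichotomy on a single compact Morse stratum. The idea is that the random walk driven by $\mu$ cannot ``sustain'' mass on a fixed Morse gauge: by stationarity, $\nu(K_n) = \sum_{g} \mu(g)\, \nu(g^{-1} K_n)$, and more usefully the stationary measure is the barycenter of the pushforwards along the random walk, $\nu = \int \mathrm{bnd}_*(\omega)\, d\mathbb{P}(\omega)$ on the appropriate boundary (or, at the level of convolutions, $\mu^{*k} * \nu \to \nu$ in a suitable sense). I would argue that because $X$ contains a contracting ray and $G$ acts geometrically, $G$ contains a contracting (hence Morse, loxodromic-type) element $g$ whose axis is $M_0$--contracting for some gauge $M_0$; acting by large powers of $g$ ``spreads out'' the gauge, i.e. $g^k \cdot \partial^{M}_{x_0}X$ escapes any fixed stratum $\partial^{M_n}_{x_0}X$ as $k \to \infty$ except near the two fixed points $g^{\pm\infty}$ of $g$ on $B$. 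Since $g^{\pm\infty}$ are two points, they carry no mass unless $\nu$ has atoms; an atom argument (the stationary measure of a non-elementary action has no atoms on a metrizable boundary, or a direct argument using that the orbit of an atom of maximal mass is finite, contradicting non-hyperbolicity via existence of a free subgroup — recall Morse local-to-global groups with infinite Morse boundary contain non-abelian free subgroups) rules these out. Pushing this through, the $g^k$-translates of $K_n$ have $\nu$-mass bounded below by $\nu(K_n)$ up to the contribution of the shrinking neighborhoods of $g^{\pm\infty}$, yet they become ``essentially disjoint'' off those neighborhoods, forcing $\nu(K_n) = 0$.

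More carefully, I would run this via the standard Furstenberg-type argument: let $m_n = \sup_{g \in G} \nu(g K_n)$; by stationarity this supremum is attained along the random walk in the limit, and the set of $g$ nearly attaining it is, after normalizing, controlled; feeding in that the gauge of $g\partial^{M_n}_{x_0}X$ degrades under the contracting element's dynamics (this is where non-hyperbolicity is essential — in a hyperbolic group every stratum already is the whole boundary and the statement is false, so we genuinely need the Morse boundary to be a proper subset and the gauge to be a non-trivial parameter), we conclude the maximizing translates concentrate near $\{g^{\pm\infty}\}$, and then atomlessness gives $m_n = 0$. The main obstacle, and the step I expect to require the most care, is making precise the claim that the contracting element's action ``degrades the Morse gauge'': I need that for any compact stratum $K$ and any $\epsilon > 0$, all but finitely many powers $g^k$ satisfy $\nu(g^k K \setminus N_\epsilon(\{g^{\pm\infty}\})) $ is small relative to how $g^k K$ sits inside the union of strata — equivalently, that a geodesic ray which is $M_n$-Morse and whose $\iota$-image is far from $g^{\pm\infty}$ cannot be translated by $g^k$ into the $M_n$-stratum for large $k$. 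This should follow from the contraction property of the axis of $g$ and a ping-pong estimate on geodesic rays (the translate $g^k\gamma$ fellow-travels the axis of $g$ for a long time before diverging, and the Morse gauge of such a concatenation grows with the fellow-travelling length unless the ray limits to $g^{\pm\infty}$), but turning the geometric picture into the required measure estimate — uniformly over the compact stratum — is the technical crux.
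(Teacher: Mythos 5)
Your reduction to a single Morse stratum via strong $\sigma$-compactness is exactly what the paper does, and you correctly identify that the geodesic-boundary axioms make each stratum-image compact and the full image a countable increasing union. The genuine gap is in the dynamical mechanism you propose for killing a single stratum.

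First, the hypothesis gives a strongly contracting geodesic \emph{ray}, not a contracting group element; the paper is deliberately careful here (it even remarks that contracting rays are not known to be QI-invariant), whereas your argument starts by extracting a contracting element $g$ and using its fixed points $g^{\pm\infty}$. That step is not available.

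Second, and more seriously, the claimed ``degradation of the Morse gauge'' under powers of a contracting element is false. If $g$ is contracting, its axis is $M_0$-Morse for a \emph{fixed} gauge $M_0$, and a geodesic from $x_0$ to $g^k x_0$ is close to that axis, hence $M_0$-Morse uniformly in $k$. Consequently, if $\gamma$ is an $M_n$-Morse ray from $x_0$, the geodesic ray from $x_0$ asymptotic to $g^k\gamma$ is obtained (up to bounded error) by concatenating an $M_0$-Morse segment with an $M_n$-Morse ray, and by the $n$-gon property (Lemma~\ref{new-monster}\ref{monster:triangle}) the result is $M'$-Morse for a gauge $M'$ depending only on $M_0$ and $M_n$ -- \emph{not} on $k$ or on the fellow-travelling length. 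So the translates $g^k K_n$ stay trapped in a fixed stratum $K_{n'}$ and never become ``essentially disjoint'' in the way your argument requires. What the paper does instead is crucially different: it uses non-hyperbolicity together with Theorem~\ref{thm:MLTG_implies_sigma_cpct} to produce, for each $n$, a geodesic segment $\hat\beta_n$ that is \emph{not} $M_n$-Morse and has non-contracting ends (Lemma~\ref{lem:bad_geodesic}); it then splices these bad segments between long contracting segments to build quasi-geodesics $q_{A,B}$ (using MLTG to control them globally), and takes $g_i$ with $g_i x_0'$ the endpoint of the resulting geodesic $\eta_i$. The disjointness of translated strata is then forced because any geodesic between two translated basepoints is shown to contain a bad $\hat\beta$-piece in its projection picture (Proposition~\ref{prop:pentagon}), which cannot happen for an $N$-Morse geodesic; finally Lemma~\ref{lem:-Tiozzo} (Maher--Tiozzo) converts this geometric disjointness into $\nu(K_n)=0$. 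So the non-Morse pieces, not the contracting element's drift, are what separate the translates. Your Furstenberg-type maximal-mass argument is a different route in principle, but the specific dynamical input you feed into it is incorrect, and without the bad-geodesic construction the argument does not close.
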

The proof of Theorem~\ref{thmintro:MCG} follows the strategy of \cite{mahertiozzo:random}. We note that the class of geodesic boundaries with a stationary measure is very general. Indeed, the existence of a $\mu$--stationary measure on a boundary $B$ is guaranteed if $B$ is compact \cite[Lemma~4.3]{mahertiozzo:random}. Thus, for a finitely generated group $G$ acting geometrically on a geodesic metric space $X$, the following are compact geodesic boundaries.
\begin{enumerate}
    \item The horofunction boundary of $X$.
    \item If $G$ is an HHG, the HHG boundary of $G$.
    \item If $X$ is an ATM space (as in \cite{qingrafi:quasiredirecting}), the quasi-redirecting boundary of $X$.
\end{enumerate}
Another advantage of Theorem~\ref{thmintro:general_vanishing} is that it is very well behaved with respect to quasi-isometries. For instance, the horofunction boundary does depend on a specific space $X$ on which $G$ acts. The theorem states that if one can find a \emph{single} space on which $G$ acts geometrically that contains a contracting ray (which does not need to be periodic), then the image of Morse ray in \emph{any} horofunction boundary needs to have measure zero. 

The vanishing of Morse directions in the horofunction boundary, or more generally in convergence bordifications, was previously studied in \cite{yang:conformal}, where the author showed that a harmonic measure arising from a random walk with a finite logarithmic moment needs to be supported on the Myrberg set, which is disjoint from the set of contracting geodesic rays. Using techniques from \cite{chawlaforghanifrischtiozzo:Poisson} the same theorem can be applied without the assumption on logarithmic moment. Notably, our result does not need any moment assumption and works for all stationary measures, not only harmonic measures arising from random walks. After the first draft of this paper was written, Kunal Chawla pointed out to us new and exciting techniques coming from \cite{choi2022random} and \cite{chawlaChoiTiozzo:genericity}, giving another strategy of proof of Theorem~\ref{thmintro:general_vanishing} for measures on geodesic boundaries arising as hitting measures of random walks.

Similar results were also obtained by \cite[Theorem~1.3]{cordesdussaulegrkhtman:anembedding}, where the authors constructed a continuous map from the Morse boundary to the Martin boundary using the Ancona inequality, and as a consequence the image of the Morse boundary needs to have vanishing measure with respect to the associated harmonic measure. Our result recovers Theorem~1.3 of \cite{cordesdussaulegrkhtman:anembedding} when applied to Martin boundaries of hierarchically hyperbolic groups. 

Theorem~\ref{thmintro:general_vanishing} holds trivially if a geodesic boundary $B$ does not admit a $\mu$-stationary measure. Since the existence of a $\mu$--stationary measure on a boundary $B$ is guaranteed if $B$ is compact, we conclude the following two corollaries.  
\begin{corintro}\label{corintro:no-stationary-measure-on-morse-boundary}
    Let $G$ be a non-hyperbolic group with the Morse local to-global property and suppose that $G$ acts geometrically on a space containing a contracting ray. The Morse boundary $\mb G$ of $G$ with the metrizable topology of \cite{CM:topology} does not admit a stationary measure.
\end{corintro}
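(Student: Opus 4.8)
The plan is to apply Theorem~\ref{thmintro:general_vanishing} with the role of $B$ played by the Morse boundary itself. The first, and only slightly delicate, step is to check that $\mb G$ equipped with the metrizable topology of \cite{CM:topology} is a geodesic boundary for $G$ in the sense above. That $G$ acts on $\mb G$ by homeomorphisms is standard, and since $G$ acts geometrically on the given space $X$ we identify $\mb G$ with $\mb X$. For the required map, send a Morse geodesic ray of $X$ to the point of $\mb X$ it represents; this is $G$-equivariant, since $G$ acts by isometries and hence preserves Morse gauges, and it is by construction injective on geodesic rays representing distinct points of the Morse boundary. The only subtlety is that the map must be defined on \emph{all} geodesic rays of $X$, not just the Morse ones; I would handle this by enlarging the target to $\mb G \sqcup \{*\}$ for an isolated, $G$-fixed point $*$, and sending every non-Morse ray to $*$. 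This affects neither metrizability nor the two required properties, and the remaining (compactness) condition is precisely the compactness, in the topology of \cite{CM:topology}, of the union of the images of all $M$-Morse rays from a fixed basepoint; this is the classical compactness of the Morse strata, valid here because each stratum is compact Hausdorff in both the topology of \cite{CM:topology} and the direct limit topology, and these two topologies agree on it. Hence $\mb G \sqcup \{*\}$ is a geodesic boundary for $G$.

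Applying Theorem~\ref{thmintro:general_vanishing} then gives that, for any probability measure $\mu$ on $G$, every $\mu$-stationary probability measure $\nu$ on $\mb G \sqcup \{*\}$ assigns measure zero to the image of the set of Morse rays. But that image is exactly $\mb G$, since every point of the Morse boundary is represented by a Morse geodesic ray; so $\nu(\mb G) = 0$ for every such $\nu$.

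To conclude, suppose toward a contradiction that $\mb G$ admitted a $\mu$-stationary probability measure $\nu_0$. Because every $g \in G$ preserves the subset $\mb G$ inside $\mb G \sqcup \{*\}$, extending $\nu_0$ by $\nu_0(\{*\}) = 0$ produces a $\mu$-stationary probability measure on $\mb G \sqcup \{*\}$; the previous paragraph then forces $\nu_0(\mb G) = 0$, contradicting $\nu_0(\mb G) = 1$. Therefore $\mb G$ admits no stationary measure. The whole argument is formal once Theorem~\ref{thmintro:general_vanishing} is available; the only point that requires genuine care is the bookkeeping in the first step, namely confirming that the Morse strata are compact for the metrizable topology of \cite{CM:topology} — not merely for the direct limit topology — so that $\mb G$ really does fit the definition of a geodesic boundary.
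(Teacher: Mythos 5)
Your proof is correct and takes essentially the same approach the paper intends: apply Theorem~\ref{thmintro:general_vanishing} with $B$ the Morse boundary itself (the paper states Corollary~\ref{corintro:no-stationary-measure-on-morse-boundary} as an immediate consequence after observing that a stationary measure on $\mb G$ would have to vanish on the image of the Morse rays, which is all of $\mb G$). The extra step you take — extending $\iota$ to all geodesic rays by adjoining an isolated, $G$-fixed point $*$ so as to literally satisfy the definition of geodesic boundary — is sound and compatible with the paper's internal Theorem~\ref{thm:vanishing_measure}, whose map $\rho$ is anyway only required on Morse rays; the one fact you cite without proof, that Morse strata are compact in the \cite{CM:topology} topology, is indeed the crucial hypothesis and holds because the identity from the direct-limit topology to the coarser CM topology is continuous and the strata are compact in the former.
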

\begin{corintro}
    Let $G$ be a non-hyperbolic group with the Morse local to-global property and suppose that $G$ acts geometrically on a space containing a contracting ray. There is no topology on $\mb G$ that is compact, metrizable, where all Morse strata are closed and so that $G$ acts on $\mb G$ by homeomorphisms.
\end{corintro}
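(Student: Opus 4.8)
The plan is to argue by contradiction: I will show that any topology $\tau$ on $\mb G$ with the four stated properties would exhibit $\mb G$ as a \emph{geodesic boundary} of $G$ in the sense introduced just before Theorem~\ref{thmintro:general_vanishing}, so that Theorem~\ref{thmintro:general_vanishing} applies and forbids the existence of a stationary measure on it --- which contradicts the fact that compact metrizable spaces always carry one. Concretely, suppose $\tau$ is compact, metrizable, makes every Morse stratum closed, and lets $G$ act by homeomorphisms. I would work with $X=\cay$ for a finite generating set $\mc S$, so that $G\acts X$ is geometric and $\mb X = \mb G$. The only slightly delicate point is to produce the required $G$--equivariant map $\iota$ from the set of \emph{all} geodesic rays of $X$ to $(\mb G,\tau)$: on a Morse ray $\gamma$ put $\iota(\gamma)=[\gamma]\in\mb G$, and on non-Morse rays extend $G$--equivariantly using that $G$ acts \emph{freely} on the set of geodesic rays of its Cayley graph. (Freeness holds because an element stabilizing a ray $\gamma$ setwise must act on $\gamma\cong[0,\infty)$ as an isometry, hence as the identity, hence fixes infinitely many vertices, hence is trivial since $G$ acts freely on vertices; freeness then lets me choose orbit representatives and send each to an arbitrary fixed point $b_0\in\mb G$. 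Note that the definition of geodesic boundary demands no continuity of $\iota$.)

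Next I would verify the two conditions in the definition of a geodesic boundary. For the first: the $\iota$--image of the $M$--Morse geodesic rays issued from the basepoint is exactly the $M$--Morse stratum of $\mb G$ --- matching the conventions for Morse gauges and basepoints is the only bookkeeping here --- which is $\tau$--closed by hypothesis, hence $\tau$--compact as a closed subset of the compact space $(\mb G,\tau)$. For the second: two geodesic rays representing distinct points of the Morse boundary are in particular both Morse, and $\iota$ sends them to their distinct classes in $\mb G$. Since $G\acts\cay$ is geometric, $(\mb G,\tau)$ is thus a geodesic boundary of $G$; and since $G$ is non-hyperbolic, has the Morse local-to-global property, and acts geometrically on a space containing a contracting ray, Theorem~\ref{thmintro:general_vanishing} applies to this boundary.

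Finally I would close the loop. As $(\mb G,\tau)$ is compact and metrizable, it carries a $\mu$--stationary Borel probability measure $\nu$ by \cite[Lemma~4.3]{mahertiozzo:random}. Theorem~\ref{thmintro:general_vanishing} says the $\iota$--image of the set of Morse rays of $X$ has $\nu$--measure zero; but every point of $\mb G$ is by definition represented by a Morse ray from the basepoint, so that image is all of $\mb G$, forcing $\nu(\mb G)=0$ and contradicting $\nu(\mb G)=1$. This argument parallels the proof of Corollary~\ref{corintro:no-stationary-measure-on-morse-boundary}: the point is precisely that the four hypotheses on $\tau$ are exactly what is needed to run it in a topology-agnostic way. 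I expect the only non-automatic steps to be the $G$--equivariant extension of $\iota$ over non-Morse rays (handled via the Cayley graph as above) and the identification of the $\iota$--image of the $M$--Morse rays with a single Morse stratum; everything else is formal.
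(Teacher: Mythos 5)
Your argument is correct and is essentially the paper's argument: realize $(\mb G,\tau)$ as a geodesic boundary, note that compactness forces a $\mu$-stationary measure to exist, and then Theorem~\ref{thmintro:general_vanishing} forces the image of the Morse rays --- which is all of $\mb G$ --- to be $\nu$-null, a contradiction. One remark worth making: the detour through the freeness of the $G$-action on geodesic rays (to extend $\iota$ over non-Morse rays) is correct but avoidable. The technical engine, Theorem~\ref{thm:vanishing_measure}, only requires a $G$-equivariant map $\rho$ defined on $\mc{M}(X)$, the Morse rays, satisfying (a)--(d); applying it directly with $\rho([\gamma])=[\gamma]\in(\mb G,\tau)$, $X=\cay$, gives the contradiction immediately without ever extending $\iota$ to non-Morse rays, since conditions (b), (c) are trivial and (d) is exactly the hypothesis that Morse strata are closed in the compact space $(\mb G,\tau)$. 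So if you wanted to tighten the write-up, replacing the geodesic-boundary/extension step by a direct invocation of Theorem~\ref{thm:vanishing_measure} would shorten it; the conclusion and the key observations (closed stratum $\Rightarrow$ compact, image of Morse rays $=\mb G$) are the same either way.
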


\subsection{Further questions}

\begin{ques}\label{ques:MLTG_iff_sigma_cpt}
    Let $G$ be a group. Is having Morse local-to-global property equivalent to having strongly $\sigma$-compact Morse boundary?
\end{ques}

Question~\ref{ques:MLTG_iff_sigma_cpt} is currently open, among the reason due to a severe lack of counterexamples. Indeed, all the known groups that do not satisfy the Morse local-to-global property are infinitely presented, and to the best of our knowledge there is only one class of groups that are known to have non (strongly) $\sigma$-compact Morse boundary, namely the class of small-cancellation groups described in \cite{Z:small_cancellation}. In particular, the following is open.

\begin{ques}
    Is there a finitely presented group with non $\sigma$-compact Morse boundary?
\end{ques}

A priori, there is a real difference between $\sigma$-compact and strongly $\sigma$-compact Morse boundary, as the arguments in \cite{Z:free_product} show that the Morse boundary of the free product of two groups is $\sigma$-compact if and only if both factors have strongly $\sigma$-compact Morse boundary. However, it is very reasonable to expect that for groups the two notions are the same.

\begin{ques}
    Is there a group with $\sigma$-compact, but not strongly $\sigma$-compact Morse boundary?
\end{ques}

To complete the general theme of \say{how much of the Morse local-to-global property is implied by $\sigma$--compactness of the Morse boundary?}, we ask the following. 

\begin{ques}
    Which consequences of the Morse local-to-global property follow directly from the $\sigma$-compactness of the Morse boundary?
\end{ques}

One of the main advantages of the Morse local-to-global property is that it is a quasi-isometry invariant. One might therefore hope to relate it to acylindrical hyperbolicity, since it is a famous open question whether the latter is quasi-isometry invariant. The strongest possible result in this sense one can hope is a positive answer to the following. 

\begin{ques}
    [Hard] If $G$ has $\sigma$-compact Morse boundary, does the action of $G$ on $\mb G$ satisfy strong north-south dynamics?
\end{ques}
 It is known that the action of a group on the Morse boundary satisfies weak north-south dynamics (\cite[Theorem~6.7]{Liu:dynamics} for the limit topology, \cite[Theorem~9.4]{CM:topology} for the metrizable topology). A dynamical criterion for acylindrical hyperbolicity is provided in \cite[Theorem~1.2]{sun:adynamical}, and it requires an element with strong north-south dynamics. One of the reasons why this is necessary is that acylindrical hyperbolicity implies the existence of a free subgroup and, as mentioned, there are groups with non-empty Morse boundary and no free subgroups \cite{OlshankiiOsinSapir:lacunary}. Thus, an intermediate question is the following.

\begin{ques}
    If $G$ is such that $\mb G$ is infinite and strongly $\sigma$-compact, does $G$ have a free non-abelian subgroup?
\end{ques}

\subsection*{Acknowledgements}
We are especially grateful to Abdul Zalloum for suggesting the question on stationary measures, providing the reference to \cite{mahertiozzo:random} and, generally, for many conversations on the topic on the early stages of the project. Moreover, we want to thank Kunal Chawla, Inhyeok Choi and Wenyuan Yang for being very generous with their time and answering many questions related to stationary measures and random walks. Finally, we would like to thank Matt Cordes, Marco Fraccaroli, Ursula Hamenst\"adt, Davide Perego, Alessandro Sisto, Kim Ruane, and Giulio Tiozzo for helpful conversations, clarifications and inputs on early drafts of the paper. The first named author was partially supported by NSERC CGS-D. 

\section{Preliminaries}\label{sec:prelim}
\textbf{Notation and Convention:} For the rest of this paper, $X$ denotes a proper geodesic metric space with basepoint $x_0$ and $G$ a finitely generated group acting geometrically (that is, properly and cocompactly) on $X$. For any subspace $Y\subset X$ the closest point projection from $X$ to $Y$, if it exists, is denoted by $\pi_Y: X \to 2^Y$. For points $x, y\in X$, we denote by $[x, y]$ a fixed choice of geodesic from $x$ to $y$. We call $[x,y]-\{x,y\}$ the interior of the geodesic $[x,y]$ and denote it by $(x,y)$. Likewise, we use the notation $[x,y) = [x,y] - \{y\}$ and $(x,y] = [x,y] - \{x\}$. Let $p: I \to X$ be a path, by abuse of notation, we denote its image $Im(p)$ by $p$. For $s, t\in I$, we denote the subsegment of $p$ restricted to $[s, t]$ by $p[s, t]$. Given $x, y\in p$, we denote by $[x, y]_p$ a choice of subsegment $p[s, t]$ such that $p(s) = x$ and $p(t) = y$ and $s\leq t$. If $s > t$, then $[x, y]_p$ can still be defined, we just traverse $p$ in the other direction. Note that if $p$ is a geodesic, then $s$ and $t$ are uniquely defined.

\begin{defn}[Quasi-geodesic]
    Let $Q\geq 1$ be a constant. A continuous map $\gamma : I \to X$ is a $Q$-quasi-geodesic if 
    \begin{align}
        \frac{\abs{t - s}}{Q} - Q \leq d(\gamma(s), \gamma(t))\leq Q\abs{t - s} + Q, 
    \end{align}
    for all $s, t\in I\subset \R$. 
\end{defn}

\subsection{Morse boundaries}

In this section, we recall key definitions and results about the Morse boundary and Morse geodesics. 

\begin{defn}
    A function $M: \R_{\geq 1}\to \R_{\geq 0}$ is called a \emph{Morse gauge}, if it is non-decreasing and continuous.
\end{defn}

\begin{defn}[Morseness]
    A quasi-geodesic $\gamma$ is called \emph{$M$-Morse} for some Morse gauge $M$ if every $Q$--quasi-geodesic $\lambda$ with endpoints $\gamma(s)$ and $\gamma(t)$ stays in the closed $M(Q)$--neighbourhood of $\gamma[s, t]$. A quasi-geodesic is called \emph{Morse} if it is $M$--Morse for some Morse gauge $M$.
\end{defn}

Morse geodesics satisfy a plethora of properties, which we summarize below. These properties are well known properties, variations of which can be found in \cite{C:Morse, CharneyCordesMurray:quasi-mobius, Z:manifold}.

\begin{lem}[Properties of Morse geodesics]\label{new-monster}
    Let $M$ be a Morse gauge. For each constant $Q\geq 0$ there exists a constant $D$ and a Morse gauge $M'$ such that for every $M$--Morse $Q$--quasi-geodesic $\gamma: I\to X$ and $Q$--quasi-geodesic $\gamma' : I'\to X$ the following hold.
    \begin{enumerate}[label = (\roman*)]
        \item (Subsegment) If $\gamma'$ is a subsegment of $\gamma$, then $\gamma'$ is $M$--Morse.\label{monster:subsegments}
        \item (Close to Morse quasi-geodesic) If $\gamma$ and $\gamma'$ have endpoints which are in the $Q$--neighbourhood of each other, then
        \[
        \dH(\gamma', \gamma)\leq D
        \]
        and $\gamma'$ is $M'$--Morse.\label{monster:hausdorff}
        \item {($n$--gons)} Let $(\gamma_1, \gamma_2, \ldots, \gamma_n)$ be a geodesic $n$--gon (with possibly infinite sides). If all sides $\gamma_i$ are $M$--Morse for $1\leq i \leq n-1$, then $\gamma_n$ is $M_n$--Morse, where $M_n$ only depends on $M$ and $n$. \label{monster:triangle}
        \item (Small diameter) If $\diam{(\gamma')}\leq Q$, then $\gamma'$ is $M$--Morse. \label{monster:small-diam-is-morse}
        \item (Concatenation) If $\gamma'$ is $M$--Morse and the endpoint of $\gamma$ is the same as the starting point of $\gamma'$, then $\gamma\ast\gamma'$ is $M'$--Morse. \label{monster:concatenation}
    \end{enumerate}
    
\end{lem}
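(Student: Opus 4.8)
The plan is to reduce all five items to the definition of Morseness together with two standard auxiliary facts: (a) if two $Q'$--quasi-geodesics have boundedly close endpoints and one lies in a bounded neighbourhood of the other, then they are boundedly Hausdorff close (proved by a coarse intermediate value argument along one of them); and (b) a geodesic triangle with two $M$--Morse sides is $\delta(M)$--thin. The constant $D$ and gauge $M'$ in the statement will be the maxima of the finitely many constants and gauges produced below. Items \ref{monster:subsegments} and \ref{monster:small-diam-is-morse} require only the definition. For \ref{monster:subsegments}: a $Q'$--quasi-geodesic joining two points of a subsegment $\gamma'$ of $\gamma$ is in particular a $Q'$--quasi-geodesic joining two points of $\gamma$, hence lies in the $M(Q')$--neighbourhood of the corresponding subsegment of $\gamma$, which is contained in $\gamma'$. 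For \ref{monster:small-diam-is-morse}: the lower quasi-geodesic inequality forces a $Q'$--quasi-geodesic with endpoints at distance $\le Q$ to have parameter length $\le Q'(Q+Q')$, and hence diameter bounded in terms of $Q$ and $Q'$, so that such a competitor lies in a neighbourhood of $\gamma'$ of radius depending only on $Q$ and $Q'$.

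For \ref{monster:hausdorff}: pre- and post-concatenating $\gamma'$ with geodesics of length $\le Q$ to the endpoints of $\gamma$ gives a $Q''$--quasi-geodesic, $Q''=Q''(Q)$, with exactly the endpoints of $\gamma$, which by Morseness of $\gamma$ lies in the $(M(Q'')+Q)$--neighbourhood of $\gamma$; hence so does $\gamma'$. Since $\gamma$ and $\gamma'$ also have close endpoints, fact (a) gives $\gamma\sq N_D(\gamma')$, so $\dH(\gamma,\gamma')\le D$. That $\gamma'$ is $M'$--Morse follows by running the same bootstrap on subsegments: a $Q_1$--quasi-geodesic competitor with endpoints on $\gamma'$ has those endpoints within $M(Q'')+Q$ of $\gamma$, so extending it by short geodesics and invoking \ref{monster:subsegments} makes it fellow-travel the relevant subsegment of $\gamma$, which by the same argument applied to subsegments is boundedly Hausdorff close to the corresponding subsegment of $\gamma'$.

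Item \ref{monster:triangle} I would prove by induction on $n$. The case $n=2$ is a geodesic bigon, a special case of \ref{monster:hausdorff}. The case $n=3$ is the classical fact that a geodesic triangle with two $M$--Morse sides has $M_3$--Morse third side; the standard proof combines the thinness fact (b) with the Morse property of the two good sides to bound the excursions of an arbitrary quasi-geodesic competitor for the third side away from that side. The inductive step cuts an $(n+1)$--gon on vertices $p_0,\dots,p_n$ along the diagonal $[p_{n-1},p_0]$, which is $M_n$--Morse by the inductive hypothesis, leaving the triangle $(p_0,p_{n-1},p_n)$. Finally \ref{monster:concatenation} follows from \ref{monster:triangle}: for a quasi-geodesic competitor for $\gamma\ast\gamma'$ with endpoints $u\in\gamma$, $w\in\gamma'$ and junction point $b$, apply \ref{monster:triangle} to the triangle $(u,b,w)$, whose sides $[u,b]$ and $[b,w]$ are Morse with controlled gauge since $u,b\in\gamma$ and $b,w\in\gamma'$ with $\gamma,\gamma'$ Morse.

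The step I expect to be the main obstacle is the thinness fact (b), together with the $n=3$ case that relies on it: this is the only place where the Morse hypothesis on the good sides must be exploited quantitatively rather than merely propagated, while everything else reduces to bookkeeping of quasi-geodesic constants and neighbourhoods.
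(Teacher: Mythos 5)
The paper does not actually prove this lemma: it merely states it and remarks that ``these properties are well known,'' citing \cite{C:Morse, CharneyCordesMurray:quasi-mobius, Z:manifold} for variations. So there is no in-paper argument to compare against; the relevant comparison is against the standard literature treatment, which your outline follows faithfully. Your reduction plan is sound: \ref{monster:subsegments} and \ref{monster:small-diam-is-morse} are immediate from the definition, \ref{monster:hausdorff} follows by extending $\gamma'$ by short arcs to a competitor for $\gamma$ and then bootstrapping on subsegments, \ref{monster:triangle} is proved by induction on $n$ with the triangle case as the base, and \ref{monster:concatenation} reduces to the triangle case via the vertices $u,b,w$. Your identification of the triangle case --- where the Morse hypothesis must be used quantitatively rather than just propagated --- as the genuine technical content is exactly right; it is the lemma proved in \cite{C:Morse} (where one shows that a geodesic triangle with two Morse sides has Morse third side, using a thinness/contraction argument).

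Two small points of care that your sketch elides but that do not break the argument. First, in \ref{monster:concatenation} the sides $[u,b]$ and $[b,w]$ of the auxiliary geodesic triangle are genuine geodesics, whereas $[u,b]_\gamma$ and $[b,w]_{\gamma'}$ are quasi-geodesic subsegments; you need \ref{monster:hausdorff} once more to pass between them and conclude that a competitor is close to the subsegment of $\gamma\ast\gamma'$ rather than only to the geodesic $[u,w]$. Second, your fact (a) --- the coarse intermediate value argument showing that one-sided containment of boundedly-close-ended quasi-geodesics upgrades to two-sided Hausdorff closeness --- is correct but requires properness or at least connectivity of the image; in the paper's standing hypotheses $X$ is proper geodesic, so this is fine. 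With these caveats the proposal is correct and essentially the standard modular proof.
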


We can use Morse geodesics to define the Morse boundary. The Morse boundary $\mb X$, as defined in \cite{C:Morse}, is the set of all Morse geodesic rays, where two geodesics are identified if they have bounded Hausdorff distance. A description of the topology can be found in \cite{C:Morse}. Since the Morse boundary is a quasi-isometry invariant (see \cite{C:Morse}) we can define the Morse boundary of the group $G$ as $\mb G = \mb X$. Given a Morse gauge $M$, we can define the $M$--Morse stratum of $\mb X$, denoted by $\ms^M X$, as the subset of $\mb X$ consisting of (equivalence classes of) geodesic rays starting at $x_0$ which are $M$--Morse. We list some properties of the Morse boundary and Morse strata which we will use.

\begin{lem}[\cite{C:Morse}]\label{lem:visibility}
    The Morse boundary is a visibility space.
\end{lem}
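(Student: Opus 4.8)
Recall that asserting $\mb X$ is a \emph{visibility space} means that any two distinct points $\xi \neq \eta$ of $\mb X$ are joined by a bi-infinite Morse geodesic in $X$. So the task is: given unit-speed Morse geodesic rays $\gamma_1,\gamma_2$ based at $x_0$ representing $\xi$ and $\eta$, produce a bi-infinite geodesic $\sigma \colon \R \to X$, Morse with a gauge depending only on those of $\gamma_1,\gamma_2$, such that $\sigma|_{[0,\infty)}$ represents $\eta$ and $\sigma|_{(-\infty,0]}$ represents $\xi$. After replacing the two Morse gauges by their pointwise maximum --- still a Morse gauge --- we may assume $\gamma_1$ and $\gamma_2$ are both $M$--Morse. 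The plan is the classical one: for each $n$ pick a geodesic $\sigma_n \coloneqq [\gamma_1(n),\gamma_2(n)]$, show the $\sigma_n$ are uniformly Morse and pass uniformly close to $x_0$, and extract a limit.

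First, the geodesic triangle $(\gamma_1[0,n],\sigma_n,\gamma_2[0,n])$ has two $M$--Morse sides, so Lemma~\ref{new-monster}\ref{monster:triangle} (with $n=3$) makes $\sigma_n$ an $M_3$--Morse geodesic for some $M_3 = M_3(M)$; hence all three sides are $M_3$--Morse. I would then prove that $d(x_0,\sigma_n) \le R$ for a constant $R$ independent of $n$. For this I would invoke the standard companion to Lemma~\ref{new-monster} (cf.\ the references cited there) that a geodesic triangle with $M_3$--Morse sides has a $\delta$--coarse center $c_n$, $\delta = \delta(M_3)$, lying within $\delta$ of all three sides. If $d(x_0,c_n) \to \infty$ along a subsequence, then there are points $a_n \in \gamma_1$ and $b_n \in \gamma_2$ with $d(x_0,a_n),d(x_0,b_n)\to\infty$ and $d(a_n,b_n)\le 2\delta$; applying Lemma~\ref{new-monster}\ref{monster:hausdorff} (with $Q = \max(1,2\delta)$) to the initial segments $\gamma_1[0,d(x_0,a_n)]$ and $\gamma_2[0,d(x_0,b_n)]$ bounds their Hausdorff distance uniformly, and letting $n\to\infty$ forces $\dH(\gamma_1,\gamma_2)<\infty$, i.e.\ $\xi=\eta$ --- a contradiction. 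So $d(x_0,c_n)$, and hence $d(x_0,\sigma_n)\le d(x_0,c_n)+\delta$, is bounded.

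Next I would reparametrize each $\sigma_n$ by arclength so that $\sigma_n(0)$ is a point within $R$ of $x_0$ and $\gamma_2(n)$ is the positive endpoint; since $d(x_0,\gamma_i(n)) = n$, the two endpoints sit at parameters of absolute value at least $n-R$, so the domains exhaust $\R$. By properness of $X$ and the Arzel\`a--Ascoli theorem, a subsequence of the $\sigma_n$ converges uniformly on compact subsets to a bi-infinite geodesic $\sigma \colon \R \to X$. That $\sigma$ is $M'$--Morse with $M' = M'(M_3)$ follows by applying Lemma~\ref{new-monster}\ref{monster:hausdorff} on each finite subinterval, comparing $\sigma|_{[s,t]}$ with the $M_3$--Morse geodesics $\sigma_n|_{[s,t]}$ whose endpoints converge to those of $\sigma|_{[s,t]}$ (apply it with $Q=1$ once they are within distance $1$); as this $M'$ is uniform in $[s,t]$, the ray and line segments force $\sigma$ itself to be $M'$--Morse. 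Finally, for the ends: the subsegment of $\sigma_n$ from $\sigma_n(0)$ to $\gamma_2(n)$ is an $M_3$--Morse geodesic (Lemma~\ref{new-monster}\ref{monster:subsegments}) with endpoints $R$--close, respectively equal, to those of $\gamma_2[0,n]$, so Lemma~\ref{new-monster}\ref{monster:hausdorff} bounds their Hausdorff distance by some $D'$ independent of $n$; letting $n\to\infty$ (using properness and the fact that $\gamma_2$ is fixed to push the bound through the limit) shows $\dH(\sigma|_{[0,\infty)},\gamma_2)\le D'$, so $\sigma|_{[0,\infty)}$ represents $\eta$, and symmetrically $\sigma|_{(-\infty,0]}$ represents $\xi$.

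I expect the only genuinely delicate point to be the uniform bound $d(x_0,\sigma_n)\le R$: this is the sole place where the hypothesis $\xi\neq\eta$ is used, and the only input about Morse geodesics not literally listed among the clauses of Lemma~\ref{new-monster} (namely, coarse centers, equivalently uniform slimness of Morse triangles). Everything else is a routine limiting argument driven by clause~\ref{monster:hausdorff}, modulo the minor care needed to check that a uniform Hausdorff bound between sequences of sets survives passing to the limit when one of the limit sets is fixed and $X$ is proper.
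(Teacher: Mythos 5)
Your proof is correct, and it reconstructs the argument that \cite{C:Morse} itself uses to establish this fact; the present paper does not reprove it but simply cites Cordes, so there is no ``paper's proof'' to diverge from. Your outline is the standard one: cap off $\gamma_1[0,n]$ and $\gamma_2[0,n]$ by $\sigma_n = [\gamma_1(n),\gamma_2(n)]$, use Lemma~\ref{new-monster}\ref{monster:triangle} to make all $\sigma_n$ uniformly Morse, bound $d(x_0,\sigma_n)$ uniformly using $\xi\neq\eta$, extract a limit by Arzel\`a--Ascoli, and carry the Morse gauge and the Hausdorff bounds through the limit via Lemma~\ref{new-monster}\ref{monster:hausdorff}. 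The one ingredient you invoke beyond the literal clauses of Lemma~\ref{new-monster} --- uniform slimness (equivalently, coarse centers) of geodesic triangles with Morse sides --- is exactly right and is precisely the lemma Cordes proves and then applies for this step; your attribution to the references behind Lemma~\ref{new-monster} is appropriate, and your contradiction argument from there (two far-out points $a_n\in\gamma_1$, $b_n\in\gamma_2$ at distance $\le 2\delta$ force $\dH(\gamma_1,\gamma_2)<\infty$ via clause~\ref{monster:hausdorff}) is the right way to close it. No gaps.
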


The following results from \cite{CD:stable} and \cite{C:Morse} show that Morse strata encode compact sets.
\begin{lem}[{\cite[Lemma~4.1]{CD:stable} and {\cite[Proposition 3.12]{C:Morse}}}]\label{lem:compact-sets-in-mb}
    Let $K\subset \mb X$ be compact, then there exists a Morse gauge $M$ such that $K\subset \ms^MX$. Furthermore, $\ms^MX$ is compact for all Morse gauges $M$.
\end{lem}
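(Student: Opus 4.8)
The plan is to establish the two assertions separately. The compactness of each stratum $\ms^M X$ will follow from the Arzelà--Ascoli theorem once one knows that ``being $M$--Morse'' is a closed condition on geodesic rays; the statement that a compact set lies in a single stratum is then a soft consequence of this together with the direct-limit structure of the topology on $\mb X$.

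For the first assertion I would use the following feature of the topology of \cite{C:Morse}: every point of $\mb X$ has a neighbourhood basis consisting of open sets, each of which is contained in a single Morse stratum. Concretely, if $\alpha$ is an $M$--Morse geodesic ray based at $x_0$, its basic open neighbourhoods consist of (classes of) rays that are $M'$--Morse for a fixed $M'$ depending only on $M$ and that fellow-travel $\alpha$ up to a prescribed length. Given a compact $K \sq \mb X$, I would cover it by such basic open sets $U_a \ni a$ with $U_a \sq \ms^{M_a} X$, pass to a finite subcover $U_{a_1}, \dots, U_{a_k}$, and set $M := \max_{1 \le i \le k} M_{a_i}$. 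Then $M$ is again a Morse gauge, since the pointwise maximum of finitely many non-decreasing continuous functions is non-decreasing and continuous; and since the Morse condition is monotone in the gauge, $\ms^{M_{a_i}} X \sq \ms^M X$ for each $i$, so $K \sq \bigcup_i U_{a_i} \sq \ms^M X$. (This is essentially \cite[Lemma~4.1]{CD:stable}.)

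For the second assertion, fix a Morse gauge $M$ and let $\mathcal R$ be the set of geodesic rays $\gamma \colon [0,\infty) \to X$ with $\gamma(0) = x_0$, with the topology of uniform convergence on compact sets. Since $X$ is proper, each such ray is $1$--Lipschitz and satisfies $\gamma(t) \in \ov{B(x_0,t)}$, a compact set; so the family is equicontinuous and pointwise relatively compact, and by Arzelà--Ascoli every sequence in $\mathcal R$ has a subsequence converging uniformly on compacts to a map $\gamma$, which is again a geodesic ray based at $x_0$ since $d(\gamma(s),\gamma(t)) = \lim_n d(\gamma_n(s),\gamma_n(t)) = \abs{s-t}$. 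Thus $\mathcal R$ is compact. I would then show that the subset $\mathcal R_M \sq \mathcal R$ of $M$--Morse rays is closed. Suppose $\gamma_n \to \gamma$ with every $\gamma_n$ being $M$--Morse; let $s \le t$ and let $\lambda$ be a $Q$--quasi-geodesic with endpoints $\gamma(s), \gamma(t)$. For $\eps > 0$ and $n$ large enough that $d(\gamma_n(s), \gamma(s)), d(\gamma_n(t), \gamma(t)) < \eps$, the concatenation $\lambda_n := [\gamma_n(s), \gamma(s)] \ast \lambda \ast [\gamma(t), \gamma_n(t)]$ is a $Q_\eps$--quasi-geodesic with endpoints on $\gamma_n$, where $Q_\eps$ depends only on $Q$ and $\eps$ and $Q_\eps \to Q$ as $\eps \to 0$. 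Since $\gamma_n$ is $M$--Morse, $\lambda \sq \lambda_n$ lies in the closed $M(Q_\eps)$--neighbourhood of $\gamma_n[s,t]$, hence in the closed $\bigl(M(Q_\eps) + \dH(\gamma_n[s,t], \gamma[s,t])\bigr)$--neighbourhood of $\gamma[s,t]$; letting $n \to \infty$ (so that $\dH(\gamma_n[s,t], \gamma[s,t]) \to 0$) and then $\eps \to 0$, and using continuity of $M$, shows that $\lambda$ lies in the closed $M(Q)$--neighbourhood of $\gamma[s,t]$. So $\mathcal R_M$ is closed in the compact space $\mathcal R$, hence compact, and since the natural map $\mathcal R_M \to \mb X$ sending a ray to its class is a continuous surjection onto $\ms^M X$ (by the definition of the topology in \cite{C:Morse}), $\ms^M X$ is compact as a continuous image of a compact space.

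The main obstacle is the closedness of $\mathcal R_M$: one must verify that the perturbation $\lambda_n$ is a quasi-geodesic with constant tending to $Q$ and that the Hausdorff error $\dH(\gamma_n[s,t], \gamma[s,t])$ tends to $0$ on the relevant compact parameter interval, which is exactly where continuity of the Morse gauge is used. Note that one cannot simply invoke Lemma~\ref{new-monster}\ref{monster:hausdorff} here, as that only yields a \emph{larger} gauge $M'$, whereas compactness of $\ms^M X$ needs the limit ray to be $M$--Morse for the \emph{same} $M$. The remaining inputs---that the stratum topology of \cite{C:Morse} agrees with uniform convergence on compacts of basepoint representatives, and the form of the neighbourhood basis used for the first assertion---I would simply cite from \cite{C:Morse, CD:stable}.
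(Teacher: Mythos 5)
Your proof of the second assertion (compactness of each stratum $\ms^M X$) is essentially sound: the Arzel\`a--Ascoli argument, the verification that the $M$--Morse condition is closed under uniform-on-compacta limits via the continuity of $M$, and the observation that Lemma~\ref{new-monster}\ref{monster:hausdorff} would not suffice (since it produces a \emph{larger} gauge $M'$) are all correct, and together they identify exactly why Morse gauges are required to be continuous in \cite{C:Morse}.

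The proof of the first assertion, however, has a genuine gap: the claimed feature of the direct-limit topology --- that every point of $\mb X$ has a neighbourhood basis of open sets each contained in a single Morse stratum --- is false in general. If it held, then since each stratum is compact (by the second part of the lemma), every point would have a compact neighbourhood and $\mb X$ would be locally compact; but the Morse boundary in the direct-limit topology is typically not locally compact, and indeed not even first countable (this is the motivation for the metrizable topology of \cite{CM:topology}). To see the failure concretely in, say, $\mathbb{Z}^2 \ast \mathbb{Z}$, take the Morse ray $\gamma$ labelled $ccc\cdots$ and any open $U \ni [\gamma]$. For each gauge $M$, $U \cap \ms^M X$ is an open neighbourhood of $[\gamma]$ in $\ms^M X$, so it contains all classes of $M$--Morse rays that coincide with $\gamma$ beyond some length $L_M$. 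Rays that agree with $\gamma$ for $L_M$ steps, enter a $\mathbb{Z}^2$--coset for $k$ steps, and then continue are $M$--Morse for $k$ up to a bound $k_M$, with $k_M \to \infty$ as $M$ grows; by taking $M$ large and $k$ close to $k_M$ one finds points of $U$ whose optimal Morse gauge is arbitrarily bad, so $U$ is contained in no single stratum. The correct argument for the first assertion is of a different shape: one argues by contradiction, assuming $K$ is contained in no stratum and producing a sequence in $K$ that escapes every stratum; since each stratum is closed, such a sequence is closed and discrete, contradicting compactness of $K$. I would refer you to the proof of \cite[Lemma~4.1]{CD:stable} for the details of extracting such a sequence.
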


A topological space $Y$ is \emph{$\sigma$-compact}, if it is the union of countably many compact subsets. In light of Lemma~\ref{lem:compact-sets-in-mb} we have the following 

\begin{lem}\label{lem:def-of-sigma-compact}
    The Morse boundary $\mb X$ is $\sigma$-compact if and only if there exists an increasing sequence $(M_n)_{n\in \N}$ of Morse gauges, such that $\mb X = \cup_{n\in \N} \ms ^{M_n}X$.
\end{lem}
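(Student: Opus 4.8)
The plan is to prove both implications directly, using Lemma~\ref{lem:compact-sets-in-mb} as the single essential input; the statement is really just a packaging of that lemma together with a diagonalization over Morse gauges. For the backward direction, suppose we are given an increasing sequence $(M_n)_{n\in\N}$ with $\mb X = \bigcup_{n\in\N}\ms^{M_n}X$. By the second sentence of Lemma~\ref{lem:compact-sets-in-mb}, each $\ms^{M_n}X$ is compact, so $\mb X$ is exhibited as a countable union of compact subsets, hence $\sigma$-compact. This direction requires no work beyond citing the lemma.

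For the forward direction, assume $\mb X$ is $\sigma$-compact, say $\mb X = \bigcup_{n\in\N} K_n$ with each $K_n$ compact. First I would apply the first sentence of Lemma~\ref{lem:compact-sets-in-mb} to each $K_n$ to obtain a Morse gauge $N_n$ with $K_n \subseteq \ms^{N_n}X$. The only remaining issue is that the sequence $(N_n)$ need not be increasing, so I would replace it by $M_n$ defined pointwise as the maximum $M_n(t) = \max_{1\le i\le n} N_i(t)$ for $t\in\R_{\ge 1}$. One checks that $M_n$ is again a Morse gauge — it is non-decreasing as a pointwise maximum of non-decreasing functions, and continuous as a finite maximum of continuous functions — and that $M_n \le M_{n+1}$ pointwise, so $(M_n)$ is increasing. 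Since any $M$--Morse geodesic ray is also $M'$--Morse whenever $M\le M'$ pointwise (immediate from the definition of Morseness: a larger gauge imposes a weaker constraint), we get $\ms^{N_n}X \subseteq \ms^{M_n}X$, hence $K_n\subseteq \ms^{M_n}X$ for every $n$, and therefore $\mb X = \bigcup_n K_n \subseteq \bigcup_n \ms^{M_n}X \subseteq \mb X$, giving equality.

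There is essentially no obstacle here; the only point that warrants a line of justification is the monotonicity observation $\ms^{N}X\subseteq\ms^{M}X$ for $N\le M$, and the verification that a finite pointwise maximum of Morse gauges is a Morse gauge. Both are routine, so the proof is short.
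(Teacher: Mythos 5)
Your proof is correct and is exactly the argument the paper has in mind: the paper states the lemma immediately after Lemma~\ref{lem:compact-sets-in-mb} with the phrase ``in light of,'' leaving precisely the two directions you spell out (compactness of strata for the easy direction, and extraction of a gauge per compact piece followed by a pointwise-max to make the sequence increasing for the converse) to the reader. No discrepancies.
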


We call such a sequence $(M_n)_{n\in \N}$ an \emph{exhaustion} of $\mb X$. If the Morse boundary $\mb X$ is $\sigma$-compact and $(M_n)_{n\in \N}$ is an exhaustion of $\mb X$ we have that for every Morse geodesic ray $\gamma$, there exists $n = n(\gamma)$ such that $\gamma$ is $M_{n}$--Morse.

However, sometimes we need a stronger property. Namely, we want that for any Morse gauge $M$, there exists $n = n(M)$ such that all $M$--Morse geodesics are $M_n$--Morse. Below we give a formal definition of this which we call \emph{strong $\sigma$-compactness}.

\begin{defn}\label{def:strong-sigma-compact}
    The space $X$ has \emph{strongly $\sigma$-compact} Morse boundary if there exists an exhaustion $(M_n)_{n\in \N}$ of $\mb X$ such that for all Morse gauges $M$, we have that $\ms ^MX\subset \ms ^{M_n} X$ for some $n = n(M)$. We call such an exhaustion $(M_n)_{n\in \N}$ a \emph{strong exhaustion}. Equivalently, $\mb X$ is strongly $\sigma$-compact, if and only if
    \[
        \mb X = \lim_{\xrightarrow[n\in \N]{}} \ms ^{M_n} X.
    \]
\end{defn}

\subsection{The Morse local-to-global property}
In this section we define the Morse local-to-global property, a property introduced in \cite{russellsprianotran:thelocal}.

\begin{defn}\label{def:local-property}
    We say that a path $p \colon I\to X$  $L$--locally satisfies a property $(P)$ if for every $s, t \in I$ with $\abs{t - s} \leq L$ the subpath $p[s, t]$ has the property $(P)$. The quantity $L$ is called the \emph{scale}. We say that $p$ is locally $(P)$ if it is $L$--locally $(P)$ for some scale $L$.
\end{defn}

The two local properties we will consider are being Morse and being a quasi-geodesic.

\begin{defn}[Morse local-to-global]\label{def_morse-local-to-global}
    A space $X$ satisfies the \emph{Morse local-to-global} property (for short MLTG) if the following holds. For any constant $Q\geq 1$ and Morse gauge $M$ there exists a scale $L$, a constant $Q'\geq 1$ and a Morse gauge $M'$ such that every path that is $L$--locally an $M$--Morse $Q$--quasi geodesic is globally an $M'$--Morse, $Q'$--quasi-geodesic. 
\end{defn}

In \cite{russellsprianotran:thelocal}, it is shown that a path which is locally a quasi-geodesic is globally a quasi-geodesic as long as it stays close to an actual geodesic. 

\begin{lem}[{\cite[Lemma~2.14]{russellsprianotran:thelocal}}]\label{lemma:close-to-geodesic-implies-quasi-geodesic}
Let $\gamma: I \to X $ be an $L$--locally $Q$--quasi-geodesic and $C \geq 0$. Let $s, t\in I$. If $L > Q(3C+Q+2)$ and $\gamma[s,t]$ is contained in the $C$--neighbourhood of a geodesic from $\gamma(s)$ to $\gamma(t)$, then $\gamma[s, t]$ is a $Q'$--quasi-geodesic where $Q'$ depends only on $Q$ and $C$. 
\end{lem}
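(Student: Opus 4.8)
The plan is to verify the two quasi-geodesic inequalities for $\gamma|_{[s,t]}$ separately; the upper (Lipschitz-type) inequality is routine and the lower inequality carries all the content. Throughout write $\beta\colon[0,\ell]\to X$ for the given geodesic from $\gamma(s)$ to $\gamma(t)$, so that $\gamma[s,t]\subset N_C(\beta)$.

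For the upper bound, given $a\le b$ in $[s,t]$ I would cut $[a,b]$ into $\lceil(b-a)/L\rceil$ consecutive subintervals of length $\le L$; on each of these $\gamma$ is an honest $Q$--quasi-geodesic, so consecutive images are at distance $\le Q\cdot(\text{length})+Q$, and summing (using $L\ge 1$, which holds since $L>Q(3C+Q+2)\ge 3$) yields $d(\gamma(a),\gamma(b))\le 3Q(b-a)+Q$, a linear bound with constant depending only on $Q$. It will also be convenient to first upgrade the hypothesis to a two-sided bound $\dH(\gamma[s,t],\beta)\le 3C$: for $v\in(0,\ell)$ the function $\tau\mapsto d(\gamma(\tau),\beta[0,v])-d(\gamma(\tau),\beta[v,\ell])$ is continuous, non-positive at $s$ and non-negative at $t$, so it vanishes at some $\tau_v$; since $\gamma(\tau_v)\in N_C(\beta)$ this forces $\gamma(\tau_v)$ to be $C$--close to each half of $\beta$, and a triangle inequality then gives $d(\gamma(\tau_v),\beta(v))\le 3C$, so $\beta\subset N_{3C}(\gamma[s,t])$.

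For the lower bound, fix $a\le b$ in $[s,t]$; the case $b-a\le L$ is immediate because then $\gamma|_{[a,b]}$ is an honest $Q$--quasi-geodesic, so assume $b-a>L$ and subdivide $[a,b]$ into $N=\lceil(b-a)/L\rceil\ge2$ subintervals $[t_i,t_{i+1}]$ of length $\le L$, the first $N-1$ of length exactly $L$; put $x_i=\gamma(t_i)$. On each full subinterval $\gamma$ is honest, so $d(x_i,x_{i+1})\ge L/Q-Q$, and $L>Q(3C+Q+2)$ is exactly what makes $L/Q-Q>3C+2$. Choosing closest points $y_i\in\beta$ to $x_i$ and letting $u_i$ be their arc-length parameters, $|u_i-u_{i+1}|=d(y_i,y_{i+1})\ge d(x_i,x_{i+1})-2C>C+2$ for $i\le N-2$. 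The crux will be to show that $(u_i)_{i=0}^N$ is monotone up to an additive error $E=E(Q,C)$, i.e. $\sum_i|u_i-u_{i+1}|\le|u_0-u_N|+E$. Granting this, $(N-1)(L/Q-Q-2C)\le\sum_{i\le N-2}|u_i-u_{i+1}|\le|u_0-u_N|+E\le d(\gamma(a),\gamma(b))+2C+E$, and since $b-a\le NL\le 2(N-1)L$ together with
\[
\frac{L}{L/Q-Q-2C}=\frac{QL}{L-Q(Q+2C)}\le\frac{Q(Q+3C+2)}{C+2}
\]
(the last inequality because $L>Q(3C+Q+2)=Q(Q+2C)+Q(C+2)$), we obtain $b-a\le\frac{2Q(Q+3C+2)}{C+2}\bigl(d(\gamma(a),\gamma(b))+2C+E\bigr)$. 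The scale $L$ has cancelled, so this is precisely the desired lower inequality with $Q'$ a function of $Q$ and $C$ only, and together with the upper bound it proves the lemma.

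The hard part is the monotonicity claim: ruling out that the projection of $\gamma|_{[a,b]}$ to $\beta$ reverses direction along $\beta$ by more than a bounded amount. This is where one must genuinely use both that $\gamma$ is an \emph{honest} $Q$--quasi-geodesic on every window of length $\le L$ and that it is confined to the $C$--tube (equivalently, by the previous step, the $3C$--tube) around $\beta$: a reversal of the projection produces a length-$\le L$ window whose two endpoints project to nearby points of $\beta$ while an interior point projects far along $\beta$, and tube-confinement forces these endpoints to be close in $X$; once the detour exceeds the margin $L/Q-Q>3C+2$ supplied by the hypothesis, this contradicts the honest quasi-geodesic lower bound on that window. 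Making this precise — in particular controlling ``gradual'' reversals spread over several consecutive windows, and handling the behaviour near the endpoints $a$ and $b$ — is the step I expect to require the most care.
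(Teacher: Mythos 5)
You are right to flag the monotonicity of the projections $(u_i)$ as the crux, and I'm afraid the gap you identify there is genuine: as written, the proposal does not prove the lemma. (For what it's worth, the paper itself gives no proof here; it cites \cite[Lemma~2.14]{russellsprianotran:thelocal} and remarks the present statement follows from that proof, so there is no in-paper argument to compare against.)

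The preparatory steps are fine: the Lipschitz upper bound is routine, and the intermediate-value argument upgrading $\gamma[s,t]\subset N_C(\beta)$ to $\dH(\gamma[s,t],\beta)\le 3C$ is correct. The reduction of the lower bound to ``$\sum_i|u_i-u_{i+1}|\le|u_0-u_N|+E$'' is also the right shape. But that inequality is precisely what is not established, and the discretization you chose cannot establish it on its own. The constraints you extract from local honesty only control $|u_i-u_j|$ when $|t_i-t_j|\le L$; with windows of length $L$ this is only consecutive indices, and a sequence with $|u_i-u_{i+1}|\ge\delta$ for all $i$ can oscillate freely (e.g.\ $0,\delta,0,\delta,\dots$), so nothing forces $\sum|u_i-u_{i+1}|$ to be comparable to $|u_0-u_N|$. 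Shrinking the windows to $L/k$ gives constraints on $|u_i-u_j|$ for $|i-j|\le k$, which rules out $2$-periodic oscillations, but one can still write down sequences satisfying all those finite-range constraints whose total variation vastly exceeds $|u_0-u_N|$ once the period exceeds $k$; so no fixed choice of partition closes the gap.

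The reason the statement is nevertheless true is that a reversal cannot happen ``between'' your sample points either: if the projection parameter goes forward and then back, there is a window of length $\le L$ \emph{straddling the turning point} whose two endpoints project to nearby points of $\beta$, and local honesty then bounds that window's length. But this window is not one of your $[t_i,t_{i+1}]$ (nor a union of them), so to run this argument one must work with the projection $u(\tau)$ for \emph{all} $\tau$, not just the lattice $t_i$. That introduces two further difficulties your sketch does not address: (i) the closest-point projection onto $\beta$ need not be single-valued or continuous, since $\beta$ is an arbitrary geodesic in an arbitrary proper geodesic space, so one cannot simply ``locate the turning point''; some surrogate (e.g.\ $\tau\mapsto d(\gamma(s),\gamma(\tau))$, continuous but only coarsely tracking arclength along $\beta$) and an intermediate-value argument are needed; and (ii) the exact constants matter, because the margin $L/Q-Q>3C+2$ is what makes the endpoint-collision contradiction go through, and the $3C$ tube from your Hausdorff upgrade is wider than the $C$ tube in the hypothesis. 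So the proof strategy is plausible, but the step you explicitly defer is where the whole content of the lemma lives, and it requires a continuity/IVT argument over all of $[a,b]$ rather than an estimate over a fixed finite sample.
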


Although the statement of Lemma~\ref{lemma:close-to-geodesic-implies-quasi-geodesic} differs slightly from \cite[Lemma~2.14]{russellsprianotran:thelocal}, it follows from the proof of \cite{russellsprianotran:thelocal}. 

Further, we have that being close to a locally Morse quasi-geodesic implies being a locally Morse. 

\begin{lem}\label{lem:close-to-local-implies-local}
    Let $M$ be a Morse gauge and let $Q\geq 1$ be a constant. There exists a Morse gauge $M'$ such that the following holds. Let $L'\geq 0$ be a scale. There exists a scale $L$ such that any $Q$--quasi-geodesic $\gamma'$ in the $Q$--neighbourhood of an $L$--locally $Q$--quasi-geodesic $\gamma$ is $L'$--locally $M'$--Morse.
\end{lem}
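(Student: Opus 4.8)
The plan is to reduce the statement to a single application of Lemma~\ref{new-monster}\ref{monster:hausdorff}. Apply Lemma~\ref{new-monster} to the gauge $M$ and the constant $Q$; this yields a Morse gauge $M'$ (and a constant $D$) depending only on $M$ and $Q$, and $M'$ will be the gauge asserted by the statement. Now fix $L'\ge 0$. It then suffices to prove that, for $L$ chosen large enough in terms of $L'$ and $Q$, every subsegment $\gamma'[s,t]$ of $\gamma'$ with $|t-s|\le L'$ has the following property: there is a subsegment $\eta=\gamma[a,b]$ of $\gamma$ with $|b-a|\le L$ such that $d(\gamma(a),\gamma'(s))\le Q$ and $d(\gamma(b),\gamma'(t))\le Q$. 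Indeed, by Definition~\ref{def:local-property} and the local hypothesis on $\gamma$ (used through the fact that subsegments of $\gamma$ of length at most $L$ are $M$--Morse $Q$--quasi-geodesics), such an $\eta$ is an $M$--Morse $Q$--quasi-geodesic, so Lemma~\ref{new-monster}\ref{monster:hausdorff} applied to $\eta$ and the $Q$--quasi-geodesic $\gamma'[s,t]$ gives that $\gamma'[s,t]$ is $M'$--Morse; since $s$ and $t$ were arbitrary with $|t-s|\le L'$, this says exactly that $\gamma'$ is $L'$--locally $M'$--Morse.

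To produce $\eta$ I would run a shadowing construction along $\gamma$. Partition $[s,t]$ as $s=r_0<r_1<\dots<r_N=t$ with $r_{k+1}-r_k=\epsilon$ for a fixed constant $\epsilon$ (say $\epsilon=1$), so that $N\le L'/\epsilon+1$. Inductively choose ``shadow parameters'' $u_0,\dots,u_N$ in the domain of $\gamma$ with $d(\gamma(u_k),\gamma'(r_k))\le Q$ for all $k$: take any $u_0$ with $d(\gamma(u_0),\gamma'(s))\le Q$ (which exists because $\gamma'$ lies in the $Q$--neighbourhood of $\gamma$), and given $u_k$ pick $u_{k+1}$ to be a parameter with $d(\gamma(u_{k+1}),\gamma'(r_{k+1}))\le Q$ lying within parameter-distance $L$ of $u_k$. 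Since $\gamma'$ is a $Q$--quasi-geodesic, $d(\gamma'(r_k),\gamma'(r_{k+1}))$ is bounded in terms of $Q$ and $\epsilon$, hence so is $d(\gamma(u_k),\gamma(u_{k+1}))$; applying the quasi-geodesic inequality to $\gamma$ on the parameter window between $u_k$ and $u_{k+1}$ --- which has length at most $L$, so there $\gamma$ is a genuine $Q$--quasi-geodesic --- then bounds $|u_{k+1}-u_k|$ by a constant $B=B(Q,\epsilon)$. Taking $a=u_0$ and $b=u_N$ we obtain $|b-a|\le\sum_k|u_{k+1}-u_k|\le NB\le (L'/\epsilon+1)B$, so it is enough to take $L$ larger than this quantity (and larger than a few constants depending only on $Q$ that appear below).

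The real obstacle is the inductive step just used: one must show that at every stage there genuinely is a parameter $u$ with $d(\gamma(u),\gamma'(r_{k+1}))\le Q$ lying within parameter-distance $L$ of $u_k$, i.e.\ that the shadow never has to jump to a parameter-distant portion of $\gamma$. This is subtle because $\gamma$ is only \emph{locally} a quasi-geodesic, so a priori it may return close to itself at parameter-scales exceeding $L$, and then $\gamma'(r_{k+1})$ could be within $Q$ of $\gamma$ only at parameters far from $u_k$. The way I would deal with this is to exploit that on any window of length $L$ the path $\gamma$ is a genuine $Q$--quasi-geodesic: it follows that for each $r$ the set of parameters $u$ with $d(\gamma(u),\gamma'(r))\le 2Q$ meets any length-$L$ window in a subset of diameter bounded purely in terms of $Q$, so this set decomposes into ``clumps'' of bounded diameter that are pairwise separated by roughly $L$ in parameter. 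Using that $\gamma'$ is continuous and that $\gamma'(r_k)$ and $\gamma'(r_{k+1})$ differ by a bounded amount, one should be able to show --- provided $L$ is large enough --- that the clump containing $u_0$ stays available at every step, so the shadow can follow that single clump throughout. Making this precise, and in particular ruling out that the tracked clump ``disappears'' as $r$ advances, is where the bulk of the argument lies; once that is done, all the remaining estimates are routine triangle-inequality and quasi-geodesic computations, and the reduction in the first paragraph finishes the proof.
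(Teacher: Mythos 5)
Your approach coincides with the paper's: reduce the statement to Lemma~\ref{new-monster}\ref{monster:hausdorff} by pairing each short subsegment $\gamma'[s,t]$ of $\gamma'$ with a nearby, parameter-bounded subsegment of $\gamma$. However, you have correctly spotted a subtlety that the paper's one-line proof passes over: since $\gamma$ is only \emph{locally} a quasi-geodesic, it may return close to itself at parameter scales exceeding $L$, so the shadow $u_k$ could a priori be forced to jump between parameter-distant ``clumps'' of $\gamma$. You flag this as the main obstacle and do not resolve it --- you explicitly defer the step (``ruling out that the tracked clump disappears'') on which everything hinges --- so the proposal as written is incomplete, and I do not see how the clump-tracking strategy can be carried through in this generality, since nothing in ``$\gamma$ is $L$-locally $M$-Morse'' obviously forbids such migration. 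For context: in the paper's only use of this lemma (Corollary~\ref{cor:gamma-is-Morse}), the path playing the role of $\gamma$ has at that point already been shown to be a \emph{global} quasi-geodesic (Corollary~\ref{cor:aux_path_is_qg}), and with that extra information the shadowing step becomes trivial: any $a_0,b_0$ with $d(\gamma(a_0),\gamma'(s))\le Q$ and $d(\gamma(b_0),\gamma'(t))\le Q$ automatically satisfy $|b_0-a_0|\le Q(QL'+3Q)+Q^2$ by the global quasi-geodesic inequality, so they lie in a common window of length $L$ once $L$ exceeds this bound and no clump analysis is needed. Adding a global quasi-geodesic assumption to the lemma (or importing it from the surrounding context, as the application actually permits) would make your reduction go through cleanly.
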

\begin{proof}
    This is a direct consequence from Lemma~\ref{new-monster}\ref{monster:hausdorff} applied to subsegments of $\gamma$ and $\gamma'$. 
\end{proof}

\subsection{Strong contraction}
We now introduce strong contraction, which is slightly stronger property than being Morse.

\begin{defn}[Strongly contracting]
    Let $C\geq 0$ be a constant. We say that a geodesic $\gamma$ is \emph{$C$--contracting} if for every point $x\in X$, $\diam(\pi_\gamma(B_{d(x, \gamma)} (x)))\leq C$. A geodesic is called \emph{strongly contracting} if it is $C$--contracting for some constant $C$. 
\end{defn}

The following lemma from \cite{ACHG:contraction_morse_divergence} relates strong contraction to Morseness. 

\begin{lem}[Implication of Theorem 1.4 of \cite{ACHG:contraction_morse_divergence}]\label{lem:theorem1.4}
    Let $C\geq 0$ be a constant. There exists a Morse gauge $M$ only depending on $C$ such that any $C$--contracting quasi-geodesic is $M$--Morse.
\end{lem}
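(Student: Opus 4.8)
The plan is to fix $C$ and produce a single Morse gauge $M=M_C$ such that every $C$--contracting quasi-geodesic is $M$--Morse. For clarity I would first treat the case where the contracting object $\gamma$ is a geodesic; the general case runs along the same lines, with the quasi-geodesic constant of $\gamma$ — and the passage to subsegments, which alters the contraction constant by a factor depending only on $C$ — absorbed into the bookkeeping. So let $\gamma$ be a $C$--contracting geodesic and let $\lambda\colon[0,T]\to X$ be a $Q$--quasi-geodesic with $\lambda(0)=\gamma(s)$ and $\lambda(T)=\gamma(t)$. Writing $\sigma=\gamma[s,t]$ — again $C'$--contracting with $C'=C'(C)$, and with endpoints $x,y$ projecting to themselves — the whole task is to bound $R:=\sup_{z\in\lambda}d(z,\sigma)$ by a function of $C$ and $Q$ alone; the dependence on $Q$ is then the uniform one that assembles into the Morse gauge.

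The only consequence of contraction I would use is the \emph{close-projection estimate}: if $a,b\in X$ satisfy $d(a,b)\le d(a,\sigma)$ then $b\in B_{d(a,\sigma)}(a)$, so $d(\pi_\sigma(a),\pi_\sigma(b))\le\diam\pi_\sigma\bigl(B_{d(a,\sigma)}(a)\bigr)\le C'$ (with the indeterminacy of $\pi_\sigma$ also $\le C'$). Chained along a sequence of points that are consecutively close relative to their distance from $\sigma$, this shows that on any region lying at distance $\ge r$ from $\sigma$ the projection $\pi_\sigma$ moves by at most $2C'/r$ per unit length. I would then argue by an extremal-point estimate. Let $t_0\in[0,T]$ realize $R=d(\lambda(t_0),\sigma)$; assuming $R$ large, continuity of $\lambda$ yields a maximal subinterval $J\ni t_0$ on which $d(\lambda(\cdot),\sigma)\ge R/2$; since $x,y\in\sigma$ this $J$ is interior to $[0,T]$, so its endpoints $p^-,p^+$ satisfy $d(p^\pm,\sigma)=R/2$. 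Subdividing $\lambda|_J$ into $\lceil 2\ell(\lambda|_J)/R\rceil$ sub-arcs of length $\le R/2$ and applying the close-projection estimate at the division points (legitimate since all of $\lambda|_J$ stays at distance $\ge R/2$ from $\sigma$) bounds $d(\pi_\sigma p^-,\pi_\sigma p^+)$ by a constant times $\ell(\lambda|_J)/R$, hence — via $\ell(\lambda|_J)\le Q(|J|+Q)$ and $|J|\le Q(d(p^-,p^+)+Q)$ — by a constant times $(d(p^-,p^+)+Q^2)/R$. Against this one sets the triangle bound $d(p^-,p^+)\le R+d(\pi_\sigma p^-,\pi_\sigma p^+)$ (each $p^\pm$ lying within $R/2$ of its projection), together with the lower bounds coming from the two side arcs of $\lambda$, and solves for $R$.

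The step I expect to be the main obstacle is that, as just hinted, the bound on the excursion length in terms of $R$ and the bound on $R$ in terms of the excursion are mutually circular. I would break the circle in the standard way: either run the annulus estimate at the dyadic family of scales $R,R/2,R/4,\dots$, so that the projection displacement accumulated in each annulus is governed by the next scale down and the contributions telescope to a bound free of $R$; or first settle the case where $\lambda$ is a geodesic — there $\ell(\lambda)=d(x,y)\le d(\pi_\sigma p^-,\pi_\sigma p^+)+R$ makes the inequality self-defeating once $R$ exceeds a fixed multiple of $C'$ — and then deduce the quasi-geodesic case by replacing $\lambda$ with a concatenation of geodesics between closely spaced points of $\lambda$, keeping it uniformly quasi-geodesic via Lemma~\ref{lemma:close-to-geodesic-implies-quasi-geodesic}. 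Either way, every constant that enters is a function of $C$ and a controlled, uniform function of $Q$, and nothing in the argument referred to $X$ or to $\gamma$ beyond its contraction constant, so collecting the estimates yields the Morse gauge $M=M_C$ as claimed.
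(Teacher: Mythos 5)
The paper does not prove this lemma; it cites it as an implication of Theorem~1.4 of Arzhantseva--Cashen--Gruber--Hume, so your attempt is a from-scratch proof rather than a reconstruction of the paper's argument. Your overall set-up (reduce to a geodesic $\gamma$, take an extremal point $z=\lambda(t_0)$, consider the maximal excursion $J$ at distance $\geq R/2$, and chain the close-projection estimate along $\lambda|_J$) is the standard opening move, and the close-projection estimate is indeed exactly the definition of strong contraction. Up to the bound $\diam\pi_\sigma(\lambda|_J)\lesssim C\,\ell(\lambda|_J)/R$, the argument is sound.

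The gap is precisely at the step you flag as ``solves for $R$,'' and neither of your two proposed fixes closes it. If you follow the inequalities through, you obtain (for $R$ large) that $\diam\pi_\sigma(\lambda|_J)$ is bounded by a constant $P_0 = P_0(C,Q)$, and hence $d(p^-,p^+)\leq R+P_0$ and $\ell(\lambda|_J)\lesssim k_1 R+O(1)$ (where $k_1\geq 1$ is the Lipschitz constant of the tamed quasi-geodesic). The only lower bound available for the excursion length is $\ell(\lambda|_J)\geq R$, which gives $R\leq k_1 R + O(1)$ --- no contradiction, since $k_1\geq 1$. Even in the geodesic case your claimed ``self-defeating'' inequality is misstated: the correct version reads $d(p^-,p^+)\leq d(\pi_\sigma p^-,\pi_\sigma p^+)+R$, not $\ell(\lambda)=d(x,y)\leq\dots$, and combined with the chaining bound it yields only $P_0\leq 6C$ and $d(p^-,p^+)\leq R+6C$ against $d(p^-,p^+)\geq R$; again $R$ cancels and nothing is forced. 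The dyadic-annulus variant inherits the same problem: the per-annulus projection displacement is bounded by the annulus length over the annulus scale, and the lengths do not decay fast enough relative to the growth of the inverse scales for the contributions to telescope to something independent of $R$.

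What the argument is missing is a uniform (length-independent) bound on the projection of a geodesic kept at distance $\geq D$ from $\gamma$: this is exactly the bounded geodesic image property of Lemma~\ref{lemma:equivalence_strong_contraction1}, which is not a formal consequence of the chaining estimate. With BGI in hand, the geodesic case falls immediately ($\diam\pi_\sigma(\lambda|_J)\leq D$ uniformly, hence $d(p^-,p^+)\leq 3D$ and $R\leq 4D$), and the promotion to quasi-geodesics is the Morse-lemma step that requires the divergence input --- the actual content of ACHG Theorem~1.4. Your proof uses neither BGI nor divergence, and that is why the inequality you try to close is not self-defeating but merely tautological. The fix is to invoke BGI (or to carry out the divergence/recurrence argument of ACHG), not to iterate the chaining estimate at finer scales.
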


The converse of Lemma~\ref{lem:theorem1.4} does not always hold. Spaces where it does are called \emph{Morse-dichotomous} (see \cite{Z:hyperbolic}) and we have the following result.

\begin{cor}\label{morse-dich-implies-strong-sigma}
    If $X$ is Morse-dichotomous, then $\mb X$ is strongly $\sigma$-compact.
\end{cor}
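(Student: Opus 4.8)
The plan is to combine Lemma~\ref{lem:theorem1.4}, Lemma~\ref{lem:compact-sets-in-mb}, and the definition of Morse-dichotomous to produce a strong exhaustion directly. Recall that $X$ being Morse-dichotomous means: there is a function $C \mapsto C'$ (or, more precisely, for every Morse gauge $M$ there is a contraction constant) such that every $M$--Morse geodesic is $C(M)$--contracting; conversely, by Lemma~\ref{lem:theorem1.4}, for every contraction constant $C$ there is a Morse gauge $M(C)$ so that every $C$--contracting quasi-geodesic is $M(C)$--Morse. So the two notions interchange with a controlled trade-off of constants.

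First I would fix an increasing sequence of contraction constants $C_n \to \infty$ (say $C_n = n$), and set $M_n := M(C_n)$ using Lemma~\ref{lem:theorem1.4}; by enlarging if necessary we may assume the $M_n$ are increasing (replace $M_n$ by $\max(M_1,\dots,M_n)$ pointwise, noting the max of finitely many Morse gauges is a Morse gauge and each $M_k$--Morse geodesic is $M_n$--Morse for $n \ge k$). Each $\ms^{M_n}X$ is compact by Lemma~\ref{lem:compact-sets-in-mb}. Next I would check that $(M_n)$ is an exhaustion of $\mb X$, i.e.\ $\mb X = \bigcup_n \ms^{M_n}X$: given any Morse geodesic ray $\gamma$ based at $x_0$, it is $M$--Morse for some gauge $M$; since $X$ is Morse-dichotomous, $\gamma$ is $C$--contracting for some constant $C$; choosing $n$ with $C_n \ge C$ (so $\gamma$ is also $C_n$--contracting, contraction constants only weakening as they grow) gives $\gamma$ is $M(C_n) = M_n$--Morse, hence $\gamma \in \ms^{M_n}X$.

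The final, and only slightly delicate, step is to verify the \emph{strong} exhaustion condition: for every Morse gauge $M$ there is $n = n(M)$ with $\ms^M X \subseteq \ms^{M_n}X$. Here I would take an arbitrary $M$--Morse geodesic ray $\gamma$ from $x_0$; Morse-dichotomy, applied uniformly, gives a single constant $C = C(M)$ (depending only on $M$, not on $\gamma$) such that every $M$--Morse geodesic is $C(M)$--contracting. Pick $n = n(M)$ with $C_n \ge C(M)$. Then every $M$--Morse geodesic ray is $C_n$--contracting, hence $M_n$--Morse by the choice of $M_n = M(C_n)$, so $\ms^M X \subseteq \ms^{M_n}X$ as required. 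Therefore $(M_n)_{n\in\N}$ is a strong exhaustion and $\mb X$ is strongly $\sigma$-compact by Definition~\ref{def:strong-sigma-compact}.

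I expect the main (and essentially only) obstacle to be making sure the Morse-dichotomous hypothesis is being used in its \emph{uniform} form — that $M$--Morse geodesics are $C$--contracting for a $C$ depending only on $M$ — which is exactly what is needed for the strong exhaustion; the converse direction's uniformity is already guaranteed by the statement of Lemma~\ref{lem:theorem1.4}. Everything else is bookkeeping with increasing sequences of gauges and constants.
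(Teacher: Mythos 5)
Your argument is correct. The paper states this corollary without proof (it is left as an immediate consequence of Lemma~\ref{lem:theorem1.4} and the definition of Morse-dichotomous), and what you have written out is exactly the natural argument: the definition of Morse-dichotomous is precisely the uniform converse of Lemma~\ref{lem:theorem1.4}, so the uniformity you flag as the only potential obstacle is built into the hypothesis, and the rest is indeed bookkeeping with increasing sequences of contraction constants and gauges.
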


Similar to Morse geodesics, strongly contracting geodesics satisfy a plethora of properties, which we summarize below.

\begin{lem}[Direct consequence of \cite{ACHG:contraction_morse_divergence} Theorem 7.1]\label{lemma:equivalence_strong_contraction1}
   For any constant $C$, there exists a constant $D$ such that any $C$--contracting geodesic $\gamma$ satisfies the \emph{$D$--bounded geodesic image property}, that is, every geodesic $\lambda$ with $d(\gamma, \lambda)\geq D$, satisfies $\diam(\pi_\gamma(\lambda))\leq D$.
\end{lem}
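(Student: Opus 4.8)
The statement is, after unwinding definitions, one of the standard equivalent formulations of strong contraction, so the quickest route is to invoke \cite[Theorem~7.1]{ACHG:contraction_morse_divergence} directly: that theorem lists several properties of a geodesic that are equivalent (with all constants controlled by one another) to being strongly contracting, and the $D$--bounded geodesic image property is one of them. In practice I would check that our notion of $C$--contracting matches the hypothesis there --- demanding $\diam(\pi_\gamma(B_{d(x,\gamma)}(x))) \le C$ for every $x$ is the same as asking that every metric ball disjoint from $\gamma$ project to a set of diameter $\le C$ --- and then read off the bounded geodesic image property from the conclusion, noting the constant depends only on $C$.

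If instead one wants a self-contained argument, I would argue by contrapositive. Fix $C$, let $D = D(C)$ be a constant to be determined, let $\lambda$ be a geodesic with $d(\lambda, \gamma) \ge D$, and suppose there are $p, q \in \lambda$ with $d(p', q') > D$, where $p' = \pi_\gamma(p)$ and $q' = \pi_\gamma(q)$; after replacing $\lambda$ by $[p,q]_\lambda$ we may assume $\lambda$ runs from $p$ to $q$. First I would record two elementary consequences of the contraction hypothesis, both proved by subdividing a geodesic into steps whose length equals the current distance to $\gamma$ and applying the contraction inequality to each step: (i) along any geodesic $[p, p']$ one has $d(z, \gamma) = d(z, p')$, since $d(z,p') \ge d(z,\gamma) \ge d(p,\gamma) - d(p,z) = d(z,p')$, and hence $\pi_\gamma([p,p'])$ lies within $C$ of $p'$, and likewise $\pi_\gamma([q',q])$ lies within $C$ of $q'$; and (ii) closest-point projection to $\gamma$ is coarsely Lipschitz and coarsely realizes distances to $\gamma$, with constants depending only on $C$.

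Next I would replace the projection excursion of $\lambda$ by the broken path $\beta := [p,p'] \ast [p', q']_\gamma \ast [q', q]$. Using (i) and (ii), the out--along--back shape of $\beta$ has length comparable to $d(p,q)$ provided $d(p',q')$ exceeds a threshold $R_0 = R_0(C)$, so that $\beta$ is a $Q_0$--quasi-geodesic with $Q_0 = Q_0(C)$; and since its middle portion is a subsegment of $\gamma$, which is $M(C)$--Morse by Lemma~\ref{lem:theorem1.4}, while the two legs attach to it at coarse right angles, $\beta$ is $M'$--Morse with $M'$ depending only on $C$ (for instance via Lemma~\ref{new-monster}\ref{monster:concatenation}). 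Requiring $D \ge R_0$, Lemma~\ref{new-monster}\ref{monster:hausdorff} applied to the $M'$--Morse $Q_0$--quasi-geodesic $\beta$ and the geodesic $\lambda$, which share their endpoints, gives $\dH(\lambda, \beta) \le D_1$ for some $D_1 = D_1(C)$. But then the midpoint $m$ of $[p',q']_\gamma$, a point of $\gamma$, lies within $D_1$ of some point of $\lambda$, so $d(\lambda, \gamma) \le D_1$; taking $D := \max(R_0, D_1) + 1$ contradicts $d(\lambda, \gamma) \ge D$. Hence no such pair $p, q$ exists, i.e.\ $\diam(\pi_\gamma(\lambda)) \le D$.

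The main obstacle is step (ii) together with the length estimate for $\beta$: proving that projection to $\gamma$ is coarsely Lipschitz and that a geodesic out of $\gamma$, a segment along $\gamma$, and a geodesic back into $\gamma$ concatenate to a quasi-geodesic with constants depending only on $C$. This is exactly where strong contraction is genuinely stronger than Morseness, and it is precisely the technical content packaged in \cite[Theorem~7.1]{ACHG:contraction_morse_divergence}; so in practice I would simply cite that theorem rather than reprove these estimates.
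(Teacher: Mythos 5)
The paper provides no proof for this lemma; it is stated as a direct consequence of \cite[Theorem~7.1]{ACHG:contraction_morse_divergence}, so your primary route of simply invoking that theorem (after matching definitions) is exactly what the paper does, and that alone suffices.

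Your alternative contrapositive sketch is a reasonable exposition of the underlying geometry, and you have correctly located the crux: proving that $\beta = [p,p'] \ast [p',q']_\gamma \ast [q',q]$ is a quasi-geodesic with constants depending only on $C$. Be aware, though, that this estimate is not merely a routine technicality to be filled in --- the out--along--back inequality $d(p,q) \gtrsim d(p,\gamma) + d(p',q') + d(q,\gamma)$ is essentially equivalent to the bounded geodesic image property you are trying to prove, so proving it from scratch would re-derive most of the content of \cite[Theorem~7.1]{ACHG:contraction_morse_divergence}. A second, smaller wrinkle: the legs $[p,p']$ and $[q',q]$ are finite geodesic segments, which are only $M$--Morse for a gauge that a priori grows with their length, so invoking Lemma~\ref{new-monster}\ref{monster:concatenation} does not immediately yield a Morse gauge for $\beta$ depending only on $C$; one would instead need to use the perpendicularity of these legs to $\gamma$ (a consequence of strong contraction) more directly. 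Since you ultimately defer both points to the cited theorem, the proposal is sound, but the phrase ``for instance via Lemma~\ref{new-monster}\ref{monster:concatenation}'' is slightly optimistic about how much of the work the paper's auxiliary lemmas would do here.
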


The following lemma states that strong contraction of geodesics behaves well under taking subsegments; a proof can be found in \cite[Theorem 1.1]{EZ:contracting}. 

\begin{lem}[Strongly-contracting subsegments]\label{lemma:subsegments}
    For all constants $C$, there exists a constant $C'$ such that the following holds. Every subgeodesic of a $C$--contracting geodesic is $C'$--contracting.
\end{lem}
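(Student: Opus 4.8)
\textbf{Proof proposal for Lemma~\ref{lemma:subsegments}.}

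The plan is to argue directly from the definition of $C$--contraction, extracting the needed estimate on closest-point projections to a subsegment from the known estimate on projections to the whole geodesic, via the $D$--bounded geodesic image property of Lemma~\ref{lemma:equivalence_strong_contraction1}. Fix a $C$--contracting geodesic $\gamma$ and let $D$ be the constant from Lemma~\ref{lemma:equivalence_strong_contraction1} associated to $C$, so that $\gamma$ satisfies the $D$--bounded geodesic image property. Let $\eta = [a,b]_\gamma$ be a subgeodesic. I want to produce a constant $C'$ (depending only on $C$, hence only on $C$ via $D$) such that for every $x \in X$, $\diam(\pi_\eta(B_{d(x,\eta)}(x))) \leq C'$.

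First I would reduce to controlling $\pi_\eta$ of a single point $x$: it suffices to show that for any $x$ and any two points $y_1, y_2 \in B_{d(x,\eta)}(x)$, the projections $\pi_\eta(y_1)$ and $\pi_\eta(y_2)$ are within $C'/3$ of a common point, say of $\pi_\eta(x)$; this is the standard way to pass from the ``ball'' formulation to a pairwise bound. So the heart of the matter is: given $y$ with $d(y,\eta) \leq d(y, \text{(anything})) $... more precisely given $y$ and a point $p \in \pi_\eta(y)$ and a point $q \in \pi_\eta(x)$, bound $d(p,q)$ in terms of $d(x,y)$ and the geometry. The key geometric input is that $\eta \subseteq \gamma$, so $p, q \in \gamma$ as well, and one can compare $\pi_\eta$ with $\pi_\gamma$. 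If the segment $[x,y]$ stays $D$--close to $\gamma$, then by tracking the closest-point projections to $\gamma$ along $[x,y]$ and using $C$--contraction of $\gamma$ one gets that $\pi_\gamma(x)$ and $\pi_\gamma(y)$ are uniformly close; one then has to upgrade ``close to a point of $\gamma$'' to ``close to a point of $\eta$'', handling the case where the relevant projection to $\gamma$ falls outside $[a,b]$ by pushing it to the nearest endpoint $a$ or $b$ and paying only a bounded error. If instead $[x,y]$ travels at distance $\geq D$ from $\gamma$ at some point, then by the bounded geodesic image property $\diam(\pi_\gamma([x,y])) \leq D$, so again $\pi_\gamma(x)$ and $\pi_\gamma(y)$ are within $D$ of each other; combining this with the fact that $p \in \pi_\eta(y)$ is within a bounded distance of $\pi_\gamma(y)$ (closest-point projections to a sub-interval and to the whole geodesic differ by at most, essentially, the contraction constant plus $D$, by a short argument using that a geodesic from $y$ to $\pi_\gamma(y)$ meeting $\eta$ would meet it near its own $\gamma$--projection), and likewise $q$ near $\pi_\gamma(x)$, yields the bound.

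I expect the main obstacle to be the bookkeeping in the ``endpoint'' case: when the closest point of $\gamma$ to $x$ (or to $y$) lies outside the subinterval $[a,b]$, one must show that the closest point of $\eta$ to $x$ is then within a controlled distance of the corresponding endpoint $a$ or $b$, and that when this happens for both $x$ and $y$ their $\eta$--projections are still close. This requires a convexity-type estimate: for a point $z$ projecting (on $\gamma$) to the left of $a$, any $\eta$--projection of $z$ should be near $a$, with error controlled by $C$ (roughly, a geodesic from $z$ to a point of $\eta$ far from $a$ would have to pass near $\pi_\gamma(z)$ and then near $a$, forcing a detour that contradicts it being geodesic unless the endpoint is close). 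Once these local estimates are in place, assembling them into a single constant $C' = C'(C)$ is routine. I would also double-check the degenerate cases where $\eta$ is a point or where $d(x,\eta)$ is small compared to $C$, which are handled trivially by Lemma~\ref{new-monster}\ref{monster:small-diam-is-morse}-style observations or directly.

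(Alternatively, one can cite \cite[Theorem~1.1]{EZ:contracting} as the authors do; the above sketches the self-contained route.)
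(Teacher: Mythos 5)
First, a point of comparison: the paper gives no proof of this lemma at all; it is stated with a pointer to \cite[Theorem~1.1]{EZ:contracting}, as you note at the end. So the only question is whether your self-contained sketch is sound, and it has a genuine gap at its crux. Your plan is: (Step 1) show that for $y \in B_{d(x,\eta)}(x)$ the projections $\pi_\gamma(x)$ and $\pi_\gamma(y)$ to the \emph{ambient} geodesic are uniformly close, splitting on whether $[x,y]$ stays $D$--close to $\gamma$; (Step 2) push these down to $\eta$ via the endpoint estimate. Step 1 is false in the ``close to $\gamma$'' case, and this is not a technicality --- it is the entire difficulty of the lemma. The ball in the definition of contraction for $\eta$ has radius $d(x,\eta)$, which can vastly exceed $d(x,\gamma)$, so the $C$--contraction of $\gamma$ says nothing about it. Concretely, in $\mathbb{H}^2$ let $\gamma$ be a complete geodesic, $\eta=\gamma[100,200]$, and $x$ a point at distance $1$ from $\gamma(0)$. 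Then $d(x,\eta)\approx 100$, the point $y=\gamma(-99)$ lies in $B_{d(x,\eta)}(x)$, the geodesic $[x,y]$ stays within distance $1$ of $\gamma$, and yet $d(\pi_\gamma(x),\pi_\gamma(y))\approx 99$. ``Tracking the projection along $[x,y]$'' using contraction only yields a bound linear in $d(x,y)$, never a uniform one.

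The lemma is still true because in exactly this regime both $\eta$--projections collapse onto the same endpoint of $\eta$, but proving that requires a different case division and a quantitative input your sketch does not supply: the coarse reverse triangle inequality $d(x,y)\geq d(x,\gamma)+d(\pi_\gamma(x),\pi_\gamma(y))+d(y,\gamma)-K(C)$ (a consequence of Lemma~\ref{lemma:equivalence_strong_contraction1}), combined with the upper bound $d(x,\eta)\leq d(x,\gamma)+d(\pi_\gamma(x),a)$ valid when $\pi_\gamma(x)$ lies beyond the endpoint $a$ of $\eta=[a,b]_\gamma$. Together with $d(x,y)\leq d(x,\eta)$ these force $\pi_\gamma(y)$ to penetrate at most a bounded distance past $a$, after which your (correct) Step 2 places both $\pi_\eta(x)$ and $\pi_\eta(y)$ within bounded distance of $a$. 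The productive dichotomy is therefore on whether $\pi_\gamma(x)$ lands inside $[a,b]$ --- in which case $d(x,\eta)=d(x,\gamma)$ and contraction of $\gamma$ applies verbatim --- or outside, handled as above; the dichotomy on whether $[x,y]$ stays close to $\gamma$ does not lead to the stated intermediate conclusion. With that repair the argument goes through, but as written the sketch asserts a false statement precisely where the real work lies.
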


To following lemma states quasi-geodesics close to strongly contracting geodesics are strongly contracting, a proof can be found for example in \cite[Lemma~1.11]{Z:free_product}.

\begin{lem}\label{lemma:hausdorff_contraction}
    There exists a function $\Phi_{d} : \R_{\geq 0}\to \R_{\geq 0}$ such that the following holds. Every $C$--quasi-geodesic segment $\lambda$ with endpoints contained in a $C$--neighbourhood of a $C$--contracting $C$--quasi-geodesic $\gamma$ is $\Phi_{d}(C)$--contracting.
\end{lem}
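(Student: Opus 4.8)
The plan is to prove the statement through the \emph{bounded geodesic image property} (BGIP), which — unlike the contraction condition itself — transfers cleanly under Hausdorff approximation; one only needs that the BGIP is equivalent to strong contraction for quasi-geodesics, which is part of \cite[Theorem~7.1]{ACHG:contraction_morse_divergence}. So it suffices to produce, from $C$ alone, a constant $\delta=\delta(C)$ such that $\lambda$ has the $\delta$--BGIP.

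First I would reduce to comparing $\lambda$ with a genuine \emph{contracting geodesic}. Since $\gamma$ is $C$--contracting it is $M(C)$--Morse by Lemma~\ref{lem:theorem1.4}, so a geodesic $\bar\gamma$ with the same endpoints satisfies $\dH(\bar\gamma,\gamma)\le D_0(C)$ and is $M'(C)$--Morse by Lemma~\ref{new-monster}\ref{monster:hausdorff}; it is also $C_1(C)$--contracting (strong contraction passes between a quasi-geodesic and a bounded-Hausdorff-distance geodesic, part of the same circle of equivalences in \cite{ACHG:contraction_morse_divergence}). Now extend $\lambda$ by two geodesic segments of length $\le C+D_0$ to a path $\lambda^{+}$ whose endpoints lie on $\bar\gamma$; then $\lambda^{+}$ is a $C'(C)$--quasi-geodesic, and if $\sigma\subseteq\bar\gamma$ denotes the subgeodesic spanned by these endpoints, then $\sigma$ is $M'$--Morse (Lemma~\ref{new-monster}\ref{monster:subsegments}) and $C_2(C)$--contracting (Lemma~\ref{lemma:subsegments}), while $\dH(\lambda^{+},\sigma)\le D_1(C)$ by Lemma~\ref{new-monster}\ref{monster:hausdorff} applied to $\sigma$ and $\lambda^{+}$ (same endpoints). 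Hence $\dH(\lambda,\sigma)\le D_2(C)$, and $\sigma$ has the $D_3(C)$--BGIP by Lemma~\ref{lemma:equivalence_strong_contraction1}.

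Next I would record the ``coarse closest point'' estimate for the contracting geodesic $\sigma$: if $p\in\sigma$ and $d(w,p)\le d(w,\sigma)+t$, then $d(p,\pi_\sigma(w))\le 2t+C_2$, since on a geodesic $[w,p]$ the point at distance $d(w,\sigma)$ from $w$ lies in $B_{d(w,\sigma)}(w)$, so its $\sigma$--projection is $C_2$--close to $\pi_\sigma(w)$ and $2t$--close to $p$. Applying this with $t=2D_2$: since $\dH(\lambda,\sigma)\le D_2$, for every $w$ each point of $\pi_\lambda(w)$ is within $D_2$ of a point of $\sigma$ at distance $\le d(w,\sigma)+2D_2$ from $w$, hence within $E(C):=5D_2+C_2$ of $\pi_\sigma(w)$. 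Finally, given a geodesic $\mu$ with $d(\mu,\lambda)\ge D_3+D_2$ we get $d(\mu,\sigma)\ge D_3$, so $\diam(\pi_\sigma(\mu))\le D_3$, and therefore $\diam(\pi_\lambda(\mu))\le D_3+2E(C)$. Thus $\lambda$ has the $\delta$--BGIP with $\delta=\max(D_3+D_2,\,D_3+2E(C))$, a function of $C$ only, and \cite[Theorem~7.1]{ACHG:contraction_morse_divergence} gives that $\lambda$ is $\Phi_{d}(C)$--contracting for a suitable $\Phi_{d}$.

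The substantive point — and the reason the naive argument fails — is that one cannot deduce contraction of $\lambda$ directly from contraction of $\sigma$: the $\sigma$--projection of a ball of radius $d(x,\sigma)+D_2$ is in general unbounded (already in $\mathbb{H}^2$), so comparing $\pi_\lambda$ with $\pi_\sigma$ on balls is hopeless. Routing through the BGIP avoids this, at the cost of invoking the BGIP-to-contraction equivalence for quasi-geodesics from \cite{ACHG:contraction_morse_divergence}; beyond that, and the routine but fiddly tracking of additive constants, each step is a direct application of the lemmas recorded above.
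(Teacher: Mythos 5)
Your route through the bounded geodesic image property is a sound strategy, and your observation that the naive comparison fails — projections of balls of radius $d(x,\sigma)+D_2$ can be unbounded — correctly identifies the obstruction. The coarse closest point estimate is correct, the BGIP transfer is correct, and the final bound $\delta = \max(D_3 + D_2,\, D_3 + 2E(C))$ works. Since the paper refers to \cite[Lemma~1.11]{Z:free_product} rather than spelling out a proof, there is no written argument here to compare against line by line.

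However, one step in your reduction needs to be repaired. The claim that the geodesic $\bar\gamma$ is $C_1(C)$--contracting because ``strong contraction passes between a quasi-geodesic and a bounded-Hausdorff-distance geodesic'' is itself a special case of the lemma being proved (take $\lambda = \bar\gamma$, whose endpoints lie on $\gamma$), and \cite[Theorem~7.1]{ACHG:contraction_morse_divergence} is an equivalence of conditions for a single fixed subset, not a transfer statement between nearby sets, so as written this looks circular. It is not genuinely circular once the pieces are reordered: the coarse closest point estimate you record for $\sigma$ holds verbatim for $\gamma$, using the given $C$--contraction of $\gamma$; $\gamma$ has the BGIP by \cite[Theorem~7.1]{ACHG:contraction_morse_divergence}; the same Hausdorff-distance transfer you run for $\lambda$ then gives that $\bar\gamma$ has the BGIP; and the converse direction of \cite[Theorem~7.1]{ACHG:contraction_morse_divergence} then makes $\bar\gamma$ contracting with constant depending only on $C$. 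You should carry out this short chain explicitly rather than cite it, since it is exactly your own transfer step applied one level earlier. Relatedly, note that Lemma~\ref{lemma:equivalence_strong_contraction1} only records contraction~$\Rightarrow$~BGIP and only for \emph{geodesics}; your argument needs the converse, and for \emph{quasi}-geodesics, which is the genuinely nontrivial external input being imported from \cite{ACHG:contraction_morse_divergence} and should be flagged as such rather than folded in with the paper's weaker stated lemma.
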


Strongly contracting geodesics also satisfy the following slightly technical lemmas which we will use in Section~\ref{sec:measure}

\begin{lem}\label{lem:paths-are-rectangles}
    Let $C\geq 0$ be a constant. There exists a constant $C'$ such that the following holds. Let $\gamma$ be a $C$--contracting geodesic. Let $x, y$ be points and $x'$ and $y'$ closest point projections of $x$ respectively $y$ onto $\gamma$. If $d(x', y')\geq C'$, then for all geodesics $\eta$ from $x$ to $y$ there exist $u, v$ on $\eta$ such that $d_{Haus}([u, v]_\eta, [x', y']_\gamma)\leq C'$. Moreover one can assume that $w\in [u, v]_{\eta}$ for all $w\in \eta$ which are contained in the $C$--neighbourhood of $[x', y']_\gamma$.
\end{lem}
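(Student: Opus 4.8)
The plan is to reduce everything to the bounded geodesic image property (Lemma~\ref{lemma:equivalence_strong_contraction1}) and elementary comparison of projections. Fix $C$ and let $D$ be the constant from Lemma~\ref{lemma:equivalence_strong_contraction1} applied to $C$; we will choose $C'$ to be a suitably large multiple of $C+D$ (say $C' = 10(C+D)+100$) and adjust at the end. Let $\gamma$ be $C$--contracting, let $x,y\in X$, let $x'\in\pi_\gamma(x)$, $y'\in\pi_\gamma(y)$, and suppose $d(x',y')\geq C'$. Fix a geodesic $\eta$ from $x$ to $y$.

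First I would locate $u$ and $v$ on $\eta$. Since $d(x',y')$ is large, the bounded geodesic image property forces $\eta$ to come $D$--close to $\gamma$: if $\eta$ stayed at distance $\geq D$ from $\gamma$ then $\diam(\pi_\gamma(\eta))\leq D$, but $\pi_\gamma(\eta)$ contains points within $C$ of both $x'$ and $y'$ (because $x',y'$ are closest-point projections of the endpoints of $\eta$, and closest-point projections onto a $C$--contracting geodesic are coarsely the image-under-$\pi_\gamma$ of the endpoint), contradicting $d(x',y')\geq C'$ once $C'>D+2C$. So the set $N$ of points of $\eta$ in the $D$--neighbourhood of $\gamma$ is nonempty; let $u$ be the first such point along $\eta$ (from $x$) and $v$ the last. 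By construction every point $w\in\eta$ lying in the $C$--neighbourhood of $[x',y']_\gamma$ lies in $N$, hence in $[u,v]_\eta$, giving the "moreover" clause for free once $C'\geq D$.

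Next I would prove $d_{\mathrm{Haus}}([u,v]_\eta,[x',y']_\gamma)\leq C'$ via two inclusions. For $[u,v]_\eta\subset N_{C'}([x',y']_\gamma)$: take $w\in[u,v]_\eta$, pick $w''\in\pi_\gamma(w)$; since $w$ is within $D+\mathrm{something}$ of $\gamma$ (actually one first argues that the whole segment $[u,v]_\eta$ stays uniformly close to $\gamma$ — reapply the bounded geodesic image property to the two complementary sub-rays $[u,w]_\eta$ and $[w,v]_\eta$, or simply note that if $w$ were far from $\gamma$ the bounded image property applied to a geodesic through $w$ between two $\gamma$--close points of $\eta$ near $u$ and near $v$ would again force $d$ of those projections $\leq D$, contradicting that they straddle most of $[x',y']$). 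Then one checks $w''\in[x',y']_\gamma$ up to bounded error: the projections $x',y',u'',v''$ (projections of $x,y,u,v$) appear in this order along $\gamma$ up to additive $C$, because $\eta$ is a geodesic and projections onto a contracting geodesic are coarsely monotone — this is where one uses contraction one more time, comparing $\pi_\gamma$ along the geodesic $\eta$. For the reverse inclusion $[x',y']_\gamma\subset N_{C'}([u,v]_\eta)$: any $p\in[x',y']_\gamma$ has, by coarse monotonicity, a point of $\eta$ projecting within $C$ of $p$; that point of $\eta$ is within $C+D$ of $\gamma$, hence in $N$ and in $[u,v]_\eta$, and it is within $C+D$ of $p$. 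Collecting constants, everything is bounded by our chosen $C'$.

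The main obstacle is the coarse monotonicity of closest-point projection onto a strongly contracting geodesic along the geodesic $\eta$ — i.e.\ that as one travels along $\eta$ from $x$ to $y$, the projections to $\gamma$ move (coarsely) monotonically from near $x'$ to near $y'$, without backtracking more than a bounded amount. This is standard for contracting geodesics and follows from the bounded geodesic image property applied to subsegments of $\eta$ (a subsegment of $\eta$ between two points with far-apart projections must itself approach $\gamma$, pinning down where it goes), but setting it up cleanly with honest constants is the fiddly part; everything else is a routine neighbourhood chase. I expect the final constant $C'$ to be obtained by first fixing $D$ from Lemma~\ref{lemma:equivalence_strong_contraction1}, proving monotonicity with constant $O(C+D)$, and then taking $C'$ a fixed linear function of $C+D$.
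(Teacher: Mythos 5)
Your proposal is correct in outline but takes a genuinely different route to the Hausdorff-distance bound than the paper does, and the difference is worth noting. Both proofs begin the same way: fix $D$ from Lemma~\ref{lemma:equivalence_strong_contraction1}, observe $\eta$ must enter the $D$--neighbourhood of $[x',y']_\gamma$, take $u$ and $v$ to be the first and last such entry points (the paper uses the $(D+1)$--neighbourhood of $[x',y']_\gamma$ specifically, which is what makes the ``moreover'' clause immediate, and then uses bounded geodesic image on $[x,u]_\eta$ and $[v,y]_\eta$ to pin $u$ near $x'$ and $v$ near $y'$). Where you diverge is the heart of the lemma, namely bounding $\dH([u,v]_\eta,[x',y']_\gamma)$. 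You try to prove this purely from the bounded geodesic image property via ``coarse monotonicity'' of $\pi_\gamma$ along $\eta$, which you rightly flag as the fiddly part and leave as a sketch. The paper instead sidesteps this entirely: once $u$ and $v$ are known to lie within $2D+1$ of $x'$ and $y'$, it invokes Lemma~\ref{lem:theorem1.4} to get that $[x',y']_\gamma$ is $M$--Morse for $M$ depending only on $C$, and then applies Lemma~\ref{new-monster}\ref{monster:hausdorff} (two quasi-geodesics with $Q$--close endpoints, one of them $M$--Morse, are at uniformly bounded Hausdorff distance) directly to $[u,v]_\eta$ and $[x',y']_\gamma$. This buys a clean two-line finish with constants depending only on $M$ and $D$, and avoids any explicit excursion/monotonicity argument. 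Your route would work --- coarse monotonicity of projections onto a contracting geodesic is indeed standard --- but it effectively re-proves a piece of the Morse machinery that the paper already has packaged, and the proposal as written does not actually close that step (no constants, no complete argument for why an excursion of $[u,v]_\eta$ away from $\gamma$ contradicts $\eta$ being a geodesic). If you wanted to complete your version you would need the length-comparison argument: a maximal subsegment of $[u,v]_\eta$ at distance $>D$ from $\gamma$ has endpoints projecting $\leq D$ apart, so its length is bounded by $2D$ plus the distance between its endpoints' projections, and summing these excursions against the geodesicity of $\eta$ and the lower bound $d(x',y')\geq C'$ pins everything down. That is precisely the work Lemma~\ref{new-monster}\ref{monster:hausdorff} encapsulates.
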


\begin{proof}
    By Lemma~\ref{lemma:equivalence_strong_contraction1} there exists a constant $D\geq C$ only depending on $C$ such that $\gamma$ has $D$--bounded geodesic image. By Lemma~\ref{lemma:subsegments} we can assume that all subsegments of $\gamma$ have $D$--bounded geodesic image. Consequently, if $C'\geq D$ we know that $d(\eta, [x', y'])\leq D$. Let $u$ and $v$ be the first respectively last point on $\eta$ in the $(D+1)$--neighbourhood of $[x', y']_{\gamma}$ and let $u'$ and $v'$ be closest point projections of $u$ and $v$ on $[x', y']_{\gamma}$. Note that with these choices, we have $d(u, u')\leq D+1$ and $d(v, v')\leq D+1$. With this, $d([x, u]_\eta, [x', y']_{\gamma}) > D$ and since $[x', y']_{\gamma}$ has $D$--bounded geodesic image we know that $\diam(\pi_{[x', y']_{\gamma}}([x, u]_\eta))\leq D$ implying that $d(x', u')\leq D$. Hence by the triangle inequality $d(x', u)\leq 2D+1$. Similarly one can show that $d(y', v)\leq 2D+1$. Lemma~\ref{lem:theorem1.4} implies that there exists a Morse gauge $M$ only depending on $C$ such that $[x', y']_\gamma$ is $M$--Morse. Finally, Lemma~\ref{new-monster}\ref{monster:hausdorff} shows that the Hausdorff distance between $[x', y']_\gamma$ and $[u, v]_\eta$ can be bounded by a constant $C'$ only depending on $M$ and $D$ and hence only depending on $C$. Our choice of $u$ and $v$ shows that the moreover part holds.
\end{proof}

\begin{lem}\label{passing-contraction}
Let $C$ be a constant. For each constant $l$, there exists a constant $C_l$ depending only on $C$ and $l$ such that the following holds. Let $\gamma :[0, T]\to X$ be a geodesic and $S\in [0, T]$ such that $\gamma[0, S]$ is $C$--contracting. Let $\lambda: [0, T']$ be a geodesic with $d(\lambda(0), \gamma(0))\leq l$ and $d(\lambda(T'), \gamma(T))\leq l$. Then the following hold
\begin{enumerate}
    \item $\lambda[0, S]$ is $C_l$--contracting,
    \item $d_{\mathrm{Haus}}(\lambda[0, S], \gamma[0, S])\leq C_l$.
\end{enumerate}
\end{lem}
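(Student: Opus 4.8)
The plan is to deduce everything from the "rectangle" structure of Lemma~\ref{lem:paths-are-rectangles} applied to the contracting initial segment $\gamma[0,S]$, together with the Hausdorff-stability of Morse geodesics from Lemma~\ref{new-monster}\ref{monster:hausdorff} and the stability of strong contraction from Lemma~\ref{lemma:hausdorff_contraction}. Here is the setup. Write $x := \gamma(0)$, $y := \gamma(S)$, $x' := \lambda(0)$, $y' := \lambda(S)$; so $d(x,x') \le l$, while $d(y,y')$ needs to be controlled. Since $\gamma[0,S]$ is $C$--contracting, Lemma~\ref{lem:theorem1.4} gives a Morse gauge $M = M(C)$ making $\gamma[0,S]$ $M$--Morse. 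The key preliminary estimate is that $d(\gamma(S), \lambda(S))$ is bounded in terms of $C$ and $l$ only; once we have this, everything else follows from the two-endpoints-close lemmas.

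First I would establish the bound on $d(\gamma(S),\lambda(S))$. Consider the geodesic quadrilateral with vertices $\gamma(0), \gamma(T), \lambda(T'), \lambda(0)$, whose sides are $\gamma$, a geodesic $[\gamma(T),\lambda(T')]$ of length $\le l$, the reverse of $\lambda$, and a geodesic $[\lambda(0),\gamma(0)]$ of length $\le l$. Using Lemma~\ref{new-monster}\ref{monster:hausdorff} (applied with the small-constant sides absorbed, or Lemma~\ref{new-monster}\ref{monster:triangle} on a suitable triangulation) one gets that $\lambda$ lies within a bounded Hausdorff distance $D_0 = D_0(C,l)$ of $\gamma$ — or at least that $\lambda[0,T']$ fellow-travels $\gamma$ on the portion near $\gamma[0,S]$. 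More carefully: $\gamma[0,S]$ being $C$--contracting satisfies the $D$--bounded geodesic image property (Lemma~\ref{lemma:equivalence_strong_contraction1}), and by Lemma~\ref{lemma:subsegments} so do all its subsegments; arguing as in the proof of Lemma~\ref{lem:paths-are-rectangles}, the point $\lambda(S)$ projects to within a bounded distance of $\gamma(S)$ along $\gamma[0,S]$, so $d(\gamma(S),\lambda(S)) \le C_l'$ for some $C_l' = C_l'(C,l)$. (One should be slightly careful that $\lambda(S)$ is "far enough along"; if $S$ is small, i.e. $d(\gamma(0),\gamma(S)) \le$ some threshold depending on $l$, then $d(\gamma(S),\lambda(S)) \le l + d(\gamma(0),\gamma(S))$ is automatically bounded, and $\gamma[0,S], \lambda[0,S]$ have diameter bounded in terms of $C,l$, so both conclusions are trivial by Lemma~\ref{new-monster}\ref{monster:small-diam-is-morse}-type reasoning and Lemma~\ref{lemma:hausdorff_contraction}. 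So assume $S$ is large.)

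Now, $\gamma[0,S]$ is a $C$--contracting geodesic, and $\lambda[0,S]$ is a geodesic (hence a $1$--quasi-geodesic) whose endpoints $\lambda(0), \lambda(S)$ are within $\max\{l, C_l'\}$ of the endpoints $\gamma(0), \gamma(S)$ of $\gamma[0,S]$. Applying Lemma~\ref{lemma:hausdorff_contraction} (after inflating the constant $C$ to $\max\{C, l, C_l'\}$ so that $\lambda[0,S]$ qualifies as a quasi-geodesic with endpoints in the required neighbourhood) yields that $\lambda[0,S]$ is $\Phi_d(\max\{C,l,C_l'\})$--contracting, giving conclusion (1) with $C_l$ at least this large. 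For conclusion (2): $\gamma[0,S]$ is $M$--Morse with $M = M(C)$, so Lemma~\ref{new-monster}\ref{monster:hausdorff} applied to $\gamma[0,S]$ and $\lambda[0,S]$ (endpoints within $\max\{l, C_l'\}$) gives $\dH(\lambda[0,S], \gamma[0,S]) \le D$ where $D$ depends only on $M$ and $\max\{l,C_l'\}$, hence only on $C$ and $l$. Taking $C_l$ to be the maximum of all the constants produced finishes the proof.

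The main obstacle I anticipate is the preliminary estimate $d(\gamma(S),\lambda(S)) \le C_l'(C,l)$: a priori $\lambda$ could wander far from $\gamma$ in its middle and there is no contraction hypothesis past $\gamma(S)$, so one genuinely needs to use the bounded geodesic image property of the contracting segment $\gamma[0,S]$ to trap the projection of $\lambda(S)$, exactly as in the proof of Lemma~\ref{lem:paths-are-rectangles}, and to separately handle the degenerate case where $S$ is too small for this argument to bite. Everything after that is a bookkeeping application of Lemmas~\ref{new-monster}\ref{monster:hausdorff} and~\ref{lemma:hausdorff_contraction}.
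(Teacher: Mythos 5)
Your proposal follows essentially the same route as the paper's proof: reduce the problem to bounding $d(\gamma(S),\lambda(S))$ in terms of $C$ and $l$ only, using the bounded geodesic image property of the contracting segment $\gamma[0,S]$, and then conclude (1) via Lemma~\ref{lemma:hausdorff_contraction} and (2) via Lemmas~\ref{lem:theorem1.4} and~\ref{new-monster}\ref{monster:hausdorff}. Two small things to tighten relative to the paper: first, the inference \say{$\lambda(S)$ projects close to $\gamma(S)$, so $d(\gamma(S),\lambda(S))\le C_l'$} is not immediate — a bounded projection by itself does not place $\lambda(S)$ near $\gamma[0,S]$; the paper instead applies Lemma~\ref{lem:paths-are-rectangles} and its moreover clause directly to $\lambda$ to produce a sub-geodesic $[u,v]_\lambda$ with $u=\lambda(0)$ at Hausdorff distance $\le C'$ from $[z,x]_\gamma\subset\gamma[0,S]$, and then locates $\lambda(S)$ inside that sub-geodesic to get $d(\gamma(S),\lambda(S))\le 2D+2C'+2l$. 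Second, besides your $S$-small degenerate case you should also handle $T-S\le l+D+1$ separately (as the paper does), since the estimate $\diam(\pi_{\gamma[0,S]}([\gamma(T),\lambda(T')]))\le D$ requires $[\gamma(T),\lambda(T')]$ to be at distance at least $D+1$ from $\gamma[0,S]$, which can fail when $T-S$ is small.
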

\begin{proof}
    Let $C'$ be the constant form Lemma~\ref{lem:paths-are-rectangles} applied to $\max\{C, 2l\}$. By Lemma~\ref{lemma:equivalence_strong_contraction1} there exists a constant $D$ only depending on $C$ such that $\gamma[0, S]$ has $D$--bounded geodesic image. If $T - S \leq l + D  + 1$, then (1) follows directly from Lemma~\ref{lemma:hausdorff_contraction} about geodesics with endpoints close to contracting geodesics and (2) follows from  Lemma~\ref{lem:theorem1.4} which states that contracting geodesics are Morse and Lemma~\ref{new-monster}\ref{monster:hausdorff} which states that if quasi-geodesics have close endpoints and one of them is Morse, then they have bounded Hausdorff distance. Hence we can assume that $T - S \geq l + D  + 1$. Since any geodesic of length at most $r$ is $r$--contracting, the statement follows trivially if $S\leq 2l + C' + D$ and hence we can assume that $S > 2l + C' + D$.
    With these assumptions we have by the triangle inequality that $d([\gamma(T), \lambda(T')], \gamma[0, S])\geq D+1$ and hence $\diam{(\pi_{\gamma[0, S]}([\gamma(T), \lambda(T')]))}\leq D$ since $\gamma[0, S]$ has $D$--bounded geodesic image. Let $x, y$ and $z$ be the closest point projection of $\lambda(T'), \gamma(T)$ and $\lambda(0)$ onto $\gamma[0, S]$. Note that $y = \gamma(S)$ because $\gamma$ is a geodesic. Hence the statement above implies that
    \begin{align*}
        d(x, \gamma(S))\leq d(x, y) \leq D.
    \end{align*}
    Since $d(\lambda(0), \gamma(0))\leq l$ we have that $d(z, \gamma(0))\leq2l$ by the triangle inequality. Consequently, $d(z, x)\geq S - 2l - D> C'$. By Lemma~\ref{lem:paths-are-rectangles}, there exists $u, v$ on $\lambda$ such that the Hausdorff distance between $[z, x]_{\gamma}$ and $[u, v]_{\lambda}$ is at most $C'$. The moreover part of Lemma~\ref{lem:paths-are-rectangles} further guarantees that $u = \lambda(0)$. Recall that $d(y, \gamma(S))\leq D$. By the triangle inequality, we have that $v = \lambda(t)$ for some $S - D - C' - 2l\leq t\leq S + D + C' + 2l$. Hence 
    \[
    d(\lambda(S), \gamma(S))\leq d(\gamma(S), v)+ d(v, x)\leq  2D + 2C' + 2l.
    \]
    Now (1) follows directly from Lemma~\ref{lemma:hausdorff_contraction} about geodesics with endpoints close to contracting geodesics and (2) follows from  Lemma~\ref{lem:theorem1.4} which states that contracting geodesics are Morse and Lemma~\ref{new-monster}\ref{monster:hausdorff} which states that if quasi-geodesics have close endpoints and one of them is Morse, then they have bounded Hausdorff distance.
\end{proof}

\subsection{Small cancellation}

We will subsequently define most notions about small-cancellation needed in our paper. For further background on small-cancellation we refer to \cite{lyndon1977combinatorial}. 

\textbf{Notation and Conventions.} For the rest of this section and in Section~\ref{sec:small-cancellation}, unless specified otherwise, $\mc S$ denotes a finite set of formal variables, $\mc S^{-1}$ its formal inverses and $\ov{\mc S}$ the symmetrised set $ \mc S \cup \mc S^{-1}$. A word $w$ over $\mc S$ (respectively $\ov{\mc S}$) is a finite sequence of elements in $\mc S$ (respectively $\ov{\mc S}$). By abuse of notation, we sometimes allow words to be infinite. 

Let $G = \pres$ be finitely generated group, $X = \cay$ its Cayley graph, $p$ an edge path in $X$ and $v$ a word over $\ov {\mc S}$.
\begin{itemize}
    \item By following $p$, we can read a word $w$ over $\ov {\mc S}$. We say that $p$ is labelled by $w$. We say a word $w'$ is a subword of $p$ if it is a subword of $w$.
    \item For any vertex $x\in X$ there is a unique edge path labelled by $v$ and starting at $x$.
\end{itemize}

We say a word $w$ over $\ov {\mc S}$ is \emph{cyclically reduced} if it is reduced and all its cyclic shifts are reduced. Given a set $\mc R$ of cyclically reduced words, we denote by $\overline{\mc R}$ the cyclic closure of $\mc R\cup \mc R^{-1}$. If $\mc R = \{w\}$ we sometimes denote $\ov{\mc{R}}$ by $\ov w$.

\begin{defn}[Piece]
    Let $\mc S$ be a finite set and let $\mc R$ be a set of cyclically reduced words over $\ov{\mc S}$. We say that $p$ is a piece if there exists distinct words $r, r'\in \overline{\mc R}$ such that $p$ is a prefix of both $r$ and $r'$. We say that $p$ is a piece of a word $r\in \ov{\mc R}$ if $p$ is a piece and a subword of $r$.
\end{defn}

\begin{defn}[$C'(\lambda)$ condition]
    Let $\lambda >0$ be a constant. We say that a set $\mc R$ of cyclically reduced words satisfies the $C'(\lambda)$--small-cancellation condition if for every word $r\in \overline{\mc R}$ and every piece $p$ of $r$ we have $\abs{p}<\lambda\abs{r}$. 
\end{defn}
If $\mc R$ satisfies the $C'(\lambda)$--small-cancellation condition we call the finitely generated group $G = \pres$ a $C'(\lambda)$--group. If $G = \pres$ is a $C'(\lambda)$--group, then the graph $\Gamma$ defined as the disjoint union of cycle graphs labelled by the elements of $\mc R$ is a $Gr'(\lambda)$--labelled graph as defined in \cite{gruber2018infinitely}. We can thus state and use the results of \cite{gruber2018infinitely} and \cite{arzhantseva2019negative} in the less general setting of groups satisfying the $C'(\lambda)$--small-cancellation condition.  

\begin{lem}[Lemma 2.15 of \cite{gruber2018infinitely}]\label{lem:isometric_embedding} Let $G = \pres$ be a $C'(1/6)$--group. Let $r\in \mc R$ be a relator, $\Gamma_0$ a cycle graph labelled by $r$ and let $f\colon \Gamma_0\to \cay$ be a label-preserving graph homomorphism. Then $f$ is an isometric embedding, and its image is convex.
\end{lem}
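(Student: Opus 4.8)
The plan is to prove both conclusions at once, by a single van Kampen diagram argument resting on Greendlinger's Lemma for $C'(1/6)$ presentations together with two elementary facts: \emph{(i)} any common subword of two distinct relators is a piece, hence by the $C'(1/6)$ hypothesis has length less than a sixth of either relator; and \emph{(ii)} a relator has no geodesic subword of length more than $\tfrac12\abs r$, since otherwise its complementary arc would be a strictly shorter path with the same endpoints. Write $n=\abs r$ and identify the vertex set of $\Gamma_0$ with $\Z/n\Z$ so that the edge from $i$ to $i+1$ carries the $i$-th letter of $r$ and $d_{\Gamma_0}(i,j)=\min\{\,\abs{i-j},\,n-\abs{i-j}\,\}$. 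Since $f$ sends edges to edges it is automatically $1$--Lipschitz, so it remains to prove $d_{\cay}(f(i),f(j))\ge d_{\Gamma_0}(i,j)$ for all $i,j$ (this also yields injectivity, as relators have positive length) and then that $f(\Gamma_0)$ is convex.

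First I would set up the diagram for the isometry statement. If it fails, choose $i\ne j$ with $\ell:=d_{\Gamma_0}(i,j)\le n/2$ \emph{minimal} subject to $d_{\cay}(f(i),f(j))<\ell$; let $\tau$ be the length--$\ell$ arc of $\Gamma_0$ from $i$ to $j$ and let $\s$ be a geodesic of $\cay$ from $f(i)$ to $f(j)$. Minimality of $\ell$ forces the closed path $f(\tau)\ast\s^{-1}$ to be cyclically reduced, and simultaneously forces every proper subarc of $\tau$ to map to a geodesic. This path represents $1\in G$, hence bounds a reduced simply connected van Kampen diagram $D$ over $\pres$, and the decisive estimate is $\abs{\partial D}=\abs\tau+\abs\s<2\ell\le n=\abs r$; moreover $\abs\tau>\abs\s$, so the boundary word is not freely trivial and $D$ has at least one $2$--cell.

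Next I would apply Greendlinger's Lemma: either $D$ is a single $2$--cell, or $D$ has a $2$--cell $\Delta$ whose intersection with $\partial D$ is a single subpath of length $>\tfrac12\abs{\partial\Delta}$. In the first case $\partial D$ is a cyclic conjugate of $\rho^{\pm1}$ with $\rho\in\mc R$ and $\abs\rho=\abs{\partial D}<n$, so $\rho\ne r$; but then the $\tau$--part of $\partial D$ is a subword of $r$ of length $\ell$ that is also a subword of $\rho$, hence a piece, so $\ell<\tfrac16\abs\rho$ while $\abs\rho=\ell+\abs\s<2\ell$ — absurd. In the second case, if the outer arc of $\Delta$ lies inside the geodesic side $\s$, it is a geodesic subword of $\mathrm{lab}(\Delta)$ of length more than half of $\mathrm{lab}(\Delta)$, contradicting \emph{(ii)}; and if it lies inside $f(\tau)$, it is a subword of $r$ of length $>\tfrac12\abs{\partial\Delta}$ which, being at most $\abs\tau\le n/2$, forces $\abs{\partial\Delta}<n$ and hence $\mathrm{lab}(\Delta)\ne r$, making that arc a piece of length $<\tfrac16\abs{\partial\Delta}$ — again absurd. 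Convexity is then obtained by the same scheme: once $f$ is an isometric embedding, any geodesic from $f(i)$ to $f(j)$ that left $f(\Gamma_0)$ would bound, together with the (now genuinely geodesic) shorter arc of $\Gamma_0$, a reduced diagram of boundary length at most $\abs r$, and the identical case analysis collapses that diagram to the empty one or to a single $r$--cell, both of which keep the geodesic on $f(\Gamma_0)$.

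The one genuinely delicate point — indeed the only one — is the remaining configuration, in which the outer arc of the Greendlinger cell $\Delta$ straddles one or both of the points $f(i),f(j)$ and so decomposes as a subword of $r$, then a subword of the geodesic, then possibly another subword of $r$; here the crude length count does not close by itself, since a geodesic subword occupying up to half of $\mathrm{lab}(\Delta)$ is not yet prohibited. I would resolve this either via the inductive mechanism behind Greendlinger's Lemma — peel off $\Delta$ to obtain a strictly smaller reduced diagram with a boundary of the same type and recurse — or, more transparently, by invoking Strebel's classification (Lemma~\ref{lemma:strebel}) of reduced diagrams over a $C'(1/6)$ group whose boundary is a concatenation of geodesic segments: since $\abs{\partial D}<\abs r$, the permitted shapes leave no room for a cell meeting both corners, reducing matters to the cases already settled.
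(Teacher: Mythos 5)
The paper does not prove this lemma; it cites it as Lemma~2.15 of Gruber--Sisto (\cite{gruber2018infinitely}), so there is no in-paper proof to compare against, and I am assessing your argument on its own.

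Your setup (minimal $\ell$, diagram $D$ with $\abs{\partial D}<\abs r$, Greendlinger) and your cases where the Greendlinger arc lies entirely in $\s$ or entirely in $f(\tau)$ are correct. But the straddle case you flag is a genuine gap, and neither of your two proposed fixes closes it as stated. Peeling off the Greendlinger cell produces a boundary containing an arc of $\partial\Delta$ that is neither a geodesic nor a subword of $r$, so the inductive hypothesis no longer matches and the recursion does not run. The Strebel route \emph{can} be made to work, but not for the reason you give: first, $\partial D = f(\tau)\ast\s^{-1}$ is not a concatenation of geodesic segments (the premise is precisely that $f(\tau)$ might fail to be geodesic), so Strebel's theorem cannot be invoked under that hypothesis; one must verify the combinatorial bigon conditions directly. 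These do hold (for a boundary face $\Pi$ with a single exterior arc on $\s$ use the geodesic-subword bound $\le\tfrac12\abs{\partial\Pi}$, and for one on $f(\tau)$ use $\abs\alpha\le\ell\le n/2$ together with interior arcs being pieces), and Strebel then gives a single face or shape $\mathrm{I}_1$. But the remaining obstruction is \emph{not} ``a cell meeting both corners''; it is a cell $\Pi$ of shape-$\mathrm{I}_1$ whose $f(\tau)$-exterior arc is read by $\mathrm{lab}(\Pi)$ at the \emph{same} cyclic position of $r$ as $\tau$ reads it, in which case that arc is not a piece and your length count does not apply. Ruling this out needs an extra observation: such an alignment would force the label of the adjacent cell to have a free cancellation at the junction vertex (the interior arc and the continuing exterior arc would begin with the same letter), contradicting cyclic reducedness of relator labels. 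That reducedness argument is the missing ingredient.

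Two further points. You never verify that $D$ is simple, which Strebel's theorem requires; this does follow from minimality of $\ell$ (any self-intersection of $f(\tau)\ast\s^{-1}$ would produce a smaller counterexample), but it must be said. And the convexity conclusion is asserted in a single clause; the diagram you get from a maximal excursion of a geodesic outside $f(\Gamma_0)$ has a relator-side whose length you have not controlled to be at most $\abs r/2$, so the case analysis is not literally ``identical'' to the one you ran and needs its own (short) treatment.
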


We call the image of such a label-preserving graph homomorphism the image of a relator. By abuse of notation, we sometimes use relators and their images interchangeably. The lemma has the following consequence. 

\begin{lem}\label{lemma:Morse_implies_small_relators}
    Let $M$ be a Morse gauge and let $G = \pres$ be a $C'(1/6)$--group. Then for each integer $N\geq 2$, there exists $D$ such that the following holds. If $r\in \mc R$ is a relator and $w\subset r$ is a subword such that 
    \begin{enumerate}
        \item $\abs{w}\geq \frac{\abs{r}}{N}$,
        \item and $w$ is $M$--Morse,
    \end{enumerate}
    then $\abs{r}\leq D$.
\end{lem}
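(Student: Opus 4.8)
The plan is to exploit the convexity of relator images (Lemma~\ref{lem:isometric_embedding}) to find, inside any relator $r$ containing a long $M$--Morse subword, a quasi-geodesic bigon with one Morse side and a quantitatively controlled ``thickness'', and then to derive a bound on $\abs r$ from the fact that Morse geodesics cannot be paralleled by geodesics that diverge arbitrarily far. Concretely, let $r \in \mc R$, let $\Gamma_0$ be the cycle graph labelled by $r$, and let $f \colon \Gamma_0 \to \cay$ be the label-preserving homomorphism, so that by Lemma~\ref{lem:isometric_embedding} the image $f(\Gamma_0)$ is an isometrically embedded convex subset of $X = \cay$. Write $w \subset r$ for the given $M$--Morse subword with $\abs w \geq \abs r / N$, realised as an embedded arc in $f(\Gamma_0)$ with endpoints $a,b$; call $w'$ the complementary arc of the cycle, so $w$ and $w'$ together form the embedded circle $f(\Gamma_0)$, both arcs run from $a$ to $b$, and $\abs{w'} = \abs r - \abs w \leq (1 - 1/N)\abs r$.

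**The core estimate.**

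First I would observe that, since $f(\Gamma_0)$ is convex and isometrically embedded, the arc $w$ is a \emph{geodesic} in $X$ precisely when $\abs w \leq \abs{w'}$, and in general the distance $d(a,b)$ equals $\min\{\abs w, \abs{w'}\}$; after possibly relabelling we may assume $\abs w \leq \abs{w'}$, so $w = [a,b]$ is an $M$--Morse geodesic of length $\ell := \abs w \geq \abs r / N$, and $w'$ is an edge path from $a$ to $b$ of length $\abs{w'} \leq (N-1)\ell$, hence a $Q_0$--quasi-geodesic for $Q_0 := N-1$ (it is an edge path in a convex set, so distances along it are comparable to distances in $X$ up to the factor $\abs{w'}/d(a,b) \leq N-1$). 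Now apply the Morse property of $w$: every point of the $Q_0$--quasi-geodesic $w'$ lies within $M(Q_0)$ of $w = [a,b]$. On the other hand, $w$ and $w'$ are the two arcs of an embedded circle in the convex set $f(\Gamma_0)$, so the midpoint $m$ of $w'$ satisfies $d(m, a) = d(m,b) = \abs{w'}/2 \geq \ell/2$ along $f(\Gamma_0)$, and since $f(\Gamma_0)$ is isometrically embedded, $d(m, w) \geq$ (a definite fraction of $\ell$): indeed any point of $w$ is at distance $\geq \abs{w'}/2 - \abs w = \abs{w'}/2 - \ell$ from $m$ through the circle, and by convexity this is the distance in $X$ — but this is only useful when $\abs{w'} > 2\ell$. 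I expect the cleaner route is: pick the point $p$ on $w'$ maximising $d(p, w)$; on one hand $d(p,w) \leq M(Q_0)$ by Morseness; on the other hand, a direct combinatorial/convexity argument on the circle $f(\Gamma_0)$ forces $d(p, w) \geq c\,\abs r$ for some constant $c = c(N) > 0$ unless $\abs r$ is bounded. Combining, $c\,\abs r \leq M(Q_0) = M(N-1)$, giving $\abs r \leq M(N-1)/c =: D$.

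**The main obstacle.**

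The delicate point — and the step I would spend the most care on — is the lower bound $d(p, w) \geq c\,\abs r$, i.e.\ showing that the ``bulge'' of the complementary arc $w'$ away from the geodesic $w$ is genuinely linear in $\abs r$ rather than merely positive. The naive circle estimate above degenerates when $\abs w$ is close to $\abs{w'}$ (both near $\abs r / 2$), so one cannot simply use ``halfway around the circle is far''. The fix is to invoke the $C'(1/6)$ small-cancellation condition more seriously: a subpath of $w'$ of length $\geq \abs r / 6$ that stays within a bounded neighbourhood of $w$ would, after pushing to $w$ via closest-point projection, produce a long common subword of $r$ with another relator — that is, a long \emph{piece} — contradicting $C'(1/6)$ once the bound exceeds $\abs r / 6$. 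More precisely, if $d(p,w) \leq K$ for every $p \in w'$, then $w'$ lies in the $K$--neighbourhood of the geodesic $w$; a standard small-cancellation/disk-diagram argument (of the type underlying Lemma~\ref{lem:isometric_embedding} and Strebel's bigon classification) then shows $f(\Gamma_0)$ has diameter $O(K)$, hence $\abs r = O(K)$. Feeding in $K = M(N-1)$ closes the loop. Thus the proof reduces to: (i) extract the Morse geodesic $w$ and the $(N-1)$--quasi-geodesic $w'$ from the convex circle; (ii) apply Morseness to bound the neighbourhood $w' \subset \mathcal N_{M(N-1)}(w)$; (iii) apply small cancellation (via disk diagrams, or directly via pieces) to conclude that a relator trapped in a bounded neighbourhood of one of its geodesic arcs has bounded length. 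Step (iii) is where the hypothesis $C'(1/6)$ (rather than, say, the weaker Morse local-to-global machinery) is essential, and it is the technical heart of the argument; the rest is bookkeeping with the constants $N$, $Q_0 = N-1$, and $M$.
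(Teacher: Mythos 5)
Your opening strategy is the right one, and the first half of your ``core estimate'' is essentially the paper's argument. But there are two concrete errors that push you off course, and they cause you to pivot to a substantially more complicated (and not actually needed) route.

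First, the relabelling step is not valid: you propose to ``after possibly relabelling assume $\abs{w}\leq\abs{w'}$,'' but the hypothesis is that $w$ is $M$--Morse, and swapping $w$ with the complementary arc $w'$ destroys that hypothesis. The correct move, and the one the paper makes, is to pass to a \emph{subword} $w''\subset w$ of length exactly $\abs{r}/N$; this is still $M$--Morse by Lemma~\ref{new-monster}\ref{monster:subsegments}, and since $N\geq 2$ it has length $\leq\abs{r}/2$ and hence is a geodesic by Lemma~\ref{lem:isometric_embedding}. After this, the complementary arc $r-w''$ has length $\frac{N-1}{N}\abs{r}$ and is an $N$--quasi-geodesic with the same endpoints. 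Second, your circle-distance estimate is miscomputed: you claim the midpoint $m$ of the complementary arc satisfies $d(m,w)\geq \abs{w'}/2-\abs{w}$, ``only useful when $\abs{w'}>2\ell$.'' In fact, any path from $m$ to a point of the Morse arc must first reach one of the two shared endpoints, so by the isometric embedding of Lemma~\ref{lem:isometric_embedding} the correct bound is $d(m,w)\geq\abs{w'}/2$, which is linear in $\abs{r}$ and never degenerates. (In the paper's normalisation this is $\frac{N-1}{2N}\abs{r}$.) Because of this computational error you conclude that the direct estimate fails when $\abs{w}$ is close to $\abs{w'}$, and you retreat to a piece/disk-diagram argument invoking $C'(1/6)$; but that detour is unnecessary, and as sketched it is also not well-formed — closest-point projection of $w'$ onto $w$ does not produce a common subword with another relator, and both arcs belong to the \emph{same} relator, so no piece contradiction arises in the way you describe. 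With the two corrections above, the Morse bound $d(m,w)\leq M(N)$ together with the lower bound $\frac{N-1}{2N}\abs{r}\leq d(m,w)$ immediately yields $\abs{r}\leq \frac{2N}{N-1}M(N)$, which is the paper's proof; the $C'(1/6)$ condition enters only via Lemma~\ref{lem:isometric_embedding}, not through any additional piece argument.
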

\begin{proof}
    Let $w' \subset w$ be a subword with length $\abs{w'} = \frac{\abs{r}}{N}$. Then $w'$ is also $M$--Morse by Lemma~\ref{new-monster}\ref{monster:subsegments}. By Lemma \ref{lem:isometric_embedding}, the relator $r$ isometrically embeds in $G$, so the segment $r-w'$ is an $N$--quasi-geodesic. Let $x \in r-w'$ be the point in that segment with maximal distance from $w$. Since $r$ is isometrically embedded in $G$,
    \[
        d(x,w') \geq \frac{\abs{r} - \frac{\abs{r}}{N}}{2}= \frac{N-1}{2N}\abs{r}.
    \]
    On the other hand, since $w'$ is $M$--Morse, 
    \[
        d(x,w') \leq M(N).
    \]
    Combining the two inequalities we have $\abs{r}\leq D$, where
    \[
        D= \frac{2N}{N-1}M(N).
    \]
\end{proof}

\subsubsection{Disk diagrams}

\begin{defn}
    A (disk) diagram is a contractible, planar 2-complex. A disk diagram is 
    \begin{itemize}
        \item \emph{simple}, if it is homeomorphic to a disk.
        \item \emph{$\mc S$--labelled}, if all edges are labelled by an element of $\ov{\mc S}$.
        \item a diagram \emph{over $\mc R$}, if the boundary of any face is labelled by an element of $\ov{\mc R}$
    \end{itemize}
\end{defn}

Let $D$ be a disk diagram and let $\Pi$ be a face of $D$. An \emph{arc} is a maximal subpath of $D$ whose interior vertices all have degree 2. An arc is an \emph{interior arc} if its interior is contained in the interior of $D$ and an \emph{exterior arc} otherwise. Note that an exterior arc is contained in the boundary of $D$. The \emph{interior degree} of a face $\Pi$ is the number of interior arcs in its boundary and the \emph{exterior degree} of a face $\Pi$ is the number of exterior arcs in its boundary. A face is an \emph{interior face} if its exterior degree is $0$ and \emph{exterior face} otherwise.

\begin{defn}[Combinatorial geodesic bigon] A combinatorial
geodesic $n$--gon $(D, \gamma_1, \gamma_2, \ldots, \gamma_n)$ is a simple diagram $D$ whose boundary $\partial D$ is the concatenation $\gamma_1\ast \ldots \ast \gamma_n$ and such that the following conditions hold.
\begin{enumerate}
    \item Each boundary face whose exterior part is a single arc contained in one of the sides $\gamma_i$ has interior degree at least 4.\label{cond:ext}
    \item The boundary of each interior face consists of at least 7 arcs. \label{cond:int}
\end{enumerate}
\end{defn}

Combinatorial geodesic $n$--gons for $n\leq 3$ have been classified in \cite{strebel1990small} as follows.

\begin{lem}[Strebel’s classification,{\cite[Theorem 43]{strebel1990small}}]\label{lemma:strebel}
Let $D$ be a simple diagram.
\begin{enumerate}
    \item If $D$ is a combinatorial $1$-gon, then $D$ has a single face.
    \item If $D$ is a combinatorial $2$-gon (also called bigon), then either $D$ has a single face or it has shape $I_1$ from Figure~\ref{fig:strebel}.
    \item If $D$ is a combinatorial geodesic $3$--gon (also called triangle), then either $D$ has a single face or it has one of the shapes $I_2, I_3, II, III_1, IV$, or $V$ in Figure~\ref{fig:strebel}.
\end{enumerate}
\end{lem}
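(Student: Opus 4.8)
The plan is to prove Strebel's classification by a \emph{combinatorial Gauss--Bonnet} argument on the simple disk diagram $D$, after which the cases $n=1,2,3$ reduce to a finite accounting of curvature. First I would collapse every arc of $D$ to a single edge, so that the remaining vertices are exactly the endpoints of arcs and every interior vertex has degree at least $3$; since $D$ is homeomorphic to a disk there are no spurs or cut vertices to worry about. Next I would assign to each corner of each face an angle, arranged so that $\sum_{\Pi}\mathrm{curv}(\Pi)+\sum_{v}\mathrm{curv}(v)=2\pi\chi(D)=2\pi$, with the curvature concentrated as follows: giving each corner at an interior vertex the angle $2\pi/3$ makes every interior vertex have curvature $\le 0$ and makes an interior face with $a$ arcs have curvature $2\pi-\tfrac{\pi}{3}a$, which is $0$ for a flat $6$-gon and \emph{strictly negative} once $a\ge 7$ --- this is exactly where hypothesis (ii) enters. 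The boundary angles I would then choose so that, using the geodesic hypothesis (i), the positive curvature is confined to the $n$ corner vertices of the $n$-gon, each contributing strictly less than $\pi$, together with an explicit short list of low-complexity boundary faces; the role of (i) is precisely to exclude the one dangerous configuration --- a boundary face whose exterior part is a single arc inside a side and whose interior degree is $\le 3$ --- which could otherwise carry uncontrolled positive curvature along a side.

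Granting this set-up, the classification becomes a finite analysis of how a total of $2\pi$ can be distributed when only $n$ positive atoms of size $<\pi$ (plus the controlled boundary-face terms) are available. For $n=1$ there is not enough positive curvature unless $D$ degenerates, and one checks by inspection that $D$ must be a single face. For $n=2$ the two corners supply at most $2\pi$, so in fact every other curvature term vanishes: there are no interior faces (excluded by $a\ge 7$), every interior vertex has degree exactly $3$, and every boundary face is flat; one then argues that such a $D$ is either a single face or a \emph{ladder} of faces glued successively along single interior arcs, which is the shape $I_1$. For $n=3$ there is a surplus of at most $\pi$, so $D$ is again built from ladders, now allowed to meet at a single branching vertex or across one small central face, and enumerating the ways three ladders can be glued inside a disk subject to (i)--(ii) yields exactly the shapes $I_2, I_3, II, III_1, IV, V$ of Figure~\ref{fig:strebel}. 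I would carry out this enumeration by induction on the number of faces: find a boundary face meeting the interior of some side $\gamma_i$, show that deleting it (or cutting $D$ along an interior arc of it) produces either a strictly smaller combinatorial geodesic bigon/triangle or a pair of such diagrams, and match the resulting local picture against the list.

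The hard part will be the triangle case, and within it the bookkeeping of the degeneracies that the clean curvature count hides: faces meeting $\partial D$ in more than one arc; faces two of whose arcs are identified; vertices at which a side $\gamma_i$ fails to be locally embedded; and the ``pinched'' $I$-shapes, where a single arc separates $D$ into two pieces hanging off different sides. Each of these must be shown either to contradict simplicity or hypotheses (i)--(ii), or to be one of the listed shapes, and handling them correctly is what makes the integers $4$ and $7$ essential rather than cosmetic: replacing $7$ by $6$ admits flat interior faces and the list acquires infinitely many extra shapes, while replacing $4$ by $3$ lets a side be rerouted across a single face and destroys geodesicity. I would therefore first isolate a handful of reduction lemmas (every interior arc separates $D$; each side is embedded; each side meets each face in at most one arc) and only then run the final matching, following Strebel's original argument.
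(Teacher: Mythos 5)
The paper does not prove this lemma at all: it is imported wholesale as Theorem~43 of \cite{strebel1990small}, so there is no in-paper argument to compare against, and your task is effectively to reconstruct Strebel's original proof. Your high-level plan is the right one and matches what Strebel does: combinatorial Gauss--Bonnet on the collapsed diagram, with an angle assignment chosen so that the $C'(1/6)$-style hypotheses manifest as strict nonpositive curvature in the interior. Your interior computation is correct: giving each corner at an interior vertex angle $2\pi/3$ yields $\mathrm{curv}(\Pi) = 2\pi - \tfrac{\pi}{3}a$ for an interior face with $a$ arcs, and $\mathrm{curv}(v) = 2\pi(1 - d/3) \le 0$ for an interior vertex of degree $d \ge 3$, so hypothesis~(ii) is encoded as $\mathrm{curv}(\Pi) < 0$ once $a \ge 7$, as you say.

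Where the sketch breaks is the boundary accounting in the $n=2$ step, and the error propagates into $n=3$. You assert both that each corner vertex of the $n$-gon contributes \emph{strictly} less than $\pi$ and, two sentences later, that ``the two corners supply at most $2\pi$, so in fact every other curvature term vanishes'' and ``every boundary face is flat.'' These cannot all hold: if each corner gives strictly less than $\pi$ and every other term is nonpositive, the total is strictly less than $2\pi$, contradicting $2\pi\chi(D)=2\pi$. Moreover the conclusion ``every boundary face is flat'' is simply false for the shape~$\mathrm I_1$: a ladder has two \emph{end} faces, each meeting both sides $\gamma_1$ and $\gamma_2$, and it is precisely these distinguished faces that carry the extra positive curvature (the same is true of the distinguished faces that create the shapes $\mathrm I_2$ through $\mathrm V$ in the triangle case). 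In Strebel's argument the role of hypothesis~(i) is not to force all boundary faces to be flat, but to make the faces whose exterior part lies in the interior of a single side carry nonpositive curvature, so that positive curvature can live only at the $n$ corners and at the small number of faces that touch more than one side; it is this tightly controlled list of positive atoms that one enumerates. Your reduction lemmas (interior arcs separate, each side embedded, each side meets a face in at most one arc) are the right ones to prove first, but as written the $n=2$ argument does not close and the $n=3$ enumeration inherits the same gap.
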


Each of these shapes represents an infinite family of combinatorial geodesic bigons or
triangles obtained by performing face combination at a non-degenerate, distinguished
vertex with a shape $I_1$ bigon arbitrarily many times. Figure \ref{fig:altV} shows alternate examples of each shape.

\begin{figure}[h]
\captionsetup[subfigure]{labelformat=empty}
  \centering\hfill
  \begin{subfigure}[h]{.33\textwidth}
\centering
\includegraphics[width=\mytriwidth]{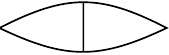}
    \caption{$\mathrm{I}_1$}
  \end{subfigure}\hfill
\begin{subfigure}[h]{.33\textwidth}\centering
\includegraphics[width=\mytriwidth]{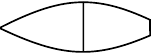}
    \caption{$\mathrm{I}_2$}
  \end{subfigure}\hfill
\begin{subfigure}[h]{.33\textwidth}\centering
\includegraphics[width=\mytriwidth]{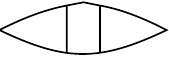}
    \caption{$\mathrm{I}_3$}
  \end{subfigure}\hfill

\begin{subfigure}[h]{.25\textwidth}\centering
\includegraphics[width=\mytriwidth]{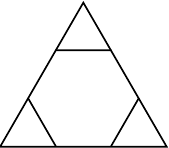}
    \caption{$\mathrm{II}$}
  \end{subfigure}\hfill
\begin{subfigure}[h]{.25\textwidth}\centering
\includegraphics[width=\mytriwidth]{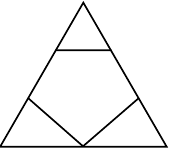}
    \caption{$\mathrm{III}_1$}
  \end{subfigure}\hfill
\begin{subfigure}[h]{.25\textwidth}\centering
\includegraphics[width=\mytriwidth]{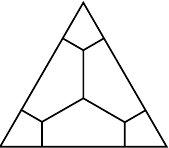}
    \caption{$\mathrm{IV}$}
  \end{subfigure}\hfill
\begin{subfigure}[h]{.25\textwidth}\centering
\includegraphics[width=\mytriwidth]{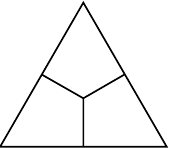}
    \caption{$\mathrm{V}$}
  \end{subfigure}\hfill

  \caption{Strebel's classification of combinatorial geodesic bigons and triangles, adopted from\cite[Figure~5]{arzhantseva2019negative}.}
  \label{fig:strebel}
\end{figure}

\begin{figure}[h]
\captionsetup[subfigure]{labelformat=empty}
  \centering\hfill
  \begin{subfigure}[h]{.33\textwidth}
\centering
\includegraphics[width=\mytriwidth]{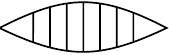}
    \caption{$\mathrm{I}_1$}
  \end{subfigure}\hfill
\begin{subfigure}[h]{.33\textwidth}\centering
\includegraphics[width=\mytriwidth]{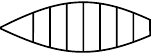}
    \caption{$\mathrm{I}_2$}
  \end{subfigure}\hfill
\begin{subfigure}[h]{.33\textwidth}\centering
\includegraphics[width=\mytriwidth]{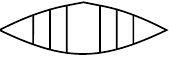}
    \caption{$\mathrm{I}_3$}
  \end{subfigure}\hfill

\begin{subfigure}[h]{.25\textwidth}\centering
\includegraphics[width=\mytriwidth]{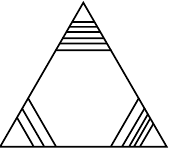}
    \caption{$\mathrm{II}$}
  \end{subfigure}\hfill
\begin{subfigure}[h]{.25\textwidth}\centering
\includegraphics[width=\mytriwidth]{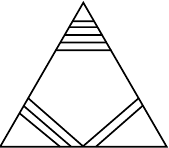}
    \caption{$\mathrm{III}_1$}
  \end{subfigure}\hfill
\begin{subfigure}[h]{.25\textwidth}\centering
\includegraphics[width=\mytriwidth]{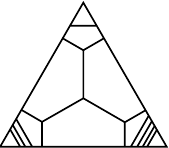}
    \caption{$\mathrm{IV}$}
  \end{subfigure}\hfill
\begin{subfigure}[h]{.25\textwidth}\centering
\includegraphics[width=\mytriwidth]{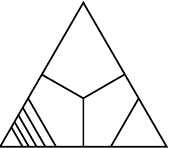}
    \caption{$\mathrm{V}$}
  \end{subfigure}\hfill

  \caption{Alternate examples of each shape, adopted from\cite[Figure~6]{arzhantseva2019negative}.}
  \label{fig:altV}
\end{figure}

\subsubsection{Morse geodesics in small-cancellation groups} In this section we highlight some consequences of \cite{arzhantseva2019negative}, which allow us to characterize Morse geodesics in small-cancellation groups in terms of their intersections with relators.

\begin{defn}[Intersection function]\label{def:intersection-function}
Let $G = \pres$ be a $C'(1/6)$--group. Let $\gamma$ be an edge path in $\cay$. The intersection function of $\gamma$ is the function $\rho : \N\to \R_+$ defined by 
\begin{align*}
    \rho(t) = \max_{\substack{\abs{r}\leq t\\r\in\ov{\mc R}}}\left\{\abs{w}\Big\vert \text{$w$ is a subword of $r$ and $\gamma$}\right\}.
\end{align*}
\end{defn}
\begin{rem}
    In light of Lemma \ref{lem:isometric_embedding}, the intersection function $\rho$ of a geodesic $\gamma$ is equal to the function $\rho'$, defined via
    \begin{align*}
    \rho'(t) = \max_{\abs{\Gamma_0}\leq t}\left\{\abs{\Gamma_0 \cap \gamma}\right\},
    \end{align*}
    where $\Gamma_0$ ranges over all images of relators.
\end{rem}

The following lemma is a consequence of \cite[Theorem 1.4]{ACHG:contraction_morse_divergence} and \cite[Corollary 4.14, Theorem 4.1]{arzhantseva2019negative}. It shows the relation between contraction, Morseness and the intersection function. In \cite{Z:small_cancellation} it is explained in more detail how Lemma~\ref{lem:4.14} follows from \cite[Theorem 4.1, Corollary 4.14]{arzhantseva2019negative}

\begin{lem}[Consequence of {\cite[Theorem 4.1, Corollary 4.14]{arzhantseva2019negative}}]\label{lem:4.14} Let $G = \pres$ be a $C'(1/6)$--group. Let $\alpha$ be a geodesic in $X =\cay$ and let $\rho$ be its intersection function. Then,
\begin{enumerate}[label=(\roman*)]
    \item The geodesic $\alpha$ is Morse if and only if $\rho$ is sublinear.
    \item If $\rho$ is sublinear, $\alpha$ is $M$-Morse for some Morse gauge $M$ only depending on $\rho$.
    \item If $\alpha$ is $M$-Morse, then $\rho\leq \rho'$ for some sublinear function $\rho'$ only depending on $M$. 
\end{enumerate}
\end{lem}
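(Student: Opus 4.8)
<br>

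The plan is to deduce the three statements from the corresponding results of \cite{arzhantseva2019negative}, translating their hypotheses and conclusions into the present language of Morse gauges and intersection functions. First I would recall from \cite[Corollary 4.14]{arzhantseva2019negative} that for a geodesic $\alpha$ in the Cayley graph of a $C'(1/6)$--group, the contraction behaviour of $\alpha$ is controlled by exactly the same asymptotic data as its intersection function $\rho$; more precisely, a divergence/contraction gauge for $\alpha$ can be extracted from $\rho$ and vice versa, and $\alpha$ has sublinear divergence from geodesics (equivalently is \emph{contracting} in the sublinear sense) if and only if $\rho$ is sublinear. Combined with \cite[Theorem~4.1]{arzhantseva2019negative}, which characterises Morse geodesics in these groups via precisely this sublinear contraction condition, and with \cite[Theorem~1.4]{ACHG:contraction_morse_divergence} relating (sublinear) contraction and Morseness, one obtains the equivalence in part~(i): $\alpha$ Morse $\iff$ $\alpha$ has sublinear contraction function $\iff$ $\rho$ sublinear.

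<br>

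For part~(ii) I would track the dependence of constants in the implications used for (i). If $\rho$ is sublinear, \cite[Corollary 4.14]{arzhantseva2019negative} produces an explicit sublinear contraction (or divergence) bound for $\alpha$ depending only on $\rho$; feeding this bound into the quantitative version of \cite[Theorem~1.4]{ACHG:contraction_morse_divergence}, which outputs a Morse gauge depending only on the contraction data, yields a Morse gauge $M = M(\rho)$ as required. The point is simply that every step in this chain is effective and the output gauge never depends on $\alpha$ beyond $\rho$. For part~(iii), conversely, I would start from an $M$--Morse geodesic $\alpha$, invoke \cite[Theorem~1.4]{ACHG:contraction_morse_divergence} in the reverse direction to obtain a sublinear contraction function for $\alpha$ depending only on $M$, and then use the quantitative form of \cite[Corollary 4.14, Theorem~4.1]{arzhantseva2019negative} to bound $\rho$ above by a sublinear function $\rho'$ depending only on that contraction function, hence only on $M$.

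<br>

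The main obstacle I anticipate is bookkeeping rather than conceptual: the statements in \cite{arzhantseva2019negative} are phrased for $Gr'(1/6)$--labelled graphs and in terms of divergence functions, not the intersection function $\rho$ as defined here (Definition~\ref{def:intersection-function}), and \cite[Theorem~1.4]{ACHG:contraction_morse_divergence} is stored here only as the one-directional Lemma~\ref{lem:theorem1.4}. So the real work is to (a) confirm that the ``intersection function'' implicit in \cite{arzhantseva2019negative} agrees, up to harmless reparametrisation, with $\rho$, using the isometric embedding and convexity of relators from Lemma~\ref{lem:isometric_embedding} and the remark following Definition~\ref{def:intersection-function}; and (b) extract the \emph{quantitative, two-directional} correspondence between Morseness and sublinear contraction from \cite{ACHG:contraction_morse_divergence}, of which only one direction is recorded in the excerpt. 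Since the excerpt itself remarks that \cite{Z:small_cancellation} already spells out how Lemma~\ref{lem:4.14} follows from \cite[Theorem~4.1, Corollary~4.14]{arzhantseva2019negative}, I would at this point simply cite that reference for the detailed verification and present the argument above as the outline.
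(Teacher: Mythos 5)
Your proposal takes essentially the same route as the paper: the paper itself offers no written proof of this lemma but instead states it as a direct consequence of \cite[Theorem~4.1, Corollary~4.14]{arzhantseva2019negative} together with \cite[Theorem~1.4]{ACHG:contraction_morse_divergence}, and explicitly defers the detailed derivation to \cite{Z:small_cancellation} — exactly the conclusion you reach. Your outline of the implication chain and your flagging of the two translation issues (divergence function versus intersection function, and the need for the quantitative two-directional form of the contraction--Morse correspondence rather than the one-directional Lemma~\ref{lem:theorem1.4}) is accurate and, if anything, slightly more explicit than what the paper records.
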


\subsubsection{Increasing partial small-cancellation condition}

In \cite{Z:small_cancellation}, the notion of $C'(1/f)$ for ``viable'' functions $f$ is introduced, which is used to define the increasing partial small-cancellation condition (see Definition~\ref{def:IPSC}).

\begin{defn}\label{def:viable}
    We say that a non-decreasing function $f: \N \to \R_+$ is viable if $f(n)\geq 6$ for all $n$ and $\lim_{n\to \infty} f(n) = \infty$. 
\end{defn}

Observe that a consequence of a function $f$ being viable is that the function $\rho(n) = n/f(n)$ (or equivalently the function $\rho'(n) =\max\{\rho'(n-1), n/f(n)\}$) is sublinear. 

\begin{defn}\label{def:c1f} Let $f$ be a viable function. We say that a finitely generated group $G = \pres$ is a $C'(1/f)$--group if $\mc R$ is infinite and for every piece $p$ of a relator $r\in \ov{\mc R}$ we have that $\abs{p}<\abs{r} / f(\abs{r})$. 
\end{defn}

In other words, in $C'(1/f)$--groups, pieces of large relators have to be smaller fractions of their relators than pieces of smaller relators. Requiring that viable functions satisfy $f(n)\geq 6$ ensures that all $C'(1/f)$--groups are $C'(1/6)$--groups. Instead of requiring that a set of relators $\mc R$ satisfies the $C'(1/f)$ condition as a whole, we can also require that certain words satisfy the $C'(1/f)$ condition with respec to $\mc R$.

\begin{defn}\label{def:loacl:c1f}
    Let $f$ be a viable function, let $x$ be a reduced word and let $\mc R$ be a set of cyclically reduced words. We say that the pair $(x, \mc R)$ satisfies the $C'(1/f)$--small-cancellation condition if every common subword $p$ of $x$ and a relator $r\in \ov {\mc R}$ satisfies $\abs{p}< \abs{r}/f(\abs{r})$.
\end{defn}

The following definition introduced in \cite{Z:small_cancellation} quantifies having sufficiently long subwords $w$ of longer and longer relators such that $(w, \mc R)$ satisfy the $C'(1/f)$--small-cancellation condition.

\begin{defn}[IPSC]\label{def:IPSC}
     Let $G = \pres$ be a finitely generated group. We say that $G$ satisfies the \emph{increasing partial small-cancellation condition (IPSC)} if for every sequence $(n_i)_{i\in \N}$ of positive integers, there exists a viable function $f$ such that the following holds. For all $K\geq 0$ there exists $i\geq K$ and a relator $r = xy\in \ov{\mc R}$ satisfying: 
    \begin{enumerate}[label= \roman*)]
        \item $\abs{r}\geq n_i$,
        \item $\abs{x}\geq \abs{r}/i$,
        \item the pair $(x, \mc R)$ satisfies the $C'(1/f)$--small-cancellation condition.
    \end{enumerate}
\end{defn}

We care about groups satisfying the IPSC condition due to the following result.

\begin{lem}[{\cite[Theorem~B]{Z:small_cancellation}}]\label{prop:non-sigma-compact-condition}
    Let $G = \pres$ be a $C'(1/6)$--group. The Morse boundary of $G$ is non-$\sigma$-compact if and only if $G$ satisfies the IPSC condition.
\end{lem}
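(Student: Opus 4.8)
\emph{Sketch of a proof.}

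The plan is to recast $\sigma$-compactness of $\mb X$ entirely in terms of \emph{intersection functions} and then to read off the combinatorial meaning of the IPSC condition in those terms. First I would note, combining Lemma~\ref{lem:def-of-sigma-compact} with Lemma~\ref{lem:4.14}, that $\mb X$ is $\sigma$-compact if and only if there is a countable family $(\rho_n)_{n\in\N}$ of sublinear functions such that the intersection function $\rho_\gamma$ of every Morse geodesic ray $\gamma$ based at $x_0$ is dominated by some $\rho_n$: part (ii) of Lemma~\ref{lem:4.14} turns a sublinear bound into a Morse gauge, part (iii) turns a Morse gauge back into a sublinear bound, and properness of $X$ supplies a geodesic representative at $x_0$ for each boundary point. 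So the theorem reduces to showing that \emph{no such countable family exists} exactly when $G$ satisfies IPSC. Below, $\rho_p$ denotes the intersection function of an edge path $p$ as in Definition~\ref{def:intersection-function}, and $p\cap r$ denotes a longest common subword of $p$ and a relator $r$.

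For the direction IPSC $\Rightarrow$ non-$\sigma$-compact I would argue by contradiction, starting from an increasing dominating family $(\rho_n)$. For each $n,i$ sublinearity furnishes a threshold $T_n(i)$ with $\rho_n(t)<t/i$ for $t\ge T_n(i)$, and I would \emph{diagonalise}: put $n_i:=\big(\max_{n\le i}T_n(i)\big)+2^i$ and feed $(n_i)_{i\in\N}$ into IPSC to obtain a viable function $f$ and, along a subsequence $i_1<i_2<\cdots$, relators $r_k=x_ky_k$ with $|r_k|\ge n_{i_k}$, $|x_k|\ge|r_k|/i_k$ and $(x_k,\mc R)$ satisfying $C'(1/f)$. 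The core construction is to splice the words $x_k$ together, separated by short connecting words, into a reduced word spelling a \emph{geodesic ray} $\gamma$ in $\cay$ that still carries each $x_k$ as a subword; for this I would use the $Gr'(1/6)$ graphical small-cancellation technology of \cite{gruber2018infinitely,arzhantseva2019negative} together with disk diagrams and Strebel's classification of combinatorial geodesic bigons (Lemma~\ref{lemma:strebel}). The $C'(1/f)$ hypotheses on the pairs $(x_k,\mc R)$, together with the $C'(1/6)$ bound on all pieces, would force any common subword of $\gamma$ with a relator $r$ to meet each $x_k$ in fewer than $|r|/f(|r|)$ letters, to meet the connectors in boundedly many, and to meet $r_k$ itself in $O(|r_k|/f(|r_k|))$ letters; a counting argument bounding how many distinct $x_k$ a single relator can meet — using that $|x_k|\to\infty$ — then gives that $\rho_\gamma(t)$ is at most a constant times $t/f(t)$, hence sublinear, so $\gamma$ is Morse by Lemma~\ref{lem:4.14}(i). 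But $\rho_\gamma(|r_k|)\ge|x_k|\ge|r_k|/i_k$, and since $|r_k|\ge n_{i_k}\ge T_{n_0}(i_k)$ for every fixed $n_0$ and all large $k$, we obtain $\rho_{n_0}(|r_k|)<|r_k|/i_k\le\rho_\gamma(|r_k|)$; so no $\rho_{n_0}$ dominates $\rho_\gamma$, contradicting the choice of $(\rho_n)$.

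For the converse I would prove the contrapositive: assuming $G$ fails IPSC, I would build the countable dominating family. Fix a sequence $(n_i)$ witnessing the failure. Given a Morse geodesic ray $\gamma$ with sublinear intersection function $\rho=\rho_\gamma$, I would attach to it a viable function $f_\gamma$ — a monotone minorant of $t\mapsto t/(2\rho(t))$, clamped below by $6$ — and let $T_0(\gamma)$ be a scale beyond which $f_\gamma$ is not clamped. For any relator $r$ with $|r|\ge T_0(\gamma)$, the overlap $x:=\gamma\cap r$, seen as a subword of $r$, makes $(x,\mc R)$ satisfy $C'(1/f_\gamma)$: common subwords of $x$ with relators other than $r$ are pieces, hence controlled by $C'(1/6)$ on short scales and by $\rho$ on long scales, while the common subword of $x$ with $r$ itself has length at most $\rho(|r|)\le |r|/(2f_\gamma(|r|))<|r|/f_\gamma(|r|)$. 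Feeding $f_\gamma$ into the failure of IPSC yields $K=K(f_\gamma)\in\N$ such that $|\gamma\cap r|<|r|/i$ for all $i\ge K$ and all $r$ with $|r|\ge n_i$; hence, with $m=m(\gamma):=\max\{K(f_\gamma),\,\min\{j:n_j\ge T_0(\gamma)\}\}$, one has $\rho_\gamma(t)\le\max(n_i,t/i)$ for every $i\ge m$ with $n_i\le t$. Taking the infimum over such $i$ produces a sublinear function $\sigma_m$ depending only on the integer $m$, and since every Morse geodesic ray lies in one of the strata $\{\gamma:m(\gamma)\le m\}$, the family $(\sigma_m)_{m\in\N}$ dominates all Morse intersection functions, so $\mb X$ is $\sigma$-compact.

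The step I expect to be genuinely hard is the geodesic construction in the first implication: one must choose the connecting words so that the resulting infinite word is reduced and actually spells a geodesic ray — this is where graphical small cancellation and Strebel's classification are essential — check that passing to a geodesic does not destroy the overlaps $x_k\subset r_k$, and make the intersection-function estimate uniform enough to conclude sublinearity, including the bound on how many distinct $x_k$ a single relator can meet. The converse is comparatively soft once the dictionary between sublinear intersection functions and $C'(1/f)$--positioned relator subwords is in place, although extracting $f_\gamma$ from $\rho_\gamma$ cleanly — so that the clamp at $6$ and the short relators do not interfere — needs the small bookkeeping indicated above.
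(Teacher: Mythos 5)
This lemma is cited from \cite[Theorem~B]{Z:small_cancellation} and the present paper contains no proof of it, so there is no in-paper argument to compare against. Your route --- recasting $\sigma$-compactness as the existence of a countable dominating family of sublinear intersection functions via Lemmas~\ref{lem:def-of-sigma-compact} and~\ref{lem:4.14}, then diagonalising against such a family in the IPSC $\Rightarrow$ non-$\sigma$-compact direction and assembling such a family from $\neg$IPSC in the converse --- is the natural one given the tools available, and is in the same spirit as Zbinden's argument.

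The gap you yourself flag in the forward direction is genuine and not merely technical. Beyond checking that the spliced infinite word is reduced, you must show it is a \emph{geodesic} word in $\cay$ (this is where disk diagrams and Strebel's classification enter), and the step \emph{``bounding how many distinct $x_k$ a single relator can meet''} is not automatic: a contiguous common subword $w$ of $\gamma$ and a relator $r$ could a priori traverse several $x_k$'s together with their connectors, and making that count uniformly bounded requires a deliberate choice of connector lengths relative to the $|x_k|$ and a quantitative comparison with $|r|$. Without pinning this down, the conclusion $\rho_\gamma(t) = O(t/f(t))$ does not follow, and this estimate is the entire point. The converse direction is sound in outline; the residual items to verify are that the family $(\sigma_m)$ you extract is genuinely sublinear (the infimum over $i \geq m$ with $n_i \leq t$ must be taken with care) and that $\sigma_m$ dominates $\rho_\gamma$ at all scales, not only for $t$ above $n_{m(\gamma)}$. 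Modulo these, the proposal is a plausible sketch, but the forward direction in particular is far from a complete proof.
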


\section{The Morse-local-to-global property in small-cancellation groups}\label{sec:small-cancellation}

In this Section we prove the implication $(2)\implies (1)$ of Theorem~\ref{thmintro:small_cancellation}. That is, we show that if a $C'(1/9)$--group has a $\sigma$-compact Morse boundary, then it satisfies the Morse local-to-global property. We first show that if a $C'(1/6)$--group $G = \pres$ has $\sigma$-compact Morse boundary, then any geodesic which is $L$--locally Morse for a sufficiently large scale $L$ is globally Morse (see Definition~\ref{def:geodesicMLTG}). The proof of this relies on Lemma~\ref{prop:non-sigma-compact-condition}, which states that $C'(1/6)$--groups have $\sigma$-compact Morse boundary if and only if they satisfy the IPSC condition. As a second step, we approximate a locally Morse quasi-geodesic by a path $p$, which we show to have ``small'' intersection function by using disk diagram techniques. Lastly, we show using combinatorial geodesic bigons, that for all $s, t$, we have that $p[s, t]$ stays close to the geodesic $[p(s), p(t)]$. The last step is the only place where we require that $G$ is a $C'(1/9)$--group instead of a $C'(1/6)$--group. If that step is shown for $C'(1/6)$--groups, then the whole result can be upgraded to $C'(1/6)$--groups.

\begin{defn}[geodesic MLTG]\label{def:geodesicMLTG}
We say that a geodesic metric space $X$ satisfies the \emph{geodesic MLTG} property if for every Morse gauge $M$, there exists a constant $L$ and a Morse gauge $M'$ such that every geodesic which is $L$--locally $M$--Morse is $M'$--Morse.
\end{defn}

Further we say that a $C'(1/6)$ group $G = \pres$ satisfies the geodesic MLTG property if $X = \cay$ satisfies the geodesic MLTG property.

\begin{prop}\label{lem:sigma_cpct_implies_geosdesicMLTG}
	Let $G = \pres$ be a $C'(1/6)$--group. If $\mb G$ is $\sigma$-compact, then $X = \cay$ satisfies the geodesic MLTG property. 
\end{prop}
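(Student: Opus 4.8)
The plan is to prove the contrapositive: if $X = \cay$ does \emph{not} satisfy the geodesic MLTG property, then $\mb G$ is not $\sigma$-compact, which by Lemma~\ref{prop:non-sigma-compact-condition} amounts to showing that $G$ satisfies the IPSC condition. So suppose the geodesic MLTG property fails. Unwinding Definition~\ref{def:geodesicMLTG}, there is a Morse gauge $M$ such that for every scale $L$ and every Morse gauge $M'$ there is a geodesic that is $L$--locally $M$--Morse but not $M'$--Morse. Applying this with $L$ ranging over the positive integers and using a diagonal/exhaustion argument, we obtain a sequence of geodesic segments $\gamma_L$ that are $L$--locally $M$--Morse but whose ``Morse quality'' degrades without bound as $L\to\infty$ (they are not $M_L$--Morse for any prescribed exhausting sequence of gauges). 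The key point is to translate the failure of Morseness of such a local-Morse geodesic into the combinatorial statement that it shares arbitrarily long subwords with relators whose length grows at a controlled (in fact, given any target growth, faster-than-that) rate — i.e.\ the intersection function $\rho$ of $\gamma_L$ is forced to be large on large inputs.

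The main technical step is this translation, carried out via Lemma~\ref{lem:4.14}. By part (i)--(iii) of that lemma, a geodesic $\alpha$ is $M$--Morse precisely when its intersection function $\rho_\alpha$ is bounded above by a sublinear function depending only on $M$; and being far from $M'$--Morse forces $\rho_\alpha$ to be large somewhere. Since $\gamma_L$ is $L$--locally $M$--Morse, every subsegment of length $\le L$ has intersection function controlled by the sublinear function $\rho_M$ attached to $M$; but since $\gamma_L$ is globally badly non-Morse, on a scale larger than $L$ it must meet some relator $r$ in a subword $x$ of definite proportion $\abs{x}\ge \abs{r}/i$ of that relator (otherwise Lemma~\ref{lemma:Morse_implies_small_relators}-type estimates together with Lemma~\ref{lem:4.14} would bound the Morse gauge). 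Crucially, because $\gamma_L$ is a \emph{geodesic} and $x$ is a common subword of $\gamma_L$ and $r$, any further relator $r'$ sharing a long subword with $x$ is ``seen'' by $\gamma_L$ too, and the local $M$--Morseness of $\gamma_L$ at scale $L$ bounds how long such common subwords $p$ of $x$ and $r'\in\ov{\mc R}$ can be — this is exactly the statement that the pair $(x, \mc R)$ satisfies a $C'(1/f)$ condition for a viable function $f$ built out of $\rho_M$ and the scale $L$. Letting $L$ (equivalently $i$) grow produces, for any prescribed sequence $(n_i)$, relators $r_i = x_i y_i$ with $\abs{r_i}\ge n_i$, $\abs{x_i}\ge \abs{r_i}/i$, and $(x_i,\mc R)$ satisfying $C'(1/f)$; this is precisely the IPSC condition.

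The step I expect to be the main obstacle is extracting the viable function $f$ and verifying condition (iii) of Definition~\ref{def:IPSC} uniformly: one must check that a \emph{single} viable $f$ (depending on the target sequence $(n_i)$, not on $i$) controls all the common subwords $p$ of $x_i$ with relators $r'\in\ov{\mc R}$ of every length, and that $f(n)\to\infty$. This requires combining the scale-$L$ local Morseness bound (which controls common subwords with short relators via $\rho_M$ and Lemma~\ref{lemma:Morse_implies_small_relators}) with an argument that longer relators $r'$ cannot share too-large a fraction of $x_i$ — here one uses that if they did, $x_i$ (hence $\gamma_L$) would fail to be $L$--locally $M$--Morse once $L$ exceeds the relevant length, contradicting the choice of $\gamma_L$. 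Managing the interdependence of the parameters $L$, $i$, $n_i$, and $f$ — i.e.\ choosing $\gamma_L$ with $L$ large enough relative to $n_i$ and to $\rho_M$ so that the $C'(1/f)$ estimate closes up — is the bookkeeping heart of the proof. Once IPSC is established, Lemma~\ref{prop:non-sigma-compact-condition} gives that $\mb G$ is non-$\sigma$-compact, completing the contrapositive.
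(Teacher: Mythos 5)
Your high-level outline matches the paper's: prove the contrapositive, translate the failure of geodesic MLTG into intersection-function language via Lemma~\ref{lem:4.14}, and then produce relators witnessing IPSC so that Lemma~\ref{prop:non-sigma-compact-condition} finishes the job. But the step you flag as the ``bookkeeping heart'' --- producing a \emph{single} viable $f$ for which $(x_i,\mc R)$ is $C'(1/f)$ uniformly in $i$ --- is a genuine gap, and the mechanism you sketch for closing it does not work.

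You suggest that if a long relator $r'$ shared too large a fraction of $x_i$, then $\gamma_L$ ``would fail to be $L$-locally $M$-Morse once $L$ exceeds the relevant length.'' But $L$ is fixed once $\gamma_L$ is chosen; local Morseness at scale $L$ only controls common subwords of length at most $L$, and gives no control whatsoever on overlaps at scales larger than $L$ --- indeed such overlaps are exactly what make $\gamma_L$ globally non-Morse. So the local Morseness bound handles short relators and short common subwords, but says nothing about long relators sharing long subwords with $x_i$, which is precisely the regime you need to control for condition (iii). The paper closes this by two devices you do not have. First, for relators $r$ with $\abs{r}<\abs{r_i}$, it uses a \emph{minimality} choice of $r_i$ (smallest relator with a subword that is $L_i$-locally $\rho$-intersecting but not $\rho'$-intersecting), which forces any piece $p\subset x_i$ lying in such an $r$ to already be $\rho'$-intersecting. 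Second, for relators $r$ with $\abs{r}\ge\abs{r_i}$, it uses the trivial bound $\abs{p}\le\abs{x_i}$ together with a deliberate truncation of $x_i$ so that $m_i := \abs{r_i}/\abs{x_i}$ grows like $k_i/2$, yielding $\abs{p}\le\abs{x_i}\le\abs{r}/m_i$. Neither idea appears in your proposal, and neither is a consequence of local Morseness at scale $L$. The paper then stitches these two regimes together by defining $g_i(t)=1+t$ for $t\le\abs{r_i}$ and $1+t/m_i$ otherwise, taking $g=\max\{\rho',\min_i g_i\}$, and proving $g$ is sublinear via Lemma~\ref{lem:construct-sublinear-functions}; the case analysis ($i<j$ vs.\ $i\ge j$, $\abs{r}$ small vs.\ large) is what you would still need to supply. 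In addition, Lemma~\ref{lemma:Morse_implies_small_relators}, which you invoke, is not used in this part of the paper's argument (it appears only in the later quasi-geodesic step), and invoking it here does not help since it already presupposes global Morseness.

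To salvage the proposal: make the choice of $r_i$ minimal among relators admitting a subword that is $L_i$-locally $\rho$-intersecting but not $\rho'$-intersecting; truncate $x_i'$ to $x_i$ so that $\abs{r_i}/k_i<\abs{x_i}\le 2\abs{r_i}/k_i$, ensuring $m_i\to\infty$; set up the inductive choice of $L_i$ so that $k_i$ (and hence $m_i$) strictly increases; and then carry out the four-way case analysis to bound $\abs{p}$ by a single sublinear $g$, from which the viable $f$ is extracted by monotonizing and capping below at $6$.
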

\begin{proof}

By Lemma~\ref{prop:non-sigma-compact-condition}, which states that for $C'(1/6)$--groups having $\sigma$-compact Morse boundary is equivalent to not satisfying the IPSC condition it suffices to show that if $G$ does not have the geodesic MLTG property, then $G$ satisfies the IPSC condition. 

Assume that $G = \pres$ does not have the geodesic MLTG property. Consequently, there exists a Morse gauge $M$ such that for all Morse gauges $M'$ and scales $L$ there exists a geodesic $\gamma_{L,M'}$ which is $L$--locally $M$--Morse but not $M'$--Morse. 

Thanks to Lemma~\ref{lem:4.14} we know that in a $C'(1/6)$ group a  geodesic $\gamma$ is $M$--Morse if and only if its intersection function $\rho_\gamma$ is sublinear. Further, the Morse gauge $M$ and the intersection function $\rho_\gamma$ can be bounded in terms of each other. Hence not satisfying the geodesic MLTG property is equivalent to the following.

There exists a sublinear function $\rho$ such that for all sublinear functions $\rho'$ and for all scales $L$, there exists a geodesic $\gamma_{L, \rho'}$ which is $L$-locally $\rho$--intersecting but not $\rho'$--intersecting.

If $\gamma_{L, \rho'}$ is not $\rho'$--intersecting, then there has to exist a relator $r = xy$ such that $x$ is a subpath of $\gamma_{L, \rho'}$ and $\abs{x}>\rho'(\abs{r})$. In other words, there exists a subpath $x$ of a relator $r$ such that $x$ is $L$--locally $\rho$--intersecting but $\abs{x} > \rho'(\abs{r})$.

Now we can start proving that $G$ satisfies the IPSC. Let $(n_i)_{i\in \N}$ be a sequence of integers. We want to show that there exists a viable function $f$ such that for each $K$ there exists $i \geq K$ and a relator $r = xy$ such that the pair $(i, r=xy)$ satisfies conditions $i)-iii)$ from Definition~\ref{def:IPSC}. Since increasing $(n_i)$ makes it harder to satisfy these conditions, it suffices to consider sequences $(n_i)_{i\in \N}$ which satisfy $n_{i+1}\geq n_i+1$ and $n_i \geq i^2$ for all $i$.

In fact, we will construct a viable function $f$, a sequence of relators $r_i = x_iy_i$ and of integers $k_i\geq k_{i-1}+1$ such that for all $i$,
\begin{enumerate}[label = \Roman*)]
    \item $\abs{x_i}\geq \abs{r_i}/k_i$,\label{1}
    \item $\abs{r_i}\geq n_{k_i}$,\label{2}
    \item $(x_i, \mc R)$ satisfies the $C'(1/f)$--small-cancellation condition.\label{3}
\end{enumerate}

The existence of such a sequence of relators will conclude the proof.

Let $\rho'$ be the following function

\begin{align*}
    \rho'(t) = \max\left\{\frac{t}{j} ,\rho (t), \rho'(t-1)\right\} \quad \text{for $n_j\leq t <n_{j+1}$.}
\end{align*}
Note that $\rho'$ is sublinear and non-decreasing. 

We start by inductively constructing the sequence of relators $r_i = x_iy_i$ and numbers $k_i$ satisfying conditions \ref{1} and \ref{2}. We also need the additional quantities $m_i= \vert r_i \vert / \vert x_i\vert$ which we will ensure satisfy $m_i \geq k_i/2$,  and $L_i$, the scales we use need to find geodesics $\gamma_{L_i, \rho'}$ as in the hypothesis. 

To start the induction, set  $k_{0} = 6$,  $m_0 = 3$ and let $x_0$ be the empty word, so that $\vert x_0 \vert= 0$. For $i \geq 1$ define

\[
L_i = n_{\max\{\vert x_{i-1} \vert, {2\lceil{m_{i-1}}\rceil+2}\}}.
\]

Let $r_i = x_i'y_i'$ be the smallest relator such that
\begin{enumerate}
    \item $x_i'$ is $L_i$--locally $\rho$--intersecting.
    \item $x_i'$ is not $\rho'$--intersecting.
\end{enumerate}
Note that such a relator exists as argued above. 
Since $x_i'$ is not $\rho'$--intersecting and $\rho'\geq \rho$, we know that $\abs{x_i'} > L_i$. Define $k_i$ as the integer such that $n_{k_i}\leq \abs{r_i}< n_{k_{i}+1}$. Since $\abs{x_i'} > L_i$ we have that 
\[
    \abs{r_i} \geq \abs{x_i'} > L_i = n_{\max\{\vert x_{i-1} \vert, {2\lceil{m_{i-1}}\rceil+2}\}},
\]
and hence
\begin{equation}\label{eqn:lowerbound_k_i}
    k_i\geq\max\{\vert x_{i-1} \vert, {2\lceil{m_{i-1}}\rceil+2}\}.
\end{equation}

Since $x_i'$ is not $\rho'$--intersecting, we know that $\abs{x_i'} > \rho'(\abs{r_i}) \geq \abs{r_i}/k_i$. By choosing $x_i$ as an appropriate subsegment of $x_i'$, we can ensure that 
\begin{equation}\label{eqn:bound_x_i}
    \frac{\abs{r_i}}{k_i} < \abs{x_i} \leq 2 \frac{\abs{r_i}}{k_i}.
\end{equation}

Define $m_i = \abs{r_i}/\abs{x_i}$. Inequalities \eqref{eqn:lowerbound_k_i} and \eqref{eqn:bound_x_i} show that 
\begin{align}\label{eqmiki}
\lceil m_{i-1}\rceil +1\leq\frac{k_i}{2}\leq m_i\leq k_i.
\end{align}
In particular,
\begin{align}\label{eq:mis}
    m_{i-1}&\leq m_i,\\
    k_{i-1}+1&\leq k_i,
\end{align}
where we used the left most inequality of \eqref{eqmiki} and that by induction $k_{i-1}/2\leq m_{i-1}$.
Further, since $n_j\geq j^2$ for all $j$, we have that $\abs{x_i}> \abs{r_i}/k_i\geq k_i \geq \abs{x_{i-1}}$. In particular,
\begin{align}\label{increasingxis}
    \abs{x_{i-1}}\leq \abs{x_i}.
\end{align}

With this, \ref{1} and \ref{2} are satisfied for all $i$. It remains to construct a viable function $f$ such that \ref{3} is satisfied for all $i$. We will first construct a sublinear function $g(t)$.

To do so, we define the following functions for all $i$.

\begin{align*}
        g_i(t) = 1+ \begin{cases}
            t &\text{if $t\leq \abs{r_i}$}\\
            \frac{t}{m_i} &\text{otherwise.}
        \end{cases}
    \end{align*}

    Define 
    \[
    g_i'(t) = \max\{\rho'(t), g_i(t)\} \qquad \text{and} \qquad  g(t) = \min_{i \in \N}\{g_i'(t)\} = \max\{\rho'(t), \min_{i\in \N}\{g_i(t)\}\}.
    \]
    Lemma~\ref{lem:construct-sublinear-functions} shows that $\min_{i\in \N}\{g_i(t)\}$, and hence $g$, is sublinear. We want to show that $(x_i, \mc R)$ satisfies $\abs{p}< g(\abs{r})$ for all relators $r\in \mc R$ and pieces $p$ which are a subword of $x_i$.

    Fix $i$, let $r\in \mc R$ be a relator and let $p\subset r$ be a piece which is a subword of $x_i$. We want to show that $\abs{p}< g_j'(\abs{r})$ for all $j$. 
    \begin{itemize}
        \item \textbf{Case 1: $i< j$.} 
        \begin{itemize}
            \item  \textbf{Case 1.1: $\abs{r}\geq \abs{r_j}$.} In this case 
                \begin{align*}
                    \abs{p}\leq \abs{x_i}\leq \abs{x_j} = \frac{\abs{r_j}}{m_j}\leq \frac{\abs{r}}{m_j} < g_j(\abs{r}) \leq g_j'(\abs{r}).
                \end{align*}
                For the second step, we used \eqref{increasingxis}.
            \item  \textbf{Case 1.2: $\abs{r} < \abs{r_j}$.} By the definition of $g_j$, we have that $g_j(\abs{r}) = \abs{r}+1$ and hence 
            \[
            \abs{p} < \abs{r}+1 = g_j(\abs{r}) \leq g_j'(\abs{r}).
            \]
        \end{itemize}
        \item \textbf{Case 2 : $i\geq j$.}
        \begin{itemize}
            \item \textbf{Case 2.1: $\abs{r}\geq \abs{r_i}$.} In this case 
            \begin{align*}
                \abs{p}\leq \abs{x_i} = \frac{\abs{r_i}}{m_i}\leq \frac{\abs{r}}{m_i}\leq \frac{\abs{r}}{m_j} < g_j(\abs{r}) \leq g_j'(\abs{r}).
            \end{align*}
            The fourth inequality is due to \eqref{eq:mis}, which implies that $m_j\leq m_i$.
            \item  \textbf{Case 2.2: $\abs{r} < \abs{r_i}$.} In the definition of $r_i$ we chose the relator of minimal length which has a subword which is $L_i$--locally $\rho$--intersecting but not $\rho'$--intersecting. Since $p$ is a subword of $x_i$ it is $L_i$--locally $\rho$--intersecting. Since $\abs{r}< \abs{r_i}$, minimality of $\abs{r_i}$ implies that $p$ is $\rho'$--intersecting. Hence
            \[
                \abs{p}\leq \rho'(\abs{r}) < g_j'(\abs{r}).
            \] 
        \end{itemize}
    \end{itemize}

    Lastly, we use $g$ to construct a viable function $f$ such that $(x_i, \mc R)$ satisfies the $C'(1/f)$--condition. Neither $f$ nor $g$ will depend on $i$. 
    
    Define $f'(n) = n/g(n)$. Since $f$ is sublinear, the function $f'$ tends to infinity. Define the function $f''$ as 
    \begin{align*}
        f''(n) = \min_{n\leq k} \{f'(k)\}.
    \end{align*}
    Since $f$ tends to infinity, so does $f''$. Moreover $f''$ is non-decreasing. We have that $f''\leq f'$ so being $C'(1/f')$ implies being $C'(1/f'')$. Lastly define the function $f$ as follows
    \begin{align*}
        f(n) = \max\{6, f''(n)\}.
    \end{align*}
    Note that $f$ is viable. Since $G$ is a $C'(1/6)$--group, being $C'(1/f'')$ implies being $C'(1/f)$. Let $i\geq 1$. We have shown above that for any relator $r\in \mc R$ and piece $p\subset r$ which is a subword of $x_i$ we have that $\abs{p}<g(\abs{r}) = \abs{r}/f'(\abs{r})$. Since $f''\leq f'$ we have that $\abs{p}<\abs{r}/f''(\abs{r})$ and since $G$ satisfies the $C'(1/6)$--small-cancellation condition, we have that $\abs{p} < \abs{r}/f(\abs{r})$. Hence we have successfully constructed a viable function $f$ such that $(x_i, \mc R)$ satisfies the $C'(1/f)$--small-cancellation, which concludes the proof.
\end{proof}

\begin{lem}\label{lem:construct-sublinear-functions}
    Let $(m_i)_{i\in \N}$ and $(l_i)_{i\in \N}$ be increasing sequences of integers going to infinity. For each $i\in \N$, define $g_i$ as follows
    \begin{align*}
        g_i(t) = \begin{cases}
            t &\text{if $t\leq l_i$,}\\
            \frac{t}{m_i} &\text{otherwise.}
        \end{cases}
    \end{align*}
    Let $g = \min \{ g_i\}$. Then $g$ is sublinear.
\end{lem}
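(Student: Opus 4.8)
The plan is to show that $g = \min_i g_i$ is sublinear, meaning $g(t)/t \to 0$ as $t \to \infty$. Fix $\epsilon > 0$; I need to find $T$ so that $g(t)/t < \epsilon$ for all $t \geq T$. Since the sequence $(m_i)$ is increasing and tends to infinity, pick an index $N$ with $m_N > 1/\epsilon$. Then for all $t > l_N$ we have $g_N(t) = t/m_N$, so $g(t) \leq g_N(t) = t/m_N < \epsilon t$. Taking $T = l_N + 1$ finishes the argument: for $t \geq T$, $g(t)/t \leq g_N(t)/t = 1/m_N < \epsilon$.

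So the core observation is that a single function $g_N$ already witnesses the desired decay rate past the threshold $l_N$, and since $g$ is a minimum it is bounded above by $g_N$. The role of the hypothesis that $(l_i)$ is increasing and goes to infinity is essentially irrelevant for the upper bound — it only matters that each $g_i$ is eventually equal to $t/m_i$ — though one should note $g$ is well-defined as a pointwise minimum of non-negative functions (for each fixed $t$, the values $g_i(t)$ are bounded below by $0$, in fact for $i$ large enough $g_i(t) = t$, so the minimum is attained and equals $\min_{i : l_i < t}\{t/m_i\} \wedge t$, a finite min).

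I would write this up in two or three sentences: reduce sublinearity to the limit statement, invoke one $g_N$ with $m_N$ large, and conclude $g(t)/t \le 1/m_N$ for $t > l_N$. There is essentially no obstacle here; the only thing to be slightly careful about is confirming $g$ is genuinely a function $\N \to \R_+$ (the minimum exists and is positive for $t \geq 1$), which is immediate. This lemma is then applied in Proposition~\ref{lem:sigma_cpct_implies_geosdesicMLTG} with $l_i = \abs{r_i}$ and the given $m_i$, after absorbing the additive $+1$ and the $\max$ with $\rho'$, both of which preserve sublinearity.

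\begin{proof}
It suffices to show $g(t)/t \to 0$ as $t \to \infty$. Fix $\epsilon > 0$. Since $(m_i)_{i \in \N}$ is increasing and tends to infinity, choose $N$ with $m_N > 1/\epsilon$. For every $t > l_N$ we have $g_N(t) = t/m_N$, hence
\[
    g(t) = \min_{i \in \N}\{g_i(t)\} \leq g_N(t) = \frac{t}{m_N} < \epsilon t.
\]
Thus $g(t)/t < \epsilon$ for all $t > l_N$, and since $\epsilon$ was arbitrary, $g$ is sublinear.
\end{proof}
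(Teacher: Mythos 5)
Your proof is correct and takes essentially the same approach as the paper: choose an index $N$ with $m_N$ large, use $g \leq g_N$, and observe $g_N(t) = t/m_N$ for $t$ past the threshold $l_N$. In fact yours is slightly more careful on the boundary case (you require $t > l_N$ whereas the paper's "$T = l_i$, for $t \geq T$" has a harmless off-by-one since $g_i(l_i) = l_i$), and you also address well-definedness of the pointwise minimum as the paper does.
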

\begin{proof} First note that $g$ is well defined. Indeed, the sequence $(l_i)$ is increasing and diverges, hence for each $t$ there are only finitely many values of $i$ such that $g_i(t) \neq t$, implying that the minimum is well defined.  

To show that $g$ is sublinear, it is enough to show that for each $N$, there exists and integer $T$ such that for all $t\geq T$ we have that $g(t)< \frac{t}{N}$. Fix $N$, since $(m_i)_{i\in \N}$ goes to infinity, there exists $i$ such that $m_i> N$ define $T = l_i$. For $t\geq T$ we have that $g_i(t) = t/m_i <t/N$ and since $g$ is the minimum of all $g_i$ we have that $g(t)<t/m_i$.
\end{proof}

The following lemma shows that Lemma~\ref{lemma:Morse_implies_small_relators} also works for subsets which are locally Morse, as long as the groups satisfies the geodesic MLTG property.

\begin{lem}\label{lemma:geodesicMLTG_implies_small_relators}
    Let $M$ be a Morse gauge and let $G = \pres$ be a $C'(1/6)$--group which satisfies the geodesic MLTG property. Then there exists a scale $L$ such that for each integer $N\geq 2$, there exists $D$ such that the following holds. If $r\in \mc R$ is a relator and $w\subset r$ is a subword such that 
    \begin{enumerate}
        \item $\abs{w}\geq \frac{\abs{r}}{N}$,
        \item and $w$ is $L$--locally $M$--Morse,
    \end{enumerate}
    then $\abs{r}\leq D$.
\end{lem}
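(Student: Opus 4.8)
The strategy is to reduce Lemma~\ref{lemma:geodesicMLTG_implies_small_relators} to the already-established Lemma~\ref{lemma:Morse_implies_small_relators} by using the geodesic MLTG property to upgrade local Morseness to global Morseness. First I would apply the geodesic MLTG property to the Morse gauge $M$: this produces a scale $L$ and a Morse gauge $M'$ such that every geodesic which is $L$--locally $M$--Morse is globally $M'$--Morse. This $L$ is the scale claimed in the statement of the lemma (note that, crucially, $L$ depends only on $M$ and not on $N$, matching the order of quantifiers in the conclusion). Then I would feed $M'$ into Lemma~\ref{lemma:Morse_implies_small_relators}: for the given integer $N\geq 2$, that lemma yields a constant $D = D(M', N)$ bounding the length of any relator with an $M'$--Morse subword of length at least $\abs{r}/N$.

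\textbf{Key steps.} Given a relator $r\in\mc R$ and a subword $w\subset r$ with $\abs{w}\geq \abs{r}/N$ and $w$ being $L$--locally $M$--Morse, I must check that $w$ is in fact a geodesic, so that the geodesic MLTG property applies to it. This is exactly Lemma~\ref{lem:isometric_embedding}: since $G$ is a $C'(1/6)$--group, the image of the relator $r$ in $\cay$ is isometrically embedded and convex, so every subword of $r$ — in particular $w$ — is a geodesic in $X=\cay$. Now $w$ is a geodesic that is $L$--locally $M$--Morse, so by the geodesic MLTG property $w$ is globally $M'$--Morse. Applying Lemma~\ref{lemma:Morse_implies_small_relators} with the gauge $M'$ and the integer $N$ to the relator $r$ and its $M'$--Morse subword $w$ (which still satisfies $\abs{w}\geq\abs{r}/N$) gives $\abs{r}\leq D$, as desired.

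\textbf{The main obstacle.} This proof is essentially a bookkeeping argument, so there is no serious mathematical obstacle; the only thing requiring care is the quantifier structure. The statement demands that $L$ be chosen \emph{before} $N$ (one scale $L$ works for all $N$, each $N$ getting its own $D$), and this is exactly what the geodesic MLTG property delivers, since the pair $(L,M')$ there depends only on the input gauge $M$. The constant $D$ then legitimately depends on $N$ (through Lemma~\ref{lemma:Morse_implies_small_relators}) and on $M'$, hence ultimately only on $M$ and $N$. One should also double-check that a subword of a relator is genuinely a geodesic in the \emph{whole} Cayley graph $X$, not merely within the relator's cycle graph — but this is precisely the content of the isometric embedding and convexity in Lemma~\ref{lem:isometric_embedding}, and local Morseness of $w$ as a path in $X$ is preserved since $L$--local $M$--Morseness is a statement about subsegments of bounded length, which are the same whether viewed in $r$ or in $X$.
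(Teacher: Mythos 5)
Your overall strategy is the same as the paper's: use the geodesic MLTG property to upgrade the $L$--locally $M$--Morse subword to a globally $M'$--Morse one, then feed this into Lemma~\ref{lemma:Morse_implies_small_relators}. The quantifier bookkeeping (choose $L$ depending only on $M$, let $D$ depend on $N$ through Lemma~\ref{lemma:Morse_implies_small_relators}) is also handled correctly.

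However, there is a gap in the step where you claim that $w$ is a geodesic. Lemma~\ref{lem:isometric_embedding} says the cycle graph of $r$ embeds isometrically in $\cay$, but this does \emph{not} imply that every subword of $r$ traces a geodesic in $\cay$. In a cycle of circumference $\abs{r}$, a subpath of length $\abs{w}$ has endpoints at distance $\min\{\abs{w},\,\abs{r}-\abs{w}\}$, so it is a geodesic only when $\abs{w}\leq \abs{r}/2$. The hypothesis of the lemma only gives $\abs{w}\geq \abs{r}/N$; nothing prevents $\abs{w}$ from being, say, $\abs{r}-1$, in which case $w$ is very far from a geodesic and you cannot invoke the geodesic MLTG property (whose statement applies only to geodesics). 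The paper closes this gap by first replacing $w$ with a subword $w'\subset w$ of length exactly $\abs{r}/N\leq \abs{r}/2$ (using $N\geq 2$), which is therefore a geodesic segment. Since being $L$--locally $M$--Morse passes to subsegments, $w'$ is still $L$--locally $M$--Morse and still satisfies $\abs{w'}\geq \abs{r}/N$, so the rest of your argument goes through with $w'$ in place of $w$.
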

\begin{proof}
    Since $G$ satisfies the geodesic MLTG, there exist a Morse gauge $M'$ and constant $L$ such that any $L$--locally $M$--Morse geodesic is $M'$--Morse. Let $w' \subset w$ be a subword with length $\abs{w'} = \frac{\abs{r}}{N}$. In particular, $\abs{w'} \leq \frac{\abs{r}}{2}$, so it is a geodesics segment.  Since $w$ is $L$--locally $M$--Morse, so is $w'$. It follows that $w'$ is $M'$--Morse. By Lemma~\ref{lemma:Morse_implies_small_relators}, there exists $D$ such that $\abs{r}\leq D$.
\end{proof}

\subsection{Construction of auxiliary paths}

In this section we assume that $G = \pres$ is a $C'(1/9)$--small-cancellation group with $\sigma$-compact Morse boundary. We will show that, for an appropriate scale $L$, $L$--locally Morse quasi-geodesics are Morse quasi-geodesics. 

Let $Q\geq 1$ be a constant and let $M$ be a Morse gauge. Let $\gamma$ be a $2L$--locally $M$--Morse $Q$--quasi-geodesic for a constant $L$ to be determined later. For each $i \in \Z$ let $\eta_i$ be a geodesic from $\gamma(iL) = a_i$ to $\gamma((i+1)L)=a_{i+1}$.

\begin{lem}\label{lem:eta_i-are-morse}
    There exists a Morse gauge $\M$ not depending on $\gamma$ such that $\eta_i$ is $\M$--Morse for all $i$.
\end{lem}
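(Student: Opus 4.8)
The plan is to realize each $\eta_i$ as the ``last side'' of a geodesic triangle whose other two sides are controlled subsegments of $\gamma$, and then apply the $n$--gon property of Lemma~\ref{new-monster}\ref{monster:triangle}. First I would observe that $\gamma[iL, (i+1)L] = \gamma[(i-1)L,(i+1)L]\cap\gamma[iL,(i+2)L]$ is a subsegment of a $2L$--local piece of $\gamma$, hence (since $\gamma$ is $2L$--locally $M$--Morse and $2L\ge L$) the path $\gamma[iL,(i+1)L]$ is an $M$--Morse $Q$--quasi-geodesic. Its endpoints are $a_i = \gamma(iL)$ and $a_{i+1}=\gamma((i+1)L)$, which are exactly the endpoints of the geodesic $\eta_i$. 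So $\eta_i$ and $\gamma[iL,(i+1)L]$ are two quasi-geodesics (one of them $M$--Morse) with the same endpoints.

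Next I would apply Lemma~\ref{new-monster}\ref{monster:hausdorff}: taking the constant ``$Q$'' of that lemma to be our $Q$ and the Morse gauge to be $M$, we get a Morse gauge $M'$ depending only on $M$ and $Q$ (not on $i$ or on $\gamma$) such that any $Q$--quasi-geodesic whose endpoints lie in the $Q$--neighbourhood of the endpoints of an $M$--Morse $Q$--quasi-geodesic is $M'$--Morse. Here $\eta_i$ is a geodesic, hence a $Q$--quasi-geodesic (as $Q\ge 1$), and its endpoints literally coincide with those of $\gamma[iL,(i+1)L]$, so the hypothesis is satisfied with room to spare. Therefore $\eta_i$ is $M'$--Morse, and we set $\M = M'$. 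Since $M'$ depends only on $M$ and $Q$, and $Q$ is a fixed ambient constant for this construction, $\M$ does not depend on $\gamma$ or on $i$, as required.

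The only subtlety — and the one place I would be careful — is the bookkeeping at the start: making sure that the relevant subpath of $\gamma$ really is covered by the $2L$--local Morseness hypothesis. Since $\abs{(i+1)L - iL} = L \le 2L$, the subpath $\gamma[iL,(i+1)L]$ has the form $p[s,t]$ with $\abs{t-s}\le 2L$, so by Definition~\ref{def:local-property} it is $M$--Morse (and it is a $Q$--quasi-geodesic as a subpath of the $Q$--quasi-geodesic $\gamma$, using Lemma~\ref{new-monster}\ref{monster:subsegments} or directly). After that the argument is immediate. One could equally phrase the last step via Lemma~\ref{new-monster}\ref{monster:triangle} applied to the degenerate ``triangle'' $(\gamma[iL,(i+1)L], \text{point}, \eta_i)$, but invoking \ref{monster:hausdorff} directly is cleaner since the endpoints match exactly. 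I expect no genuine obstacle here; this lemma is a warm-up whose role is to fix the uniform gauge $\M$ for the subsequent disk-diagram arguments.
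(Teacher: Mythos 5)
Your proof is correct and takes essentially the same approach as the paper's: both observe that $\eta_i$ shares its endpoints with the $M$--Morse subsegment $\gamma[iL,(i+1)L]$ (which is $M$--Morse since $\gamma$ is $2L$--locally $M$--Morse and $L\le 2L$), and then invoke Lemma~\ref{new-monster}\ref{monster:hausdorff} to obtain a Morse gauge depending only on $M$ and $Q$. The only quibble is that the opening identity $\gamma[iL,(i+1)L]=\gamma[(i-1)L,(i+1)L]\cap\gamma[iL,(i+2)L]$ is an unnecessary detour -- the segment $\gamma[iL,(i+1)L]$ has parameter length $L\le 2L$ and so is already $M$--Morse directly from Definition~\ref{def:local-property}.
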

\begin{proof}
    The geodesic $\eta_i$ has endpoints on an $M$--Morse geodesic, so Lemma~\ref{new-monster}\ref{monster:hausdorff} implies that $\eta_i$ is $\M$--Morse for some $\M$ only depending on $M$.
\end{proof}

Let $\Gamma$ be an image of some relator $r\in \mc R$ and let $W$ be a subpath of $\Gamma$ with $\abs{W}\leq \abs{\Gamma}/2$ and which intersects both $\eta_i$ and $\eta_{i+1}$. We define $x_{W}$ and $y_{W}$ to be the points on $\eta_i$ and $\eta_{i+1}$ which are contained in $W$ and minimize $d(a_i, x_W)$ and $d(a_i, y_W)$ respectively. Over all such $W$, let $W_i$ be the one that maximizes $d(a_i, x_{W_i}) +d(a_i, y_{W_{i}})$. We define $x_i = x_{W_i}$ and $y_i = y_{W_i}$. We will first show that $d(a_i, x_i)$ and $d(a_i, y_i)$ is small.

\begin{lem}\label{lem:small_paths}
    There exists a constant $\C$ not depending on $\gamma$ and $L$ such that $d(a_i, x_{i})\leq \C$ and $d(a_i, y_{i})\leq \C$ for all $i$.
\end{lem}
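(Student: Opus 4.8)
The plan is to bound $d(a_i, x_i)$ and $d(a_i, y_i)$ uniformly by exploiting the $C'(1/9)$ small-cancellation condition together with the fact that $\eta_i$ and $\eta_{i+1}$ are $\M$-Morse geodesics meeting near the common vertex $a_{i+1}$. The key geometric input is that a relator $\Gamma$ whose boundary contains a subpath $W$ crossing both $\eta_i$ and $\eta_{i+1}$ must have bounded size, because large portions of $\eta_i$ and $\eta_{i+1}$ would then lie on the isometrically embedded convex relator $\Gamma$ (Lemma~\ref{lem:isometric_embedding}), forcing those portions to be Morse subsegments of $\Gamma$ of definite proportional length, which by Lemma~\ref{lemma:Morse_implies_small_relators} (applied with $\M$ and a suitable $N$) caps $\abs{\Gamma}$ by a constant $D$ depending only on $\M$.

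**Main steps, in order.**

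First I would argue that if $d(a_i, x_i) + d(a_i, y_i)$ is large, then $W_i$ itself is long: since $x_i \in \eta_i$ and $y_i \in \eta_{i+1}$ both lie on $W_i$, and $\eta_i$ runs from $a_i$ to $a_{i+1}$ while $\eta_{i+1}$ runs from $a_{i+1}$ to $a_{i+2}$, the subpath of $W_i$ between $x_i$ and $y_i$ together with the definition of $x_i, y_i$ as extremal points shows $\abs{W_i} \geq d(a_i, x_i) - L$ or a similar lower bound (using $d(a_i,a_{i+1}) \le QL+Q$ to control $\eta_i$'s length). Second, I would consider the geodesic subsegments $[x_i, a_{i+1}]_{\eta_i}$ and $[a_{i+1}, y_i]_{\eta_{i+1}}$; these have length comparable to $d(a_i,x_i)$ and $d(a_i,y_i)$ respectively (again via $d(a_i,a_{i+1}) \le QL + Q$), and they are $\M$-Morse by Lemma~\ref{lem:eta_i-are-morse} and Lemma~\ref{new-monster}\ref{monster:subsegments}. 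Third, since both $x_i, y_i$ lie on $W \subset \Gamma$ and $\Gamma$ is isometrically embedded and convex, a geodesic between $x_i$ and $y_i$ lies in $\Gamma$; concatenating we get a Morse $n$-gon argument (Lemma~\ref{new-monster}\ref{monster:triangle}) showing that this geodesic $[x_i,y_i]$ inside $\Gamma$ is Morse with gauge depending only on $\M$, of length at least some definite fraction of $d(a_i,x_i)+d(a_i,y_i)$. Fourth, applying Lemma~\ref{lemma:Morse_implies_small_relators} with this Morse gauge and $N$ chosen so that $\abs{[x_i,y_i]} \geq \abs{\Gamma}/N$ yields $\abs{\Gamma} \leq D$. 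Finally, since $x_i, y_i \in \Gamma$ and $\abs{\Gamma} \le D$, we get $d(x_i, y_i) \le D$, and then $d(a_i, x_i) \le d(a_i,y_i) + D$; combined with the extremality of $W_i$ (it maximizes $d(a_i, x_W) + d(a_i, y_W)$) and a short argument that if $d(a_i,x_i)$ were huge then $W_i$ would be a Morse subpath of $\Gamma$ of proportional length contradicting $\abs{\Gamma}\le D$, we conclude $d(a_i, x_i), d(a_i, y_i) \le \C$ for $\C$ depending only on $\M$ and $Q$, hence not on $\gamma$ or $L$.

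**Anticipated obstacle.**

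The delicate point is making rigorous the step that converts "$d(a_i, x_i)$ large" into "a genuinely long Morse subsegment shared between a geodesic and the relator $\Gamma$," because $W$ is only assumed to satisfy $\abs{W} \le \abs{\Gamma}/2$ and need not by itself be geodesic or Morse — one has to pass to geodesics $[x_i, a_{i+1}]$, $[a_{i+1}, y_i]$, then to $[x_i, y_i]$, and carefully track that the resulting geodesic inside $\Gamma$ has length proportional to (rather than merely bounded below in terms of) $d(a_i,x_i) + d(a_i,y_i)$, so that the hypothesis $\abs{w} \ge \abs{r}/N$ of Lemma~\ref{lemma:Morse_implies_small_relators} can be arranged with a \emph{fixed} $N$. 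I expect one also needs to handle the possibility that $x_i$ and $y_i$ are on the "far" ends of $\eta_i, \eta_{i+1}$ (close to $a_{i+1}$) — but that case is easy since then $d(a_i, x_i)$ and $d(a_i, y_i)$ are automatically at most $d(a_i, a_{i+1}) \le QL + Q$, which is not uniform in $L$; so in fact the argument must show that a long $W$ forces $x_i, y_i$ to be near $a_{i+1}$ and \emph{simultaneously} near each other via $\Gamma$, and it is the interplay with the Morse gauge $\M$ (independent of $L$) that breaks the $L$-dependence. Getting the quantifiers right there is the crux.
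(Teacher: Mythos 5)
The gap you flag at the end is genuine and, in fact, fatal to the route you outline: you want to apply Lemma~\ref{lemma:Morse_implies_small_relators} to the geodesic $[x_i,y_i]$ viewed as a subword of the relator $\Gamma$, which requires $d(x_i,y_i)\geq \abs{\Gamma}/N$ for a \emph{fixed} $N$, and there is no mechanism forcing that proportion. Nothing prevents a sequence of relators $\Gamma$ with $\abs{\Gamma}\to\infty$ whose arcs $W$ meet $\eta_i$ and $\eta_{i+1}$ with $d(x_W,y_W)\to\infty$ while $d(x_W,y_W)/\abs{\Gamma}\to 0$; in that regime your step 4 gives no bound on $\abs{\Gamma}$, and consequently no bound on $d(x_i,y_i)$, and the final reduction to $d(a_i,x_i),d(a_i,y_i)\leq \C$ never gets off the ground. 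The extremality of $W_i$ does not rescue this, because it only says that no $W$ gives a \emph{larger} sum $d(a_i,x_W)+d(a_i,y_W)$, which is precisely what you are trying to bound; it supplies no lower bound on $d(x_i,y_i)$ relative to $\abs{\Gamma}$.

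The missing idea is to move away from the relator $\Gamma$ altogether and instead van Kampen/Strebel the geodesic triangle. The paper forms a minimal simple disk diagram $D$ over the triangle with sides $[x_i,y_i]$ and the two Morse geodesic segments along $\eta_{i-1},\eta_i$ ending at the branch point $b_i$, and invokes Strebel's classification (Lemma~\ref{lemma:strebel}). This forces the existence of a single face $\Pi$ of $D$ carrying the entire geodesic side $[x_i,y_i]$ on its boundary, with the rest of $\partial\Pi$ consisting of exterior arcs on the two Morse sides plus a single interior piece. The budget is then automatic: $[x_i,y_i]$ contributes at most $\abs{\Pi}/2$ because it is a geodesic, the piece contributes less than $\abs{\Pi}/6$, so the two exterior arcs on the $\M$--Morse sides together exceed $\abs{\Pi}/3$, and one of them exceeds $\abs{\Pi}/6$. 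It is this combinatorially guaranteed proportion --- supplied by the shape of the disk diagram, not by $\Gamma$ --- that lets one invoke Lemma~\ref{lemma:geodesicMLTG_implies_small_relators} with a \emph{fixed} $N$ to bound $\abs{\Pi}$, hence $d(x_i,y_i)$, and only then does the local quasi-geodesic condition convert this into the uniform bound on $d(a_i,x_i)$ and $d(a_i,y_i)$. (The role of the relator $\Gamma$ in the argument is different from, and smaller than, what you give it: its convexity is used only to conclude that $[x_i,y_i]$ lies in the image of a relator, so that any interior arc of $D$ meeting the side $[x_i,y_i]$ is a piece and hence short.) Without the disk diagram, there is no way to convert ``$d(a_i,x_i)$ large'' into a definite-proportion Morse subword, which is why your Morse $n$--gon plus $\Gamma$--convexity reduction stalls exactly where you anticipate.
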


\begin{proof}
    Let $b_i$ be the last point on $\eta_{i-1}$ which is also contained in $\eta_i$. By the definition of $x_i$ and $y_i$, the point $b_i$ lies on $[x_i, a_i]_{\eta_{i-1}}$ and $[a_i, y_{i}]_{\eta_i}$. 
    
    Consider the triangle $\Delta$ with sides $[x_i, y_i], [y_i, b_i]_{\eta_i}$ and $[b_i, x_i]_{\eta_{i-1}}$. By construction, the three sides of $\Delta$ only intersect at their respective endpoints. Let $D$ be a minimal disk diagram with $\partial D = \Delta$. Since $\Delta$ is  a simple path, we know that $D$ is a simple disk diagram.
    
    Since in addition each side of $\Delta$ is a geodesic, we can use the classification of geodesic triangles to better understand how $D$ looks like. 

    Assume there is a face $\Pi$ of $D$ which has exactly one exterior arc $\alpha = \alpha_1\cup \alpha_2$ for which $\alpha_1$ is contained in $[b_i, x_i]_{\eta_{i-1}}$ and $\alpha_2$ is contained in $[x_i, y_i)$. We show this cannot happen. The classification of geodesic triangles implies that $\partial \Pi = \alpha \ast \beta$ or  $\partial \Pi = \alpha \ast \beta_1\ast \beta_2$ for some arc $\beta$ respectively arcs $\beta_1$ and $\beta_2$ which are pieces. Since  $[b_i, x_i]_{\eta_{i-1}}$ is a geodesic we have that $\abs{\alpha_1}\leq \abs{\Pi}/2$. Since relators are convex, we know that $[x_i, y_i]$ is a subpath of the image of a relator and hence $\alpha_2$ is a piece, implying that $\abs{\alpha_2}\leq \abs{\Pi}/6$. Consequently, $\abs{\beta}\geq \abs{\Pi}/3$, respectively $\abs{\beta_1}+\abs{\beta_2}\geq\abs{\Pi}/3$, a contradiction to $\beta$, respectively $\beta_1$ and $\beta_2$, being pieces. 
    
\begin{figure}
    \centering
    \includegraphics[width=1\linewidth]{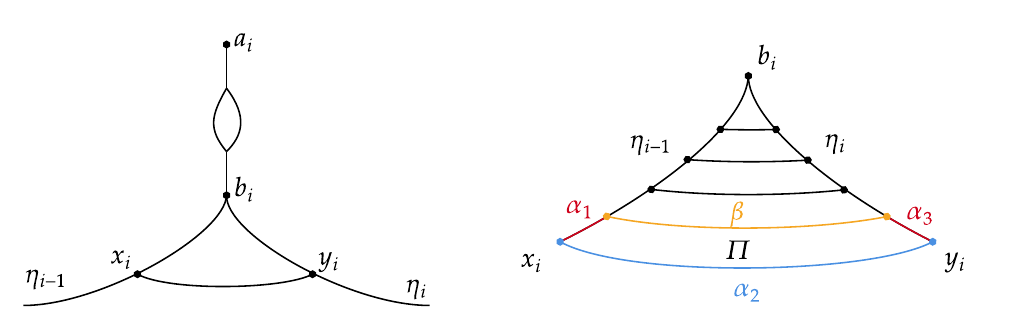}
    \caption{By classification of geodesic triangles, the disk diagram $D$ must look like the right figure.}
    \label{fig:disk}
\end{figure}

    Lemma~\ref{lemma:strebel} about the classification of combinatorial geodesic triangles shows that $D$ has to be as in Figure~\ref{fig:disk}. In particular, there exists a face $\Pi$ with $\partial \Pi = \alpha_1\ast \alpha_2\ast \alpha_3\ast \beta$ where $\beta$ is a piece, $\alpha_1$ is contained in $[x_i, b_i]_{\eta_{i-1}}$, $\alpha_2 = [x_i, y_i]$ and $\alpha_3$ is contained in $[b_i, y_i]_{\eta_{i}}$. We have that $\abs{\alpha_2}\leq \abs{\Pi}/2$ since $[x_i, y_i]$ is a geodesic and $\abs{\beta}<\abs{\Pi}/6$ since $\beta$ is a piece. Thus, for at least one of $j =1, 3$, we have that $\abs{\alpha_j}\geq \abs{\Pi}/6$, say this holds for $j=1$. By Lemma~\ref{lem:eta_i-are-morse}, we have that $\eta_{i-1}$, and hence $\alpha_1$, is $\M$--Morse, where $\M$ does not depend on $\gamma$. Now we can apply Lemma~\ref{lemma:geodesicMLTG_implies_small_relators} to $\alpha_1$ to get that $\abs{\Pi}\leq \C_r$ for some constant $\C_r$ not depending on $\gamma$. Since both $x_i$ and $y_i$ lie on $\Pi$, we have that $d(x_i, y_i)\leq \C_r$. 
    
    Since $\gamma$ is $2L$--locally an $M$--Morse $Q$--quasi-geodesic and the geodesics $\eta_i$ have endpoints on $\gamma$, there exist $(i-1)L\leq s \leq iL \leq t\leq (i+1)L$ such that $d(\gamma(s), x_i)\leq M(1)$ and $d(\gamma(t), y_i)\leq M(1)$. Since $\gamma$ is $2L$--locally a $Q$--quasi-geodesic, we can bound $s-t$ in terms of $Q, M(1)$ and $\C_r$. In turn, we can bound $d(\gamma(s), \gamma(iL))$ and $d(\gamma(t), \gamma(iL))$ in terms of $Q, M(1)$ and $\C_r$. Hence, using the triangle inequality one more time, we can see that $d(x_i, a_i)\leq\C$ and $d(y_i, a_i)\leq \C$ for a constant $\C$ only depending on $Q, M(1)$ and $\C_r$, which do not depend on $\gamma$.
\end{proof}

\begin{cor}\label{cor:no_overlap}
    Let $\Cthree$ be a constant. For large enough $L$ we have that 
    \[
    d(a_i, y_{i}) + \Cthree < d(a_{i}, x_{i+1}),
    \]
    for all $i$.
\end{cor}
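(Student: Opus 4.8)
The plan is to read this off directly from Lemma~\ref{lem:small_paths} combined with the fact that $\gamma$ is $2L$--locally a $Q$--quasi-geodesic. The intuition is that $d(a_i, y_i)$ is bounded by a constant that does \emph{not} grow with $L$, whereas $d(a_i, x_{i+1})$ is forced to be almost as large as $d(a_i, a_{i+1})$, and the latter grows linearly in $L$.

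First I would record the two estimates supplied by Lemma~\ref{lem:small_paths}: namely $d(a_i, y_i) \leq \C$ and, applying the lemma with index $i+1$, $d(a_{i+1}, x_{i+1}) \leq \C$, where the constant $\C$ depends only on $Q$ and $M$ and not on $\gamma$ or $L$. Next, since $\abs{iL - (i+1)L} = L \leq 2L$ and $\gamma$ is $2L$--locally a $Q$--quasi-geodesic, the subpath $\gamma[iL,(i+1)L]$ is an honest $Q$--quasi-geodesic, so
\[
d(a_i, a_{i+1}) \geq \frac{L}{Q} - Q.
\]
Combining this with the triangle inequality yields
\[
d(a_i, x_{i+1}) \;\geq\; d(a_i, a_{i+1}) - d(a_{i+1}, x_{i+1}) \;\geq\; \frac{L}{Q} - Q - \C.
\]

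Finally I would choose $L$ large enough, for instance $L > Q\left(Q + 2\C + \Cthree\right)$, so that $\frac{L}{Q} - Q - \C > \C + \Cthree$. Chaining the inequalities,
\[
d(a_i, y_i) + \Cthree \;\leq\; \C + \Cthree \;<\; \frac{L}{Q} - Q - \C \;\leq\; d(a_i, x_{i+1}),
\]
which is precisely the claim, and the threshold for $L$ is uniform in $\gamma$ since $\C$ is. There is essentially no obstacle here: the only point that makes the statement nontrivial (and the reason it is phrased as ``for large enough $L$'') is that the constant $\C$ produced by Lemma~\ref{lem:small_paths} must be independent of $L$ — which is exactly what that lemma asserts — so there is no circular dependence between $L$ and $\C$. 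Once the linear lower bound on $d(a_i, a_{i+1})$ is in place, the argument is a two-line triangle-inequality computation.
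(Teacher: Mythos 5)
Your proof is correct and follows essentially the same route as the paper's: bound $d(a_i,y_i)$ and $d(a_{i+1},x_{i+1})$ by $\C$ via Lemma~\ref{lem:small_paths}, lower bound $d(a_i,a_{i+1})$ linearly in $L$ using local quasi-geodesicity, and conclude by the triangle inequality. The only difference is that you make the linear lower bound $d(a_i,a_{i+1})\ge L/Q - Q$ and the threshold $L> Q(Q+2\C+\Cthree)$ explicit, whereas the paper leaves these implicit.
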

\begin{proof} We have that $d(a_i, a_{i+1})\geq \eps (L)$ for a term $\eps(L)$ depending linearly on $L$. Hence for large enough $L$, we can ensure that $d(a_{i}, a_{i+1})\geq \Cthree+2\C$, which implies the desired result. 
\end{proof}

We now construct the auxiliary path. let 
\begin{align*}
    p = [\gamma(0),x_0]_{\eta_0} \ast [x_0, y_0] 
    \ast [y_0, x_1]_{\eta_1} \ast [x_1, y_1] 
    \ast [y_1,x_2]_{\eta_2} \ast [x_3,y_3] \ldots,
\end{align*}
which is depicted in Figure~\ref{fig:aux_path}. This path is well-defined by Corollary \ref{cor:no_overlap}. 
\begin{figure}
    \centering
    \includegraphics[width=1\linewidth]{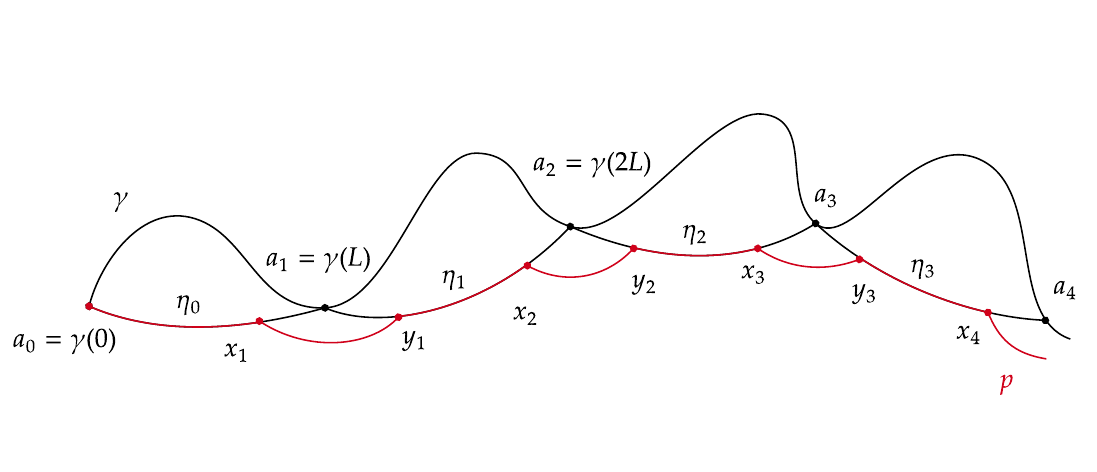}
    \caption{The auxiliary path constructed from $\gamma$. For $L$ large enough, the segments $[x_i,y_i]$ do not overlap by Corollary \ref{cor:no_overlap}.} 
    \label{fig:aux_path}
\end{figure}

\begin{lem}\label{lem:close-to-gamma}
    There exists a constant $\Cone$ not depending on $\gamma$ such that $d_{\mathrm{Haus}}(p, \gamma)\leq \Cone$.
\end{lem}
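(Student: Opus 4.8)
I want to show that the auxiliary path $p$ stays within bounded Hausdorff distance of $\gamma$, with the bound $\Cone$ independent of $\gamma$. The path $p$ is built by alternating between pieces of the geodesics $\eta_i$ (which run between consecutive points $a_i = \gamma(iL)$ on $\gamma$) and the relator-segments $[x_i,y_i]$. So the strategy is to control each of these two types of pieces separately and then assemble.

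\textbf{Step 1: each $\eta_i$ is uniformly close to $\gamma$.} By Lemma~\ref{lem:eta_i-are-morse}, $\eta_i$ is $\M$--Morse with $\M$ independent of $\gamma$. Its endpoints $a_i = \gamma(iL)$ and $a_{i+1} = \gamma((i+1)L)$ lie on $\gamma$, and $\gamma$ is $2L$--locally an $M$--Morse $Q$--quasi-geodesic, so $\gamma[iL,(i+1)L]$ is an honest $M$--Morse $Q$--quasi-geodesic. By Lemma~\ref{new-monster}\ref{monster:hausdorff} (applied with the roles suitably arranged, using that $\eta_i$ and $\gamma[iL,(i+1)L]$ share endpoints), $\dH(\eta_i, \gamma[iL,(i+1)L]) \leq D_0$ for a constant $D_0$ depending only on $M$ and $Q$. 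In particular every point of $[\gamma(0),x_0]_{\eta_0}$ and every point of $[y_i,x_{i+1}]_{\eta_{i+1}}$ is within $D_0$ of $\gamma$, and every point of $\gamma$ is within $D_0$ of some $\eta_i$, hence of $p$ (since the relevant subsegments of $\eta_i$ used in $p$ are the large middle portions — this needs a small check using Corollary~\ref{cor:no_overlap}, namely that after removing the short end-bits near $x_i$ and $y_i$, what remains of $\eta_i$ still covers the part of $\gamma$ it was tracking up to an additive error controlled by $\C$).

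\textbf{Step 2: the relator-segments $[x_i,y_i]$ are uniformly close to $\gamma$.} By Lemma~\ref{lem:small_paths}, $d(a_i, x_i) \leq \C$ and $d(a_i, y_i) \leq \C$ with $\C$ independent of $\gamma$. Hence $d(x_i, y_i) \leq 2\C$ by the triangle inequality, so the whole geodesic $[x_i,y_i]$ lies in the $2\C$--neighbourhood of $x_i$, hence in the $(3\C)$--neighbourhood of $a_i = \gamma(iL) \in \gamma$. Conversely such a short segment is automatically within $\C$ of the corresponding point $a_i$ of $\gamma$, so it contributes nothing new to the ``$\gamma$ is covered by $p$'' direction.

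\textbf{Step 3: assemble.} Combining Steps 1 and 2: every point of $p$ lies on some $[\cdot,\cdot]_{\eta_i}$ or some $[x_i,y_i]$, hence within $\max\{D_0, 3\C\}$ of $\gamma$. For the other direction, given $z \in \gamma$, say $z \in \gamma[iL,(i+1)L]$, find a point of $\eta_i$ within $D_0$ of $z$; then adjust by at most $2\C$ (using Lemma~\ref{lem:small_paths}) to land on the sub-arc $[y_{i-1},x_i]_{\eta_i}\ast[x_i,y_i]\ast[y_i,x_{i+1}]_{\eta_{i+1}}$ that is actually part of $p$ — here Corollary~\ref{cor:no_overlap} guarantees these sub-arcs genuinely tile, with no gaps, once $L$ is large enough. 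Setting $\Cone = \max\{D_0, 3\C\} + 2\C$ (or any sufficiently generous combination) gives $\dH(p,\gamma) \leq \Cone$ with $\Cone$ depending only on $M$ and $Q$.

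\textbf{Main obstacle.} The genuinely fiddly point is not the coarse distance estimates — those are immediate from Lemmas~\ref{lem:eta_i-are-morse} and~\ref{lem:small_paths} — but rather bookkeeping the \emph{endpoints}: making sure the short initial/terminal stubs of each $\eta_i$ that get discarded when passing to $p$ (the bits before $x_i$ and after $y_i$) really are short (length $\leq 2\C$, by Lemma~\ref{lem:small_paths}) and that the surviving middle portions, together with the $[x_i,y_i]$ segments, still form a path whose image covers a $\C$--neighbourhood of all of $\gamma$. This is exactly what Corollary~\ref{cor:no_overlap} is for, so the proof is really just: invoke Lemma~\ref{lem:eta_i-are-morse}, Lemma~\ref{lem:small_paths}, Corollary~\ref{cor:no_overlap}, and Lemma~\ref{new-monster}\ref{monster:hausdorff}, then take the max of the resulting constants.
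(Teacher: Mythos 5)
Your proof is correct and takes essentially the same approach as the paper's: invoke Lemma~\ref{new-monster}\ref{monster:hausdorff} to bound $d_{\mathrm{Haus}}(\eta_i, \gamma[iL,(i+1)L])$, then use Lemma~\ref{lem:small_paths} to note that the relator segments $[x_i,y_i]$ and the discarded stubs of each $\eta_i$ have length at most $2\C$, so replacing stubs by relator segments perturbs the Hausdorff distance by only $O(\C)$. The paper compresses Steps 2 and 3 into the single phrase ``Lemma~\ref{lem:small_paths} concludes the proof,'' but you have simply unpacked that correctly.
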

\begin{proof} 
    Since $\gamma$ is $L$--locally $M$--Morse, the segment $\gamma[iL, (i+1)L]$ is $M$--Morse. By Lemma~\ref{new-monster}\ref{monster:hausdorff}
    \[
        d_{\mathrm{Haus}}(\eta_i, \gamma[iL, (i+1)L])\leq \Cone'.
    \]
    for a constant $\Cone'$ not depending on $\gamma$. Lemma~\ref{lem:small_paths} concludes the proof.
\end{proof}

\begin{lem}\label{lem:p-is-locally-morse}
    There exists a Morse gauge $M'$ not depending on $\gamma$ such that for every scale $L'$ the following holds. For $L$ large enough, $p$ is $L'$--locally $M'$--Morse. 
\end{lem}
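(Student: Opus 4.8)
The plan is to read a uniform Morse gauge for short subpaths of $p$ off its combinatorial description as an alternating concatenation of long geodesic ``blocks'' and short ``bridges''. Call the geodesics $[x_i,y_i]$ occurring in $p$ the \emph{bridges} and the geodesic pieces of $p$ between consecutive bridges the \emph{blocks}; each block is a subsegment of some $\eta_j$. I would record three facts: (a) every block is $\M$--Morse, by Lemma~\ref{lem:eta_i-are-morse} together with Lemma~\ref{new-monster}\ref{monster:subsegments}; (b) since by Lemma~\ref{lem:small_paths} the endpoints of consecutive bridges lie within $\C$ of consecutive points $a_i,a_{i+1}$, every block has length at least $d(a_i,a_{i+1})-2\C\ge L/Q-Q-2\C$, using that $\gamma$ is $2L$--locally a $Q$--quasi-geodesic; (c) from (the proof of) Lemma~\ref{lem:small_paths} there is a constant $\C_r$, independent of $\gamma$ and $L$, with $d(x_i,y_i)\le\C_r$ for all $i$, so each bridge is a geodesic of diameter at most $\C_r$ and hence, by Lemma~\ref{new-monster}\ref{monster:small-diam-is-morse}, is $M_{br}$--Morse for a gauge $M_{br}$ depending only on $\C_r$.

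Given $L'$, I would then take $L$ large enough that $L/Q-Q-2\C>L'$ (and large enough for Corollary~\ref{cor:no_overlap} and the earlier constructions to apply), and parametrise $p$ by arclength. For $s\le t$ with $t-s\le L'$, the subpath $p[s,t]$ has length at most $L'$; since by (b) any two distinct bridges are separated by a block of length $>L'$, it follows that $p[s,t]$ meets at most one bridge. Writing $A$, $B$, $C$ for the portions of $p[s,t]$ lying in the preceding block, in that bridge, and in the following block respectively --- any of which may be empty --- we get $p[s,t]=A\ast B\ast C$, where $A$ and $C$ are geodesic subsegments of some $\eta_j$'s, hence $\M$--Morse, and $B$ is a geodesic of diameter $\le\C_r$, hence $M_{br}$--Morse.

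The remaining step is to bound the Morse gauge of $A\ast B\ast C$ uniformly, and here the plan is to apply Lemma~\ref{new-monster}\ref{monster:concatenation} twice. First, since $\abs B\le\C_r$, a direct check (the upper quasi-geodesic inequality is automatic for an arclength parametrisation, the lower one follows from the triangle inequality) shows $A\ast B$ is a $\hat Q_0$--quasi-geodesic for $\hat Q_0:=\max\{1,2\C_r\}$; as $B$ is $M_{br}$--Morse and meets $A$ endpoint-to-endpoint, Lemma~\ref{new-monster}\ref{monster:concatenation}, applied with Morse gauge $\max\{\M,M_{br}\}$ and quasi-geodesic constant $1$, yields that $A\ast B$ is $M_1$--Morse for some $M_1$ depending only on $\M$ and $\C_r$. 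Then, since $C$ is an $\M$--Morse geodesic, hence an $\M$--Morse $\hat Q_0$--quasi-geodesic, meeting $A\ast B$ endpoint-to-endpoint, a second application of Lemma~\ref{new-monster}\ref{monster:concatenation}, with Morse gauge $\max\{M_1,\M\}$ and quasi-geodesic constant $\hat Q_0$, gives that $p[s,t]=A\ast B\ast C$ is $M'$--Morse, where $M'$ depends only on $\M$, $M_{br}$ and $\C_r$, hence ultimately only on $M$, $Q$ and the small-cancellation data of $G$; in particular $M'$ depends neither on $\gamma$ nor on $L'$, as required.

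The hard part is really hidden in Lemma~\ref{lem:small_paths}: what makes everything go through is that the detour $p$ makes at each junction $a_i$ --- a bridge of diameter $\le\C_r$ replacing a corridor of length $\le d(x_i,a_i)\le\C$ along which $\eta_{i-1}$ and $\eta_i$ coincide --- is uniformly bounded, so that $p$ genuinely decomposes into uniformly Morse pieces. Granting that, the only subtlety left is that iterating Lemma~\ref{new-monster}\ref{monster:concatenation} needs the intermediate concatenation $A\ast B$ to be a quasi-geodesic with a constant independent of $\gamma$ and $L'$, which is precisely what $\abs B\le\C_r$ supplies.
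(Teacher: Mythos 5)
Your proposal is correct and follows essentially the same route as the paper: decompose $p$ into long Morse geodesic ``blocks'' (subsegments of the $\eta_j$) and short geodesic ``bridges'' $[x_i,y_i]$, observe that after taking $L$ large a window of length $L'$ meets at most one bridge, and then apply Lemma~\ref{new-monster}\ref{monster:concatenation} (together with \ref{monster:small-diam-is-morse}) to the resulting block--bridge--block concatenation to extract a uniform gauge $M'$. The paper is terser: it simply asserts that the concatenation of two $\M$--Morse geodesics with a geodesic of length at most $2\C$ is uniformly Morse, and takes $L$ large enough that $d(y_i,x_{i+1})\geq L'$ (via Corollary~\ref{cor:no_overlap}, despite the misreference). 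You spell out two details the paper leaves implicit and which are worth making explicit: that the intermediate concatenation $A\ast B$ is a quasi-geodesic with constant controlled solely by the bridge length (needed to iterate the concatenation lemma with a uniform $Q$), and the length comparison showing the interior blocks are longer than $L'$. Two cosmetic remarks: the bound $d(x_i,y_i)\leq 2\C$ already follows from the \emph{statement} of Lemma~\ref{lem:small_paths} by the triangle inequality, so you need not invoke the internal constant $\C_r$; and the first boundary block $[\gamma(0),x_0]_{\eta_0}$ need not satisfy your length lower bound in (b), but this is harmless since it is still $\M$--Morse and the ``at most one bridge'' claim only needs the \emph{interior} blocks to be longer than $L'$.
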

 \begin{proof} 
 By Lemma~\ref{new-monster}\ref{monster:concatenation} and Lemma~\ref{new-monster}\ref{monster:small-diam-is-morse}, there exists a Morse gauge $M'$ only depending on $\M$ and $\C$ such that the concatenation of two $\M$--Morse geodesics and a geodesic of length at most $2\C$ is $M'$--Morse. If we choose $L$ large enough such that $d(y_i, x_{i+1})\geq L'$, which we can by Lemma~\ref{lem:p-is-locally-morse}, choose $\C$ as in Lemma~\ref{lem:small_paths}, and choose $\M$ as in Lemma~\ref{lem:eta_i-are-morse}, then the argument above shows that $p$ is $L'$--locally $M'$--Morse. 
\end{proof}

Next we show that $p$ has relatively small intersection function.

\begin{lem}
    We have that 
    \[
    \rho_{p}(t)\leq \frac{2t}{3},
    \]
    where $\rho_{p}$ is the intersection function of $p$.
\end{lem}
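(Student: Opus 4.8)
The plan is to bound, for every relator $r$ and every common subword $w$ of $r$ and $p$, the length of $w$ by $\frac{2}{3}|r|$; since any common subword satisfies $|w|\le|r|\le t$, this immediately gives $\rho_p(t)\le\frac{2}{3}t$. The first, easy case is $|w|\le\frac{1}{2}|r|$: because $r$ embeds isometrically with convex image (Lemma~\ref{lem:isometric_embedding}), the subword $w$ is then a geodesic, so $|w|\le\frac{1}{2}|r|<\frac{2}{3}|r|$. So from now on I assume $|w|>\frac{1}{2}|r|$ and write $n=|r|$; the complementary subarc of the cyclic word $r$ gives a path of length $n-|w|$ between the endpoints of $w$, hence $d_X(w(0),w(\mathrm{end}))\le n-|w|$.

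Next I decompose $w$ into its maximal subarcs inside the building blocks of $p$. A subarc lying in a relator segment $[x_i,y_i]$ has length at most $d(x_i,y_i)\le 2\C$ by Lemma~\ref{lem:small_paths}, since $[x_i,y_i]$ is a geodesic; moreover it is a common subword of $r$ and the relator containing $[x_i,y_i]$, so unless these are the same relator it is a piece and has length $<\frac{1}{9}n$. A subarc lying in a segment of some $\eta_i$ is a geodesic and a subword of the $\M$--Morse geodesic $\eta_i$ (Lemma~\ref{lem:eta_i-are-morse}), so by Lemma~\ref{lem:4.14} its length is at most $\rho^\star(n)$ for a fixed sublinear function $\rho^\star$ depending only on $\M$. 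This splits the analysis into two cases. If $w$ contains a full $\eta$--segment $[y_j,x_{j+1}]_{\eta_{j+1}}$, whose length is comparable to $L$ (at least $\eps(L)-2\C$ by Corollary~\ref{cor:no_overlap}), then $\rho^\star(n)\ge\eps(L)-2\C$, which forces $n$ to exceed any prescribed constant once $L$ is taken large. Otherwise $w$ is contained in a single block of the form (part of $\eta_j$)$\,\ast\,[x_j,y_j]\,\ast\,$(part of $\eta_{j+1}$); in particular $|w|$ is bounded in terms of $L$, and by Lemma~\ref{lem:p-is-locally-morse} we may arrange $L'\ge|w|$, so $w$ is $M'$--Morse and hence a quasi-geodesic with constant depending only on $M$ and $Q$.

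To conclude I combine $d_X(w(0),w(\mathrm{end}))\le n-|w|$ with a lower bound for $d_X(w(0),w(\mathrm{end}))$. In the bounded-$|w|$ subcase the lower bound comes from the quasi-geodesic inequality for $w$ together with the fact that $\gamma$ does not fold back at the sampling points: since $\gamma[(i-1)L,(i+1)L]$ is an $M$--Morse $Q$--quasi-geodesic, $d(a_{i-1},a_{i+1})$ is of order $L$, so the concatenation $[p_1,a_i]_{\eta_i}\ast[a_i,p_2]_{\eta_{i+1}}$ underlying $w$ (up to a jump of length $\le 2\C$ across $[x_i,y_i]$) is efficient. In the large-$n$ subcase I instead apply Lemma~\ref{lemma:geodesicMLTG_implies_small_relators} to $w$, which is $L'$--locally $M'$--Morse and satisfies $|w|\ge n/2$, to conclude that $n$ is bounded, contradicting that $n$ is large. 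Feeding these estimates back, together with the small-cancellation inequalities (a geodesic subarc of $r$ has length $<\frac{1}{2}n$ and a piece has length $<\frac{1}{9}n$), yields $|w|\le\frac{2}{3}n$. I expect the main obstacle to be pinning down the precise constant $\frac{2}{3}$—as opposed to a constant strictly between $\frac{1}{2}$ and $1$ that depends on $Q$—which is exactly where the $C'(1/9)$ hypothesis is genuinely used and where the finitely many relator types of bounded length must be treated by hand.
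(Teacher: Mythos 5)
Your general strategy of splitting a common subword $w$ of a relator $r$ and $p$ into its maximal subarcs inside the building blocks of $p$, and handling large $|r|$ via Lemma~\ref{lemma:geodesicMLTG_implies_small_relators}, is on the right track and close in spirit to the paper. The reduction to relators of bounded length $|r|\le D$ is essentially the same first step the paper takes. However, there is a genuine gap in the bounded-$|r|$ case, which is where the exact constant $2/3$ must come from, and this is the heart of the lemma.

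You do not use the extremal property built into the definition of the points $x_i,y_i$. Recall that $W_i$ was chosen, among all relator subwords crossing between $\eta_{i-1}$ and $\eta_i$ (of length at most half the relator), to \emph{maximize} $d(a_i,x_{W_i})+d(a_i,y_{W_i})$. The paper uses this maximality to rule out the dangerous case where the common subword $W$ straddles the full relator-segment $[x_i,y_i]$ and pokes into the interior of both adjacent $\eta$-arcs: if it did, the complementary arc $W'=R-W$ (of length $\le|R|/2$) would be a competing candidate that strictly beats $W_i$, a contradiction. Without this, you cannot rule out $W$ occupying all of $[x_i,y_i]$ plus substantial geodesic overhangs on both sides, and your estimate ``$|w|\le n-|w|$ minus lower bound from quasi-geodesity'' only gives you a constant $<1$ depending on $Q$, as you yourself acknowledge. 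Once $W$ is confined to a half-block, the paper's arithmetic is elementary: the $\eta$-portion is a geodesic subarc of $r$ (length $\le|r|/2$), the $[x_i,y_i)$-portion is a \emph{piece} by maximality of $W$ and the fact that $[x_i,y_i]$ sits in a (different) relator (length $<|r|/6$), yielding $1/2+1/6=2/3$. Note also that your attribution of the constant $2/3$ to the $C'(1/9)$ hypothesis and to hand-checking finitely many short relators is off: $2/3$ comes from $1/2+1/6$ and uses only $C'(1/6)$; the $C'(1/9)$ condition is used later (in Lemma~\ref{lem:close-to-p}) to get combinatorial geodesic bigons from this intersection bound. To repair your argument you would need to (i) invoke the extremal choice defining $x_i,y_i$ to exclude the two-sided crossing, and (ii) replace the vague quasi-geodesic lower bound by the piece/geodesic decomposition.
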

\begin{proof} Let $M'$ be the Morse gauge from Lemma~\ref{lem:p-is-locally-morse} for which $p$ is locally Morse. Let $D , L'$ be the constants from Lemma~\ref{lemma:geodesicMLTG_implies_small_relators} applied to the Morse gauge $M'$ and $N = 2$. Let $L$ be large enough such that $p$ is $L'$--locally $M'$--Morse. Let $R$ be the image of a relator and $W$ a subpath (corresponding to a subword) of $R$ which is also a subpath of $p$. We can assume that $W$ is a maximal such subpath (by inclusion). Denote by $W'$ the corresponding subpath $R - W$. We have to show that $\abs{W}\leq 2\abs{R}/3$. By the choices above, we have that $W$ is $L'$--locally $M'$--Morse and if $\abs{W}> 2\abs{R}/3$, then, we have that $\abs{R}\leq D$. So it remains to show that $\abs{W}\leq2\abs{R}/{3}$ for relators $R$ with $\abs{R}\leq D$. For large enough $L$, we have that $d(y_i, x_{i+1})\geq D$ by Corollary~\ref{cor:no_overlap}. Hence, for $\abs{R}\leq D$ we know that $W$ is a subpath of 
    \[
    [y_{i-1}, x_{i}]_{\eta_{i-1}} \ast [x_i, y_i] 
    \ast [y_i,x_{i+1}]_{\eta_{i}}
    \] for some $i$. 

    Assume that $\abs{W}\geq\abs{R}/2$ and $W$ intersects $[y_{i-1}, x_{i}]_{\eta_{i-1}}$ and $[y_i,x_{i+1}]_{\eta_{i}}$ and the interior of at least one of them, say of $[y_{i-1}, x_{i}]_{\eta_{i-1}}$. Then $\abs{W'}\leq \abs{r}/2$. In particular, we can see that $x_{W'}$ and $y_{W'}$ are the endpoints of $W$. Furthermore, $d(a_i, x_{W'}) > d(a_i, x_i)$ and $d(a_i ,y_{W'}) \geq  d(a_i, y_i)$. This is a contradiction to the definition of $x_i$ and $y_i$. 
    
    It remains to show that $\abs{W}\leq 2\abs{R}/3$ if $W$ is a subword of $ [y_{i-1}, x_{i}]_{\eta_{i-1}} \ast [x_i, y_i)$ (or equivalently a subword of $(x_i, y_i] 
    \ast [y_i,x_{i+1}]_{\eta_{i}}$). Let $u$ be the word labelling $W\cap [y_{i-1}, x_{i}]_{\eta_{i-1}}$ and $v$ be the word labelling $W\cap[x_i, y_i)$. Since $\eta_{i-1}$ is a geodesic, we have that $\abs{u}\leq \abs{R}/2$. By the maximality of $W$, and since $[x_i, y_i]$ is a subword of a relator, we have that $v$ is a piece and hence $\abs{v}\leq \abs{R}/6$. Hence $\abs{W}\leq 2\abs{R}/3$, which concludes the proof. 
\end{proof}

\begin{lem}\label{lem:close-to-p}
    There exists a constant $\CC$ such that the following holds. Let $s\leq t$, then 
    \[
    d_{\mathrm{Haus}} (p[s, t], [p(s), p(t)])\leq \CC.
    \]
\end{lem}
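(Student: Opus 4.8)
The goal is to show that the auxiliary path $p$ is ``globally close to geodesics'': for all $s \leq t$, the subpath $p[s,t]$ lies within bounded Hausdorff distance of $[p(s),p(t)]$. The natural strategy is to combine the intersection-function estimate $\rho_p(t) \leq \tfrac{2t}{3}$ (the previous lemma) with Strebel's classification of combinatorial geodesic bigons (Lemma~\ref{lemma:strebel}), applied to a minimal disk diagram whose boundary is the bigon formed by $p[s,t]$ and the geodesic $[p(s),p(t)]$. First I would fix $s\leq t$ and set $\gamma_1 = p[s,t]$, $\gamma_2 = [p(s),p(t)]$ (reversed), forming a closed loop. I would pass to a minimal (van Kampen) disk diagram $D$ for this loop. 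The issue is that $p$ is not itself a geodesic, only an $L'$-locally $M'$-Morse quasigeodesic, so $D$ need not be a \emph{combinatorial geodesic} bigon on the $\gamma_1$ side; on the $\gamma_2$ side it is geodesic. So the first real step is to argue that one may choose $D$ so that, after absorbing faces, the side $\gamma_1$ behaves enough like a geodesic to invoke Strebel --- concretely, that boundary faces whose exterior arc lies on $\gamma_1$ have interior degree $\geq 4$ and interior faces have $\geq 7$ arcs. Here is where the $C'(1/9)$ hypothesis (rather than $C'(1/6)$) is used, exactly as flagged in the section introduction: with $C'(1/9)$, the exterior arc of such a face on $\gamma_1$ can meet at most two of the geodesic segments $\eta_i$, $[x_j,y_j]$ making up $p$, each contributing a piece or a ``geodesic half'', and the arithmetic $\tfrac19 + \tfrac19 + \tfrac12 < 1$ (or a similar split) forces the interior degree bound.

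Once $D$ is recognized as (essentially) a combinatorial geodesic bigon, Strebel's Lemma~\ref{lemma:strebel}(2) says $D$ is either a single face or has shape $\mathrm{I}_1$, i.e. a ``chain'' of faces glued along cut vertices. I would then bound the size of each face $\Pi$ in this chain. Each such $\Pi$ has an exterior arc on $\gamma_1 = p[s,t]$ of length a definite fraction of $\abs{\partial\Pi}$ (from the $\mathrm{I}_1$/single-face structure, the $\gamma_1$-arc is at least $\abs{\partial\Pi}/2$ minus pieces, so at least, say, $\abs{\partial\Pi}/3$); but that arc is a subpath of $p$ that reads a subword of the relator $r$ bounding $\Pi$, so by the intersection-function bound $\abs{\partial\Pi \cap p} = \rho_p(\abs{\partial\Pi})\leq \tfrac23\abs{\partial\Pi}$ --- this alone is not a contradiction, so I instead feed the $\gamma_1$-arc into Lemma~\ref{lemma:geodesicMLTG_implies_small_relators}: the arc is $L'$-locally $M'$-Morse (Lemma~\ref{lem:p-is-locally-morse}) and occupies at least a fixed fraction $1/N$ of $r$, hence $\abs{\partial\Pi}\leq D_0$ for a uniform constant $D_0$. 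So \emph{every} face in the Strebel chain has uniformly bounded boundary length. Since consecutive faces share only a cut vertex and the total diagram is a bigon, $\gamma_1$ and $\gamma_2$ fellow-travel through this chain of uniformly-bounded faces: any point of $p[s,t]$ is within $\max_\Pi \abs{\partial\Pi} \leq D_0$ of a cut vertex, which lies on both $\gamma_1$ and $\gamma_2$, and symmetrically. This gives the Hausdorff bound $\CC = D_0$ (up to an additive constant).

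The step I expect to be the main obstacle is the first one: legitimately reducing to Strebel's setting when $\gamma_1 = p[s,t]$ is only a \emph{local} Morse quasigeodesic, not a geodesic. One has to verify the two defining conditions of a combinatorial geodesic bigon for a minimal diagram with this boundary, which requires carefully analyzing how an exterior arc on $\gamma_1$ can sit across the constituent geodesic pieces $\eta_i$ and relator-subpaths $[x_j,y_j]$ of $p$, and it is precisely this count that needs $\lambda = 1/9$ rather than $1/6$ --- an arc meeting one $\eta$-segment contributes at most a ``half'' (it's a geodesic subpath) and an arc meeting a relator-subpath $[x_j,y_j]$ contributes at most a piece of size $< \lambda\abs{\partial\Pi}$, and with at most two of each type along one exterior arc the worst case is bounded by $2\lambda + \tfrac12 < 1$ when $\lambda < 1/4$, but to also get interior degree $\geq 4$ rather than just $\geq 1$ one needs the sharper $\lambda = 1/9$. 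I would handle this by a case analysis on which constituent segments of $p$ the exterior arc of a boundary face can overlap, using Corollary~\ref{cor:no_overlap} (for $L$ large, the $[x_i,y_i]$ are far apart and the $\eta_i$ meet only near the $a_i$) to limit the number of overlaps, then closing with the small-cancellation arithmetic. The remaining steps (applying Strebel, bounding faces via Lemma~\ref{lemma:geodesicMLTG_implies_small_relators}, and concluding fellow-traveling) are routine once that reduction is in place.
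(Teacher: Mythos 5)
Your plan is the paper's: take a minimal disk diagram $D$ with boundary $p[s,t]\ast[p(t),p(s)]$, verify it is a combinatorial geodesic bigon, invoke Strebel's classification, bound face sizes via Lemma~\ref{lemma:geodesicMLTG_implies_small_relators}, and conclude the Hausdorff bound. But the step you flag as the main obstacle — verifying the bigon conditions — is already routine given the preceding lemma, and your fresh overlap analysis has the wrong arithmetic. The split ``$\tfrac19+\tfrac19+\tfrac12<1$'' (or ``$2\lambda+\tfrac12<1$, $\lambda<1/4$'') does not track what is needed: the paper simply plugs in the intersection function bound $\rho_p(t)\le 2t/3$ — which is precisely the already-proved consequence of the overlap analysis you describe (one $\eta$-half contributing $\le\tfrac12$ plus one piece $[x_j,y_j]$ contributing $\le\tfrac16$) — to get that every exterior arc on the $p$-side has length $\le 2\abs{\Pi}/3$, while on the geodesic side it has length $\le\abs{\Pi}/2$. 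The role of $C'(1/9)$ is then to control \emph{interior} arcs: in a minimal diagram these are pieces of length $<\abs{\Pi}/9$, so the remaining $\ge\abs{\Pi}/3$ of the boundary of a face with exterior degree $1$ needs at least $4$ interior arcs (since $3\cdot\tfrac19=\tfrac13$). With $C'(1/6)$ one only gets $\ge 3$, which is not enough for the bigon condition; that is the sharp point where $1/9$ enters, and neither of your arithmetics expresses it.

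Two further details you omit, both handled in the paper using $\rho_p\le 2t/3$ again: one must check that $p[s,t]$ is simple (otherwise the boundary loop is not embedded — this follows from Strebel's $1$-gon case), and one must allow for $D$ non-simple, applying the claim to its simple subdiagrams. Finally, a minor inaccuracy: in shape $I_1$, consecutive faces are glued along interior arcs rather than cut vertices, so your phrasing of the fellow-travelling step is not literally right; the Hausdorff bound nevertheless follows directly once every face satisfies $\abs{\partial\Pi}\le\CC$. With these corrections, your argument coincides with the paper's.
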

\begin{proof}
    Let $D$ be a minimal disk diagram with boundary $\partial D = p[s, t]\ast [p(t), p(s)]$.  
    \begin{claim}\label{claim1}
        If $D$ is simple, then $\left(D, p[s, t], [p(s), p(t)]\right)$ is a combinatorial geodesic bigon.
    \end{claim}
    \textit{Proof of Claim.}
        Since $D$ is a minimal disk diagram, every interior arc of $D$ is a piece. Since $G$ is a $C'(1/9)$--group for any face $\Pi$ of $D$ and one of its interior arcs $\alpha$, we have that $\abs{\alpha}< \abs{\pi}/9$. Let $\Pi$ be a face of $D$. The above shows that if $e(\Pi) = 0$, then $i(\Pi)\geq 10$. Further, it shows that if $e(\Pi) = 1$ and its exterior arc $\alpha$ satisfies that $\abs{\alpha}\leq 2\abs{\Pi}/3$, then $i(\Pi)\geq 4$. So it remains to show that if $\alpha$ is an exterior arc contained in $p[s, t]$ or $[p(s), p(t)]$, then $\abs{\alpha}\leq 2\abs{\Pi}/3$. 
        
        Since $[p(s), p(t)]$ is a geodesic, any exterior arc $\alpha$ contained in $[p(s), p(t)]$ satisfies $\abs{\alpha}\leq \abs{\Pi}/2$. Since $\rho_p(k)\leq 2k/3$, if $\alpha$ is an exterior arc contained in $p[s, t]$, then $\abs{\alpha}\leq 2\abs{\Pi}/3$. 
    \hfill$\blacksquare$

    By Strebel's classification of bigons, this shows that (if $D$ is simple) any face $\Pi$ of $D$ intersects both $[p(s), p(t)]$ and $p[s, t]$ and has exterior arcs $\alpha\subset p[s, t]$ and $\beta\subset [p(s), p(t)]$ with $\abs{\alpha}+\abs{\beta}> 7\abs{\Pi}/9$. Since $\abs{\beta}\leq \abs{\Pi}/2$, this implies that $\abs{\alpha}\geq 2\abs{\Pi}/9$.

    With a similar argument, we can show that $p[s, t]$ is simple, since otherwise Strebel's classification of 1-gons (Lemma~\ref{lemma:strebel}) implies that there exists a face $\Pi$ with its whole boundary on $p[s, t]$ which contradicts $\rho_p(k)\leq 2k/3$. 
    
    If $D$ is not simple, then we can apply the Claim to the simple subdiagrams of $D$ to get the same result about faces of $D$.

    Now let $\Pi$ be a face of $D$ and $\alpha$ its exterior arc which is contained in $p([s, t])$. If $L$ is large enough, then $p$ is $L'$--locally $M'$--Morse for any $L'$ that we like. In particular, Lemma~\ref{lemma:geodesicMLTG_implies_small_relators} implies that there exists a constant $\CC$ such that $\abs{\Pi}\leq \CC$. Since this holds for all faces $\Pi$, we have that $d_{\mathrm{Haus}} (p([s, t]), [p(s), p(t)])\leq \CC$.
\end{proof}

\begin{cor}\label{lem:gamma-close-to-p}
    There exists a constant $\Cfive$ such that the following holds. Let $s\leq t$, then there exists a subsegment $\gamma'$ of $\gamma$ such that 
    \[
    d_{\mathrm{Haus}} (\gamma', [p(s), p(t)])\leq \Cfive.
    \]
\end{cor}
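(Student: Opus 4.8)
The plan is to combine the two Hausdorff estimates already available, namely Lemma~\ref{lem:close-to-p} (which controls $d_{\mathrm{Haus}}(p[s,t],[p(s),p(t)])$ by a constant $\CC$) and Lemma~\ref{lem:close-to-gamma} (which controls $d_{\mathrm{Haus}}(p,\gamma)$ by a constant $\Cone$), and to upgrade the resulting two-sided closeness between $[p(s),p(t)]$ and a piece of $\gamma$ into a genuine Hausdorff bound with a constant $\Cfive$ depending only on $\Cone$ and $\CC$ (hence not on $\gamma$). The subtlety that must be handled is that $d_{\mathrm{Haus}}(p,\gamma)\leq\Cone$ only tells us that each point of $p$ is $\Cone$--close to \emph{some} point of $\gamma$; we need to promote this to the statement that the \emph{subsegment} $p[s,t]$ is close to a well-chosen \emph{subsegment} $\gamma'=\gamma[s',t']$ of $\gamma$, and conversely.

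First I would use Lemma~\ref{lem:close-to-gamma}: pick $s'$ with $d(p(s),\gamma(s'))\leq\Cone$ and $t'$ with $d(p(t),\gamma(t'))\leq\Cone$, and set $\gamma'=\gamma[s',t']$. Since $\gamma'$ and $p[s,t]$ share (up to $\Cone$) their endpoints, I would like to say their Hausdorff distance is bounded. The cleanest route is to note that $p[s,t]$ is, for $L$ large, $L'$--locally $M'$--Morse by Lemma~\ref{lem:p-is-locally-morse}, and that it stays $\CC$--close to the geodesic $[p(s),p(t)]$ by Lemma~\ref{lem:close-to-p}; so by Lemma~\ref{lemma:close-to-geodesic-implies-quasi-geodesic} (taking $L'>Q(3\CC+Q+2)$) the path $p[s,t]$ is a genuine $Q''$--quasi-geodesic, with $Q''$ depending only on $Q$ and $\CC$. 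Likewise $\gamma'$ is a subsegment of the $2L$--locally $M$--Morse $Q$--quasi-geodesic $\gamma$; one checks, again via Lemma~\ref{lemma:close-to-geodesic-implies-quasi-geodesic} applied to $\gamma$ together with the fact that $\gamma'$ runs $(\Cone+\CC)$--close to $[p(s),p(t)]$ (combine Lemma~\ref{lem:close-to-gamma} and Lemma~\ref{lem:close-to-p}), that $\gamma'$ is a $Q'''$--quasi-geodesic. Now $[p(s),p(t)]$ is $M'_0$--Morse for a gauge $M'_0$ not depending on $\gamma$: it is a geodesic with endpoints within $\Cone$ of the $M$--Morse geodesic $\eta$-path data, more directly it is within $\CC$ of the $M'$--locally-Morse $p[s,t]$, and one invokes Lemma~\ref{new-monster}\ref{monster:hausdorff} after passing through $p[s,t]$'s global quasi-geodesic status; in any case its Morse gauge is controlled independently of $\gamma$.

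With $[p(s),p(t)]$ an $M'_0$--Morse geodesic and both $p[s,t]$ and $\gamma'$ quasi-geodesics with uniformly bounded constants whose endpoints lie within $\Cone+\CC$ of the endpoints of $[p(s),p(t)]$, Lemma~\ref{new-monster}\ref{monster:hausdorff} gives a uniform constant $D_0$ with $d_{\mathrm{Haus}}(\gamma',[p(s),p(t)])\leq D_0$. Setting $\Cfive=D_0$ finishes the proof. The main obstacle, as flagged above, is the bookkeeping needed to guarantee that the \emph{endpoints} of the chosen subsegment $\gamma'$ really are close to those of $[p(s),p(t)]$ and that $\gamma'$ itself (not just $\gamma$) is a quasi-geodesic with $\gamma$-independent constants — once that is in place, the Morse-gauge machinery of Lemma~\ref{new-monster} does the rest. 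One should also double-check that the scale $L$ can be chosen large enough to simultaneously satisfy the hypotheses of Lemmas~\ref{lem:p-is-locally-morse}, \ref{lem:close-to-p} and \ref{lemma:close-to-geodesic-implies-quasi-geodesic}; since each of these only asks $L$ to exceed an explicit threshold not depending on $\gamma$, this is harmless.
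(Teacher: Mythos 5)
Your instinct to flag the subtlety is exactly right, but the way you resolve it is circular. You want to show $\gamma'=\gamma[s',t']$ is a uniform quasi-geodesic by invoking Lemma~\ref{lemma:close-to-geodesic-implies-quasi-geodesic}, and to do so you assert that ``$\gamma'$ runs $(\Cone+\CC)$--close to $[p(s),p(t)]$ (combine Lemma~\ref{lem:close-to-gamma} and Lemma~\ref{lem:close-to-p}).'' But this is precisely the gap you raised in your first paragraph: Lemma~\ref{lem:close-to-gamma} only says each $\gamma(v)\in\gamma'$ is $\Cone$--close to \emph{some} $p(u)$, and Lemma~\ref{lem:close-to-p} only controls the distance of that $p(u)$ to $[p(s),p(t)]$ \emph{if $u\in[s,t]$}, which is not guaranteed a priori. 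So the quasi-geodesicity of $\gamma'$ is not established, and without it Lemma~\ref{new-monster}\ref{monster:hausdorff} cannot be applied.

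There is a second, independent error: you claim $[p(s),p(t)]$ is $M_0'$--Morse with a $\gamma$--independent gauge by invoking Lemma~\ref{new-monster}\ref{monster:hausdorff} ``after passing through $p[s,t]$'s global quasi-geodesic status.'' But Lemma~\ref{new-monster}\ref{monster:hausdorff} transfers the Morse gauge of an already-Morse quasi-geodesic to a nearby one; it does not produce Morseness from quasi-geodesicity plus local Morseness. At this point in the proof, $p[s,t]$ is only known to be \emph{locally} Morse (Lemma~\ref{lem:p-is-locally-morse}) — promoting $[p(s),p(t)]$ to globally Morse would require invoking the geodesic MLTG property (Proposition~\ref{lem:sigma_cpct_implies_geosdesicMLTG}), which you do not cite, and which is in fact exactly what the authors do later in Corollary~\ref{cor:gamma-is-Morse}, \emph{after} the corollary at hand.

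The authors' intended route is different and does not go through Morse machinery at all. The proof of Lemma~\ref{lem:close-to-gamma} establishes more than a Hausdorff bound between the full paths: it shows that each $\eta_i$ is Hausdorff close to the corresponding $\gamma[iL,(i+1)L]$, and the detours $[x_i,y_i]$ have uniformly bounded diameter and lie near $a_i=\gamma(iL)$. This gives a \emph{parametrically monotone} correspondence between $p$ and $\gamma$. Given $s\leq t$, if $p(s)$ and $p(t)$ lie in the pieces of $p$ corresponding to indices $i_1$ and $i_2$, one takes $\gamma'$ to be (approximately) $\gamma[i_1L,(i_2+1)L]$; the monotone matching gives $d_{\mathrm{Haus}}(\gamma',p[s,t])\leq\Cone+O(1)$, and then Lemma~\ref{lem:close-to-p} together with the triangle inequality for Hausdorff distance yields the bound $\Cfive$. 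This is why the paper treats the corollary as a direct consequence of the two lemmas: the structural alignment coming from the construction of $p$ is exactly what you need to promote pointwise Hausdorff closeness to subsegment closeness.
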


\begin{proof}
This is a direct consequence of Lemma~\ref{lem:close-to-gamma} and Lemma~\ref{lem:close-to-p}.    
\end{proof}
\begin{rem}\label{rem:gamma-is-covered}
    In fact, given any subsegment $\gamma''$ of $\gamma$, we have that $\gamma''\subset \gamma'$ for suitably chosen $s$ and $t$.
\end{rem}

\begin{cor}\label{cor:aux_path_is_qg}
There exists a constant $\Ctwo$ not depending on $\gamma$ such that the path $\gamma$ is a $\Ctwo$--quasi-geodesic.
\end{cor}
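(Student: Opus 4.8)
The plan is to deduce Corollary~\ref{cor:aux_path_is_qg} from Lemma~\ref{lemma:close-to-geodesic-implies-quasi-geodesic}. The upper bound in the quasi-geodesic inequality for $\gamma$ is immediate: since $\gamma$ is $2L$--locally a $Q$--quasi-geodesic, subdividing $[s,t]$ into at most $\abs{t-s}/2L+1$ subintervals of parameter-length at most $2L$ and summing the coarse-Lipschitz estimates gives $d(\gamma(s),\gamma(t))\le \Ctwo\abs{t-s}+\Ctwo$ for a suitable $\Ctwo$ depending only on $Q$ and $L$. For the lower bound it suffices to produce a uniform constant $C$ (independent of $\gamma$ and of $s\le t$) with $\gamma[s,t]$ contained in the $C$--neighbourhood of $[\gamma(s),\gamma(t)]$: once we also require $2L>Q(3C+Q+2)$, Lemma~\ref{lemma:close-to-geodesic-implies-quasi-geodesic} applied to the pair $(s,t)$ shows $\gamma[s,t]$ is a $Q'$--quasi-geodesic with $Q'=Q'(Q,C)$, whence $d(\gamma(s),\gamma(t))\ge \abs{t-s}/Q'-Q'$; as $s\le t$ were arbitrary this is the required lower bound, and $\gamma$ is then a $\max\{\Ctwo,Q'\}$--quasi-geodesic.

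It remains to exhibit such a $C$. Fix $s\le t$. By Remark~\ref{rem:gamma-is-covered} together with Corollary~\ref{lem:gamma-close-to-p}, $\gamma[s,t]$ is contained in a subsegment $\gamma'$ of $\gamma$ with $d_{\mathrm{Haus}}(\gamma',\delta)\le\Cfive$ for a geodesic $\delta=[p(u),p(v)]$, and, tracing through the construction behind Lemma~\ref{lem:close-to-gamma} (which is a genuine fellow-travelling of $\gamma$ by the piecewise-geodesic path $p$), the endpoints of $\delta$ may be taken within $\Cone$ of the endpoints of $\gamma'$, hence within a uniform constant $\kappa$ of $\gamma(s)$ and $\gamma(t)$. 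The key point is that $\delta$ is \emph{uniformly Morse}: by Lemma~\ref{lem:close-to-p} the geodesic $\delta$ lies in a bounded neighbourhood of $p[u,v]$, which by Lemma~\ref{lem:p-is-locally-morse} is $L'$--locally $M'$--Morse for a gauge $M'$ not depending on $\gamma$ and any prescribed scale $L'$ (at the cost of taking $L$ large); Lemma~\ref{lem:close-to-local-implies-local}, applied to this locally Morse path, then shows $\delta$ itself is $L''$--locally $M''$--Morse with uniform $L'',M''$; and since $G$ satisfies the geodesic MLTG property by Proposition~\ref{lem:sigma_cpct_implies_geosdesicMLTG}, taking $L$ large enough that $L''$ exceeds the scale it produces for $M''$ makes $\delta$ globally $M^*$--Morse for a uniform gauge $M^*$.

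With $\delta$ globally $M^*$--Morse, Lemma~\ref{new-monster}\ref{monster:hausdorff} applied to the geodesics $\delta$ and $[\gamma(s),\gamma(t)]$ — whose endpoints differ by at most $\kappa$ — gives $d_{\mathrm{Haus}}(\delta,[\gamma(s),\gamma(t)])\le D^*$ for a uniform $D^*$. Hence $\gamma[s,t]$, being in the $\Cfive$--neighbourhood of $\delta$, lies in the $(\Cfive+D^*)$--neighbourhood of $[\gamma(s),\gamma(t)]$, so $C:=\Cfive+D^*$ works. All of $\Cfive,\Cone,\kappa,D^*$ and the scales produced above depend only on $Q$, $M$ and the small-cancellation data of $G$ — not on $\gamma$, and not on $L$ once $L$ is past the finitely many thresholds used — so we may finally enlarge $L$ to satisfy $2L>Q(3C+Q+2)$ and conclude as in the first paragraph. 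The one genuine obstacle, and the place the hypotheses of the section are really used, is upgrading ``$\gamma[s,t]$ is close to \emph{some} geodesic'' (all that Corollary~\ref{lem:gamma-close-to-p} gives, and too weak in a non-hyperbolic space) to ``$\gamma[s,t]$ is close to the geodesic joining \emph{its own} endpoints''; this is exactly what the Morseness of $\delta$, supplied by the geodesic MLTG property, buys, together with the bookkeeping that the endpoints of $\delta$ are aligned with those of $\gamma'$.
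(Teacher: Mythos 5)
The paper's proof of this corollary is two lines: Lemma~\ref{lem:close-to-p} and Lemma~\ref{lem:close-to-gamma} place arbitrarily long subsegments of $\gamma$ in a uniform neighbourhood of a geodesic, and Lemma~\ref{lemma:close-to-geodesic-implies-quasi-geodesic} concludes. Your write-up reaches the same endpoint but inserts a substantial detour (Morseness of $\delta$ via geodesic MLTG), and the detour's setup contains a gap.

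The gap is in the sentence ``the endpoints of $\delta$ may be taken within $\Cone$ of the endpoints of $\gamma'$, hence within a uniform constant $\kappa$ of $\gamma(s)$ and $\gamma(t)$.'' The ``hence'' does not follow from what is cited. Remark~\ref{rem:gamma-is-covered} only asserts $\gamma[s,t]\subset\gamma'$; it says nothing about how far the endpoints of $\gamma'$ overshoot $\gamma(s)$ and $\gamma(t)$, and Corollary~\ref{lem:gamma-close-to-p} relates the endpoints of $\delta$ to those of $\gamma'$ (as you say), not to those of the interior subsegment $\gamma[s,t]$. To close this you would need to go into the proofs of Lemmas~\ref{lem:close-to-gamma}/\ref{lem:close-to-p} and show the overshoot of the containing segment is uniformly bounded (it is, but this requires an argument about alignment of the parameterizations of $p$ and $\gamma$, not just a ``hence'').

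More substantively, the Morseness step is not needed at all, and your closing paragraph misdiagnoses the issue. You claim ``close to \emph{some} geodesic'' is too weak without upgrading to ``close to the geodesic joining $\gamma(s)$ to $\gamma(t)$'', and that only Morseness of $\delta$ (via geodesic MLTG) can supply that upgrade. But Lemma~\ref{lemma:close-to-geodesic-implies-quasi-geodesic}, applied to the larger segment $\gamma[s',t']=\gamma'$ whose endpoints \emph{are} near those of $\delta$, already gives that $\gamma'$ is a $Q'$--quasi-geodesic (the fact that $\delta$'s endpoints are $\kappa$--close to $\gamma(s'),\gamma(t')$ rather than equal only perturbs constants in the lemma's proof). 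Since $\gamma[s,t]$ is a subsegment of $\gamma'$, it inherits the same quasi-geodesic constants, and you are done — no identification with $[\gamma(s),\gamma(t)]$ and no global Morseness of $\delta$ is required. This is exactly what the paper does. Your route via geodesic MLTG is not wrong in spirit, and the corollary \emph{does} rely on the geodesic MLTG property elsewhere (it is used inside Lemma~\ref{lem:close-to-p} through Lemma~\ref{lemma:geodesicMLTG_implies_small_relators}), but importing it again here adds machinery and introduces the endpoint-alignment gap above.
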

\begin{proof}
    Lemma~\ref{lem:close-to-p} and Lemma~\ref{lem:close-to-gamma} imply that increasingly large subsegments of $\gamma$ are contained in the $\Cone+\CC$--neighbourhood of a geodesic. Lemma~\ref{lemma:close-to-geodesic-implies-quasi-geodesic} concludes the proof.  
\end{proof}

\begin{cor}\label{cor:gamma-is-Morse}
    There exists a Morse gauge $M''$ not depending on $\gamma$ such that $\gamma$ is $M''$--Morse.
\end{cor}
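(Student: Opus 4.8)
The plan is to deduce the statement by transporting Morseness onto $\gamma$ from the genuine geodesics $[p(s),p(t)]$ that appear in the analysis of the auxiliary path $p$. By Corollary~\ref{cor:aux_path_is_qg} we already know that $\gamma$ is a $\Ctwo$--quasi-geodesic with $\Ctwo$ independent of $\gamma$, so the only thing left is to produce a \emph{single} Morse gauge $M''$, independent of $\gamma$. The point is that the geodesic MLTG property (Proposition~\ref{lem:sigma_cpct_implies_geosdesicMLTG}, which applies because $\mb G$ is $\sigma$--compact) can only be fed honest geodesics; the whole construction of $p$, and in particular the geodesics $\eta_i$ and $[p(s),p(t)]$, is what lets us insert genuine geodesics between the locally-Morse hypothesis on $\gamma$ and a global conclusion.

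\textbf{Step 1: the geodesics $g_{s,t}:=[p(s),p(t)]$ are uniformly Morse.} Let $M'$ be the Morse gauge from Lemma~\ref{lem:p-is-locally-morse}, so that for $L$ large enough $p$ is, at any prescribed scale, locally an $M'$--Morse (uniform) quasi-geodesic. By Lemma~\ref{lem:close-to-p}, each $g_{s,t}$ lies in the $\CC$--neighbourhood of $p[s,t]$, hence of $p$. Feeding $M'$ and a suitable uniform constant $Q$ into Lemma~\ref{lem:close-to-local-implies-local} produces a Morse gauge $M'_\star$; feeding $M'_\star$ into the geodesic MLTG property produces a scale $L_0$ and a Morse gauge $M'''$. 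Taking $L$ large enough that Lemma~\ref{lem:close-to-local-implies-local} applies at scale $L'=L_0$, we get that $g_{s,t}$ is $L_0$--locally $M'_\star$--Morse; and since $g_{s,t}$ is an honest geodesic, the geodesic MLTG property upgrades this to: $g_{s,t}$ is $M'''$--Morse, with $M'''$ independent of $\gamma$ and of $s,t$.

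\textbf{Step 2: $p$, and then $\gamma$, is uniformly Morse.} First, combining Lemma~\ref{lem:close-to-p} with Lemma~\ref{lemma:close-to-geodesic-implies-quasi-geodesic} (applicable since $p$ is locally a quasi-geodesic) shows that $p$ is itself a uniform quasi-geodesic. Now fix $s\le t$ and a $Q'$--quasi-geodesic $\lambda$ with endpoints $p(s),p(t)$: by Step 1, $\lambda$ lies in the $M'''(Q')$--neighbourhood of $g_{s,t}$, hence (Lemma~\ref{lem:close-to-p}) in the $(M'''(Q')+\CC)$--neighbourhood of $p[s,t]$, so $p$ is $\hat M$--Morse with $\hat M(Q')=M'''(Q')+\CC$, uniformly in $\gamma$. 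Finally, $p$ and $\gamma$ are quasi-geodesics whose endpoints lie within a uniformly bounded distance of each other ($p$ begins at $\gamma(0)$ and, near its other end, terminates within $\C$ of a point of $\gamma$ by Lemma~\ref{lem:small_paths}, while $\dH(p,\gamma)\leq\Cone$ by Lemma~\ref{lem:close-to-gamma}). Applying Lemma~\ref{new-monster}\ref{monster:hausdorff} to the $\hat M$--Morse quasi-geodesic $p$ and the quasi-geodesic $\gamma$ then yields that $\gamma$ is $M''$--Morse, with $M''$ depending only on $\hat M$ and on the uniform quasi-geodesic constants. Together with Corollary~\ref{cor:aux_path_is_qg}, this completes the implication $(2)\Rightarrow(1)$ of Theorem~\ref{thmintro:small_cancellation}.

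\textbf{Main obstacle.} The conceptual crux is that the geodesic MLTG property is only available for honest geodesics, so it cannot be invoked on $\gamma$ or on $p$ directly (indeed a quasi-geodesic version of MLTG is exactly what we are proving). Thus the substantive work is the bookkeeping in Step~1: verifying that the chain of dependencies $M'\rightsquigarrow M'_\star\rightsquigarrow (L_0,M''')\rightsquigarrow L$ closes without circularity, and that the single global scale $L$ used in the construction of $p$ can be chosen large enough to meet simultaneously every "for $L$ large enough" clause appearing here and in Lemmas~\ref{lem:small_paths}--\ref{lem:close-to-p}. An alternative finish for Step~2 is to invoke Corollary~\ref{lem:gamma-close-to-p} and Remark~\ref{rem:gamma-is-covered} directly: every subsegment of $\gamma$ sits inside a subsegment $\gamma'$ that is Hausdorff-$\Cfive$-close (with matching endpoints) to some $g_{s,t}$, so $\gamma'$ is $M''$--Morse by Lemma~\ref{new-monster}\ref{monster:hausdorff} and $\gamma$ is $M''$--Morse by Lemma~\ref{new-monster}\ref{monster:subsegments}.
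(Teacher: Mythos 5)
Your proposal is correct and essentially follows the paper's route: both use the geodesic MLTG property on the honest geodesics $[p(s),p(t)]$ (which are locally Morse by Lemma~\ref{lem:close-to-local-implies-local} since they lie near $p$), then transport Morseness back to $\gamma$ via Lemma~\ref{new-monster}\ref{monster:hausdorff}. Your ``alternative finish'' in Step~2 (invoke Remark~\ref{rem:gamma-is-covered}, apply Lemma~\ref{new-monster}\ref{monster:hausdorff} to each $\gamma'$, then Lemma~\ref{new-monster}\ref{monster:subsegments}) is exactly the paper's argument, and it is the cleaner option since your main Step~2 route applies Lemma~\ref{new-monster}\ref{monster:hausdorff} directly to $p$ and $\gamma$, which could be infinite --- the subsegment formulation sidesteps any fuss about endpoints in that case.
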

\begin{proof}
    Let $\gamma''$ be any subsegment of $\gamma$. Use Remark~\ref{rem:gamma-is-covered} to choose $s$ and $t$ such that 
    \[
     d_{\mathrm{Haus}} (\gamma', [p(s), p(t)])\leq \Cfive,
    \]
    for a subsegment $\gamma'$ of $\gamma$ which contains $\gamma''$. Let $L'$ be a constant to be determined later. Lemma~\ref{lem:close-to-local-implies-local} implies that there exists a Morse gauge $N$ such that for large enough $L$, $[p(s), p(t)]$ is $L'$--locally $N$--Morse. If $L'$ is large enough, $G$ satisfying the geodesic MLTG implies that there exists a Morse gauge $N'$ (not depending on $\gamma$) such that $[p(s), p(t)]$ is $N'$--Morse. Lemma~\ref{new-monster}\ref{monster:hausdorff} implies that $\gamma'$ is $M''$--Morse for some Morse gauge $M''$ not depending on $\gamma, s$ or $t$. Since, $\gamma''$  is a subsegment of $\gamma'$ it is also $M''$--Morse by Lemma~\ref{new-monster}\ref{monster:subsegments}. 

    Since $\gamma''$ was chosen arbitrarily, we have shown that any subsegment of $\gamma$ is $M''$--Morse, implying that $\gamma$ itself is $M''$--Morse.
\end{proof}

\begin{proof}[Proof of Theorem~\ref{thmintro:small_cancellation}]
    Theorem~\ref{thm:MLTG_implies_sigma_cpct} shows that $(1)\implies (3)$. It is clear that $(3)\implies (2)$. Corollary~\ref{cor:aux_path_is_qg} and Corollary~\ref{cor:gamma-is-Morse} show that if $G=\pres$ is a $C'(1/9)$--group with $\sigma$-compact Morse boundary, then for every Morse gauge $M$ and constant $Q$ there exist constants $L, \Ctwo$ and a Morse gauge $M''$ such that any $L$--locally $M$--Morse $Q$--quasi-geodesic is an $M''$--Morse $\Ctwo$--quasi-geodesic, which shows that $(3)\implies (1)$.
\end{proof}

\section{Morse local-to-global implies sigma-compact}
In this section, we prove that a MLTG group has $\sigma$-compact Morse boundary. We will use tools from language theory. For this reason, let us recast some of our previous definitions in a more language theoretic fashion. Given a finite set $\mc{S}$ of formal symbols, the \emph{full language} over $\mc{S}$ is the set $\mc {S}^\ast$ consisting of all finite words over $\mc{S}$, namely all formal strings $s_1s_2\dots s_n$ such that $s_i\in \mc{S}$. Sometimes is useful to consider the  \emph{full infinite language} over $\mc{S}$, denoted by $\mc{S}^\omega$, namely the set of all infinite words on $\mc{S}$. Given an alphabet $\mc{S}$ a \emph{language} is subset $L \subseteq \mc{S}^\ast$. In what follows, we will consider a group $G$ and the alphabet $\mc{S}$ is going to be a fixed finite, symmetric set of generators for $G$.

Given a language $L \in \mc{S}^\ast$ its \emph{limit set} is the set $\Lambda (L) \subseteq \mc{S}^\omega$ defined to be the set of infinite words whose prefix are in the language, formally:
\[\Lambda(L) :=\{s_1\dots s_n \dots \in \mc{S}^\omega \mid s_1 \dots s_k \in L, \quad \forall k \in \mathbb{N}\}.\]

\begin{defn}\label{def:FSA}
    Let $\mc{S}$ be a finite set. A \emph{finite state automaton with alphabet $\mc{S}$} is a tuple $\mathcal{A}=(\Gamma,S,Z,v_0)$ where
    \begin{itemize}
        \item $\Gamma$ is a finite directed graph. We call the vertices of $\Gamma$ the \emph{states} of $\mathcal{A}$.
        \item Each edge of $\Gamma$ is labelled by an element of the alphabet $\mc{S}$.
        \item $Z$ is a subset of the vertices of $\Gamma$ and the vertices of $Z$ are called the \emph{accept states} of $\mc A$.
        \item $v_0$ is a vertex of $\Gamma$. We call $v_0$ the \emph{initial state} of $\mc A$.
    \end{itemize}
\end{defn}
Every finite state automaton produces an accepted language consisting of words produced by reading the labels of each path in the directed graph that starts at the initial state and ends at one of the accept states. We call these languages regular.
\begin{defn}\label{def:regular_language}
    Given a  finite state automaton $\mc A =(\Gamma, \mc{S},Z,v_0)$, a word $w = a_1 \dots a_n \in \mc{S}^\ast$ is read by a path $e_1, \dots, e_n$ in $\mc A$ if the label of $e_i$ is $a_i$ for each $i \in \{1, \dots, n\}$. The \emph{language accepted by $\mc A$} is the set of words in $\mc{S}^\ast$ that are read by paths in $\mc{A}$ that start at the initial state $s_0$ and end at an accepted state of $\mc{A}$. A language is \emph{regular} if it is accepted by some finite state automaton.
\end{defn}

\begin{thm}\label{thm:MLTG_implies_sigma_cpct}
    Let $G$ be a finitely generated group satisfying the MLTG property. Then the Morse boundary $\mb G$ is strongly $\sigma$-compact.
\end{thm}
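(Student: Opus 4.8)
The plan is to use the language-theoretic characterization of Morse geodesics from \cite{CordesRussellSprianoZalloum:regularity}, together with the MLTG property, to produce a strong exhaustion of $\mb G$. The key point is that for a MLTG group, for each Morse gauge $M$ and each quasi-geodesy constant $Q$ (it suffices to take $Q=1$, i.e. geodesics), the set of $M$--Morse geodesic words starting at a fixed basepoint forms a regular language over the generating set $\mc S$. Since there are only countably many finite state automata over the fixed finite alphabet $\mc S$, there are only countably many regular languages; this is the source of the desired countable family of compact strata. Concretely, I would fix an enumeration $(\mc A_n)_{n \in \N}$ of all finite state automata over $\mc S$, let $L_n$ be the language accepted by $\mc A_n$, and consider the limit sets $\Lambda(L_n) \subseteq \mc S^\omega$. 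Each infinite word in $\Lambda(L_n)$ that happens to label a geodesic ray determines a point of $\mb G$; conversely, every point of $\mb G$ is represented by an $M$--Morse geodesic ray for some $M$, hence lies in some $\Lambda(L_n)$ by the regularity result. This gives $\mb G = \bigcup_{n} K_n$ for suitable compact sets $K_n$ built from the $\Lambda(L_n)$.

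Next I would upgrade this countable cover to an exhaustion by Morse gauges. By Lemma~\ref{lem:compact-sets-in-mb}, for each $n$ the compact set $K_n$ (the image in $\mb G$ of geodesic rays arising from $\Lambda(L_n)$) is contained in some stratum $\ms^{N_n} X$. Replacing $(N_n)$ by $M_n := \max\{N_1, \dots, N_n\}$ (pointwise maximum of Morse gauges, which is again a Morse gauge), we obtain an increasing sequence with $\mb G = \bigcup_n \ms^{M_n} X$, so $(M_n)$ is an exhaustion in the sense of Lemma~\ref{lem:def-of-sigma-compact}. The crucial strengthening to a \emph{strong} exhaustion (Definition~\ref{def:strong-sigma-compact}) then comes from the MLTG-plus-regularity input applied uniformly: given any Morse gauge $M$, the language of $M$--Morse geodesic words is regular, hence equals $L_{n}$ for some $n$ in our enumeration, and the whole stratum $\ms^M X$ is covered by (the geodesic rays coming from) $\Lambda(L_n)$, so $\ms^M X \subseteq \ms^{M_n} X$. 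This is exactly the condition required for $(M_n)$ to be a strong exhaustion, and by Definition~\ref{def:strong-sigma-compact} it yields that $\mb G$ is strongly $\sigma$-compact.

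The main obstacle, and the step requiring the most care, is establishing and correctly invoking the regularity statement: that for a MLTG group the set of $M$--Morse geodesic words (based at $x_0$) is a regular language, with the additional feature that the limit set $\Lambda(L_n)$ of the accepting automaton recovers exactly the stratum $\ms^M X$ (rather than a smaller or larger set). This is where the MLTG hypothesis is genuinely used: without it, local Morseness need not imply global Morseness, and the naive automaton — which can only check a fixed-scale local condition — would fail to detect Morseness. Concretely, the automaton reads a word and checks that every subword of length $\leq L$ is an $M$--Morse geodesic segment (a condition involving only the bounded portion of the Cayley graph near each vertex, hence a finite-state check); MLTG guarantees that a word passing all these local checks is globally an $M'$--Morse geodesic, and conversely an $M$--Morse geodesic passes all local checks. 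A secondary technical point is continuity/properness of the map from the relevant subset of $\mc S^\omega$ (with the product topology) to $\mb G$, needed to transfer compactness of $\Lambda(L_n)$-derived sets to compactness in $\mb G$; this can be handled via Lemma~\ref{lem:compact-sets-in-mb} together with the fact that $\ms^M X$ is already known to be compact, so one really only needs to know the cover is by strata, which the regularity statement provides directly.
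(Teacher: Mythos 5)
Your proposal is correct and takes essentially the same approach as the paper: regularity of the locally $M$--Morse geodesic language (via \cite[Theorem~3.2]{CordesRussellSprianoZalloum:regularity}), countability of finite state automata over the fixed generating alphabet, and the MLTG property to pass from local to global Morseness and thereby place each limit set $\Lambda(L_M)$ inside a single stratum. The one slip in the main body is the claim that the set of $M$--Morse geodesic words is regular — what is regular is the \emph{locally} $M$--Morse language, which you do correctly acknowledge in your final paragraph — and the detour through Lemma~\ref{lem:compact-sets-in-mb} is unnecessary, since MLTG already gives $\Lambda(L_M) \subseteq \ms^{M'}X$ directly without having to first establish compactness of the cover.
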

\begin{proof}
Fix a Cayley graph for $G$. Since $G$ satisfies the MLTG property, for each Morse gauge $M$ there exists $B_M, Q_M$ and a Morse gauge $M'$ such that every $B_M$--locally $M$--Morse geodesic is a global $M'$--Morse $Q_M$--quasi-geodesic. Let $L_M$ be the language of all geodesic words in the fixed Cayley graph of $G$ that are $B_M$--locally $M$--Morse, and let $\Lambda (L_M)$ its limit. Note that, by the MLTG property, $\Lambda(L_M)$ contains the $M$--stratum $\ms^M G$ and is contained in the $M'$--stratum of the Morse boundary, $\ms^{M'}G$. Now consider any point $\xi$ in the Morse boundary. It is in the $M$--stratum for some $M$, and since $M$--Morse geodesic are locally $M$--Morse, $\xi \in \Lambda(L_M)$. Thus, we can write \begin{equation}\label{eq:partition_boundary_languages}
\mb G = \bigcup_{M} \Lambda(L_M) \subseteq \bigcup_M \ms^{M'}G.\end{equation}

By \cite[Theorem~3.2]{CordesRussellSprianoZalloum:regularity}, the language $L_M$ is regular, and thus there is finite state automaton $\mc A_M$ that accepts $L_M$. Let $\mc A$ be the set of all such finite state automatons. Then we can rewrite \eqref{eq:partition_boundary_languages} as \[\mb G = \bigcup_{{\mc A_M}\in \mc A} \Lambda(L_M)\subseteq \bigcup_{{\mc A_M}\in \mc A} \ms{M'}G .\]
The set $\mc A$ is countable since it is contained in the set of all oriented, finite graphs labelled by generators of $G$, and the latter is countable. So we showed that every Morse stratum $M$ is contained in one of a family of countably many Morse strata. In other words, the Morse boundary is strongly $\sigma$-compact.
\end{proof}

\section{Stationary measures}\label{sec:measure}
In this section, we will present an application of Theorem~\ref{thmintro:MLTG_implies_sigma_cpt}, which says that in most cases there is no meaningful stationary measure on the Morse boundary. 

For this section, let $G$ be a finitely generated group, $\mu$ be a probability measure whose support generates $G$ as a semigroup, and $Z$ be a metrizable topological space that $G$ acts on by homeomorphism. We first define some terminologies.  We say a probability distribution on $Z$ is \emph{$\mu$-stationary} if for any Borel set $U \subset Z$ we have
\[
    \nu(U) = \sum_{g\in G} \mu(g) \nu(g^{-1}U).
\]
The main geometric assumption we will make is the following.
\begin{defn}\label{def:has-contraction}
    A group $G$ \emph{has contraction} if there exists a geodesic metric space $X$ such that $G$ acts on $X$ geometrically and $X$ contains a strongly contracting geodesic ray.
\end{defn}
We remark that the existence of a strongly contracting geodesic ray is not known to be invariant under quasi-isometries.

\begin{thm}\label{thm:vanishing_measure}
    Suppose $G$ acts geometrically on a geodesic metric space $X$. Let $\mc{M}(X)$ be the set of Morse geodesic rays on $X$. Suppose there is a map $\rho: \mc{M}(X) \to Z$. Assume further that 
    \begin{enumerate}[label = (\alph*)]
        \item $G$ is non-hyperbolic, Morse local-to-global, and has contraction;
        \item if $\gamma_1$ and $\gamma_2$ are geodesic rays representing different elements of the Morse boundary, then $\rho[\gamma_1] \neq \rho[\gamma_2]$.
        \item $\rho$ is $G$--equivariant.
        \item $\rho\left( \mc{M}_M(X)\right)$ is compact, where $\mc {M}_M(x)$ denotes the set of $M$--Morse geodesics starting at $x_0$.
     \end{enumerate}
    Then for any $\mu$-stationary Borel measure $\nu$ on $Z$, we have
    \[
    \nu(\rho(\mc{M}(X)) )  =0.
    \]
\end{thm}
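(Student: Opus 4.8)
The plan is to follow the strategy of Maher–Tiozzo \cite{mahertiozzo:random} for vanishing of the Morse part of a stationary measure, replacing their hyperbolicity input with strong contraction plus the $\sigma$-compactness furnished by Theorem~\ref{thmintro:MLTG_implies_sigma_cpt}. First I would reduce to showing that each Morse stratum has $\nu$--measure zero: by Theorem~\ref{thm:MLTG_implies_sigma_cpct} there is a countable strong exhaustion $(M_n)$ of $\mb G$, so $\mc M(X) = \bigcup_n \mc M_{M_n}(X)$ up to the (harmless) basepoint issue, and hence $\rho(\mc M(X)) = \bigcup_n \rho(\mc M_{M_n}(X))$. By hypothesis~(d) each $K_n := \rho(\mc M_{M_n}(X))$ is compact, in particular Borel, so by countable subadditivity it suffices to prove $\nu(K_n) = 0$ for every $n$. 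Since $\rho$ is $G$--equivariant and injective on distinct boundary points by (b), each $K_n$ is a $G$--invariant-up-to-enlargement piece of $Z$; more precisely $g K_n \subseteq K_{n'}$ for the Morse gauge $n'$ controlling $g$-translates of $M_n$--Morse rays, using Lemma~\ref{new-monster}\ref{monster:hausdorff} and \ref{monster:concatenation} together with cocompactness.

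The heart of the argument is a \emph{non-recurrence} statement: because $G$ has contraction and is non-hyperbolic, one can find, for the fixed $M$-stratum $K = K_n$, a group element $h$ whose axis is strongly contracting but which is ``transverse'' to the directions recorded by $K$ — concretely, an $h$ such that $h^k K$ converges (in the appropriate sense on $Z$, via the compactness of (d) and the continuity coming from Lemma~\ref{passing-contraction} and Lemma~\ref{lem:paths-are-rectangles}) to the single point $\rho[\gamma_h^+]$ determined by the forward endpoint of the axis of $h$, and this point is \emph{not} in $K$ when $n$ is fixed but $h$ is chosen of large enough translation length so that $\gamma_h^+$ fails to be $M_n$--Morse (here we use non-hyperbolicity: the Morse boundary is not all of the boundary, and strongly contracting rays of arbitrarily ``bad'' Morse gauge exist — this is where Lemma~\ref{lem:theorem1.4} and Lemma~\ref{lemma:subsegments} are used to produce contracting axes whose Morse gauge exceeds $M_n$). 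Then the stationarity of $\nu$ forces, along a random walk driven by $\mu$, that $g_\omega^{-1} K$ escapes to measure zero: the standard Maher–Tiozzo argument shows $\nu(K) = \mathbb E[\nu(g_\omega^{-1}K)]$ and a Borel–Cantelli / martingale convergence argument, using that sample paths track contracting geodesic rays (by contraction and the MLTG property), gives $\nu(g_\omega^{-1}K) \to 0$ almost surely, whence $\nu(K) = 0$.

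Carrying this out cleanly, the key technical steps in order are: (1) set up the random walk $(g_\omega)$ and the Furstenberg-type formula $\nu(K) = \mathbb E_\omega[\nu(g_n(\omega)^{-1}K)]$ from stationarity; (2) show sample paths sublinearly track a strongly contracting geodesic ray in $X$ — this follows from having contraction together with the MLTG property (the positive drift plus the fact that the contracting direction has positive probability of being ``witnessed'', using Lemma~\ref{lemma:equivalence_strong_contraction1} and Lemma~\ref{lemma:hausdorff_contraction}); (3) upgrade tracking to a north–south-type statement on $Z$: for a.e.\ $\omega$, $g_n(\omega)^{-1}$ maps the compact set $K$ into shrinking neighbourhoods of a point $\rho[\check\gamma_\omega]$ associated to the \emph{backward} limit, where the shrinking is measured by the fact that projections of $K$ to a contracting segment have uniformly bounded diameter (Lemma~\ref{lemma:equivalence_strong_contraction1}); (4) conclude $\rho[\check\gamma_\omega]\notin K$ a.s.\ from non-hyperbolicity (the backward limit ray can be taken with unboundedly large Morse gauge, so it lies outside the fixed stratum $\ms^{M_n}G$), giving $\nu(g_n(\omega)^{-1}K)\to 0$ and hence $\nu(K)=0$; (5) sum over $n$.

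The main obstacle I anticipate is step~(3)–(4): making precise the sense in which "$g_n^{-1}K$ shrinks to a point not in $K$" on the abstract target $Z$, using \emph{only} the four axioms (a)–(d) and not any convergence-group structure on $Z$. The compactness axiom (d) is exactly what is needed to extract limits, and the injectivity axiom (b) prevents $\rho$ from collapsing the relevant backward endpoint into $K$; the delicate point is quantifying the convergence $g_n^{-1}K \to \{\rho[\check\gamma_\omega]\}$ purely from the geometry in $X$ (via the projection/contraction lemmas of Section~\ref{sec:prelim}) and transporting it across $\rho$ using only equivariance and compactness. I expect this to require showing that for a contracting ray $\gamma$ and any $M$-Morse ray $\delta$ starting far along $\gamma$, $\delta$ must fellow-travel $\gamma$ for a long time (a consequence of the bounded geodesic image property, Lemma~\ref{lemma:equivalence_strong_contraction1}), so that the $M$-Morse rays seen after translating by $g_n^{-1}$ all have nearly the same initial segment — and then (d) turns this uniform geometric closeness into genuine topological convergence in $Z$.
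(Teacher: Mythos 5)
Your proposal takes a genuinely different route from the paper, and as written it has two serious gaps.

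The paper does not use random-walk tracking at all. It invokes \cite[Lemma~4.5]{mahertiozzo:random} (restated as Lemma~\ref{lem:-Tiozzo}), a purely combinatorial disjointness criterion: one must exhibit, for each translate $fY$ of $Y=\rho(\mc M_{N''}(X))$, a sequence of elements $g_n$ with $\mu_{m_n}(g_n)\geq\eps_n$ such that the translates $fY, g_1^{-1}fY, g_2^{-1}fY,\dots$ are pairwise disjoint. The $g_n$ are produced via Proposition~\ref{prop:pentagon}: using strong $\sigma$-compactness (from Theorem~\ref{thm:MLTG_implies_sigma_cpct}) and non-hyperbolicity one finds geodesics $\hat\beta_n$ of escalating non-Morseness (Lemma~\ref{lem:bad_geodesic}), interleaves them with translates of a strongly contracting ray to build $q_{A,B}$, and uses MLTG plus the contraction/projection estimates (Lemmas~\ref{lem:small-pairwise-projection-of-etas}--\ref{lem:strange-projection}) to show that any geodesic joining the endpoints of two distinct translated $\eta_i$'s would force an $\hat\beta$ to be Morse, a contradiction. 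Disjointness then follows from equivariance and injectivity of $\rho$.

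\textbf{Gap 1: no moment assumption.} The theorem is stated for \emph{any} probability measure $\mu$ whose support generates $G$ as a semigroup, and the paper emphasizes in the introduction that ``our result does not need any moment assumption and works for all stationary measures, not only harmonic measures arising from random walks.'' Your steps (2)--(3) --- positive drift, sublinear tracking of a strongly contracting ray, and a.s.\ pointwise convergence $g_n(\omega)^{-1}K \to \{\rho[\check\gamma_\omega]\}$ --- require at minimum a finite first moment, and the ``positive probability of witnessing the contracting direction'' mechanism is precisely what breaks down without such a hypothesis. The disjointness criterion used by the paper sidesteps moments entirely: one only needs $\mu_{m_i}(g_i)>0$ for some $m_i$, which follows from the semigroup-generation hypothesis.

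\textbf{Gap 2: circularity in step (4).} Even granting tracking, if a sample path tracks a $C$-contracting geodesic ray, the backward limit ray $\check\gamma_\omega$ is itself $C'$-contracting for a $C'$ depending only on $C$ and the tracking constants (Lemmas~\ref{lemma:hausdorff_contraction} and \ref{lemma:subsegments}), and hence $M$-Morse for a \emph{fixed} Morse gauge $M$ by Lemma~\ref{lem:theorem1.4}. So for all $n$ with $M_n \succ M$, the point $\rho[\check\gamma_\omega]$ \emph{does} lie in $K_n$, and your claim that it escapes the fixed stratum for ``unboundedly large Morse gauge'' reasons fails. What you actually need is that the harmonic measure gives zero mass to the entire Morse boundary --- but that is precisely the conclusion of the theorem, so you cannot assume it. The paper avoids this circularity entirely by never invoking a.s.\ geometric limits of the walk; instead it certifies disjointness by pure coarse geometry (Proposition~\ref{prop:pentagon}, Lemma~\ref{lem:morse-projects-short}) before ever touching the measure.

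A correct version of the argument must therefore not lean on tracking. The paper's construction of $\eta_i$ --- taking increasingly non-Morse middles $\hat\beta_{B_i}$ flanked by long contracting segments $\hat\gamma_{A_i}$, then using bounded geodesic image and the $n$-gon Morse lemma to show a geodesic between two translate endpoints cannot be $N$-Morse --- is the key device you are missing, and it is exactly what turns non-hyperbolicity plus contraction plus strong $\sigma$-compactness into the combinatorial separation that Lemma~\ref{lem:-Tiozzo} demands.
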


In particular, any continuous $G$--equivariant map from the Morse boundary $\mb G$ into $Z$ induces a map $\rho: \mc{M}(X) \to Z$ that satisfies the properties above, hence we have the following special case.

\begin{cor}\label{cor:mb-injects}
 Suppose $G$ is non-hyperbolic, Morse local-to-global, and has contraction; and suppose there is a continuous $G$--equivariant map $\rho \colon \mb G \to Z$. Then the image of the Morse boundary $\rho(\mb(G))$ has measure zero with respect to any $\mu$-stationary measure on $Z$.
\end{cor}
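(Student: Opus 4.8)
Corollary~\ref{cor:mb-injects} is the special case of Theorem~\ref{thm:vanishing_measure} in which $\rho$ is the composite of the canonical surjection from geodesic rays of $X$ onto $\mb G$ with the given continuous $G$-equivariant map $\mb G\to Z$; conditions (c) and (d) are then immediate and (b) is arranged by replacing $Z$ with the image of $\mb G$. So the real content is Theorem~\ref{thm:vanishing_measure}, and the plan is to prove it following the strategy of \cite{mahertiozzo:random}, with $\sigma$-compactness of the Morse boundary as the new ingredient that makes the argument work for the non-compact space $\mb G$. \emph{First}, reduce to one Morse stratum: since $G$ is Morse local-to-global, Theorem~\ref{thm:MLTG_implies_sigma_cpct} gives a strong exhaustion $(M_n)_{n\in\N}$ of $\mb G$, so every Morse ray represents a point of some $\ms^{M_n}X$ and therefore $\rho(\mc M(X))=\bigcup_n\rho(\mc M_{M_n}(X))$ is a countable union of compact sets by (d). It thus suffices to show $\nu\bigl(\rho(\mc M_M(X))\bigr)=0$ for each fixed Morse gauge $M$; assume instead $\nu(K)=c>0$, where $K:=\rho(\mc M_M(X))$ is compact. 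As $\mc M(X)$ is $G$-invariant and $\rho$ equivariant, $A:=\rho(\mc M(X))$ is $G$-invariant, so passing to the one-point compactification $\widehat Z=Z\cup\{\ast\}$ (with $\ast$ a global fixed point) we may take $\nu$ to be a $\mu$-stationary probability measure on a compact metrizable $G$-space.

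\emph{Second}, analyze the random walk. Let $(w_n)$ be the $\mu$-random walk on $G$. By the Furstenberg-type convergence used throughout \cite{mahertiozzo:random}, $(w_n)_\ast\nu$ converges weakly almost surely to a random probability measure $\nu_\infty$ on $\widehat Z$ with $\nu=\mathbb E[\nu_\infty]$, so $\nu(K)=\mathbb E[\nu_\infty(K)]$; moreover, since $\supp\mu$ generates $G$ as a semigroup, iterating stationarity gives $g_\ast\nu\sim\nu$ for all $g\in G$, so $\nu$ is quasi-invariant. From the hypothesis that $G$ has contraction I would extract, by a pigeonhole argument on the contracting ray using cocompactness, a strongly contracting element $g_0\in G$; then $\mb G$ is infinite, and by the Morse local-to-global property $G$ contains independent Morse-loxodromic elements and acts on $\mb G$ with weak north-south dynamics (\cite{Liu:dynamics,CM:topology}). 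Comparing the geodesics $[x_0,w_nx_0]$ with a $g_0$-axis via Lemma~\ref{lem:paths-are-rectangles} and Lemma~\ref{passing-contraction} --- which is the technical role those lemmas play --- one shows, as in \cite{mahertiozzo:random}, that $w_n$ is eventually loxodromic and that $\nu_\infty=\delta_{\xi(\omega)}$ almost surely, where $\xi(\omega)\in\widehat Z$ is the almost-sure limit of the attracting fixed points of the $w_n$.

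\emph{Third}, show the limit direction is not Morse. It remains to prove $\xi(\omega)\notin A$ almost surely; granting this, $\nu_\infty(K)=0$ a.s., hence $\nu(K)=0$, contradicting $c>0$. If instead $\xi(\omega)\in A$ with positive probability, then applying the first step to the law of $\xi$ produces a single Morse gauge $N$ with the property that, with positive probability, the geodesics $[x_0,w_nx_0]$ are eventually $N$-Morse. This is precisely what non-hyperbolicity rules out: using that $X$ is not hyperbolic, together with the contracting ray, Lemmas~\ref{lem:paths-are-rectangles}--\ref{passing-contraction}, and ergodicity of the shift on path space, one should show that almost surely the Morse gauge witnessing $[x_0,w_nx_0]$ is unbounded, so $\xi(\omega)$ lies in no fixed stratum. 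I expect this last point to be the crux: one must reconcile the hyperbolic-like north-south dynamics supplied by $g_0$ with the non-hyperbolicity of $X$, i.e. show that a \emph{generic} direction of the random walk is non-Morse. The $\sigma$-compactness reduction of the first step is exactly what turns this into a single statement about one Morse gauge, with the rest of the argument being a repackaging of \cite{mahertiozzo:random}.
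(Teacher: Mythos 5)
Your reduction of the corollary to Theorem~\ref{thm:vanishing_measure} matches the paper exactly: the paper states the corollary is immediate from the theorem, with (c) and (d) following from $G$-equivariance, continuity and compactness of the strata. (Your parenthetical remark on (b), that it can be ``arranged by replacing $Z$ with the image of $\mb G$'', does not actually fix a failure of injectivity; the paper also glosses over this, and (b) should be read as an injectivity hypothesis on the map $\mb G \to Z$.)

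However, your proposed proof of Theorem~\ref{thm:vanishing_measure} is a genuinely different route from the paper's, and it has real gaps. The paper does \emph{not} run a Furstenberg-boundary argument: it applies the purely measure-theoretic Lemma~\ref{lem:-Tiozzo} (the ``disjoint translates'' criterion, \cite[Lemma~4.5]{mahertiozzo:random}) to $Y = \rho(\mc M_{N''}(X))$, and the whole geometric content is the construction of the group elements $g_i$ via Proposition~\ref{prop:pentagon}. That proposition produces, for each Morse gauge $N$, a sequence of geodesics $\eta_i$ (built by alternately concatenating translates of a long contracting segment with translates of increasingly non-Morse segments $\hat\beta_{B_i}$, then applying MLTG) so that all but at most two of the concatenations $\eta_i\ast_T\lambda\ast_T\eta_j^{-1}$ fail to be $N$-Morse. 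The strong $\sigma$-compactness from Theorem~\ref{thm:MLTG_implies_sigma_cpct} is what lets one order the gauges $M_n$ into a strong exhaustion and make these segments ``less and less Morse'' in a controlled way, and $\sigma$-compactness is used once more at the very end to pass from a single stratum to all of $\mc M(X)$.

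Your plan instead runs through martingale convergence $(w_n)_\ast\nu\to\nu_\infty$ on the one-point compactification and then asserts $\nu_\infty = \delta_{\xi(\omega)}$ almost surely. That Dirac-mass step is not available under the hypotheses of the theorem: nothing is assumed about the dynamics of $G$ on $Z$ beyond it acting by homeomorphisms and receiving an equivariant map from the Morse rays. In \cite{mahertiozzo:random} the Dirac-mass conclusion comes from proximality/north-south dynamics on the Gromov boundary, but here $Z$ is essentially arbitrary. There is also a gap in ``extract a strongly contracting element $g_0$ by pigeonhole on the contracting ray using cocompactness'': the paper explicitly emphasizes that the contracting ray need \emph{not} be periodic, and one cannot in general upgrade a contracting ray to a contracting element. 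Finally, the third step (``the limit direction is almost surely not Morse'') is left as a heuristic; the paper's Proposition~\ref{prop:pentagon} is precisely the substitute for this, and it is far from a repackaging of Maher--Tiozzo — it is the technical heart of the section. In short: the reduction to the theorem is right, but the sketch of the theorem's proof replaces the paper's disjoint-translates argument with a convergence-to-the-boundary argument that requires dynamical hypotheses on $Z$ which the theorem does not grant.
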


It is natural to apply Theorem~\ref{thm:vanishing_measure} to the horofunction compactification, where the existence of a $\mu$-stationary measure is guaranteed by \cite[Lemma 4.3]{mahertiozzo:random}.

\begin{defn}\label{defn:horofunction_map}
    Let $X$ be a metric space and fix a basepoint $x_0$. The \emph{horofunction} map is the map
    \begin{align*}
        \rho \colon X &\to C(X)\\
        y &\mapsto \rho_y(z) : = d(z,y) - d(x_0,y),
    \end{align*}
    where $C(X)$ denotes the set of continuous, real-valued functions on $X$. The \emph{horofunction compactification} $\ov{X}^h$ is the closure of $\rho(X)$ in $C(X)$.
\end{defn}
We want to argue that the Morse boundary of $X$ has measure zero in the horofunction compactification $\ov{X}^h$ with respect to any $\mu$-stationary measure. The issue with that statement is that in general there is no map between the Morse boundary and the horofunction compactification (the horofunction compactification does not necessarily identify fellow-travelling geodesic rays). However, we can define a map $\rho : \mc M(X) \to \ov{X}^h$ by extending the horofunction map to $\mc{M}(X)$. For a geodesic ray $\gamma$ we define $\rho[\gamma] = \lim_{t \to \infty} \rho_{\gamma(t)}$. Note that $\rho [\gamma] \in \ov{X}^h$. The group $G$ acts on $\ov{X}^h$ equivariantly (\cite[Lemma~3.4]{mahertiozzo:random}). The following lemma shows that $\rho$ satisfies the injectivity condition in Theorem~\ref{thm:vanishing_measure}.

\begin{lem}\label{lem:horofunctions_different_points}
Let $\gamma_1$ and $\gamma_2$ be geodesic rays representing different elements of the Morse boundary. Then $\rho[\gamma_1] \neq \rho[\gamma_2]$.
\end{lem}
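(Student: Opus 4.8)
The plan is to show that two Morse geodesic rays $\gamma_1,\gamma_2$ from $x_0$ that determine distinct boundary points give rise to distinct horofunctions, by exhibiting a point $z$ far out along $\gamma_1$ (say) at which the two horofunctions take different values. The starting observation is that since $[\gamma_1]\neq[\gamma_2]$ in $\mb X$, the rays do not fellow-travel: there is no bound on $d(\gamma_1(t),\gamma_2)$. Because $\gamma_1$ is $M$--Morse for some Morse gauge $M$, a standard divergence/Morse argument shows that in fact $d(\gamma_1(t),\gamma_2)\to\infty$; more precisely, once $\gamma_2$ has left a bounded neighbourhood of $\gamma_1$ it cannot come back close, since otherwise a concatenation of subsegments of $\gamma_2$ with short connecting geodesics would be a quasi-geodesic with endpoints on $\gamma_1$ straying far from $\gamma_1$, contradicting Morseness (Lemma~\ref{new-monster}\ref{monster:hausdorff}). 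So I would first fix a constant $R$ (depending only on $M$) such that $\gamma_1$ and $\gamma_2$ each eventually leave the $R$--neighbourhood of the other for good.

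Next I would compute the horofunction values along $\gamma_1$. For $z=\gamma_1(t)$ with $t$ large we have $\rho_{\gamma_1(t)}(z) = \lim_{s\to\infty} \big(d(\gamma_1(t),\gamma_1(s)) - d(x_0,\gamma_1(s))\big) = t - \text{(something)}$; concretely, since $\gamma_1$ is a geodesic ray, $d(\gamma_1(t),\gamma_1(s)) = s-t$ and $d(x_0,\gamma_1(s)) = s$, so $\rho[\gamma_1](z) = -t$. On the other hand, I want to bound $\rho[\gamma_2](z)$ from below, away from $-t$. For this I use that $d(z,\gamma_2)$ is large: writing $\rho[\gamma_2](z) = \lim_{s\to\infty}\big(d(z,\gamma_2(s)) - s\big)$, and estimating $d(z,\gamma_2(s)) \geq d(z,\gamma_2) + \text{(progress toward }\gamma_2(s))$ — more carefully, $d(z,\gamma_2(s)) \geq s - 2\,d(z,\gamma_2)$ is too weak, so instead I would use that the closest-point projection of $z$ to $\gamma_2$ is some $\gamma_2(s_0)$, and for $s\geq s_0$, $d(z,\gamma_2(s))\geq d(z,\gamma_2(s_0)) + (s-s_0) - (\text{small Morse error})$, using that $\gamma_2$ is Morse hence its projections are coarsely well-behaved (bounded geodesic image / Morse projection lemma). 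This yields $\rho[\gamma_2](z) \geq d(z,\gamma_2) - s_0 - C$ for a constant $C=C(M)$. Since $d(z,\gamma_2)\to\infty$ while $-t = \rho[\gamma_1](z)$ and $s_0$ are comparable to $t$, choosing $t$ large enough makes $\rho[\gamma_2](z) > \rho[\gamma_1](z)$, so the two horofunctions differ at $z$.

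The main obstacle is the middle estimate: controlling $\rho[\gamma_2](z)$ from below requires knowing that moving from $\gamma_2(s_0)$ to $\gamma_2(s)$ along the Morse geodesic $\gamma_2$ genuinely increases distance from $z$ at roughly unit rate, i.e. a quantitative "monotonicity of distance to a Morse geodesic once past the projection point" statement. This is exactly where Morseness of $\gamma_2$ is essential — without it the horofunction could behave badly — and it follows from the bounded geodesic image property of Morse geodesics (the geodesic from $z$ to $\gamma_2(s)$ must pass within a bounded neighbourhood of the projection point $\gamma_2(s_0)$, so $d(z,\gamma_2(s)) \geq d(z,\gamma_2(s_0)) + d(\gamma_2(s_0),\gamma_2(s)) - C$). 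Assembling: pick $t$ with $d(\gamma_1(t),\gamma_2)$ large compared to all the additive constants, set $z=\gamma_1(t)$, and conclude $\rho[\gamma_1](z)=-t < \rho[\gamma_2](z)$, so $\rho[\gamma_1]\neq\rho[\gamma_2]$. (By symmetry one could equally well separate using a point far out on $\gamma_2$; only one side is needed.) I would also remark that the same argument, with $\gamma_1,\gamma_2$ replaced by fellow-travelling representatives, shows the value $\rho[\gamma]$ is well-defined independent of the chosen representative within its fellow-traveling class, though strictly only the stated inequality is needed here.
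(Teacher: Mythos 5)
The overall plan — separate the horofunctions by comparing values at a point far along $\gamma_1$ — is in the same spirit as the paper's proof, but as written there are two gaps that the paper's argument deliberately avoids by using visibility (Lemma~\ref{lem:visibility}).

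First, the ``monotonicity past the projection point'' estimate $d(z,\gamma_2(s)) \geq d(z,\gamma_2(s_0)) + (s-s_0) - C$ is attributed to ``the bounded geodesic image property of Morse geodesics,'' but bounded geodesic image (Lemma~\ref{lemma:equivalence_strong_contraction1}) is a property of \emph{strongly contracting} geodesics; general Morse geodesics only satisfy a sublinear contraction estimate, so the error term one gets this way a priori depends (sublinearly) on $d(z,\gamma_2)$ rather than only on the Morse gauge. Second, and more seriously, the closing step tacitly assumes that the projection parameter $s_0$ is ``comparable to $t$,'' but from the triangle inequality alone one only gets $s_0 \leq t + d(z,\gamma_2)$, which is much larger. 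Plugging this in, the bound $\rho[\gamma_2](z) \geq d(z,\gamma_2) - s_0 - C$ only yields $\rho[\gamma_2](z) \geq -t - C$, which does not separate it from $\rho[\gamma_1](z) = -t$. To do better you need to show that $t + d(z,\gamma_2) - s_0$ is bounded below by a growing quantity, i.e.\ that the nearest-point projection of $\gamma_1$ onto $\gamma_2$ stabilizes; this is true but requires an argument (essentially using the bi-infinite geodesic between the two boundary points) that the proposal does not supply. The paper sidesteps both issues: by visibility there is a bi-infinite geodesic $\eta$ joining the two boundary points, and instead of estimating absolute values of horofunctions one compares the \emph{differences} $\rho[\gamma_i](z)-\rho[\gamma_i](Z)$ at two test points $z=\gamma_1(s)$, $Z=\gamma_1(S)$; because $\gamma_1(s),\gamma_1(S)$ lie on the same side of $\eta$ as $\gamma_1(\infty)$, and $\gamma_2$ fellow-travels the opposite ray of $\eta$, these differences have opposite signs, with no need for projection estimates or a reverse triangle inequality for Morse geodesics.
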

\begin{proof}
    Since $\gamma_1$ and $\gamma_2$ represent different elements of the Morse boundary, by visibility of the Morse boundary (Lemma \ref{lem:visibility}), there is a geodesic $\eta$ such that the endpoints of $\eta$ are the endpoints of $\gamma_1, \gamma_2$ respectively (see Figure \ref{fig:injective}). In particular, there are constants $T_1, T_2$ and $K$ such that for all $t$ large enough we have 
    \[
    d(\gamma_1(t + T_1), \eta(t)) \leq K
    \qquad \text{and} \qquad d(\gamma_2(t+ T_2), \eta(-t)) \leq K.
    \]

    Choose $S$ and $s$ such that $T_1 + 2K \leq s$ and $s + 5K \leq S$, and let $z = \gamma_1(s)$, $Z = \gamma_1(S)$. Observe that for all $t \geq S$ it holds: 
    \begin{align*}
        \rho_{\gamma_1(t)}(z) - \rho_{\gamma_1(t)}(Z) 
        & = \bigl( d(z, \gamma_1(t)) - d(\gamma_1(t), x_0) \bigr) - \bigl( d(Z, \gamma_1(t))- d(\gamma_1(t), x_0) \bigr) \\ 
        &= d(\gamma_1(s), \gamma_1(t)) - d(\gamma_1(S), \gamma_1(t)) = S - s.
    \end{align*}
    Since this holds for all $t$ large enough, we obtain 
    \begin{align*}
    \rho[\gamma_1](z) -\rho[\gamma_1](Z) 
    = \lim_{t \to \infty} \rho_{\gamma_1(t)}(z) - \lim_{t \to \infty} \rho_{\gamma_1(t)}(Z) 
    = \lim_{t\to \infty} (\rho_{\gamma_1(t)}(z) - \rho_{\gamma_1(t)}(Z)) = S-s.        
    \end{align*}
    Now consider $\gamma_2$. Again, choose $t$ large enough, in particular such that $t \geq T_2 + 2K$, and let $v_t$ such $d(\eta(v_t), \gamma_2(t))\leq K$.
    We have: 
     \begin{align*}
        \rho_{\gamma_2(t)}(z) - \rho_{\gamma_2(t)}(Z) 
        & = \bigl( d(z, \gamma_2(t)) - d(\gamma_2(t), x_0) \bigr) - \bigl( d(Z, \gamma_2(t)) - d(\gamma_2(t), x_0) \bigr)  \\ 
        &= d(\gamma_1(s), \gamma_2(t)) - d(\gamma_1(S), \gamma_2(t)) \\ 
        &\leq d(\eta(-s), \gamma_2(t)) - d(\eta(-S), \gamma_2(t)) + 2K  \\ 
        &\leq  d(\eta(-s), \eta(v_t)) - d(\eta(-S), \eta(v_t)) + 4K \leq s-S + 4K.
    \end{align*}
Arguing as before, we obtain 
\[
\rho[\gamma_2](z) -\rho[\gamma_2](Z) \leq s-S + 4K.
\]
Thus, the difference $\rho[\gamma_i](z) -\rho[\gamma_i](Z) $ assumes a positive value for $i = 1$ and a negative one for $i = 2$, yielding $\rho[\gamma_1]\neq \rho[\gamma_2]$.        
\end{proof}
\begin{figure}
    \centering
    \includegraphics[width=\linewidth]{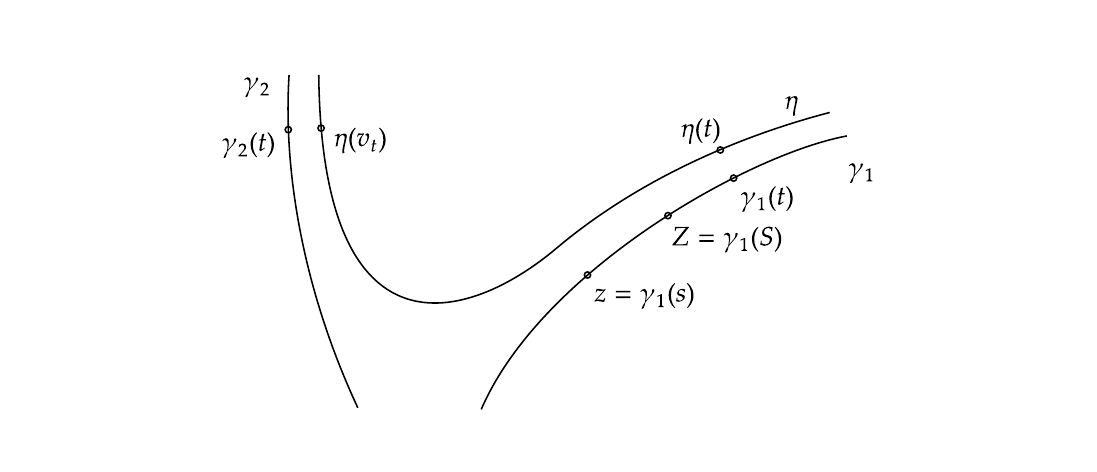}
    \caption{Geodesic rays $\gamma_1$ and $\gamma_2$ correspond to different horofunctions because $\rho_{\gamma_1(t)}(z) - \rho_{\gamma_1(t)}(Z)$ and $\rho_{\gamma_2(t)}(z) - \rho_{\gamma_2(t)}(Z)$ are different for large $t$.}
    \label{fig:injective}
\end{figure}

It remains to check that $\rho(\mc{M}(X))$ is compact. 
\begin{lem}\label{lem:horofunction-img-compact}
    Let $N$ be a Morse gauge. The image $\rho(\mc{M}_N(X))$ under $\rho$ of all $N$--Morse geodesics starting at the basepoint $x_0$ is compact in $\ov{X}^h$.
\end{lem}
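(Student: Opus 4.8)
The plan is to realise $\rho(\mc M_N(X))$ as the continuous image of a compact space. Since $X$ is proper, closed balls are compact and geodesic rays are $1$--Lipschitz, so by Arzel\`a--Ascoli the collection of all geodesic rays issuing from $x_0$, endowed with the topology of uniform convergence on compact subsets, is compact and metrizable; likewise $\ov X^h$ is compact and metrizable. First I would check that $\mc M_N(X)$ is a closed subcollection, and hence compact: if $\gamma_k\to\gamma$ with each $\gamma_k$ being $N$--Morse and $\lambda$ is a $Q$--quasi-geodesic with endpoints $\gamma(s),\gamma(t)$, then prepending $[\gamma_k(s),\gamma(s)]$ and appending $[\gamma(t),\gamma_k(t)]$ to $\lambda$ produces a $(Q+\varepsilon_k)$--quasi-geodesic with endpoints on $\gamma_k$, where $\varepsilon_k\to 0$; since $\gamma_k$ is $N$--Morse, $\lambda$ lies in the $N(Q+\varepsilon_k)$--neighbourhood of $\gamma_k[s,t]$, and letting $k\to\infty$ — using that $\gamma_k[s,t]\to\gamma[s,t]$ in the Hausdorff metric and, crucially, that the Morse gauge $N$ is continuous — we conclude $\lambda$ lies in the $N(Q)$--neighbourhood of $\gamma[s,t]$, so $\gamma$ is $N$--Morse.

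Next I would show that the extension $\rho\colon\mc M_N(X)\to\ov X^h$, $\gamma\mapsto\lim_{t\to\infty}\rho_{\gamma(t)}$, is continuous; since the source is compact this yields the lemma. The key point is that for $N$--Morse rays the defining limit is attained uniformly on compact subsets of $X$, uniformly over $\gamma\in\mc M_N(X)$. Fix $z\in X$, let $\gamma\in\mc M_N(X)$ and pick $\gamma(\sigma)\in\pi_\gamma(z)$; as $\gamma$ is a geodesic ray based at $x_0=\gamma(0)$ one has $\sigma\le 2d(x_0,z)$, and as $\gamma$ is $N$--Morse the geodesic $[z,\gamma(t)]$ fellow-travels, within a constant depending only on $N$, the path $[z,\gamma(\sigma)]\ast[\gamma(\sigma),\gamma(t)]_\gamma$ for every $t\ge\sigma$. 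This pins $\rho_{\gamma(t)}(z)=d(z,\gamma(t))-t$ down, as $t\to\infty$, up to an error controlled only by $N$ and by a bound for $d(x_0,z)$. Granting this uniform rate, if $\gamma_k\to\gamma$ in $\mc M_N(X)$ one chooses $T_k\to\infty$ large enough that $\rho_{\gamma_k(T_k)}$ is $1/k$--close to $\rho[\gamma_k]$ on $\bar B(x_0,k)$ but small enough that $d(\gamma_k(T_k),\gamma(T_k))<1/k$ — both possible by the uniform rate together with the local uniform convergence $\gamma_k\to\gamma$ — and then, using that $x\mapsto\rho_x$ is continuous from $X$ into $\ov X^h$ and that the functions $\rho[\gamma]$ are uniformly $1$--Lipschitz (hence equicontinuous), one deduces that $\rho[\gamma_k]$ and $\rho_{\gamma(T_k)}$ converge to the same limit $\rho[\gamma]$ in $\ov X^h$.

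The step I expect to be the main obstacle is this uniform rate of convergence of $\rho_{\gamma(t)}$ to $\rho[\gamma]$ over the stratum $\mc M_N(X)$: this is precisely where the Morse hypothesis is used essentially — through the control it gives on closest-point projections to $\gamma$ and on geodesics with one endpoint on $\gamma$ — and it takes care to ensure the estimate depends only on $N$ and on the evaluation point rather than on the individual ray. By comparison, the closedness of the $N$--Morse condition in the first step is routine once one exploits that Morse gauges are continuous, and the remaining ingredients (compactness and metrizability of $\ov X^h$, equicontinuity, the diagonal extraction of $T_k$) are standard.
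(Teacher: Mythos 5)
Your overall strategy matches the paper's: show that the space $\mc M_N(X)$ of $N$--Morse rays issuing from $x_0$ is compact, then deduce compactness of $\rho(\mc M_N(X))$ by arguing that $\rho$ is (sequentially) continuous on it. The paper cites \cite[Lemma~2.1]{C:Morse} for the compactness of $\mc M_N(X)$ (which you reprove by hand via Arzel\`a--Ascoli and closedness of the $N$--Morse condition — fine, and correct), and then dispatches the convergence $\rho[\gamma_{k_i}]\to\rho[\gamma]$ in a single sentence. You are more explicit that the real content is the continuity of the extended horofunction map $\rho\colon\mc M_N(X)\to\ov X^h$, i.e.\ a limit interchange between $k\to\infty$ and $t\to\infty$, and this is indeed the only non-routine part.

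The gap is in the fill you propose for that step. Your ``uniform rate'' argument correctly observes that for an $N$--Morse ray $\gamma$, the concatenation $[z,\gamma(\sigma)]\ast\gamma[\sigma,t]$ (with $\gamma(\sigma)\in\pi_\gamma(z)$, $\sigma\leq 2d(x_0,z)$) fellow-travels $[z,\gamma(t)]$ within a constant $C(N)$. But this only pins $\rho_{\gamma(t)}(z)=d(z,\gamma(t))-t$ inside a \emph{fixed} window of width $C(N)$ around $\rho[\gamma](z)$ for all $t\geq \sigma$; it does not show that this window shrinks to zero as $t\to\infty$, uniformly over $\gamma\in\mc M_N(X)$. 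You acknowledge this yourself (``up to an error controlled only by $N$''), yet the very next paragraph requires choosing $T_k$ so that $\rho_{\gamma_k(T_k)}$ is $1/k$--close to $\rho[\gamma_k]$ — an arbitrarily small error, which nothing you've established provides. So the diagonal extraction does not go through from the estimate you have. Note also that only one inequality of the desired continuity is easy: from $\rho[\gamma](z)=\inf_t\bigl(d(z,\gamma(t))-t\bigr)$ and pointwise convergence $d(z,\gamma_k(t))\to d(z,\gamma(t))$ one gets $\limsup_k\rho[\gamma_k](z)\leq\rho[\gamma](z)$ for free (upper semi-continuity of an infimum of continuous functions); the missing, harder half is the lower bound $\liminf_k\rho[\gamma_k](z)\geq\rho[\gamma](z)$, and that is precisely where the constant $C(N)$ sits in the way. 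To close the argument you would need a genuinely vanishing rate — or a different route to the reverse inequality — neither of which your sketch supplies.
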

\begin{proof}
     Let $\{\gamma_k\}$ be a sequence of $N$--Morse geodesic rays. By Lemma~2.1 of \cite{C:Morse}, a subsequence $\{\gamma_{k_i}\}$ converges to $\gamma$ for some $N$--Morse geodesic ray $\gamma$ in $X$. The horofuction map on $X$ is continuous by \cite[Lemma 3.2]{mahertiozzo:random}. Hence for any $t\geq 0$,
\[
	\lim_{i \to \infty} \rho_{\gamma_{k_i}(t)} (\cdot) = \rho_{\gamma(t)} (\cdot)
\]
By definition, we have the subsequence of points in the horofunction boundary $\rho_{\gamma_i}$ converges to  $\rho_{\gamma}$.
\end{proof}

Lemma~\ref{lem:horofunctions_different_points} and Lemma~\ref{lem:horofunction-img-compact} lets us apply Theorem~\ref{thm:vanishing_measure} to the horofunction map, as a result, we have 

\begin{cor}
    Suppose $G$ is non-hyperbolic, Morse local-to-global, and has contraction. Suppose $G$ acts geometrically on a geodesic metric space $X$. Then the image under the horofunction map $\rho(\mc{M}(X))$ has measure zero in the horofunction compactificaion, with respect to any $\mu$-stationary measure.
\end{cor}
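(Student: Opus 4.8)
The plan is to derive the statement of Theorem~\ref{thm:vanishing_measure} directly, via the strategy of \cite{mahertiozzo:random}, using strong $\sigma$-compactness (Theorem~\ref{thm:MLTG_implies_sigma_cpct}) and the presence of a contracting direction. First I would set up the combinatorial input: since $G$ is Morse local-to-global, $\mb G$ is strongly $\sigma$-compact, so fix a strong exhaustion $(M_n)_{n\in\N}$. By hypothesis (d), each $\rho(\mc M_{M_n}(X))$ is compact, and since $G$ acts by homeomorphisms and $\rho$ is $G$-equivariant (hypothesis (c)), each set $K_n := \overline{G\cdot \rho(\mc M_{M_n}(X))}$ is a $G$-invariant Borel set; because every Morse ray lies in some $\mc M_{M_n}(x)$ after translating its basepoint to $x_0$, we get $\rho(\mc M(X)) \subseteq \bigcup_n K_n$. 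So it suffices to show $\nu(K_n) = 0$ for each fixed $n$; this reduces the problem to a single Morse gauge, i.e.\ to a $G$-invariant Borel set $K$ that is a countable union of compact ``strata images''.

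The next step is the measure-theoretic heart, following \cite[\S4--5]{mahertiozzo:random}: a $\mu$-stationary measure $\nu$ decomposes along the sample paths of the random walk, and the set of ``conical/radial limit points'' hit by the walk carries full $\nu$-measure on any $G$-invariant set with positive measure --- more precisely, if $\nu(K) > 0$ then $\nu$-almost every point of $K$ is a limit of the random walk $w_n x_0$, hence (since $K$ consists of images of Morse rays that are moreover, after the contraction hypothesis, strongly contracting in the model space $X$ from Definition~\ref{def:has-contraction}) the walk must track a strongly contracting geodesic ray. Here is where hypothesis (a) is used in full: $G$ acts geometrically on a space $X'$ with a strongly contracting ray $\sigma$; its $G$-orbit of endpoints is infinite (as $G$ is non-hyperbolic but has a contracting direction, the contracting boundary is nonempty and not a single point), and one plays the contracting ray against the drift of the random walk. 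The key dichotomy: either $\nu(K) = 0$, or a positive-measure set of sample paths sublinearly tracks strongly contracting rays; in the latter case one derives a contradiction with non-hyperbolicity by a ping-pong/entropy argument --- essentially, strongly contracting rays behave like geodesics in a hyperbolic space, so a stationary measure concentrated on them would force $G$ to act on a genuinely hyperbolic-like boundary on which the action is non-elementary, and combined with the fact that in a non-hyperbolic MLTG group the strongly contracting boundary is a proper (indeed measure-theoretically negligible) part, one gets $\nu(K)=0$.

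More concretely, the argument I would run for fixed $n$: let $X'$ be the space with a contracting ray, pull everything back so that $G \acts X'$ geometrically; by Lemma~\ref{lem:theorem1.4} and Lemma~\ref{lemma:equivalence_strong_contraction1} the contracting ray $\sigma$ has bounded geodesic image, and by Lemma~\ref{lemma:subsegments} so do its subsegments. Pick $g_0 \in G$ a ``contracting'' element whose axis fellow-travels $\sigma$ (which exists because, after a cocompactness argument, the contracting ray passes within bounded distance of a periodic one --- this is where I would be careful, see below). Now use the classical fact that for a stationary measure $\nu$ on a $G$-space, applying $\nu(U) = \sum_g \mu(g)\nu(g^{-1}U)$ iteratively and the martingale convergence theorem yields that for $\mathbb{P}$-a.e.\ sample path $(w_n)$ the measures $(w_n)_*\nu$ converge weakly to a point mass $\delta_{\xi}$; the escape point $\xi$ lies in $\rho(\mc M(X))$ only if the walk tracks a Morse ray. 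Using positive drift and the bounded geodesic image property of contracting rays, one shows that if $\nu(K)>0$ then with positive probability $\xi \in K$ is a strongly contracting limit, which via Lemma~\ref{passing-contraction} and Lemma~\ref{lem:paths-are-rectangles} forces the walk to make definite progress along a contracting axis, producing a loxodromic-like element and hence (by the usual boundary-amenability / non-amenability-of-actions dichotomy, or simply by iterating $g_0$ and using that $\nu$ is $g_0$-quasi-invariant yet $g_0$ has north-south dynamics on the set of contracting rays) that $\nu$ restricted to $K$ is atomic and supported on a finite $g_0$-invariant set --- contradicting that $\mu$'s support generates $G$, which acts without a finite orbit on the (infinite) set of contracting directions of a non-hyperbolic group. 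Hence $\nu(K)=0$, and summing over $n$ finishes the proof; the final corollary on the horofunction compactification then follows by combining this with Lemma~\ref{lem:horofunctions_different_points} and Lemma~\ref{lem:horofunction-img-compact} and the guaranteed existence of a stationary measure on the compact space $\ov X^h$ from \cite[Lemma~4.3]{mahertiozzo:random}.

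The main obstacle I expect is making the ``the walk tracks a strongly contracting ray, hence we get a contradiction with non-hyperbolicity'' step precise without circularity: one must convert the qualitative statement ``$\nu(K)>0$'' into a genuine dynamical object (a contracting element acting with north-south dynamics on $K$), and for that one needs that the contracting ray $\sigma$ --- which is only assumed to exist, not to be periodic --- can be replaced by a periodic one, or that stationarity alone already supplies a loxodromic. I would handle this by a cocompactness argument: $G \acts X'$ cocompactly, so infinitely many $G$-translates of a long subsegment of $\sigma$ lie in a fixed compact set, and a limiting/pigeonhole argument produces $g_0 \in G$ whose axis is strongly contracting (with controlled constant via Lemmas~\ref{lemma:subsegments}, \ref{lemma:hausdorff_contraction}); then $g_0$ has strong north-south dynamics on its own stable/unstable contracting rays, and the stationarity equation combined with the recurrence of the random walk to $g_0^{\pm n}$ forces $\nu$ to be atomic there unless $\nu(K)=0$. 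The subtlety in bounding all the Morse/contraction constants uniformly in $n$ (so that the exhaustion argument actually closes up) is routine given Lemma~\ref{new-monster} and the lemmas of Section~\ref{sec:prelim}, but deserves care.
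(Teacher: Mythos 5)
The corollary itself has a short proof in the paper: one checks that the horofunction map $\rho$ satisfies hypotheses (b)--(d) of Theorem~\ref{thm:vanishing_measure} (injectivity via Lemma~\ref{lem:horofunctions_different_points}, $G$--equivariance from \cite[Lemma~3.4]{mahertiozzo:random}, compactness of stratum images via Lemma~\ref{lem:horofunction-img-compact}), and then applies the theorem. You do identify this step at the very end of your proposal, which is correct. The bulk of your proposal, however, is an attempted re-derivation of Theorem~\ref{thm:vanishing_measure} itself, and that re-derivation is not the paper's argument and does not work as written.

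Two concrete problems. First, your reduction is flawed: you set $K_n := \overline{G\cdot \rho(\mc M_{M_n}(X))}$ and aim for $\nu(K_n)=0$. But these are $G$--invariant closed sets; under stationarity they typically have measure $0$ or $1$, and if $\nu(\rho(\mc M(X)))>0$ then some $K_n$ would have measure $1$, so proving $\nu(K_n)=0$ is either impossible or already equivalent to the desired statement (a circularity). Worse, if you wish to invoke a Maher--Tiozzo-type disjoint-translates argument later, taking the $G$--invariant hull destroys exactly the feature you need, namely that $fY$ can be moved off itself. The paper applies \cite[Lemma~4.5]{mahertiozzo:random} (Lemma~\ref{lem:-Tiozzo}) directly to the single compact stratum image $Y=\rho(\mc M_{N''}(X))$ and never passes to the $G$--invariant closure.

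Second, the measure-theoretic core of your argument is different from the paper's and the claimed contradiction does not go through. You argue: if $\nu(K)>0$, the walk tracks a strongly contracting ray, producing a loxodromic $g_0$ with north-south dynamics, hence $\nu$ is atomic and supported on a finite $g_0$--invariant set, contradicting that $\supp\mu$ generates $G$. But the existence of a strongly contracting element and north-south dynamics does \emph{not} force a stationary measure to be atomic — in a hyperbolic group there are loxodromics with strong north-south dynamics and yet the harmonic measure is non-atomic and gives full mass to the (Morse) boundary. So your ``contradiction with non-hyperbolicity'' is never made structural. The paper's actual use of non-hyperbolicity is constructive and quite different: it guarantees a sequence of geodesics $\hat\beta_n$ escaping every Morse stratum (Lemma~\ref{lem:bad_geodesic}), and these are spliced with translates of the contracting ray (Lemma~\ref{lem:q-is-quasi-geodesic-for-large-a}) to build, via Proposition~\ref{prop:pentagon}, group elements $g_i$ whose translates of any fixed stratum image are pairwise disjoint off at most two exceptional indices. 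That disjointness, fed into Lemma~\ref{lem:-Tiozzo}, is what yields $\nu(Y)=0$. Nothing in your sketch engages with this disjoint-translates mechanism, and the abstract dynamics route you propose (boundary amenability / ping-pong / atomicity) would have to be substantially rethought to avoid the counterexample above.
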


We remark that the space $X$ where the horofunction compactification is defined need not to be the same as the space that contains the strongly contracting geodesic ray.

We turn our attention to the other boundaries mentioned in the introduction. The quasi-redirecting boundary satisfies the hypotheses of Theorem~\ref{thm:vanishing_measure} by \cite[Theorem~C, Theorem~E]{qingrafi:quasiredirecting}. The HHG boundary satisfies the hypotheses of Theorem~\ref{thm:vanishing_measure} by \cite[Theorem~3.4]{durhamhagensisto:boundaries}, 
\cite{Hagen:secondcountability}, and by using the proof of \cite[Theorem~4.3]{durhamhagensisto:boundaries} to show that the image of a Morse stratum is compact. 

\subsection{Proof of Theorem~\ref{thm:vanishing_measure}}
The proof of Theorem \ref{thm:vanishing_measure} relies on \cite[Lemma 4.5]{mahertiozzo:random}. We state the weaker version of the lemma which we will use. Recall that $\mu$ is a probability measure on $G$ whose support generates $G$ as a semigroup. We use the notation $\mu_n$ to denote the $n$-th convolution of $\mu$.

\begin{lem}[{\cite[Lemma~4.5]{mahertiozzo:random}}]\label{lem:-Tiozzo}
Let $Z$ be a metrizable space on which $G$ acts by homeomorphisms. Let $\nu$ be a $\mu$-stationary Borel probability measure on $Z$. Suppose that there is a measurable subset $Y \subset Z$ satisfying the following: there is a sequence of positive numbers $(\epsilon_n)_{n \in \N}$ such that for any translate $fY$ of $Y$ there is a sequence $(g_n)_{n \in \N}$ of group elements (which may depend on $f$), such that the translates $fY, g_1^{-1}fY, g_2^{-1}fY, \ldots$ are all disjoint, and for each $g_n$ there exists $m_n$ such that $\mu_{m_n}(g_n) \geq \epsilon_n$. Then $\nu(Y)=0$.
\end{lem}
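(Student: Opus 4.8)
The plan is to prove the lemma by a self‑improvement argument: instead of bounding $\nu(Y)$ directly, I would work with the quantity
\[
s:=\sup_{f\in G}\nu(fY),
\]
assume $s>0$, and extract a contradiction from only \emph{finitely many} of the infinitely many disjoint translates provided by the hypothesis.

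First I would record two elementary points. A $\mu$‑stationary $\nu$ is automatically $\mu_m$‑stationary for every $m$: by induction $\mu_{m+1}*\nu=\mu*(\mu_m*\nu)=\mu*\nu=\nu$, so $\nu(U)=\sum_{g\in G}\mu_m(g)\,\nu(g^{-1}U)$ for every Borel $U\subseteq Z$ and every $m\in\N$. Also, each set $g_n^{-1}fY$ is Borel, being a homeomorphic image of the Borel set $Y$, so all the measures below are defined. (For this particular argument the hypothesis that $\supp\mu$ generates $G$ as a semigroup plays no role; it enters only when the hypotheses of the lemma are \emph{verified} in an application, where it is needed to produce the $g_n$.)

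The heart of the proof is then as follows. Suppose $s>0$ and fix an integer $N>2/s$. Put $\eps:=\min\{\eps_1,\dots,\eps_N\}$, which is positive and $\le 1$ (each $\eps_n\le\mu_{m_n}(g_n)\le 1$), and choose $\delta$ with $0<\delta<s\eps/2$. By definition of $s$ there is $f\in G$ with $\nu(fY)>s-\delta$; apply the hypothesis to this $f$ to obtain group elements $g_1,g_2,\dots$ with $fY,g_1^{-1}fY,g_2^{-1}fY,\dots$ pairwise disjoint and with $\mu_{m_n}(g_n)\ge\eps_n$ for suitable exponents $m_n$. For each $n\le N$, $\mu_{m_n}$‑stationarity together with $\nu(g^{-1}fY)\le s$ for all $g\in G$ (the definition of $s$) gives
\[
\nu(fY)=\sum_{g\in G}\mu_{m_n}(g)\,\nu(g^{-1}fY)\ \le\ \mu_{m_n}(g_n)\,\nu(g_n^{-1}fY)+\bigl(1-\mu_{m_n}(g_n)\bigr)s,
\]
and hence, using $\nu(fY)-s>-\delta$ and $\mu_{m_n}(g_n)\ge\eps$,
\[
\nu(g_n^{-1}fY)\ \ge\ s+\frac{\nu(fY)-s}{\mu_{m_n}(g_n)}\ >\ s-\frac{\delta}{\eps}\ >\ \frac{s}{2}.
\]
Since also $\nu(fY)>s-\delta>s/2$, the $N+1$ pairwise disjoint Borel sets $fY,g_1^{-1}fY,\dots,g_N^{-1}fY$ would satisfy
\[
1=\nu(Z)\ \ge\ \nu(fY)+\sum_{n=1}^{N}\nu(g_n^{-1}fY)\ >\ (N+1)\,\frac{s}{2}\ >\ \frac{2}{s}\cdot\frac{s}{2}\ =\ 1,
\]
which is absurd. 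Therefore $s=0$, and in particular $\nu(Y)=0$.

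I do not expect a serious obstacle: the single new idea is the passage to the supremum $s$, after which the disjointness hypothesis is only needed for the first $\lceil 2/s\rceil$ translates, with $\delta$ and $\eps$ tuned to that finite collection so that the countability issues never arise. The one step that takes a moment's care is converting stationarity into a \emph{lower} bound on $\nu(g_n^{-1}fY)$: the atom $g_n$, of $\mu_{m_n}$‑mass at least $\eps_n$, must carry essentially all of the deficit $s-\nu(fY)$, and since that deficit is at most $\delta$ — chosen tiny compared with $\eps$ — the mass $\nu(g_n^{-1}fY)$ is forced up to nearly $s$, which is exactly what makes the disjoint sum overflow.
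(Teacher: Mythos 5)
Your proof is correct. The paper cites this result from Maher--Tiozzo without reproducing a proof, and your argument is essentially the standard one from that reference: set $s=\sup_{f}\nu(fY)$, pick a near-optimal translate $fY$, and use $\mu_{m_n}$-stationarity together with the atom $g_n$ (whose mass $\eps_n$ was fixed in advance of $f$, so that $\eps=\min\{\eps_1,\dots,\eps_N\}$ is a legitimate choice) to force each $\nu(g_n^{-1}fY)$ up near $s$, so that finitely many disjoint translates already exceed total mass $1$.
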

We will apply the lemma and take $Y$ to be the image under $\rho$ of an $N$--stratum of the Morse boundary. We now construct the $g_i$.

\subsection*{Construction of the $g_i$}

In this section, $G$ denotes a finitely generated group acting geometrically on a geodesic metric space $X'$ satisfying the Morse-local-to-global property and containing a strongly contracting geodesic ray $\gamma:[0, \infty) \to X'$. Let $x_0'$ be a basepoint of $X'$ and $\delta>0$ be a constant such that $G\cdot x_0'$ is $\delta$--dense in $X'$. We can assume that $\gamma(0) = x_0'$. Let $C_0$ be a constant such that $\gamma$ is $C_0$--contracting. Let $C$ be a constant such that all geodesics (and their subsegments) whose endpoints are in the $\delta$--neighbourhood of a $C_0$--contracting geodesic are $C$--contracting. Such a constant $C$ exists by Lemma~\ref{lemma:hausdorff_contraction} and Lemma~\ref{lemma:subsegments}. Further, let $C'$ be the constant obtain by applying Lemma~\ref{lem:paths-are-rectangles} to the constant $C$. Lastly, fix a Morse gauge $N$. Morally, one should think of $Y$ as the $N$--stratum of $\mb X'$, but while this provides intuition for how $N$ is used in this section, we do not mention $Y$ for the rest of the section. Formally, we prove the following proposition 

\begin{prop}\label{prop:pentagon}
    Let $N$ be a Morse gauge. There exists a sequence of geodesics $\eta_i$ such that for all geodesics $\lambda$ starting at $x_0'$ and ending in $G\cdot x_0'$, there exists $i_0, i_1$ such that the following holds. Let $x, y$ be the endpoints of $\eta_i\ast_T\lambda$ and $\eta_j\ast_T\lambda$ respectively. Then if $i, j, i_0, i_1$ are all distinct, then $[x, y]$ is not $N$--Morse. Let $x', y'$ be the endpoints of $\eta_i\ast_T\lambda$ respectively $\lambda$. If $i\not \in \{i_0, i_1\}$, then $[x', y']$ is not $N$--Morse.
\end{prop}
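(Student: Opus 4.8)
The plan is to build the geodesics $\eta_i$ as longer and longer initial segments of the strongly contracting ray $\gamma$, suitably perturbed to land on orbit points, and then to exploit the bounded geodesic image property of $\gamma$ to show that the relevant geodesics between endpoints ``see'' a long $C$--contracting subsegment and hence fail to be $N$--Morse by a length/contraction-constant argument. Concretely, for each $i$ I would fix a parameter $t_i \to \infty$ (chosen increasing fast enough, to be specified), pick an orbit point $g_i x_0'$ within $\delta$ of $\gamma(t_i)$, and let $\eta_i$ be a geodesic from $x_0'$ to $g_i x_0'$. By Lemma~\ref{lemma:hausdorff_contraction} and Lemma~\ref{lemma:subsegments}, $\eta_i$ is $C$--contracting, and by Lemma~\ref{lem:theorem1.4} it is $M_C$--Morse for a gauge depending only on $C$. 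The operation $\eta_i \ast_T \lambda$ should be read as: concatenate $\eta_i$ with (a translate of) $\lambda$ based at the endpoint of $\eta_i$; since $\lambda$ starts at $x_0'$ and ends in $G\cdot x_0'$, this is a geodesic segment from $x_0'$ to some orbit point, up to the usual $\delta$--fudge.

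The key mechanism is the following. Given $\lambda$ with endpoint $h x_0'$ at distance $D_\lambda$ from $x_0'$, consider two indices $i < j$ with $t_i, t_j$ both much larger than $D_\lambda + C'$. Then $\eta_i \ast_T \lambda$ and $\eta_j \ast_T\lambda$ both begin with a $C$--contracting geodesic that agrees with $\gamma$ (up to bounded Hausdorff distance) out to length roughly $t_i$, then diverge. I would apply Lemma~\ref{passing-contraction} (or directly Lemma~\ref{lem:paths-are-rectangles} and Lemma~\ref{lemma:equivalence_strong_contraction1}) to the geodesic $[x,y]$ between the two far endpoints: since the closest-point projections of $x$ and $y$ to the common contracting initial segment are far apart (distance $\approx t_i$), the geodesic $[x,y]$ must fellow-travel a subsegment of $\gamma$ of length $\approx t_i$ that is uniformly $C$--contracting, hence $M_C$--Morse, while simultaneously, because the two tails $\lambda$-then-beyond go off in ``different directions'' (this is where I exclude the two bad indices $i_0, i_1$ — the at most two values of $i$ for which the tail happens to align with $\gamma$), the geodesic $[x,y]$ is forced to backtrack and cannot be $N$--Morse: a geodesic containing a $C$--contracting subsegment of length $\ell$ that then reverses has a quasigeodesic (e.g. the ``detour'' through $\gamma(t_i)$) violating the $N$--neighbourhood bound once $\ell > $ the relevant function of $N$. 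The bad indices $i_0 = i_0(\lambda), i_1 = i_1(\lambda)$ are precisely those where the endpoint of $\eta_i \ast_T \lambda$ projects near $\gamma$ in a way that makes $[x,y]$ or $[x', y']$ a genuine near-geodesic along $\gamma$ — since $\gamma$ is a single ray and the $\eta_i$ are nested, at most a bounded number (I claim two) of indices can be ``compatible'' with a given $\lambda$, essentially by a projection/ordering argument on $\gamma$.

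For the second assertion (endpoints of $\eta_i\ast_T\lambda$ and of $\lambda$), the argument is the same but simpler: $[x', y']$ joins a point far out along $\gamma$-then-$\lambda$ back to the endpoint $h x_0'$ of $\lambda$, which sits within $D_\lambda + \delta$ of $x_0'$; its projection to $\eta_i$ (hence to $\gamma$) therefore has diameter $\approx t_i$, so $[x',y']$ fellow-travels a long $C$--contracting arc and then must turn around to reach $hx_0'$, again producing a quasigeodesic that leaves every $N$--neighbourhood once $t_i$ is large, except when $i \in \{i_0, i_1\}$ (the case where $\lambda$ points ``back along $\gamma$'' so that no backtracking is needed).

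The main obstacle I anticipate is making precise the claim that only \emph{two} indices are exceptional for a given $\lambda$, and more fundamentally, quantifying ``$[x,y]$ must backtrack, so it is not $N$--Morse.'' The clean way is: the concatenation $\eta_i \ast [\text{endpoint}, \gamma(t_i)]$ (a short segment of length $\le \delta$) followed by the reverse of the corresponding piece gives a $Q$--quasigeodesic, $Q$ absolute, whose image is not contained in the $N(Q)$--neighbourhood of $[x,y]$ as soon as the overlap length $t_i$ exceeds, say, $10N(Q) + 10C'$; choosing the sequence $t_i$ to grow without bound then guarantees failure of $N$--Morseness for all but finitely many $i$, and a bookkeeping argument pins the exceptional set down to (at most) the two values $i_0, i_1$ determined by where $\lambda$'s endpoint projects relative to $\gamma$. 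Getting the dependence of these bad indices to depend only on $\lambda$ (not on $N$) requires care, but follows because the contraction constants $C, C'$ and the gauge $M_C$ are fixed once and for all at the start of the section.
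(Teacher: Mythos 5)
Your proposal has a fundamental misconception that makes the whole approach fail: you take the $\eta_i$ to be (perturbations of) longer and longer initial segments of the strongly contracting ray $\gamma$, so each $\eta_i$ is $C$--contracting and hence $M_C$--Morse for a single fixed gauge $M_C$. But then the concatenations $\eta_i\ast_T\lambda$ are all concatenations of two $M_C$--Morse-or-$N'$--Morse pieces, and if you close up the quadrilateral (or pentagon) with sides $\eta_i$, $\lambda_i$, $[x,y]$, $\lambda_j^{-1}$, $\eta_j^{-1}$, then by Lemma~\ref{new-monster}\ref{monster:triangle} the side $[x,y]$ is automatically Morse for a gauge determined by $M_C$ and $N'$ alone --- which is exactly the conclusion you are trying to refute. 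The ``backtracking'' heuristic does not rescue this: a geodesic that fellow-travels a contracting arc and then departs does not thereby become non-Morse; on the contrary, contraction is precisely what guarantees such geodesics stay Morse with controlled gauge.

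The actual mechanism in the paper is the opposite of what you propose. The $\eta_i$ are built to contain segments $\hat\beta_{B_i}$ that are \emph{increasingly non-Morse} (their existence uses non-hyperbolicity and the strong exhaustion $(M_n)$ furnished by strong $\sigma$--compactness, i.e.\ Theorem~\ref{thm:MLTG_implies_sigma_cpct}), interleaved with contracting pieces $\hat\gamma_{A_i}$ copied from $\gamma$ whose only role is to act as buffers: they guarantee that the alternating concatenation $q_{A,B}$ is still a quasi-geodesic (via MLTG, Lemma~\ref{lem:q-is-quasi-geodesic-for-large-a}) and that projections of distinct $\eta_i$'s and of $\lambda$-translates onto each other are small (Lemmas~\ref{lemma:pab-quasi-geodesic}, \ref{lem:small-projection-qab-to-itself}, \ref{lem:small-pairwise-projection-of-etas}, \ref{lem:at-most-one-large-projection}). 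The non-Morseness of $[x,y]$ is then proved by contradiction: if $[x,y]$ were $N$--Morse, a hexagon/pentagon argument with contracting, short, and $N$--Morse sides would force $\hat\beta_{B_i}$ to be $M_1$--Morse (Lemma~\ref{lem:morse-projects-short}), contradicting its construction. Your ``at most two exceptional indices'' intuition is correct in spirit, but the reason it works in the paper is the small-pairwise-projection lemmas, which depend on the bad segments breaking alignment --- not on a projection/ordering argument along a single ray $\gamma$, which would not give the needed disjointness. You need to restart with the idea that the $\eta_i$ are engineered to be \emph{worse and worse}, not uniformly well-behaved.
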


The idea is to construct geodesics $\eta_i$ which are less and less Morse in a controlled way. We also ensure that $\eta_i$ starts at $x_0'$ and ends in $G\cdot x_0'$. We can then define $g_i$ such that $g_i\cdot x_0'$ is the endpoint of $\eta_i$.

To construct $\eta_i$, we first need to construct geodesics which are very much not Morse. We do so in the following lemma. 

\begin{lem}\label{lem:bad_geodesic}
    Let $\Cseven$ and $K_0$ be constants. There exists a constant $K\geq K_0$, a strong exhaustion $(M_n)_{n\in \N}^{\Cseven, K_0}$ and a sequence of geodesics $\hat{\beta}_n^{\Cseven, K_0}: [0, S_n]\to X'$ such that the following hold:
    \begin{enumerate}
        \item $\hat{\beta}_n^{\Cseven, K_0}$ is not $M_n$--Morse
        \item $\hat{\beta}_n^{\Cseven, K_0} [0, K]$ and $\hat{\beta}_n[S_n - K, S_n]$ are not $\Cseven$--contracting.
        \item $\hat{\beta}_n^{\Cseven, K_0}(0), \hat{\beta}_n^{\Cseven, K_0}(S_n)\in G\cdot x_0'$
    \end{enumerate}
\end{lem}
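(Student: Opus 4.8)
The plan is to build the geodesics $\hat\beta_n^{\Cseven,K_0}$ out of the given strongly contracting ray $\gamma$ by concatenating longer and longer ``bad'' pieces at both ends and filling the middle with a long contracting segment, so that the resulting geodesic is forced to be highly non-Morse (because its intersection with bounded neighbourhoods of itself can be made large) while its initial and terminal segments of length $K$ are uniformly non-$\Cseven$-contracting. First I would use the non-hyperbolicity of $G$ together with the MLTG property: since $G$ is non-hyperbolic and MLTG, one knows (this is exactly the mechanism behind $\sigma$-compactness and the exhaustion machinery set up in Section~\ref{sec:prelim}) that $\mb{X'}$ is not bounded, i.e., there is no single Morse gauge bounding all Morse geodesics, so there exist geodesic segments that are $M$-Morse but fail to be $M''$-Morse for prescribed worse-and-worse gauges. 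Dually, and more usefully here, I would extract from the hypothesis that $G$ has contraction but is non-hyperbolic a sequence of geodesic segments $\sigma_n$ of increasing length that are \emph{not} $\Cseven$-contracting — such segments exist because if every geodesic segment of every length were $\Cseven$-contracting the space would be hyperbolic. Concretely I would take $\sigma_n$ to be a geodesic with endpoints in $G\cdot x_0'$ (using $\delta$-density to perturb endpoints onto the orbit, invoking Lemma~\ref{lemma:hausdorff_contraction} and Lemma~\ref{lem:theorem1.4} to control the perturbation) whose ``$\Cseven$-contraction defect'' appears within the first $K$ and last $K$ units — one can always reparametrise/extend so the witnessing configuration for non-$\Cseven$-contraction sits near an endpoint, or alternatively sandwich $\sigma_n$ symmetrically between two copies of a fixed non-$\Cseven$-contracting segment $\tau$ of length $K_0$, setting $K=K_0$ after enlarging if necessary.

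The second step is to promote ``not $\Cseven$-contracting at the ends'' to ``not $M_n$-Morse globally'' for a strong exhaustion $(M_n)_n$. Here I would use the contrapositive of strong $\sigma$-compactness together with Lemma~\ref{lem:4.14}-style control: since we only have that $\mb{X'}$ \emph{is} strongly $\sigma$-compact (by Theorem~\ref{thm:MLTG_implies_sigma_cpct}), I would instead argue directly — fix the strong exhaustion $(M_n)_n$ first, then for each $n$ produce a geodesic $\hat\beta_n$ that escapes the $M_n$-stratum. Such geodesics exist precisely because $\mb{X'}$ is non-compact (the space is non-hyperbolic), so no single $M_n$ captures all Morse directions; picking a Morse geodesic ray not in $\ms^{M_n}X'$ and taking a long enough initial segment gives a geodesic segment that is not $M_n$-Morse. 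I would then graft the fixed non-$\Cseven$-contracting segments $\tau$ of length $K$ onto both ends of this segment, checking via Strebel-type/disk-diagram reasoning or directly via Lemma~\ref{new-monster}\ref{monster:concatenation} and Lemma~\ref{lemma:equivalence_strong_contraction1} that: (a) grafting a bounded piece does not repair non-$M_n$-Morseness once $n$ is large relative to $|\tau|$ (re-indexing the exhaustion if needed), and (b) the grafted end segments, being translated copies of $\tau$ up to bounded Hausdorff error, remain non-$\Cseven$-contracting — this uses Lemma~\ref{lemma:hausdorff_contraction} in its contrapositive form (a segment close to a $\Cseven$-contracting one would itself be $\Phi_d(\Cseven)$-contracting, contradiction if $|\tau|$ chosen with defect beyond that threshold).

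The third, bookkeeping step is the orbit condition: perturb both endpoints of the final geodesic to lie in $G\cdot x_0'$ using $\delta$-density, replace the geodesic by one between the perturbed endpoints, and absorb the bounded perturbation into all the constants (this is where Lemma~\ref{new-monster}\ref{monster:hausdorff}, Lemma~\ref{lem:paths-are-rectangles}, and Lemma~\ref{passing-contraction} do the routine work of showing the ``bad end'' and ``non-Morse'' properties survive a bounded change of endpoints).

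The main obstacle I anticipate is the \emph{simultaneous} control: making $\hat\beta_n$ badly non-$M_n$-Morse \emph{and} keeping the non-$\Cseven$-contraction defect localised to the first/last $K$ coordinates with $K$ \emph{independent of $n$}. The tension is that escaping the $M_n$-stratum for large $n$ may seem to require using ``bad'' behaviour spread over a long portion of the geodesic, whereas we need the only certified-bad part to be a fixed-length piece at the ends. I expect the resolution to be that the \emph{global} non-Morseness and the \emph{end} non-contraction are genuinely different features: the end-segments $\tau$ are a fixed non-contracting gadget (of length $K$), while the global non-$M_n$-Morseness comes from the long middle portion interacting with itself (e.g., a long detour or a long non-contracting excursion in the middle), and one proves these don't interfere by a disk-diagram / Morse-gon argument of exactly the flavour already used in Section~\ref{sec:small-cancellation} (Lemma~\ref{lem:small_paths}, Lemma~\ref{lem:close-to-p}). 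So concretely: $\hat\beta_n = \tau \ast (\text{middle}_n) \ast \tau$, where $\text{middle}_n$ is chosen to push $\hat\beta_n$ out of $\ms^{M_n}X'$, and $K=|\tau|$ is fixed once and for all, large enough (depending only on $\Cseven$, $K_0$, $\delta$ and $\Phi_d$) that its contraction defect cannot be created by Hausdorff-bounded perturbation.
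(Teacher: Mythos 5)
Your approach has a genuine gap. You propose to build $\hat{\beta}_n^{\Cseven,K_0}$ by grafting a fixed non-contracting gadget $\tau$ onto both ends of a long non-$M_n$-Morse middle segment, i.e.\ $\hat\beta_n = \tau \ast (\text{middle}_n) \ast \tau$. But that concatenation is a path, not a geodesic, and there is no reason the actual geodesic between its endpoints should fellow-travel the grafted copies of $\tau$ or inherit their non-$\Cseven$-contraction: the geodesic could cut away from the concatenation entirely, and this only gets worse as $\text{middle}_n$ grows while $\tau$ stays of fixed length. Neither Lemma~\ref{new-monster}\ref{monster:concatenation} nor Lemma~\ref{lemma:equivalence_strong_contraction1}, which you cite, hands you a geodesic, and absorbing the discrepancy into constants is not an option — controlling what the geodesic looks like near its endpoints is precisely the content of conclusion (2). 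Your proposal also sets $K = K_0$ or $K = |\tau|$, but $K$ actually needs to dominate the Morse local-to-global scale $L$, since that is what makes \say{locally contracting on windows of length $K$} imply globally Morse.

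The paper resolves the tension you correctly identify in exactly the opposite way: it cuts down rather than builds up. One starts from a single geodesic $\beta_n'$ that is not $M_{n+2}$-Morse (such geodesics exist by non-hyperbolicity once a strong exhaustion is fixed), and considers the set $I$ of times $s$ for which the window $\beta_n'[s,s+K]$ fails to be $\Ceight$-contracting, where $\Ceight$ is a larger constant chosen via Lemma~\ref{passing-contraction} so that the eventual endpoint perturbation onto $G\cdot x_0'$ downgrades non-$\Ceight$-contraction to non-$\Cseven$-contraction. The observation your plan misses is that $I$ is automatically nonempty: if every window of length $K$ were $\Ceight$-contracting, then $\beta_n'$ would be $L$-locally contracting, hence globally Morse for a fixed gauge by the Morse local-to-global property, contradicting the choice of $\beta_n'$. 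So the non-contracting windows live inside $\beta_n'$ itself and need not be manufactured externally. Taking $s'$, $t'$ near the infimum and supremum of $I$, the subsegment $\beta_n'[s',t']$ has a non-$\Ceight$-contracting window at each end by construction, is still badly non-Morse (the discarded prefix and suffix were locally contracting, hence Morse, so re-indexing the exhaustion shows their removal cannot repair Morseness), and, being a subsegment of an honest geodesic, incurs none of the fellow-traveling uncertainty that dooms the grafting construction.
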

\begin{proof}
    Define $l = \delta$ and define $\Ceight$ as the constant $C_l$ from Lemma~\ref{passing-contraction}, applied to the constant $C = \Cseven$ and $l = \delta$. Let $L$ be the constant and $M$ the Morse gauge such that any geodesic which is $L$--locally $\Ceight$--contracting is $M$--Morse. Define $K = \max\{K_0, L+1\}$.
    
    By Theorem~\ref{thm:MLTG_implies_sigma_cpct} we have that $\mb X'$ is strongly $\sigma$-compact. In particular, there exists a strong exhaustion $(M_n^{\Cseven, K_0})_{n}$ of $\mb X'$. 

    By possibly passing to a subsequence of $(M_n^{\Cseven, K_0})_{n\in \N}$, we can assume by Lemma~\ref{new-monster}\ref{monster:triangle} that for any $7$-gon the following holds. If for six of the sides we have at least one of the following
    \begin{itemize}
        \item the side is $M$--Morse,
        \item the side is $M_{n}^{\Cseven, K_0}$--Morse,
        \item the side has length at most $2 + 2 \delta$,
    \end{itemize}
    then the seventh side is $M_{n+1}^{\Cseven, K_0}$--Morse. Further, we can assume that for any $4$-gon the following holds. If for three of the sides we have at least one of the following 
    \begin{itemize}
        \item the side is $M$--Morse,
        \item the side has length at most $2K' + 4$,
    \end{itemize}
    then the fourth side is $M_{1}^{\Cseven, K_0}$--Morse. 

    Since $X'$ is not hyperbolic, there exists a sequence of geodesics $\beta_n' : [0, T_n]\to X'$ such that $\beta_n'$ is not $M_{n+2}^{\Cseven, K_0}$--Morse. 

    Fix $n$. Denote by $I\subset [0, T_n - K]$ the set of all $s$ for which $\beta_n'[s, s+K]$ is not $\Ceight$--contracting. If $J\subset [0, T_n] - I$ is a connected interval, then $\beta_n'(J)$ is $L$--locally $\Ceight$--contracting and hence is $M$--Morse. By the choice of exhaustion $(M_n^{\Cseven, K_0})_n$, this implies that $\beta_n'(J)$ is $M_1^{\Ceight, K_0}$ and hence $M_{n+2}^{\Cseven, K_0}$--Morse. This shows that $I$ cannot be empty. Let $s$ and $t$ be the infimum and supremum of $I$ and let $s\leq s'\leq s+1$ and $t-1\leq t'\leq t$ be such that $\beta_n'[s', s' + K]$ and $\beta_n'[t', t'+K]$ are not $\Ceight$--contracting. Note that by the observation before, $\beta_n'[0, s-1]$ and $\beta_n'[t+1, T_n]$ are $M$--Morse. Now, if $t-s\leq 2K +2$, then the choice of exhaustion implies that $\beta_n'$ is $M_{1}^{\Cseven, K_0}$--Morse, which we know it is not. Hence $t'-s' > 2K$. We also know that $\beta_n'[s', t']$ is not $M_{n+1}^{\Cseven, K_0}$--Morse, since by the choice of exhaustion, otherwise $\beta_n'$ would be $M_{n+2}^{\Cseven, K_0}$--Morse.
    
    Let $g_nx_0'$ and $h_nx_0'$ be points in the $\delta$--neighbourhood of $\beta_n'(s')$ and $\beta_n'(t')$ respectively. Define $\hat{\beta}_n^{\Cseven, K_0}: [0, S_n]\to X'$ as a geodesic from $g_n\cdot x_0'$ to $h_n\cdot x_0'$. Since $\beta_n'[s', t']$ is not $M_{n+1}^{\Cseven, K_0}$--Morse, we have that $\hat{\beta}_n^{\Cseven, K_0}$ cannot be $M_{n}^{\Cseven, K_0}$--Morse. It remains to show that $\hat{\beta}_n^{\Cseven, K_0}[0, K]$ and $\hat{\beta}_n^{\Cseven, K_0}[S_n - K, S_n]$ are not $\Cseven$--contracting. This follows directly from the choice of $\Ceight$, Lemma~\ref{passing-contraction} and the fact that $\beta_n'[s', s' +K]$ and $\beta_n'[t' - K, t']$ are not $\Ceight$--contracting.
\end{proof}

Next we need to construct geodesics which are $C$--contracting and have endpoints in $G\cdot x_0'$. This is much easier. Namely, for each $A\geq 0$ there exists a point $x\in G\cdot x_0'$ with $d(x, \gamma(A+\delta))\leq \delta$. We can define $\hat{\gamma}_A$ as a geodesic from $x_0'$ to $x$. We have that 
\begin{enumerate}
    \item $\hat{\gamma}_A$ is $C$--contracting. Recall that $\gamma$ is $C_0$--contracting, so this follows directly from the definition of $C$.
    \item The domain of $\hat{\gamma}_A$ is $[0, A + \delta_A]$ for some $0\leq \delta_A \leq 2\delta$.
\end{enumerate}

    We use the notation $\ast_T$ to denote the \emph{translated concatenation}, that is, for two paths $\alpha([s_1,s_2])$ and $\beta([t_1,t_2])$, with endpoints in $G\cdot x_0'$ we write 
    \[
        \alpha([s_1,s_2]) \ast_T \beta([t_1,t_2]) = \alpha([s_1,s_2]) \ast \big(  \alpha(s_2) \beta(t_1)^{-1} \cdot\beta([t_1,t_2])\big),
    \]
 where, with an abuse of notation, we denote by $\alpha(s_2)$, resp. $\beta(t_1)$, an element of $G$ sending $x_0'$ to $\alpha(s_2)$, resp. $\beta(t_1)$.
\begin{lem}\label{lemma:pab-quasi-geodesic}
    There exist constants $\Cd$, $\Cdd$, and $Q$ such that the following holds. For all $\Cddd$, for $A$ and $B$ large enough, the following hold.
    Consider the path
    \[
        p_{A, B} = \hat{\gamma}_A\ast_T\hat{\beta}_B^{\Cdd, 1}\ast_T \hat{\gamma}_A
    \]
   For simplicity, we write $p_{A,B} = \gamma^1 \ast \beta^1 \ast \gamma^2$, where $\gamma^1$ and $\gamma^2$ denote the first and last $\hat{\gamma}_A$ subsegments, and $\beta^1$ denotes the $\hat{\beta}_B^{\Cdd, 1}$ subsegment. We have that
    \begin{enumerate}
        \item $\diam\{\pi_{\gamma^1}(\beta^1)\}\leq \Cd$,\label{small-proj-1}
        \item $d(\gamma^1, \gamma^2) \geq \Cddd$,  \label{largedist}
        \item $\diam\{\pi_{\gamma^1}(\gamma^2)\}\leq \Cd$,
        \item $p_{A,B}$ is a $Q$--quasi-geodesic.\label{Q}
    \end{enumerate}
\end{lem}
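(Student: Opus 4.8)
Write $q_1$ and $q_2$ for the two junction points of $p_{A,B}$, so that $\gamma^1$ ends at $q_1$, $\beta^1$ runs from $q_1$ to $q_2$, and $\gamma^2$ starts at $q_2$. Since translated concatenation is performed by isometries of $X'$, the segments $\gamma^1,\gamma^2$ are $\C$--contracting copies of $\hat{\gamma}_A$, while $\beta^1$ is a geodesic of length $S_B:=\abs{\hat{\beta}_B^{\Cdd,1}}$ which is not $M_B$--Morse and whose initial and terminal sub-segments of length $K$ are \emph{not} $\Cdd$--contracting. Let $D_1$ be a bounded geodesic image constant valid for $\gamma^1$, $\gamma^2$ and all their sub-segments (Lemma~\ref{lemma:equivalence_strong_contraction1} together with Lemma~\ref{lemma:subsegments}). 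The constants must be fixed in a definite order, all before $\Cddd$, and with $A$ chosen before $B$: first $\Cdd$, depending only on $\C$, is taken large enough that any geodesic which shares an endpoint with, and lies in the $\CC$--neighbourhood of, a sub-segment of a $\C$--contracting geodesic is forced to be $\Cdd$--contracting (combine Lemma~\ref{lemma:subsegments} and Lemma~\ref{lemma:hausdorff_contraction}); this $\Cdd$ then feeds Lemma~\ref{lem:bad_geodesic} with $\Cseven=\Cdd$ and $K_0=1$ to produce the scale $K$ and the geodesics $\hat{\beta}_B^{\Cdd,1}$; then $\Cd$ is taken large depending on $\CC$, $K$, $D_1$; and finally $Q$ large depending on $\Cd$, $D_1$, $\C$.

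The first step is to record that $S_B\to\infty$ as $B\to\infty$: a geodesic of length at most $S$ is $M_S$--Morse for a Morse gauge depending only on $S$, so a bounded $S_B$ would make $\hat{\beta}_B^{\Cdd,1}$ uniformly Morse, contradicting that it is not $M_B$--Morse for the strong exhaustion furnished by Theorem~\ref{thm:MLTG_implies_sigma_cpct}. Next I would prove (1), namely $\diam(\pi_{\gamma^1}(\beta^1))\le\Cd$ (and, symmetrically, $\diam(\pi_{\gamma^2}(\beta^1))\le\Cd$). If it fails, some $z\in\beta^1$ has $\pi_{\gamma^1}(z)$ far from $q_1=\pi_{\gamma^1}(q_1)$, so I apply Lemma~\ref{lem:paths-are-rectangles} to $\gamma^1$ with the points $q_1$, $z$ and the geodesic $\beta^1[0,t_z]$; the ``moreover'' clause forces the resulting sub-segment to start at $q_1$, yielding a sub-segment $\beta^1[0,t_v]$ that is $\CC$--Hausdorff-close to a sub-geodesic of $\gamma^1$, and a length comparison gives $t_v>K$. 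Then $\beta^1[0,t_v]$ has both endpoints in the $\CC$--neighbourhood of a contracting sub-geodesic of $\gamma^1$, so by Lemma~\ref{lemma:hausdorff_contraction} it, and hence its length-$K$ initial sub-segment (Lemma~\ref{lemma:subsegments}), is $\Cdd$--contracting, contradicting property (2) of $\hat{\beta}_B^{\Cdd,1}$. The symmetric statement is the same argument run from the $q_2$--end of $\beta^1$.

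Granting (1), we get $\pi_{\gamma^1}(q_2)\in B(q_1,\Cd)$, hence $d(q_2,\gamma^1)\ge S_B-\Cd$, and symmetrically $d(q_1,\gamma^2)\ge S_B-\Cd$. As $\gamma^2$ has length $A+O(\delta)$ and starts at $q_2$, for $B$ large given $A$ every point of $\gamma^2$ lies more than $D_1$ from $\gamma^1$; the bounded geodesic image property of $\gamma^1$ then gives $\diam(\pi_{\gamma^1}(\gamma^2))\le D_1\le\Cd$, which is (3) (symmetrically $\diam(\pi_{\gamma^2}(\gamma^1))\le\Cd$). For (2), given $p\in\gamma^1$ and $p'\in\gamma^2$ one has $d(p,p')\ge d(p',\pi_{\gamma^1}(p'))\ge d(p',q_1)-2\Cd\ge S_B-A-2\Cd-O(\delta)$, which is $\ge\Cddd$ once $B$ is large given $\Cddd$, $A$, $\Cd$. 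Finally, for (4), I would parametrise $p_{A,B}$ by arclength; the upper quasigeodesic inequality is then automatic, and it remains to bound the arclength between $x$ and $y$ by $Q\,d(x,y)+Q^2$, the only nontrivial case being when $x$ and $y$ lie on different segments. The key tool is a consequence of Lemma~\ref{lem:paths-are-rectangles}: for $x$ on a $\C$--contracting geodesic $\sigma$ and any $y$, the geodesic $[x,y]$ passes within $\CC$ of $\pi_\sigma(y)$. Applied to $\sigma=\gamma^1$ and/or $\sigma=\gamma^2$, together with (1) and (3), this shows $[x,y]$ passes within $O(\Cd+\CC)$ of whichever of $q_1,q_2$ are relevant; when both are, one uses (3) and $S_B$ large to check that $[x,y]$ meets them in the correct order, and concludes $d(x,y)\ge(\text{arclength})-O(\Cd+\CC)$, which finishes the proof after choosing $Q$ to absorb the constant.

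I expect the main obstacle to be step (1): excluding that $\beta^1$ ``folds back'' along $\gamma^1$ is exactly where the non-$\Cdd$--contracting ends of $\hat{\beta}_B^{\Cdd,1}$ are used, and it requires running Lemmas~\ref{lem:paths-are-rectangles}, \ref{lemma:hausdorff_contraction} and \ref{lemma:subsegments} in tandem while keeping the dependence $\Cdd$, then $K$, then $\Cd$ acyclic. A secondary subtlety is the order-of-junctions check in (4), which is the reason $B$ must be taken large relative to $A$.
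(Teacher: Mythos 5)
Your proofs of parts (1) and (4) track the paper closely (run Lemma~\ref{lem:paths-are-rectangles}, Lemma~\ref{lemma:hausdorff_contraction} and Lemma~\ref{lemma:subsegments} against the non-$\Cdd$--contracting ends of $\hat{\beta}_B^{\Cdd,1}$), but part (2) has a genuine gap, and it propagates to (3) and to the order of quantifiers in the whole lemma. You prove $d(\gamma^1,\gamma^2)\ge S_B-A-2\Cd-O(\delta)$ and then take ``$B$ large given $\Cddd$, $A$, $\Cd$''; you even flag that your order is ``$A$ chosen before $B$'' and that (4) needs ``$B$ large relative to $A$.'' That dependency is the wrong way around. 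Every downstream use of this lemma --- Lemma~\ref{lem:q-is-quasi-geodesic-for-large-a}, Lemma~\ref{lem:small-projection-qab-to-itself}, Lemma~\ref{lemma:large-proj-implies-fellow}, Corollary~\ref{cor:eta-stays-close} --- fixes $B$ first and then lets $A$ be \emph{arbitrarily} large compared to $B$ (indeed, $A \ge L_B$ where $L_B$ is a local-to-global scale, so $A$ is eventually unbounded for fixed $B$). Your estimate $S_B > A + \Cddd + 2\Cd$ cannot hold for all such $A$ at fixed $B$, so your version of the lemma is too weak to be used where the paper uses it.

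The paper avoids this with a qualitatively different argument for (2): it does not compare lengths at all. Assuming $d(\gamma^1,\gamma^2)\le\Cddd$, it builds a geodesic $4$--gon with sides $\beta^1$, a subsegment of $\gamma^2$, the short geodesic $[x,y]$ realizing $d(\gamma^1,\gamma^2)$, and a subsegment of $\gamma^1$. Three sides are $M'$--Morse for a gauge $M'$ depending only on $C$ (contraction, via Lemma~\ref{lem:theorem1.4}) and $\Cddd$ (length), so by Lemma~\ref{new-monster}\ref{monster:triangle} the fourth side $\beta^1$ is $M''$--Morse with $M''$ depending only on $C$ and $\Cddd$. The strong exhaustion $(M_n^{\Cdd,1})_n$ then gives an $n=n(C,\Cddd)$ such that every $M''$--Morse geodesic is $M_n^{\Cdd,1}$--Morse; taking $B\ge n$ contradicts the construction of $\hat{\beta}_B^{\Cdd,1}$. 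The threshold on $B$ depends only on $\Cddd$ and $C$, never on $A$, which is exactly what the downstream lemmas need. Item (3) then falls out by specializing $\Cddd = C'$ and applying Lemma~\ref{lem:paths-are-rectangles}; your ``every point of $\gamma^2$ lies $>D_1$ from $\gamma^1$'' route again needs $S_B\gg A$ and so inherits the same problem. If you want to salvage your write-up, replace the metric estimate in (2) by this $4$--gon/strong-exhaustion argument; (1) and (4) can stay as you have them.
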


\begin{proof}
    Recall that $C'$ is the constant obtained from applying Lemma~\ref{lem:paths-are-rectangles} to $C$ and that $\hat{\gamma}_A$ is $C$--contracting. Let $C_1$ be the constant such that any geodesic (and its subsegments) with endpoints in the $C'$--neighbourhood of a $C$--contracting geodesic is $\Cdd$--contracting. Such a constant $\Cdd$ exists by Lemma~\ref{lemma:hausdorff_contraction} and Lemma~\ref{lemma:subsegments}. Let $K$ be the constant obtained when applying Lemma~\ref{lem:bad_geodesic} to the pair $\Cseven = \Cdd$ and $K_0 = 1$.  Let $x\in \gamma^1$ and $y \in \beta^1$. Let $a = \gamma^1(A+ \delta_A)$ and $b = \gamma^2 (0)$ be the transition points between segments. With this notation, we know that $\beta^1[0, K]$ and $\beta^1[S_B - K, S_B]$ are not $\Cdd$--contracting.

    (1): Let $q\in \beta^1$ be the last point with $d(q, \gamma^1)\leq C'$. By Lemma~\ref{lemma:hausdorff_contraction} we know that $[a, q]_{\beta^1}$ is $\Phi_d(C') = C_1$--contracting. If $d(a, q)\geq K$, we know by Lemma~\ref{lemma:subsegments}, that $\beta^1[0, K]$ is $\Cdd$--contracting, a contradiction. Hence $d(a, q)< K$. Lemma~\ref{lem:paths-are-rectangles} implies that 
    \begin{align*}
        \diam\{\pi_{\gamma^1}(\beta^1\}\leq K+ C'.
    \end{align*}
    So \eqref{small-proj-1} holds for $\Cd = K + C'$.
    
    (2): By Lemma~\ref{lem:theorem1.4}, there exists a Morse gauge $M$ such that any $C$--contracting geodesic is $M$--Morse. For a fixed constant $\Cddd$, there exists a Morse gauge $M'\geq M$ such that any geodesic segment of length at most $\Cddd$ is $M'$--Morse. Assume that $d(\gamma^1, \gamma^2)\leq \Cddd$. Choose $x\in \gamma^1$ and $y\in \gamma^2$ which realize the distance between $\gamma^1$ and $\gamma^2$. Now there exists a 4-gon with sides $\beta^1, [b, y]_{\gamma^2}, [x, y]$ and $[y, a]_{\gamma^1}$. Since three of the sides are $M'$--Morse, the third side is $M''$--Morse for a Morse gauge $M''$ only depending on $M'$. Since $(M_n^{\Cdd, 1})_{n\in \N}$ is a strong exhaustion we know that there exits $n$ such that any $M''$--Morse geodesic is $M_n^{\Cdd, 1}$--Morse. So for $B\geq n$, this is a contradiction to $\beta^1$ not being $M_n^{\Cdd, 1}$--Morse.

    (3): This follows from (2) applied to $\Cddd = C'$ and Lemma~\ref{lem:paths-are-rectangles}.

    (4): Let us summarize what we know so far, namely, for large enough $B$ we have that $\pi_{\gamma^1}(\beta^1)\subset \mc N_{\Cd} (a)$ and by symmetry, $\pi_{\gamma^2}(\beta^1)\subset \mc N_{\Cd} (b)$. Hence (3) shows that $\pi_{\gamma^1}(\gamma^2)\subset \mc N_{2\Cd} (a)$ and by symmetry $\pi_{\gamma^2}(\gamma^1)\subset \mc N_{2\Cd} (b)$. Hence, any path from $x\in \gamma^1$ to $y\in \beta^1$ passes through the $C'+\Cd$--neighbourhood of $a$. Namely, either $d(x, a)\leq C' + \Cd$ and the statement is immediate $d(x, a)\geq C' + \Cd$ and then Lemma~\ref{lem:paths-are-rectangles} implies that $[x, y]$ passes through the $C'$--neighbourhood of $\pi_{\gamma^1}(y)$, which is at distance at most $\Cd$ from $a$. Analogously, any path from $x\in \gamma^1$ to $z\in \gamma^2$ passes through the $2\Cd + C'$--neighbourhood of $a$ and $b$. The statement follows from the triangle inequality. 
\end{proof}

Let $\Cdd, \Kd$ be the constants from the lemma above. From now on, we denote $\hat{\beta}_B^{\Cdd, \Kd}$ by $\hat{\beta}_B$ and $M_n^{\Cdd, \Kd}$ by $M_n$. By potentially passing to a subsequence of $(M_n)_{n\in \N}$ we can assume that if we have a $6$-gon where each of the sides except one is one of the following 
\begin{itemize}
    \item $C$--contracting,
    \item $N$--Morse, where $N$ fixed at the beginning of the subsection,
    \item has length at most $2C'$, where $C'$ denotes the constant of Lemma~\ref{lem:paths-are-rectangles} applied to $C$,
\end{itemize}
then the last side is $M_1$--Morse.

    \begin{lem}\label{lem:local-morsepab}
        For all $B$, there exists a Morse gauge $M_B$ such that for all $A$, $p_{A, B}$ is $M_B$--Morse.
    \end{lem}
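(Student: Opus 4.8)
The plan is to exploit the decomposition $p_{A,B}=\gamma^1\ast\beta^1\ast\gamma^2$ from Lemma~\ref{lemma:pab-quasi-geodesic}, together with the fact (also from that lemma) that $p_{A,B}$ is a $Q$--quasi-geodesic with $Q$ independent of $A$ and $B$, and then to apply the concatenation property of Morse quasi-geodesics, Lemma~\ref{new-monster}\ref{monster:concatenation}, twice.

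First I would check that each of the three pieces of $p_{A,B}$ is Morse with a gauge that does not depend on $A$. The outer pieces $\gamma^1$ and $\gamma^2$ are translates of $\hat\gamma_A$, which by construction is $C$--contracting for a constant $C$ independent of $A$; hence by Lemma~\ref{lem:theorem1.4} they are $M$--Morse for a single Morse gauge $M=M(C)$, again independent of $A$. The middle piece $\beta^1$ is a translate of $\hat\beta_B$, a geodesic of finite length $S_B$ depending only on $B$; being a path of diameter $S_B$, it is $M_B^0$--Morse for a Morse gauge $M_B^0$ depending only on $S_B$, hence only on $B$ (cf.\ Lemma~\ref{new-monster}\ref{monster:small-diam-is-morse}). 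Replacing $M$ and $M_B^0$ by their pointwise maximum $M^\ast$, all three pieces are $M^\ast$--Morse geodesics, and $M^\ast$ depends only on $B$.

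Next I would record that the relevant concatenations are already known to be quasi-geodesics, so that Lemma~\ref{new-monster}\ref{monster:concatenation} applies non-vacuously: $p_{A,B}$ is a $Q$--quasi-geodesic by Lemma~\ref{lemma:pab-quasi-geodesic}, and any subpath of a $Q$--quasi-geodesic is again a $Q$--quasi-geodesic, so in particular the terminal subpath $\beta^1\ast\gamma^2$ of $p_{A,B}$ is a $Q$--quasi-geodesic. Applying Lemma~\ref{new-monster}\ref{monster:concatenation} to the $M^\ast$--Morse $Q$--quasi-geodesics $\beta^1$ and $\gamma^2$, which meet at the common point joining them in $p_{A,B}$, gives that $\beta^1\ast\gamma^2$ is $M'$--Morse for a Morse gauge $M'$ depending only on $M^\ast$ and $Q$. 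Applying Lemma~\ref{new-monster}\ref{monster:concatenation} a second time to $\gamma^1$ and $\beta^1\ast\gamma^2$ (both $\max\{M,M'\}$--Morse $Q$--quasi-geodesics, meeting at the endpoint of $\gamma^1$, and with concatenation $p_{A,B}$ a $Q$--quasi-geodesic) gives that $p_{A,B}$ is $M_B$--Morse for a Morse gauge $M_B$ depending only on $M^\ast$, $M'$ and $Q$, hence only on $B$. This is exactly the claimed statement.

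I do not expect a genuine difficulty here; the only points requiring care are the bookkeeping — tracking that every auxiliary constant and Morse gauge depends on $B$ (and on the global constants $C,Q,\delta$ fixed at the start of the subsection) but never on $A$ — and the observation that Lemma~\ref{new-monster}\ref{monster:concatenation} is invoked only for concatenations known a priori to be quasi-geodesics. If one preferred to avoid the concatenation lemma, the same conclusion can be reached directly: any $Q'$--quasi-geodesic with endpoints on $p_{A,B}$ must, by the projection estimates $\diam\pi_{\gamma^1}(\beta^1)\leq\Cd$ and $\diam\pi_{\gamma^1}(\gamma^2)\leq\Cd$ of Lemma~\ref{lemma:pab-quasi-geodesic} (and their mirror images for $\gamma^2$), pass within a uniform distance of the two transition points of $p_{A,B}$, which reduces its control to the (uniform) Morseness of the three pieces $\gamma^1$, $\beta^1$, $\gamma^2$; the concatenation route is simply shorter.
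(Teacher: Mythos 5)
Your proof is correct and takes essentially the same route as the paper's (which is considerably terser): observe that $\hat\beta_B$ is a finite geodesic and hence Morse for a gauge depending only on $B$, observe that $\hat\gamma_A$ is $C$--contracting and hence Morse for a gauge independent of $A$, and invoke Lemma~\ref{new-monster}\ref{monster:concatenation}; the paper leaves the second observation implicit, which you usefully spell out. One small caveat: you justify the applicability of the concatenation lemma by citing Lemma~\ref{lemma:pab-quasi-geodesic}\eqref{Q}, which only holds for $A$ and $B$ large enough, whereas the statement is quantified over all $A$; the cleaner reading is that Lemma~\ref{new-monster}\ref{monster:concatenation} itself supplies quasi-geodesicity of the concatenation (as is standard for Morse quasi-geodesics joined at an endpoint), so no appeal to Lemma~\ref{lemma:pab-quasi-geodesic} is needed.
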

    \begin{proof}
        Since every finite segment is Morse for some Morse gauge, we have that $\hat{\beta}_B$ is $N_B$--Morse for some Morse gauge $N_B$. Lemma~\ref{new-monster}\ref{monster:concatenation} about concatenation of Morse quasi-geodesics concludes the proof.  
    \end{proof}

    Let $L_B, M_B'$ and $Q_B$ be constants such that all $L_B$--locally $M_B$--Morse $Q$--quasi-geodesics are $M_B'$--Morse $Q_B$--quasi-geodesics.

    \begin{lem}\label{lem:q-is-quasi-geodesic-for-large-a}
        For all $B$ large enough there exists a constant $L_B$ such that for all $A\geq L_B$, the following holds. The path
        \begin{align*}
        q_{A,B} = \underbrace{\hat{\gamma}_A\ast_T \hat{\beta}_B\ast \hat{\gamma}_A\ast_T\ldots \ast \hat{\beta}_B\ast_T\hat{\gamma}_A}_{\text{$2\cdot \anz - 1$ times}}
        \end{align*}
        is an $M_B'$--Morse $Q_B$--quasi-geodesic.
    \end{lem}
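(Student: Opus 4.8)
The plan is to deduce this from the Morse local-to-global property of $X'$, in the form recorded when the constants $L_B, M_B', Q_B$ were fixed just above the statement: every path that is $L_B$--locally an $M_B$--Morse $Q$--quasi-geodesic is globally an $M_B'$--Morse $Q_B$--quasi-geodesic. Taking the constant $L_B$ of the statement to be this same scale, it suffices to prove that, for $B$ large enough and every $A\geq L_B$, the path $q_{A,B}$ is $L_B$--locally an $M_B$--Morse $Q$--quasi-geodesic.

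First fix $B$ large enough that Lemma~\ref{lemma:pab-quasi-geodesic} applies and that the lemmas used to construct $\hat{\beta}_B$ are in force; in particular, with $Q$ the universal constant of Lemma~\ref{lemma:pab-quasi-geodesic}, the path $p_{A,B}$ is a $Q$--quasi-geodesic for all sufficiently large $A$, and by Lemma~\ref{lem:local-morsepab} there is a Morse gauge $M_B$, independent of $A$, with $p_{A,B}$ being $M_B$--Morse for every $A$. Recall that each $\hat{\gamma}_A$--factor of $q_{A,B}$ is a geodesic of length $A+\delta_A\geq A$, that $q_{A,B}$ is an alternating translated concatenation with $\hat{\gamma}_A$ in the odd positions and $\hat{\beta}_B$ in the even positions (so it begins and ends with a copy of $\hat{\gamma}_A$), and hence that any three consecutive factors of the form $\hat{\gamma}_A\ast_T\hat{\beta}_B\ast_T\hat{\gamma}_A$ constitute a translate $g\cdot p_{A,B}$ for some $g\in G$.

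Now take any subpath $q_{A,B}[s,t]$ with $t-s\leq L_B$, and suppose $A\geq L_B$. Since each $\hat{\gamma}_A$--factor has length $A+\delta_A\geq A\geq L_B\geq t-s$, the interval $[s,t]$ cannot contain an entire $\hat{\gamma}_A$--factor together with portions of the factors on either side of it; as the factors strictly alternate, this rules out $[s,t]$ meeting a configuration $\hat{\beta}_B\ast_T\hat{\gamma}_A\ast_T\hat{\beta}_B$, so $[s,t]$ meets at most three consecutive factors and these lie in a block $\hat{\gamma}_A\ast_T\hat{\beta}_B\ast_T\hat{\gamma}_A$, that is, in a translate $g\cdot p_{A,B}$. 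Since $G$ acts on $X'$ by isometries, $g\cdot p_{A,B}$ is again $M_B$--Morse and a $Q$--quasi-geodesic; hence $q_{A,B}[s,t]$, being a subsegment of it, is $M_B$--Morse by Lemma~\ref{new-monster}\ref{monster:subsegments} and is a $Q$--quasi-geodesic, because the quasi-geodesic inequality for $g\cdot p_{A,B}$ restricts to every subinterval of its domain. Thus $q_{A,B}$ is $L_B$--locally an $M_B$--Morse $Q$--quasi-geodesic, and the Morse local-to-global property upgrades it to a global $M_B'$--Morse $Q_B$--quasi-geodesic, as desired. The only delicate point is the bookkeeping in this last paragraph: the local scale $L_B$ and the lower bound on $A$ must be chosen compatibly, so that the $\hat{\gamma}_A$--stretches separating consecutive copies of $\hat{\beta}_B$ are longer than $L_B$ — this is precisely what forces each window of $q_{A,B}$ of length at most $L_B$ to meet at most one copy of $\hat{\beta}_B$, and hence to sit inside a single translate of $p_{A,B}$, reducing the global question to the already-established properties of $p_{A,B}$.
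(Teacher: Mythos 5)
Your proof is correct and takes the same route as the paper: both establish that $q_{A,B}$ is $L_B$--locally an $M_B$--Morse $Q$--quasi-geodesic by noting that any window of length at most $L_B\leq A$ sits inside a $G$--translate of $p_{A,B}$ (which Lemma~\ref{lemma:pab-quasi-geodesic}\eqref{Q} and Lemma~\ref{lem:local-morsepab} show is a $Q$--quasi-geodesic and $M_B$--Morse), and then invoke the Morse local-to-global property with the constants $L_B, M_B', Q_B$ fixed just before the statement. You simply spell out the bookkeeping — why a short window cannot straddle two copies of $\hat{\beta}_B$ — that the paper leaves implicit.
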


    We sometimes write $q_{A, B} = \gamma^1\ast \beta^1 \ast  \ldots \beta^{\anz -1}\ast \gamma^{\anz}$, where the $\gamma^i$ and $\beta^i$ are the translates of $\hat{\gamma}_A$ and $\hat{\beta}_B$ respectively. 
    
    \begin{proof}
    Lemma~\ref{lemma:pab-quasi-geodesic}~\eqref{Q} yields that for all $A$ and $B$ large enough, the path $p_{A, B} = \hat{\gamma}_A \ast_T \hat{\beta}_B \ast_T \hat{\gamma}_A$ is a $Q$--quasi-geodesic. Hence by Lemma~\ref{lem:local-morsepab}, $q_{A,B}$ is $A$--locally an $M_B$--Morse quasi-geodesic. In particular, if $A\geq L_B$, the path $q_{A, B}$ is $L_B$--locally an $M_B$--Morse $Q$--quasi-geodesic. Hence by the Morse-local-to-global property, we have that $q_{A, B}$ is an $M_B'$--Morse $Q_B$--quasi-geodesic.
    \end{proof}

    \begin{lem}\label{lem:small-projection-qab-to-itself} 
       There exists a constant $R$, such that for large enough $B$ and $A$ large enough compared to $B$ the following holds for all $1\leq i \leq \anz$. 
        \begin{align*}
            \diam\{\pi_{\gamma^i} (\gamma^1\ast \beta^1\ast\ldots \ast \beta^{i-1})\}\leq R.
        \end{align*}
    \end{lem}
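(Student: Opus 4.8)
The plan is to build the estimate inductively on $i$, using the "rectangle" structure of contracting geodesics established in Lemma~\ref{lem:paths-are-rectangles} together with the bounded geodesic image property of Lemma~\ref{lemma:equivalence_strong_contraction1}. The key geometric fact I would exploit is that each $\gamma^i$ is $C$--contracting (being a translate of $\hat\gamma_A$), hence $D$--contracting for a uniform $D$ depending only on $C$, and therefore satisfies the $D$--bounded geodesic image property: any geodesic staying at distance $\geq D$ from $\gamma^i$ has projection of diameter $\leq D$ onto $\gamma^i$. Combined with Lemma~\ref{lemma:pab-quasi-geodesic}, which already tells us that consecutive $\gamma$--segments and the intermediate $\beta$--segment project to small neighbourhoods of the transition points, this should propagate along the whole quasi-geodesic $q_{A,B}$.

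First I would fix, for each $\gamma^i$, the closest-point-projection $\pi_{\gamma^i}$ and let $D$ be the bounded-geodesic-image constant from Lemma~\ref{lemma:equivalence_strong_contraction1} applied to $C$ (using Lemma~\ref{lemma:subsegments} so that all subsegments also have $D$--bounded image). Then I would prove the following claim by downward reasoning on the index $j < i$: the projection $\pi_{\gamma^i}(\gamma^1 \ast \beta^1 \ast \cdots \ast \beta^{j})$ is contained in a bounded neighbourhood of the \emph{initial} endpoint $\gamma^i(0)$ of $\gamma^i$. The base case $j = i-1$ is exactly (an iterate of) Lemma~\ref{lemma:pab-quasi-geodesic}: the segment $\beta^{i-1}$ and the preceding $\gamma^{i-1}$ project into $\mc N_{2\Cd}(\gamma^i(0))$ by parts \eqref{small-proj-1} and (3) of that lemma, after translating. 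For the inductive step, I would argue that once we know $\gamma^{j+1} \ast \beta^{j+1} \ast \cdots \ast \beta^{i-1}$ stays far from $\gamma^i$ (at distance $\geq D$), the $D$--bounded geodesic image property forces the whole tail back of it — in particular $\gamma^1\ast \cdots \ast \beta^{j}$, which can only reach $\gamma^i$ by passing through this far-away region when $A$ is large enough compared to $B$ — to have small projection. Concretely: a point $w$ on $\gamma^1 \ast \cdots \ast \beta^j$ is connected to $\gamma^i$ by a subpath of the quasi-geodesic $q_{A,B}$ that traverses $\gamma^{j+1}, \dots, \gamma^{i-1}$; choosing $A$ large makes these intermediate segments long, hence (by Lemma~\ref{lemma:pab-quasi-geodesic}(2) iterated, giving $d(\gamma^{j+1},\gamma^i)$ large) they are far from $\gamma^i$, so $\pi_{\gamma^i}(w)$ lands within $D$ of $\pi_{\gamma^i}$ of that far segment, which by the base case is near $\gamma^i(0)$.

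The quantity $R$ would then be something like $2\Cd + (\anz) D + C'$ — a finite sum of uniform constants, since the number of segments $\anz$ is a fixed constant. The main obstacle I anticipate is making precise the phrase "$A$ large enough compared to $B$": one needs that the long $C$--contracting segments $\gamma^{j+1},\dots,\gamma^{i-1}$ genuinely separate $w$ from $\gamma^i$ in the metric sense required by bounded geodesic image, i.e.\ that the concatenated tail $\gamma^1 \ast \cdots \ast \beta^j$ does not itself come close to $\gamma^i$ by some route avoiding these segments. This is where I would invoke that $q_{A,B}$ is an $M_B'$--Morse $Q_B$--quasi-geodesic (Lemma~\ref{lem:q-is-quasi-geodesic-for-large-a}): Morseness of the global path means any geodesic between two of its points fellow-travels the path, so there is no such shortcut, and the distance along the path between $w$ and $\gamma^i$ — which grows linearly in $A$ once we pass at least one full $\gamma^{j+1}$ block — translates (via the quasi-geodesic inequality and Morseness) into genuine distance in $X'$. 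Once $A$ exceeds a threshold depending on $D$, $Q_B$, $M_B'$ and $B$, the separation is guaranteed and the induction closes. I expect the bookkeeping of which constants depend on $B$ versus which are uniform to be the fiddly part, but no single step is deep: it is a controlled iteration of the two rectangle lemmas.
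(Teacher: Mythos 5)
Your proposal uses the same basic ingredients as the paper's proof (the quasi-geodesic inequality from Lemma~\ref{lem:q-is-quasi-geodesic-for-large-a} to get distance lower bounds, the bounded geodesic image property from Lemma~\ref{lemma:equivalence_strong_contraction1}, and Lemma~\ref{lemma:pab-quasi-geodesic} for the base case), but the inductive overlay you build on top of them introduces a genuine flaw.

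The specific problem is in your inductive step. You claim to apply bounded geodesic image after observing that ``$\gamma^{j+1}\ast\beta^{j+1}\ast\cdots\ast\beta^{i-1}$ stays far from $\gamma^i$ (at distance $\geq D$).'' This is false: the terminal point of $\beta^{i-1}$ is precisely the initial point of $\gamma^i$, so this concatenation touches $\gamma^i$. More generally, for fixed $B$ (and thus fixed $\abs{\hat{\beta}_B}$), the segment $\gamma^{i-1}$ lies within distance $\abs{\hat{\beta}_B}$ of $\gamma^i$ no matter how large $A$ is, so only $\gamma^j$ and $\beta^j$ with $j \leq i-2$ are pushed far from $\gamma^i$ by enlarging $A$. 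Your later remark that bounded geodesic image is applied to ``the whole tail back of it'' compounds the problem: that operator is defined on a single geodesic $\lambda$ staying far from $\gamma^i$, and $\gamma^1\ast\cdots\ast\beta^j$ is not a geodesic. You try to repair this with Morseness of $q_{A,B}$ in the last paragraph (any geodesic between two points of $q_{A,B}$ fellow-travels it), but this still does not give you a geodesic from $w$ to $\gamma^i$ that stays far from $\gamma^i$ — such a geodesic necessarily ends near $\gamma^i$ — so the argument does not close as stated.

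The paper avoids all of this by never attempting to control the projection of the whole concatenation at once. It bounds $\diam\{\pi_{\gamma^i}(\gamma^{i-1}\ast\beta^{i-1})\}\leq 2C_8$ directly from Lemma~\ref{lemma:pab-quasi-geodesic}, and then for each $j < i-1$ it observes that $\gamma^j$ and $\beta^j$ are individually geodesics at distance $>D$ from $\gamma^i$ (this uses only the quasi-geodesic inequality, not Morseness), so $\diam\{\pi_{\gamma^i}(\gamma^j)\}\leq D$ and $\diam\{\pi_{\gamma^i}(\beta^j)\}\leq D$ by bounded geodesic image. Since consecutive pieces share endpoints, the projection diameter of the union is at most the finite sum $2C_8 + 2\cdot 20\cdot D$, and the number of pieces is the fixed constant $20$. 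This is simpler and correct, and no induction is needed. You should discard the inductive framing, drop the Morseness invocation, and simply apply the two rectangle lemmas segment-by-segment followed by a telescoping sum on the diameters.
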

    \begin{proof}
        Let $\Cd$ be the constant from Lemma~\ref{lemma:pab-quasi-geodesic}. Lemma~\ref{lemma:pab-quasi-geodesic} implies that
        \begin{align*}
             \diam\{\pi_{\gamma^i} (\gamma^{i-1}\ast \beta^{i-1})\}\leq 2\Cd.
        \end{align*}
        Let $D$ be the constant such that any $C$--contracting geodesic has $D$--bounded geodesic image, such a constant exists by Lemma~\ref{lemma:equivalence_strong_contraction1}. Since $q_{A, B}$ is a $Q_B$--quasi-geodesic,  for large enough $A$ we have $d(\gamma^j, \gamma^i) > D$ and $d(\beta^j, \gamma^i) > D$ for all $j< i-1$. Hence 
        \begin{align}
            \diam\{\pi_{\gamma^i} (\gamma^{j})\}\leq D \quad \text{and}\quad  \diam\{\pi_{\gamma^i} (\beta^{j})\}\leq D
        \end{align}
        for all $j<  i-1$. Choosing $R = 2\Cd + 2\cdot \anz D$ concludes the proof.
    \end{proof}
 
        Denote by $\eta_{A, B}$ a geodesic from the starting point of $q_{A,B}$ to the endpoint of $q_{A, B}$. The lemma above implies that the distance between $q_{A, B}$ and $\eta_{A, B}$ can be bounded in terms of $M_B'$. The lemma below strengthens this and shows that if any geodesic $\lambda$ has large projection onto $\eta_{A, B}$, then it enters the $C'$--neighbourhood of some $\gamma^i$. 
        
    \begin{lem}\label{lemma:large-proj-implies-fellow}
        There exists a constant $\T$ such that the following holds. For all $B$ large enough, there exists a constant $D_B$ such that for all $A$ large enough compared to $B$ the following holds. 
        If a geodesic $\lambda$ satisfies
        \begin{align*}
            \diam\{\pi_{\eta_{A, B}}(\lambda)\}\geq 4A +D_B,
        \end{align*}
        then there exists $1\leq i < \anz$ such that $\gamma^i[\T, A - \T]$ and $\gamma^{i+1}[\T, A- \T]$ are at $C'$--bounded Hausdorff distance from a subgeodesic of $\lambda$, where $C'$ is the constant of Lemma~\ref{lem:paths-are-rectangles} applied to $C$. Furthermore, if $\lambda$ and $\eta_{A, B}$ have the same starting (respectively end) point, then the statement holds for $i=1$ (respectively $i = \anz -1$) and there is a prefix (respectively) suffix of $\lambda$ which is at Hausdorff distance at most $C'$ from $\gamma^1[0, A - T_0]$ (respectively $\gamma^{\anz}[T_0, A + \delta_A]$).
    \end{lem}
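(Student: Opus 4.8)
The plan is to transport the hypothesis on $\pi_{\eta_{A,B}}(\lambda)$ back to the quasi-geodesic $q_{A,B}$, use the block structure of $q_{A,B}$ to place a whole triple $\gamma^j\ast\beta^j\ast\gamma^{j+1}$ inside the part of $q_{A,B}$ coarsely spanned by $\pi_{q_{A,B}}(\lambda)$, and then use the strong contraction of $\gamma^j,\gamma^{j+1}$ together with Lemma~\ref{lem:paths-are-rectangles} to force $\lambda$ to fellow-travel their middle thirds. For the setup: by Lemma~\ref{lem:q-is-quasi-geodesic-for-large-a}, for $B$ large and $A\ge L_B$ the path $q_{A,B}$ is an $M_B'$--Morse $Q_B$--quasi-geodesic, so $\eta_{A,B}$ lies at Hausdorff distance $\le H_B$ from $q_{A,B}$ and is $M_B''$--Morse, with $H_B,M_B''$ depending only on $B$ (Lemma~\ref{new-monster}\ref{monster:hausdorff}); by standard properties of closest-point projection onto a Morse quasi-geodesic this yields a constant $H_B'$, depending only on $B$, with $d(\pi_{q_{A,B}}(x),\pi_{\eta_{A,B}}(x))\le H_B'$ for all $x$. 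Let $D_0$ be the bounded-geodesic-image constant for $C$--contracting geodesics from Lemma~\ref{lemma:equivalence_strong_contraction1} and $R$ the constant from Lemma~\ref{lem:small-projection-qab-to-itself}; both are independent of $A,B$. I will take $\T:=5D_0+R$, set $D_B:=2H_B'+2\delta+1$, and require $A$ large enough compared to $B$ that, besides $A\ge L_B$, also $A\ge 2S_B+C'+2\T+10\delta$, where $S_B$ is the length of $\hat\beta_B$ and $0\le\delta_A\le2\delta$.

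Choose $u,v\in\lambda$ with $d(\pi_{\eta_{A,B}}(u),\pi_{\eta_{A,B}}(v))\ge 4A+D_B$ and put $\hat u:=\pi_{q_{A,B}}(u)$, $\hat v:=\pi_{q_{A,B}}(v)$; then $d(\hat u,\hat v)\ge 4A+D_B-2H_B'\ge 4A$. Since $q_{A,B}$ is the alternating concatenation of the $\anz$ subpaths $\gamma^i$ (of length $A+\delta_A\in[A,A+2\delta]$) and the $\anz-1$ subpaths $\beta^i$ (of length $S_B$), beginning and ending with a $\gamma^i$, and since $A\ge 2S_B+10\delta$ forces any five consecutive blocks to have total length $\le 3(A+2\delta)+2S_B<4A$, the subpath $[\hat u,\hat v]_{q_{A,B}}$ meets at least six blocks, hence contains at least four consecutive blocks entirely; among these (which alternate) there is a triple $\gamma^j\ast\beta^j\ast\gamma^{j+1}$, with $\hat u$ lying in a block strictly preceding $\gamma^j$ and $\hat v$ in a block strictly following $\gamma^{j+1}$; in particular $\hat u\in\gamma^1\ast\beta^1\ast\cdots\ast\beta^{j-1}$ (a fortiori in the past of $\gamma^{j+1}$) and $\hat v$ lies past $\gamma^{j+1}$ (a fortiori in the future of $\gamma^j$).

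Fix this $j$ and write $a:=\gamma^j(0)$, $b:=\gamma^j(A+\delta_A)$. By Lemma~\ref{lem:small-projection-qab-to-itself} and its symmetric form (applied to the reversed quasi-geodesic), $\pi_{\gamma^j}(\hat u)\in\mc N_R(a)$ and $\pi_{\gamma^j}(\hat v)\in\mc N_R(b)$. I claim $d(\pi_{\gamma^j}(u),a)\le\T$: let $z$ be the first point of $[u,\hat u]$ from $u$ with $d(z,\gamma^j)\le D_0$, so either $z=u$, or $[u,z)$ is at distance $>D_0$ from $\gamma^j$ and $d(z,\gamma^j)=D_0$; bounded geodesic image (Lemma~\ref{lemma:equivalence_strong_contraction1}) gives $d(\pi_{\gamma^j}(u),\pi_{\gamma^j}(z))\le D_0$, while $d(u,z)+d(z,\hat u)=d(u,\hat u)=d(u,q_{A,B})\le d(u,\gamma^j)\le d(u,z)+D_0$ forces $d(z,\hat u)\le D_0$, hence $d(\hat u,a)\le d(\hat u,\gamma^j)+R\le 2D_0+R$, then $d(z,a)\le 3D_0+R$, and finally $d(\pi_{\gamma^j}(u),a)\le 5D_0+R=\T$. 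The same argument with $\hat v,b$ gives $d(\pi_{\gamma^j}(v),b)\le\T$, so $d(\pi_{\gamma^j}(u),\pi_{\gamma^j}(v))\ge(A+\delta_A)-2\T\ge C'$. Lemma~\ref{lem:paths-are-rectangles}, applied to the $C$--contracting $\gamma^j$, the points $u,v$, and the geodesic $[u,v]_\lambda$, then produces a subsegment of $[u,v]_\lambda$ at Hausdorff distance $\le C'$ from $[\pi_{\gamma^j}(u),\pi_{\gamma^j}(v)]_{\gamma^j}\supseteq\gamma^j[\T,A-\T]$; restricting to the part tracking $\gamma^j[\T,A-\T]$ --- which stays $C'$--close by the last assertion of Lemma~\ref{lem:paths-are-rectangles} and the fact that geodesics at bounded Hausdorff distance fellow-travel monotonically --- gives the required subgeodesic of $\lambda$, and the identical argument for $\gamma^{j+1}$ finishes the main statement with $i:=j$. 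For the final assertion, if $\lambda$ and $\eta_{A,B}$ share their starting point $o=\gamma^1(0)$, take $u:=o$, so $\hat u=o$ and $\pi_{\gamma^1}(u)=o$; since $o\in\pi_{\eta_{A,B}}(\lambda)$ there is $v\in\lambda$ with $d(\pi_{\eta_{A,B}}(v),o)\ge\frac12(4A+D_B)$, which for $A$ large forces $\hat v$ to lie past $\gamma^1$, whence $d(\pi_{\gamma^1}(v),b)\le\T$ as above; then $[\pi_{\gamma^1}(u),\pi_{\gamma^1}(v)]_{\gamma^1}\supseteq\gamma^1[0,A-\T]$ and the last assertion of Lemma~\ref{lem:paths-are-rectangles} puts $o$ in the fellow-traveling subsegment, which is therefore a prefix of $\lambda$ tracking $\gamma^1[0,A-\T]$; the shared-endpoint case is symmetric, giving $i=\anz-1$ and a suffix tracking $\gamma^{\anz}[\T,A+\delta_A]$.

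The main obstacle is the localization step of the third paragraph: pinning $\pi_{\gamma^j}(u)$ and $\pi_{\gamma^j}(v)$ near the two endpoints of $\gamma^j$ with a bound $\T$ independent of $A$ and $B$. The delicate case is when the geodesic from $u$ to $\hat u=\pi_{q_{A,B}}(u)$ comes within $D_0$ of $\gamma^j$; it is handled by truncating at the first such approach point, applying bounded geodesic image to the initial portion, and exploiting that $\hat u$ is a genuine closest-point projection onto $q_{A,B}\supseteq\gamma^j$ to force $\hat u$ itself near $\gamma^j(0)$ via Lemma~\ref{lem:small-projection-qab-to-itself}. The block-counting in the second paragraph and the transfer estimate $d(\pi_{q_{A,B}},\pi_{\eta_{A,B}})\le H_B'$ are routine, provided enough slack is built into $D_B$.
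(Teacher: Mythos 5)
Your strategy shares the paper's skeleton---locate a full triple $\gamma^j\ast\beta^j\ast\gamma^{j+1}$ inside the portion of $q_{A,B}$ seen by $\lambda$, pin the $\gamma^j$-projections of the relevant points of $\lambda$ near the endpoints of $\gamma^j$, and apply Lemma~\ref{lem:paths-are-rectangles}---and the block-counting, the derivation $d(\pi_{\gamma^j}(u),a)\le 5D_0+R$ once $\hat u$ is known to lie before $\gamma^j$, and the treatment of the ``furthermore'' case are all sound. The gap is the transfer estimate $d(\pi_{q_{A,B}}(x),\pi_{\eta_{A,B}}(x))\le H_B'$, which the rest of the argument leans on: you attribute it to ``standard properties of closest-point projection onto a Morse quasi-geodesic,'' but coarse well-definedness of nearest-point projection with a constant independent of $d(x,\cdot)$ is a property of \emph{strongly contracting} sets, not of Morse ones. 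Morseness alone only gives a bound on the diameter of the almost-closest set that is \emph{sublinear} in $d(x,q_{A,B})$ (this is the sublinear-contraction characterization of Morse quasi-geodesics in \cite{ACHG:contraction_morse_divergence}), and $d(u,q_{A,B})$ is unbounded here, so no constant $H_B'$ depending only on $B$ comes for free.

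Two fixes are available. (i) Prove directly that $q_{A,B}$ (equivalently $\eta_{A,B}$) is $C_B$-contracting for some $C_B$ depending only on $B$; this is plausible, since $q_{A,B}$ is a Morse quasi-geodesic built from $C$-contracting pieces $\gamma^i$ and pieces $\beta^i$ of uniformly bounded diameter $S_B$ with pairwise-bounded projections by Lemma~\ref{lem:small-projection-qab-to-itself}, but it needs a genuine argument and is not a free citation. (ii) Do what the paper does and never project onto $q_{A,B}$: take $x'\in\pi_{\eta_{A,B}}(x)$, $y'\in\pi_{\eta_{A,B}}(y)$, push inward to $x'',y''\in[x',y']_{\eta_{A,B}}$ at a $B$-dependent distance $D_3=2(D_2+D)+1$ from $x',y'$, use the Hausdorff bound $d_{\mathrm{Haus}}(q_{A,B},\eta_{A,B})\le D_1$ to find parameters $s,t$ with $q_{A,B}(s)$, $q_{A,B}(t)$ close to $x''$, $y''$ and $q_{A,B}[s,t]$ Hausdorff-close to $[x'',y'']_{\eta_{A,B}}$, and then replace your key inequality $d(u,\hat u)=d(u,q_{A,B})\le d(u,\gamma^j)$ by a pure distance contradiction: if $z\in[x,x']$ had $d(z,\gamma^j)\le D$, then $z$ would lie within $D+D_2$ of $[x'',y'']_{\eta_{A,B}}$, and since $d(x'',x')=D_3$ this would produce a point of $\eta_{A,B}$ strictly closer to $x$ (via $z$) than $x'$, contradicting $x'\in\pi_{\eta_{A,B}}(x)$. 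That gives $d(\gamma^j,[x,x'])\ge D$ directly, after which bounded geodesic image, Lemma~\ref{lem:small-projection-qab-to-itself}, and Lemma~\ref{lem:paths-are-rectangles} close out exactly as in your third paragraph.
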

    \begin{proof}
        Assume that $\diam\{\pi_{\eta_{A, B}}(\lambda)\}\geq 4A +D_B$ for some constant $D_B$ to be determined later.
        Let $x,y$ be points in $\lambda$ and $x'\in \pi_{\eta_{A, B}}(x)$ and $y'\in \pi_{\eta_{A, B}(y)}$ such that $d(x', y')\geq 4A + D_B$. Recall that for $B$ large enough and $A$ large enough compared to $B$, $q_{A, B}$ is a $M_B'$--Morse $Q_B$--quasi-geodesic. Let $D_1 = M_B'(1)$ and let $D_2$ be the bound on the Hausdorff distance when applying Lemma~\ref{new-monster}\ref{monster:hausdorff} to $M = M_B'$ and $Q = D_1$, let $D_3 = 2(D_2 + D) +1$. Let $x'', y''$ be the points on $[x', y']_{\eta_{A, B}}$ with $d(x', x'') = d(y', y'') =  D_3$. With this setup, there exist $s\leq t$ such that 
        \[
        d(q_{A, B}(s), x'')\leq D_1, d(q_{A, B}(t), y'')\leq D_1 \qquad \text{and} \qquad 
        d_{\mathrm{Haus}}(q_{A, B}[s, t], [x'', y'']_{\eta_{A, B}})\leq D_2.
        \]
        In particular, 
        \[
        d(q_{A, B}[s, t], [x_0', x']_{\eta_{A, B}}\cup [y', b]_{\eta_{A, B}})\geq D_3,
        \] 
        where $b$ denotes the endpoint of $\eta_{A, B}$. Recall that $x_0'$ is the starting point of $\eta_{A, B}$.
        Further, we have that 
        \[
        d(x'', y'')\geq 4A + D_B - 2(D_2 + D_3),
        \]
        and hence 
        \[
        t - s\geq 4A + D_B - 2(D_1 +D_2 + D_3).
        \]
        Choose $D_B$ such that 
        \[
        D_B - 2(D_1 +D_2 + D_3)  =  2\abs{\hat{\beta}_B}.
        \]
        Consequently, there exists $i$ such that $\gamma^i\ast \beta^i\ast\gamma^{i+1}\subset q_{A, B}[s, t]$. Observe that the path
        \[
        \zeta = \beta^{i}\ast\gamma^{i+1}\ast \ldots\ast \gamma^{\anz} \ast [b, y']_{\eta_{A, B}} \ast[y', y]\ast [y, x]_{\lambda^{-1}}\ast [x, x']\ast [x', x_0']_{\eta_{A, B}}\ast \gamma_1\ast \beta^1\ast \ldots\ast\gamma^{i-1}\ast \beta^{i-1}   
        \]
        as depicted in Figure~\ref{fig:large-projection} is a path from one endpoint of $\gamma^i$ to its other endpoint. In particular
        \[
        \diam\{ \pi_{\gamma^i}(\zeta)\}\geq A.
        \]
        We now proceed to bound the projection of subpaths of $\zeta$ to show that $\lambda$ has a large projection onto $\gamma^i$ and hence has to come close to $\gamma^i$. Lemma~\ref{lem:small-projection-qab-to-itself} shows that 
        \begin{align*}
            \diam\{ \pi_{\gamma^i}(\beta^i\ast\gamma^{i+1}\ast \ldots\ast \gamma^{\anz} )\}\leq R \quad \text{and} \quad  \diam\{ \pi_{\gamma^i}(\gamma^1\ast \beta^1\ast \ldots\ast\gamma^{i-1}\ast \beta^{i-1})\}\leq R.
        \end{align*}
        We have shown that $d(\gamma^i, [x_0', x']_{\eta_{A, B}}\cup [y', b]_{\eta_{A, B}})\geq D_3\geq D$ and since $\gamma^i$ satisfies the $D$--bounded geodesic image property, we have that 
         \begin{align*}
            \diam\{ \pi_{\gamma^i}([a, x']_{\eta_{A, B}} )\}\leq D \quad \text{and} \quad  \diam\{ \pi_{\gamma^i}([y', b]_{\eta_{A, B}})\}\leq D
        \end{align*}

        \begin{figure}
            \centering
            \includegraphics[width=1\linewidth]{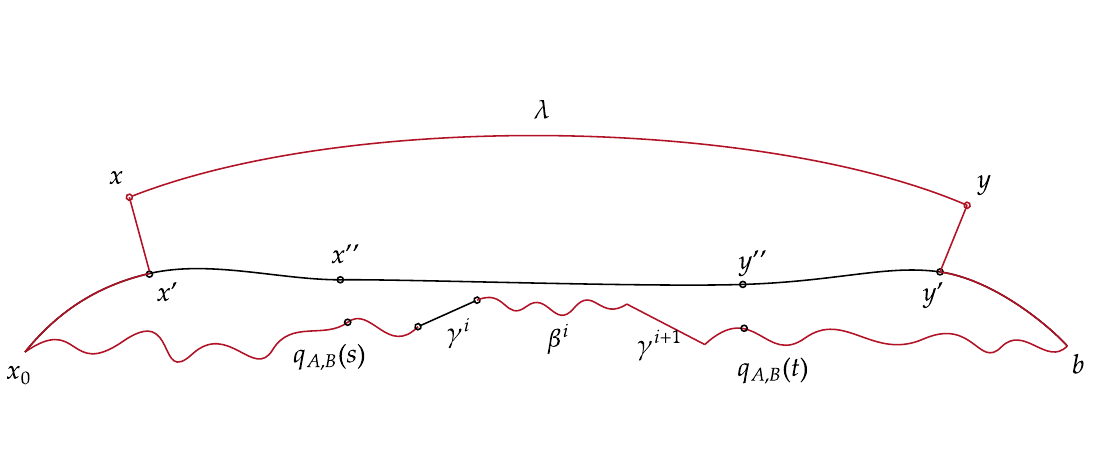}
            \caption{Depiction of the path $\zeta$.}
            \label{fig:large-projection}
        \end{figure}
        
        Next we show that $d(\gamma^i, [x, x'])\geq D$. Assume that it is not the case, then there exists $z\in [x, x']$ with $d(z, \gamma^i)\leq D$. Since $\gamma^i$ is contained in the $D_2$--neighbourhood of $[x'', y'']_{\eta_{A, B}}$, we have that 
        \[ 
        d(z, z')\leq D_2 + D \quad \text{for some $z'\in [x'', y'']_{\eta_{A, B}}$.}
        \]
        Recall that 
        \[d(x'', x') = D_3 = 2D_2 + 2D +1,\]
        so by the triangle inequality, we have that $d(z, z') < d(z, x')$, which is a contradiction to $x'\in \pi_{\eta_{A, B}}(x)$. Thus indeed, $d(\gamma^i, [x, x'])\geq D$, which implies that 
        \begin{align*}
            \diam\{ \pi_{\gamma^i}([x, x'])\}\leq D.
        \end{align*}
        Analogously, we can show that 
        \begin{align*}
            \diam\{ \pi_{\gamma^i}([y, y'])\}\leq D.
        \end{align*}

        The bounds above imply that
        \begin{align*}
            \diam\{ \pi_{\gamma^i}([x, y]_\lambda)\}\geq A - 3D - 2R.
        \end{align*}
        Lemma~\ref{lem:paths-are-rectangles} shows that the main part of the statement holds for $T_0 = 3D + 2R$. 

        If $\lambda$ and $\eta_{A, B}$ both start (respectively end) at the same point $x_0'$ (respectively $b$), then we can instead define $x = x' = x_0'$ (respectively $y = y' = b$) and a similar proof will show the desired statement.       
    \end{proof}

    \begin{lem}\label{lem:length-of-eta}
        Let $B$ be large enough. For $A$ large enough compared to $B$, we have that $\abs{\eta}\geq (\anz - 1)A$.
    \end{lem}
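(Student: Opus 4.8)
The plan is to show that the geodesic $\eta = \eta_{A,B}$ is forced to \emph{shadow} each of the $\anz$ strongly contracting blocks $\gamma^1,\dots,\gamma^{\anz}$ of $q_{A,B}$, so that $\abs{\eta}$ cannot be much smaller than the total path length $\anz(A+\delta_A) + (\anz-1)\abs{\hat{\beta}_B} \ge \anz A$ of $q_{A,B}$. Since the defect will be bounded in terms of $B$ alone while we are allowed an entire extra factor of $A$ (we only need $(\anz-1)A$, not $\anz A$), the estimate closes for $A$ large compared to $B$. Throughout write $a_i = \gamma^i(0)$ and $b_i = \gamma^i(A+\delta_A)$ for the endpoints of the $i$-th contracting block, $p, q$ for the endpoints of $q_{A,B}$ (so $p = a_1 = \eta(0)$ and $q = b_{\anz}$), $D$ for a bounded-geodesic-image constant for $C$--contracting geodesics (Lemma~\ref{lemma:equivalence_strong_contraction1}), $R$ for the constant of Lemma~\ref{lem:small-projection-qab-to-itself}, and $S_B := \abs{\hat{\beta}_B}$. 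Recall that for $B$ large and $A$ large compared to $B$ the path $q_{A,B}$ is an $M_B'$--Morse $Q_B$--quasi-geodesic (Lemma~\ref{lem:q-is-quasi-geodesic-for-large-a}).

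\textbf{Step 1: $\eta$ enters the $D$--neighbourhood of each $\gamma^i$, close to its endpoints.} For $i\in\{1,\anz\}$ this is immediate since $p\in\gamma^1$ and $q\in\gamma^{\anz}$. For $2\le i\le \anz-1$, suppose not; then $\diam\{\pi_{\gamma^i}(\eta)\}\le D$ by the bounded geodesic image property. But $p,q\in\eta$, and by Lemma~\ref{lem:small-projection-qab-to-itself} applied to the prefix $\gamma^1\ast\beta^1\ast\dots\ast\beta^{i-1}$ of $q_{A,B}$ (which contains $p$ and has $a_i$ as its final point, so its $\gamma^i$--projection contains $a_i$) together with its mirror statement for the suffix containing $q$ and starting at $b_i$, we get $\pi_{\gamma^i}(p)\subseteq \mc N_R(a_i)$ and $\pi_{\gamma^i}(q)\subseteq \mc N_R(b_i)$. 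Hence $\diam\{\pi_{\gamma^i}(\eta)\}\ge d(a_i,b_i) - 2R = (A+\delta_A) - 2R > D$ once $A$ is large, a contradiction. Now, for each $i$, let $u_i$ (resp. $v_i$) be the first (resp. last) point at which $\eta$ enters $\mc N_{D+1}(\gamma^i)$, with the convention $u_1 = p$ and $v_{\anz} = q$; these are well defined for $A$ large (using that $p,q$ are at distance $>D+1$ from the non-incident blocks since $q_{A,B}$ is a quasi-geodesic). A bounded-geodesic-image estimate on the sub-geodesics $[p,u_i]_\eta$ and $[v_i,q]_\eta$ of $\eta$ -- exactly as in the proof of Lemma~\ref{lem:paths-are-rectangles}, using $\pi_{\gamma^i}(p)\subseteq \mc N_R(a_i)$, resp. $\pi_{\gamma^i}(q)\subseteq \mc N_R(b_i)$ -- gives $d(u_i,a_i)\le R'$ and $d(v_i,b_i)\le R'$ for a constant $R'$ depending only on $C$.

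\textbf{Step 2: telescoping.} Parametrise $\eta\colon[0,\abs{\eta}]\to X'$ by arclength and write $\tau(\cdot)$ for the parameter of a point on $\eta$. Since $\eta$ is an isometric embedding, and $\tau(u_i)\le\tau(v_i)$ (first versus last entry),
\[
\tau(v_i) - \tau(u_i) = d(u_i,v_i) \ge d(a_i,b_i) - 2R' \ge A - 2R',
\]
while, using $d(b_i,a_{i+1})\le \abs{\beta^i} = S_B$,
\[
\abs{\tau(u_{i+1}) - \tau(v_i)} = d(v_i,u_{i+1}) \le d(v_i,b_i) + d(b_i,a_{i+1}) + d(a_{i+1},u_{i+1}) \le 2R' + S_B.
\]
Therefore
\[
\abs{\eta} = \tau(v_{\anz}) - \tau(u_1) = \sum_{i=1}^{\anz}\bigl(\tau(v_i)-\tau(u_i)\bigr) + \sum_{i=1}^{\anz-1}\bigl(\tau(u_{i+1}) - \tau(v_i)\bigr) \ge \anz(A - 2R') - (\anz-1)(2R' + S_B).
\]
Since $\anz$ and $R'$ are absolute constants while $S_B$ depends only on $B$, the right-hand side is $\ge (\anz-1)A$ as soon as $A \ge 2\anz R' + (\anz-1)(2R'+S_B)$, i.e. for $A$ large compared to $B$, which is the claim.

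The step most likely to need care in the write-up is the bookkeeping in Step 2: the anchor points $u_i,v_i$ are only \emph{coarsely} in order along $\eta$ (consecutive blocks being separated by the short segments $\beta^i$), and the telescoping identity is precisely what converts this coarse order into a clean lower bound, exploiting that there is a full factor of $A$ to spare ($\anz A$ versus $(\anz-1)A$) to absorb the $B$--dependent error $S_B$. Making Step 1 fully rigorous -- the existence of the first/last entry points and the applicability of the bounded-geodesic-image estimate to the relevant sub-geodesics of $\eta$ -- is routine and follows the template already used in the proof of Lemma~\ref{lem:paths-are-rectangles}.
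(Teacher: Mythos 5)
Your proof is correct. It follows essentially the same outline as the paper's argument (for each $i$ the projection of $\eta$ onto $\gamma^i$ has diameter roughly $A$, by Lemma~\ref{lem:small-projection-qab-to-itself} applied to both ends, and then $\eta$ must shadow each $\gamma^i$), but the last step differs in a minor, clean way. The paper instead invokes Lemma~\ref{lem:paths-are-rectangles} to extract a fellow-travelling subsegment of $\eta$ for each $\gamma^i$ and then uses Lemma~\ref{lemma:pab-quasi-geodesic}~\eqref{largedist} to show that these subsegments are pairwise disjoint, so that their lengths simply add up. Your telescoping identity on the arclength parameters of the anchor points $u_i, v_i$ side-steps the disjointness question entirely: you never need to know that the shadowing intervals are disjoint or even in order, because the algebraic identity $\abs{\eta}=\sum(\tau(v_i)-\tau(u_i))+\sum(\tau(u_{i+1})-\tau(v_i))$ holds regardless, and the error terms $d(v_i,u_{i+1})$ are bounded by a $B$-dependent constant. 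This buys you independence from Lemma~\ref{lemma:pab-quasi-geodesic}~\eqref{largedist} at the cost of tracking the anchor points $u_i,v_i$ explicitly; the paper's version is a bit shorter but needs the separation estimate. Both yield the same quantitative conclusion $\abs{\eta}\ge \anz A - O_B(1)\ge(\anz-1)A$.
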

    
    \begin{proof}
        For $B$ large enough and $A$ large enough compared to $B$, Lemma~\ref{lem:small-projection-qab-to-itself} shows that for each $1\leq i \leq \anz$, the diameter of the projection of $\eta_{A, B}$ onto $\gamma_i$ is at least $A - 2R$ and hence by Lemma~\ref{lem:paths-are-rectangles} $C'$--fellow travels $\gamma_i$ for at least $A - 2R - 2C'$. For large enough $B$, Lemma~\ref{lemma:pab-quasi-geodesic}~\eqref{largedist} shows that the segments where $\eta_{A, B}$ fellow travels $\gamma^i$ and $\gamma^j$ are disjoint for $i\neq j$. Hence in total, 
        \[
        \abs{\eta_{A, B}}\geq \anz(A - 2R - 2C')\geq (\anz - 1)A
        \]
        for large enough $A$.
    \end{proof}
    The following is an immediate consequence of the previous two lemmas.
    \begin{cor}\label{cor:eta-stays-close}
    For large enough $B$ and for $A$ large enough compared to $B$, we have that $\gamma^i[\T, A - \T]$ is contained in the $C'$--neighbourhood of $\eta_{A ,B}$ for all $i$. In particular, if $x\in \eta_{A, B}$ with $d(x, x_0')\leq A/2$, then $d(x, \gamma_1)\leq C'$.
    \end{cor}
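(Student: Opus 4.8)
The first assertion is, up to reformulation, already contained in the proof of Lemma~\ref{lem:length-of-eta}: there one shows, for $B$ large and $A$ large compared to $B$, that for each $1\le i\le \anz$ some subgeodesic of $\eta_{A,B}$ is at Hausdorff distance at most $C'$ from a subsegment of $\gamma^i$ of length at least $A-2R-2C'$, where $R$ is the constant of Lemma~\ref{lem:small-projection-qab-to-itself} (this fellow-travelling is exactly what powers the length estimate $\abs{\eta_{A,B}}\ge(\anz-1)A$). So the plan for the first sentence is simply to record that estimate and observe that, since $\gamma^i$ is a geodesic of length $A+\delta_A$ with $\delta_A\le 2\delta$, any subsegment of it of length at least $A-2R-2C'$ contains the middle portion $\gamma^i[\T,A-\T]$ as soon as $\T\ge 2R+2C'+2\delta$ (one enlarges the constant $\T=T_0$ of Lemma~\ref{lemma:large-proj-implies-fellow} if necessary, so as to keep a single symbol). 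Being within Hausdorff distance $C'$ of a subsegment of $\gamma^i$ in particular places $\gamma^i[\T,A-\T]$ in the $C'$--neighbourhood of $\eta_{A,B}$, which is the first assertion.

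For the final clause I would specialise to $i=1$ and exploit that $\eta_{A,B}$ and the first piece $\gamma^1$ both emanate from $x_0'$. Applying Lemma~\ref{lemma:large-proj-implies-fellow} with $\lambda:=\eta_{A,B}$ is legitimate, since the closest-point projection of $\eta_{A,B}$ onto itself is all of $\eta_{A,B}$, so $\diam\{\pi_{\eta_{A,B}}(\eta_{A,B})\}=\abs{\eta_{A,B}}\ge(\anz-1)A\ge 4A+D_B$ for $A$ large, using $\anz-1>4$, that $D_B$ does not depend on $A$, and Lemma~\ref{lem:length-of-eta}. As $\lambda=\eta_{A,B}$ trivially has the same starting point as $\eta_{A,B}$, the \emph{furthermore} part of Lemma~\ref{lemma:large-proj-implies-fellow} produces a prefix $\eta_{A,B}[0,t_0]$ at Hausdorff distance at most $C'$ from $\gamma^1[0,A-\T]$. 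Because $\gamma^1(A-\T)$ then lies within $C'$ of a point of this prefix, hence within $C'+t_0$ of $x_0'=\gamma^1(0)$, and $\gamma^1$ is a geodesic issued from $x_0'$, we get $t_0\ge A-\T-C'$. Consequently, once $A$ is large enough that $A/2\le A-\T-C'$, every $x=\eta_{A,B}(t)$ with $d(x,x_0')=t\le A/2$ satisfies $t\le t_0$, so $x$ lies on the prefix and therefore $d(x,\gamma^1)\le d(x,\gamma^1[0,A-\T])\le C'$.

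I do not anticipate a genuine obstacle; the content is bookkeeping. The only points that need care are (i) confirming that $R$, $C'$, $\T$, $D_B$, $\delta$, $\anz$ are all fixed before $A$ is chosen, so that the finitely many \say{$A$ large enough} requirements (from Lemma~\ref{lem:length-of-eta}, from Lemma~\ref{lemma:large-proj-implies-fellow}, and the closing $A\ge 2(\T+C')$) can be imposed simultaneously without circularity, and (ii) keeping the symbol $\T$ consistent with the statement of Lemma~\ref{lemma:large-proj-implies-fellow}, which costs nothing since enlarging $\T$ only strengthens the claim.
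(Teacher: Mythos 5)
Your proof is correct and fills in exactly the gap the paper elides when it calls the corollary an ``immediate consequence of the previous two lemmas'': the first assertion does indeed require the fellow-travelling estimate extracted in the \emph{proof} of Lemma~\ref{lem:length-of-eta} (via Lemmas~\ref{lem:small-projection-qab-to-itself} and~\ref{lem:paths-are-rectangles}) rather than its bare statement, and the ``in particular'' clause is cleanly obtained by applying the \emph{furthermore} part of Lemma~\ref{lemma:large-proj-implies-fellow} to $\lambda=\eta_{A,B}$, whose hypothesis is met because $\diam(\pi_{\eta_{A,B}}(\eta_{A,B}))=\abs{\eta_{A,B}}\ge(\anz-1)A\ge 4A+D_B$ for $A$ large. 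The bookkeeping on $\T$ (and its harmless enlargement) and on the order of constant choices is handled correctly, so this matches the argument the authors intended.
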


    \begin{lem}\label{lem:morse-projects-short}
        Let $\lambda$ be an $N$--Morse geodesic. For large enough $B$, and for $A$ large enough compared to $B$ we have that $\diam(\pi_{\eta_{A, B}}(\lambda))\leq 4A + D_B$.
    \end{lem}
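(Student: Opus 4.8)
The plan is to prove the statement by contradiction: assume that, for $B$ large and $A$ large compared to $B$, we have $\diam(\pi_{\eta_{A, B}}(\lambda)) \geq 4A + D_B$ with $D_B$ the constant supplied by Lemma~\ref{lemma:large-proj-implies-fellow}, and deduce that one of the geodesics $\beta^i$ is Morse with a gauge that does not depend on $B$, contradicting the defining property of $\hat\beta_B$.

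First I would feed this assumption into Lemma~\ref{lemma:large-proj-implies-fellow}: it yields an index $1 \leq i < \anz$ such that $\gamma^i[\T, A-\T]$ and $\gamma^{i+1}[\T, A-\T]$ are each at Hausdorff distance at most $C'$ from a subgeodesic of $\lambda$, where $C'$ is the constant of Lemma~\ref{lem:paths-are-rectangles} applied to $C$. Looking at the two endpoints $\gamma^i(\T)$ and $\gamma^{i+1}(A-\T)$ of these subsegments, I obtain points $x, y$ on $\lambda$ with $d(x, \gamma^i(\T)) \leq C'$ and $d(y, \gamma^{i+1}(A-\T)) \leq C'$. Since for $B$ large the legs $\gamma^i$ and $\gamma^{i+1}$ are far apart (Lemma~\ref{lemma:pab-quasi-geodesic}\eqref{largedist}, applied to the configuration $\gamma^i\ast\beta^i\ast\gamma^{i+1}$ inside $q_{A,B}$), the points $x$ and $y$ are distinct, so $\lambda' := [x,y]_\lambda$ is a genuine subsegment of $\lambda$, hence $N$--Morse by Lemma~\ref{new-monster}\ref{monster:subsegments}.

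Next I would close these pieces into a geodesic polygon. Recalling that inside $q_{A,B}$ the segment $\beta^i$ runs from $\gamma^i(A+\delta_A)$ to $\gamma^{i+1}(0)$, the cyclic chain
\[
\lambda' ,\ [y,\gamma^{i+1}(A-\T)] ,\ \gamma^{i+1}[0,A-\T] ,\ \beta^i ,\ \gamma^i[\T,A+\delta_A] ,\ [\gamma^i(\T),x]
\]
(with the third, fourth and fifth sides traversed backwards) is a genuine geodesic $6$--gon, since consecutive sides share an endpoint and the chain closes back to $x$. Of its six sides, $\lambda'$ is $N$--Morse; the two short sides $[y,\gamma^{i+1}(A-\T)]$ and $[\gamma^i(\T),x]$ have length at most $C'\leq 2C'$; and $\gamma^{i+1}[0,A-\T]$, $\gamma^i[\T,A+\delta_A]$ are subsegments of the $C$--contracting geodesics $\gamma^{i+1}$, $\gamma^i$, hence $C$--contracting by the choice of $C$. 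By the refinement of the exhaustion $(M_n)$ made just before Lemma~\ref{lem:local-morsepab} — namely, in a $6$--gon whose other five sides are each $C$--contracting, $N$--Morse, or of length at most $2C'$, the remaining side is $M_1$--Morse — the side $\beta^i$ is $M_1$--Morse, hence $M_B$--Morse because the exhaustion is increasing. But $\beta^i$ is a $G$--translate of $\hat\beta_B$, which by Lemma~\ref{lem:bad_geodesic}(1) is \emph{not} $M_B$--Morse, and $M$--Morseness is invariant under the isometric $G$--action; this contradiction gives $\diam(\pi_{\eta_{A, B}}(\lambda)) \leq 4A + D_B$.

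The only delicate point, and the one I expect to need the most care, is the bookkeeping guaranteeing that the hexagon above genuinely closes up and that its two short sides are bounded by the constant $2C'$ used in the chosen refinement of $(M_n)$: this requires reading off from Lemma~\ref{lemma:large-proj-implies-fellow} that the specific endpoints $\gamma^i(\T)$ and $\gamma^{i+1}(A-\T)$ (not just the long middle portions) lie within $C'$ of $\lambda$, and tracking the junction conventions of $q_{A,B}$ so that $\beta^i$ does connect $\gamma^i(A+\delta_A)$ to $\gamma^{i+1}(0)$. Beyond this, the argument is a direct appeal to Lemma~\ref{lemma:large-proj-implies-fellow}, Lemma~\ref{new-monster}, the contraction of the $\gamma^i$, the hexagon refinement of the exhaustion, and the defining property of $\hat\beta_B$ from Lemma~\ref{lem:bad_geodesic}.
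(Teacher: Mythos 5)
Your proposal is correct and follows essentially the same approach as the paper: argue by contradiction, feed the projection bound into Lemma~\ref{lemma:large-proj-implies-fellow} to find $\gamma^i$ and $\gamma^{i+1}$ close to $\lambda$, assemble the same hexagon with two short sides, two $C$--contracting legs, and an $N$--Morse side from $\lambda$, and invoke the $6$--gon refinement of the exhaustion to force $\beta^i$ to be $M_1$--Morse, contradicting the construction of $\hat\beta_B$. Your choice of specific endpoints $\gamma^i(\T)$ and $\gamma^{i+1}(A-\T)$ rather than the paper's unnamed $x'\in\gamma^i$, $y'\in\gamma^{i+1}$ is an inessential variant.
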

    Recall that $N$ is the Morse gauge that we fixed earlier.
    \begin{proof}
        Assume that $B$ is large enough and $A$ large enough compared to $B$ such that Lemma~\ref{lemma:large-proj-implies-fellow} holds. Assume by contradiction that $\diam(\pi_{\eta_{A, B}}(\lambda)) > 4A + D_B$.  By Lemma~\ref{lemma:large-proj-implies-fellow} there exists $i$ such that $d(\lambda ,\gamma^i)\leq C'$ and $d(\lambda, \gamma^{i+1})\leq C'$. Let $x, y\in \lambda$ and $x'\in \gamma^i$, $y'\in \gamma^{i+1}$ be points with $d(x, x')\leq C'$ and $d(y, y')\leq C'$. Let $a$ and $b$ be the endpoints of $\beta^i$. Consider the $6$--gon with sides $\beta^i, [b, y]_{\gamma^{i+1}} ,[y', y], [y, x]_{\lambda}, [x, x']$ and $[x', a]_{\gamma^i}$ as depicted in Figure~\ref{fig:hexagon}. The 6-gon has two edges which have length at most $C'$ (and hence Morse for a Morse gauge only depending on $C'$), two sides which are $C$--contracting, one side is $N$--Morse and the remaining side is $\beta^i$. The choice of $(M_n)_{n\in \N}$ implies that $\beta^i$ and hence its translate $\hat{\beta}_B$ is $M_1$--Morse, which is a contradiction to $\hat{\beta}_B$ not being $M_B$--Morse by construction.
    \end{proof}  

\begin{figure}
    \centering
    \includegraphics[width=\linewidth]{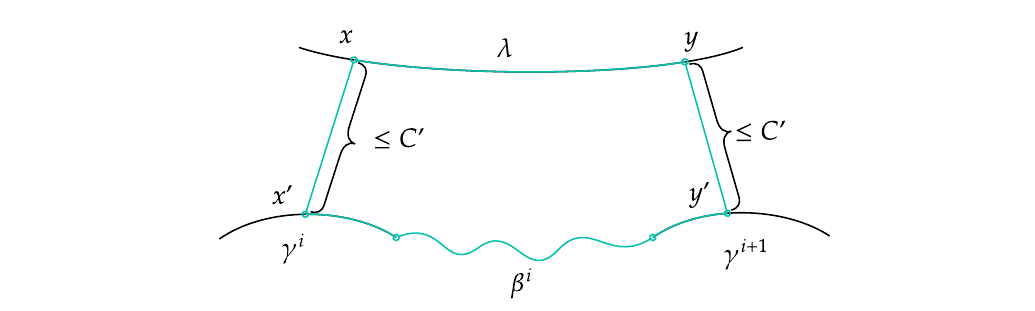}
    \caption{In the hexagon, $[x,x']$ and $[y,y']$ have length at most $C'$, the sides contained in $\gamma^i$ and $\gamma^{i+1}$ are $C$--contracting, and $[x,y]_\lambda$ is $N$--Morse. Hence the remaining side $\beta^i$ is $M_1$--Morse.}
    \label{fig:hexagon}
\end{figure}

We now construct a sequences $(B_i)_{i\in \N}$ and $(A_i)_{i\in \N}$ as follows. Let $(B_i)_{i\in \N}$ be a sequence such that 
    \begin{itemize}
        \item $B_1$ is large enough to satisfy Lemma~\ref{lem:q-is-quasi-geodesic-for-large-a}, \ref{lem:small-projection-qab-to-itself}, \ref{lemma:large-proj-implies-fellow}, \ref{cor:eta-stays-close}, \ref{lem:morse-projects-short} and Corollary~\ref{lem:length-of-eta}
        \item $B_{i+1}=B_i+1$ (and hence the $B_i$ are also large enough to satisfy the lemmas mentioned above).
    \end{itemize}
Let $(A_i)_{i\in \N}$ be a sequence such that the following hold. 
    \begin{itemize}
        \item All $A_i$ are large enough compared to $B_i$ so that they satisfy the lemmas mentioned above.
        \item We have $A_i\geq \anz(A_{i-1} + \abs{\hat{\beta}_{B_{i-1}}} + 2\delta + C'+T_0+ D_{B_i})+1 + L_{B_i}$.
    \end{itemize}
    For all $i$, define 
    \begin{align*}
        \eta_i = \eta_{A_i, B_i}.
    \end{align*}
    From now on, we write
    \begin{align*}
        q_{A_i, B_i} = \gamma_i^1\ast \beta_i^1\ast \ldots\ast \gamma_i^{\anz}.
    \end{align*}
    Where $\gamma_i^j$ and $\beta_i^j$ are the appropriate translates of $\hat{\gamma}_{A_i}$ and $\hat{\beta}_{B_i}$.
    
    \begin{lem}\label{lem:small-pairwise-projection-of-etas}
        For any $i \neq j$, we have that
        \[
        \diam(\pi_{\eta_i}(\eta_j)) \leq 4A_i + D_{B_i}
        \]
    \end{lem}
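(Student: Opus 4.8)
The plan is to split the argument according to whether $j<i$ or $j>i$. Note first that both $\eta_i$ and $\eta_j$ begin at $x_0'$, since each $q_{A,B}$ begins with the arc $\hat\gamma_A$, and $\hat\gamma_A(0)=x_0'$.

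The case $j<i$ I would handle by a crude length bound. As $\eta_j$ joins the two endpoints of $q_{A_j,B_j}=\gamma_j^1\ast\beta_j^1\ast\dots\ast\gamma_j^{\anz}$, its length is at most $\anz(A_j+2\delta)+(\anz-1)|\hat\beta_{B_j}|$, and the recursion imposed on the $A_\bullet$'s, namely $A_{j+1}\ge \anz(A_j+|\hat\beta_{B_j}|+2\delta+C'+\T+D_{B_{j+1}})+1+L_{B_{j+1}}$, together with $A_{j+1}\le A_i$ (valid because $j<i$), forces $|\eta_j|\le A_i$. Now for any $b\in\eta_j$ we get $d(b,x_0')\le|\eta_j|\le A_i$; since $x_0'\in\eta_i$, this also gives $d(b,\pi_{\eta_i}(b))\le d(b,x_0')\le A_i$, whence $d(\pi_{\eta_i}(b),x_0')\le 2A_i$. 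So $\pi_{\eta_i}(\eta_j)$ lies in a ball of radius $2A_i$ and $\diam(\pi_{\eta_i}(\eta_j))\le 4A_i\le 4A_i+D_{B_i}$.

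The substantive case is $j>i$, where $\eta_j$ is far longer than $\eta_i$; here I would argue by contradiction. Suppose $\diam(\pi_{\eta_i}(\eta_j))>4A_i+D_{B_i}$. Applying Lemma~\ref{lemma:large-proj-implies-fellow} with $(A,B)=(A_i,B_i)$ and $\lambda=\eta_j$ produces an index $k$, $1\le k<\anz$, and two consecutive subgeodesics $\sigma_k,\sigma_{k+1}$ of $\eta_j$ lying within Hausdorff distance $C'$ of $\gamma_i^k[\T,A_i-\T]$ and $\gamma_i^{k+1}[\T,A_i-\T]$, where $C'$ is the \emph{uniform} constant of Lemma~\ref{lem:paths-are-rectangles} applied to $C$. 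Since $\gamma_i^k$ and $\gamma_i^{k+1}$ are translates of $\hat\gamma_{A_i}$ and hence, together with all their subsegments, $C$-contracting, Lemma~\ref{lemma:hausdorff_contraction} upgrades $\sigma_k$ and $\sigma_{k+1}$ to $C_{*}$-contracting geodesics for a constant $C_{*}$ depending only on $C$ and $C'$, in particular independent of $j$. Between $\sigma_k$ and $\sigma_{k+1}$ the geodesic $\eta_j$ runs along an arc $\tau$ from a point within uniform distance $C'+\T+2\delta$ of one endpoint of $\beta_i^k=\hat\beta_{B_i}$ to a point the same distance from the other; moreover, by the choice of the sequence $(A_\bullet)$ the subgeodesic $\sigma_k\ast\tau\ast\sigma_{k+1}$ of $\eta_j$ has length of order $2A_i+|\hat\beta_{B_i}|$, which is much smaller than the length $\approx A_j$ of a single contracting arc $\gamma_j^l$ of $q_{A_j,B_j}$. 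This length comparison is what localizes the relevant portion of $\eta_j$ against uniformly contracting geometry rather than against the wildly non-Morse arcs $\hat\beta_{B_j}$.

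I would then run the hexagon argument of the proof of Lemma~\ref{lem:morse-projects-short} essentially verbatim, with its two $C$-contracting sides now supplied by subarcs of $\gamma_i^k$ and $\gamma_i^{k+1}$, its two short sides supplied by the uniform gaps above, and the role played there by the $N$-Morse geodesic now played by the arc $\tau$ of $\eta_j$ — the key point being that, thanks to the flanking uniformly-contracting segments $\sigma_k,\sigma_{k+1}$ and the fact that $\gamma_i^k\ast\beta_i^k\ast\gamma_i^{k+1}$ is a subpath of the $M_{B_i}'$-Morse quasi-geodesic $q_{A_i,B_i}$, one can certify $\tau$ with a Morse gauge depending only on uniform data. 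The $6$-gon clause in the choice of the exhaustion $(M_n)$ then forces $\beta_i^k$, hence its translate $\hat\beta_{B_i}$, to be $M_1$-Morse; since the exhaustion is increasing, $\hat\beta_{B_i}$ is then $M_{B_i}$-Morse, contradicting its construction in Lemma~\ref{lem:bad_geodesic}. The hard part, and where I expect to spend the most care, is precisely this certification step: making sure that \emph{every} contraction constant and Morse gauge entering the five ``good'' sides of the offending $6$-gon — above all the one governing $\tau$ — is bounded purely in terms of $C$, $C'$, $N$ and the (uniform) short-side lengths, so that the resulting contradiction with ``$\hat\beta_{B_i}$ is not $M_{B_i}$-Morse'' holds uniformly over all $j>i$.
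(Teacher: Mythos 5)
Your $j<i$ case is essentially the paper's argument for $i>j$ — a crude length bound using the recursion on the $A_\bullet$'s — and is fine. The $j>i$ case has a genuine gap, and it lies exactly where you flagged uncertainty.

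You propose a hexagon whose sixth ``nice'' side is an arc $\tau$ of $\eta_j$ lying between the two subsegments of $\eta_j$ that fellow-travel $\gamma_i^k$ and $\gamma_i^{k+1}$, and you claim $\tau$ can be certified with a uniform Morse gauge because the concatenation $\gamma_i^k\ast\beta_i^k\ast\gamma_i^{k+1}$ sits inside the $M_{B_i}'$-Morse quasi-geodesic $q_{A_i,B_i}$. This does not close the gap: $M_{B_i}'$ depends on $B_i$, hence on $i$, so any Morse gauge for $\tau$ extracted this way is $i$-dependent. But the exhaustion $(M_n)$ was fixed once and for all so that a $6$-gon forces $M_1$-Morseness of the odd side out \emph{only} when the other five sides are in the fixed collection $\{C\text{-contracting},\ N\text{-Morse},\ \text{length}\le 2C'\}$. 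An ``$M_{B_i}'$-Morse'' side is not in that collection, and since the whole point is to contradict ``$\hat\beta_{B_i}$ is not $M_{B_i}$-Morse,'' you cannot let gauges of comparable size onto the good sides. The flanking segments $\sigma_k,\sigma_{k+1}$ help control $\tau$'s endpoints but say nothing about its interior; and $\beta_i^k$, to which $\tau$ runs parallel, is deliberately \emph{not} Morse with any uniform gauge — that is the engine of the contradiction — so fellow-traveling $\beta_i^k$ is useless for certifying $\tau$.

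The paper's route avoids certifying $\tau$ at all. Because $\eta_i$ and $\eta_j$ share the starting point $x_0'$, the ``Furthermore'' clause of Lemma~\ref{lemma:large-proj-implies-fellow} pins the fellow-traveling to $k=1$: $\eta_j$ has a point $x$ within $C'$ of some $y\in\gamma_i^2$, with $d(x,x_0')\le 2A_i+2\delta_{A_i}+\lvert\hat\beta_{B_i}\rvert+C'\le A_j/2$. Then Corollary~\ref{cor:eta-stays-close} — which your sketch never invokes — supplies $z\in\gamma_j^1$ with $d(x,z)\le C'$, so $d(y,z)\le 2C'$. The resulting $5$-gon has sides $\gamma_i^1$, $\beta_i^1$, $[u,y]_{\gamma_i^2}$, $[y,z]$, $[z,x_0']_{\gamma_j^1}$: four of the five are $C$-contracting or of length $\le 2C'$, all from the \emph{uniform} list, and the hostile side is $\beta_i^1$. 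No arc of $\eta_j$ ever appears as a side. To repair your argument you would need to (a) restrict to $k=1$ via the ``Furthermore'' clause so that the relevant portion of $\eta_j$ lies within $A_j/2$ of $x_0'$, and (b) replace $\tau$ by the uniformly $C$-contracting $\gamma_j^1$ via Corollary~\ref{cor:eta-stays-close}, at which point you have reproduced the paper's $5$-gon and the problematic certification step disappears.
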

     \begin{proof}
        If $i > j$, we have by construction that $A_i\geq\anz (A_{j} + 2\delta + \abs{\hat{\beta}_{B_{j}}}) \geq\abs{\eta_j}$ and hence 
        \[
        \diam(\pi_{\eta_i}(\eta_j)) \leq 2\abs{\eta_j}\leq 2A_i.
        \]
        It remains to show the statement for $i < j$. Assume by contradiction that $\diam(\pi_{\eta_i}(\eta_j)) > 4A_i + D_{B_i}$. By Lemma~\ref{lemma:large-proj-implies-fellow}, there exists $x\in \eta_j$ and $y\in \gamma_{i}^2$ such that $d(x, y)\leq C'$. By the triangle inequality, we have that 
        \[
        d(x, x_0')\leq 2A_i + 2\delta_{A_i} + \abs{\hat{\beta}_{B_{i}}} + C'\leq A_j/2.
        \]
        Hence by Corollary~\ref{cor:eta-stays-close} we have that $d(x, z)\leq C'$ for some $z\in \gamma_j^1$. Let $u$ be the starting point of $\gamma_i^2$. Consider the $5$-gon with sides $\gamma_i^1, \beta_i^1, [u, y]_{\gamma_i^2}, [y, z]$ and $[z, x_0']_{\gamma_{j}^1}$, as depicted in Figure~\ref{fig:projection-etas}. Since each of the sides except $\beta_{i}^1$ is either $C$--contracting or has length at most $2C'$, the choice of $(M_n)_{n\in \N}$ implies that $\beta_i^1$ is $M_1$--Morse, which is a contradiction to it not being $M_{B_i}$--Morse by construction.
\begin{figure}
        \centering
        \includegraphics[width=1\linewidth]{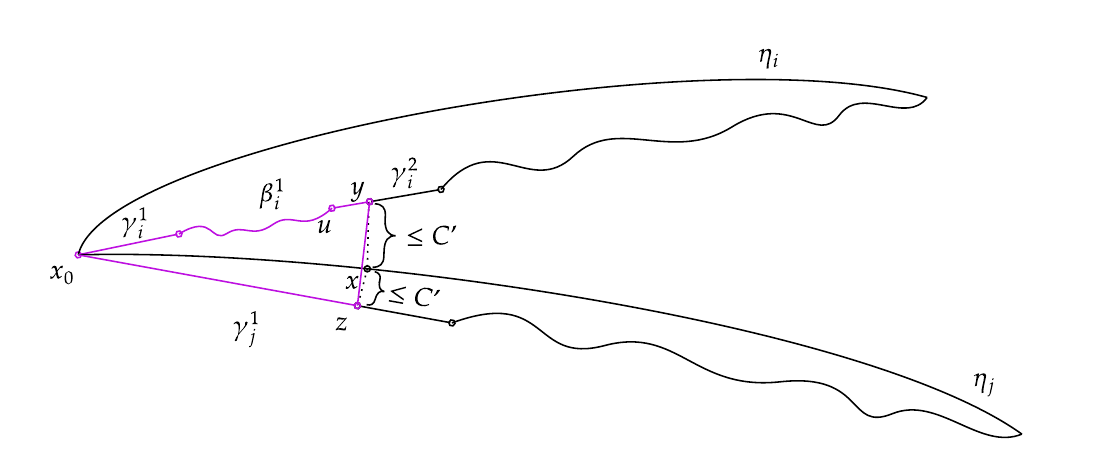}
        \caption{If the diameter of the projection $\pi_{\eta_i}(\eta_j)$ is too large, then the pentagon in the figure would force $\beta_i^1$ to be Morse, contradicting the construction.}
        \label{fig:projection-etas}
    \end{figure}
        \end{proof}

    \begin{lem}\label{lem:at-most-one-large-projection}
        Let $\lambda$ be any geodesic starting at $x_0'$. Then there exists at most one index $i$ such that
        \begin{align*}
            \diam{\pi_{\eta_i}(\lambda_i)}\geq 4A_i + D_{B_i},
        \end{align*}
        where $\lambda_i$ is the translate of $\lambda$ starting at the endpoint of $\eta_i$. Moreover, there exists at most one index $i$ such that 
        \begin{align*}
            \diam{\pi_{\eta_i}(\lambda)}\geq 4A_i + D_{B_i}.
        \end{align*}
    \end{lem}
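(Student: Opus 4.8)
The plan is to prove both assertions by contradiction, reducing them to a single argument. Suppose first there are indices $i<j$ with $\diam\{\pi_{\eta_i}(\lambda_i)\}\ge 4A_i+D_{B_i}$ and $\diam\{\pi_{\eta_j}(\lambda_j)\}\ge 4A_j+D_{B_j}$. Writing $\lambda_k=g_k\lambda$, where $g_k\in G$ sends $x_0'$ to the endpoint of $\eta_k$ (so $\gamma_k^{\anz}$, being the last segment of $q_{A_k,B_k}$, also ends at $g_kx_0'$, and $g_i^{-1}\lambda_i=g_j^{-1}\lambda_j=\lambda$ starts at $x_0'$), I would apply Lemma~\ref{lemma:large-proj-implies-fellow} to the reversed geodesic $\lambda_k^{-1}$ and $\eta_k$: these share the endpoint of $\eta_k$ and the projection is unchanged by reversal, so the ``furthermore'' clause applies with the last segment. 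Translating the output by $g_k^{-1}$ yields a $C$--contracting geodesic $\zeta_k:=(g_k^{-1}\gamma_k^{\anz})^{-1}$ starting at $x_0'$ of length $A_k+\delta_{A_k}$ such that $\zeta_k[0,A_k+\delta_{A_k}-T_0]$ is at Hausdorff distance at most $C'$ from an initial segment of $\lambda$, together with the fact that $g_i^{-1}\gamma_i^{\anz-1}[T_0,A_i-T_0]$ is at Hausdorff distance at most $C'$ from a subsegment of $\lambda$. (For the ``moreover'' statement $\lambda$ and each $\eta_k$ already start at $x_0'$, so no reversal or translation is needed: the same data is supplied directly with $\gamma_k^1=\hat{\gamma}_{A_k}$ in place of $\zeta_k$ and $\gamma_i^1,\gamma_i^2$ in place of the translated last segments.)

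Next I would locate the $G$--translate $\tilde\beta_i:=g_i^{-1}\beta_i^{\anz-1}$ of $\hat{\beta}_{B_i}$. Its two endpoints are the point where $g_i^{-1}\gamma_i^{\anz-1}$ meets it and the point $\zeta_i(A_i+\delta_{A_i})$ where $\zeta_i$ ends; a routine triangle--inequality computation — using that $\gamma_i^{\anz-1}$ has length $A_i+\delta_{A_i}\le A_i+2\delta$, that $\hat{\beta}_{B_i}$ has length $\abs{\hat{\beta}_{B_i}}$, and the two Hausdorff bounds above — shows that both endpoints of $\tilde\beta_i$ lie within a constant $\kappa$ of $\lambda$, at $\lambda$--parameter at most $P_i:=A_i+\abs{\hat{\beta}_{B_i}}+\kappa$, where $\kappa$ depends only on $C'$, $T_0$, $\delta$, and not on $i$, $j$, or $\lambda$ (in particular not on the quasi-geodesic constant $Q_{B_i}$). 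Now I invoke the super-linear growth of the $A_k$: from $A_j\ge\anz\bigl(A_{j-1}+\abs{\hat{\beta}_{B_{j-1}}}+2\delta+C'+T_0+D_{B_j}\bigr)+1+L_{B_j}$, the inequality $\anz\ge 2$, and $j>i$ (so that $A_{j-1}+\abs{\hat{\beta}_{B_{j-1}}}\ge A_i+\abs{\hat{\beta}_{B_i}}$), one checks $A_j+\delta_{A_j}-T_0\ge P_i$. Hence the initial segment of $\lambda$ lying $C'$--close to $\zeta_j$ extends past parameter $P_i$, so both endpoints of $\tilde\beta_i$ lie within $\kappa+C'$ of the $C$--contracting geodesic $\zeta_j$.

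Finally, Lemma~\ref{lemma:hausdorff_contraction} applied with the single constant $C^\ast:=\max\{1,C,\kappa+C'\}$ shows that $\tilde\beta_i$ is $\Phi_d(C^\ast)$--contracting, hence so is its translate $\hat{\beta}_{B_i}$; by Lemma~\ref{lem:theorem1.4} there is a Morse gauge $M^\ast$, depending only on $C^\ast$, for which $\hat{\beta}_{B_i}$ is $M^\ast$--Morse. Since $(M_n)_{n\in\N}$ is an (increasing) strong exhaustion of $\mb X'$, there is an index $n^\ast=n^\ast(M^\ast)$ — independent of $i$ — with $\ms^{M^\ast}X'\subset\ms^{M_{n^\ast}}X'$, whence $\hat{\beta}_{B_i}$ is $M_{B_i}$--Morse as soon as $B_i\ge n^\ast$; this contradicts the construction of $\hat{\beta}_{B_i}$ in Lemma~\ref{lem:bad_geodesic}. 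As $n^\ast$ depends only on the fixed data $C$, $C'$, $\delta$, $T_0$ and the exhaustion, we may (and do) add $B_1\ge n^\ast$ to the standing requirements on $B_1$, so the contradiction is reached for every pair $i<j$, proving both assertions. I expect the main obstacle to be the bookkeeping in the second paragraph: keeping straight the translations and orientations so that, after passing to the common geodesic $\lambda$, the relevant $\hat{\gamma}$--segment is a contracting geodesic based at $x_0'$ fellow-travelling a genuine prefix of $\lambda$, and checking that the parameter bound $P_i$ along $\lambda$ for the endpoints of $\tilde\beta_i$ is $O(A_i+\abs{\hat{\beta}_{B_i}})$ with a uniform implied constant.
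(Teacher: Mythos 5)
Your overall geometry is sound and takes a genuinely different route from the paper. Both you and the paper begin the same way, by applying Lemma~\ref{lemma:large-proj-implies-fellow} with its \say{same endpoint} clause to each of the two indices and then bounding the relevant parameters. Where the two proofs diverge is in how the contradiction is extracted. The paper (taking WLOG $i>j$) assembles a combinatorial $5$--gon whose sides are $\tilde{\gamma}_j^{\anz-1}[t,A_j+\delta_{A_j}]$, $\tilde{\beta}_j^{\anz-1}$, $\tilde{\gamma}_j^{\anz}$, $[b,z]_{\gamma_i^{\anz}}$ and $[z,y]$ (the last of length at most $2C'$), and then invokes the $6$--gon condition that was pre-installed on the exhaustion $(M_n)$ to force $\tilde{\beta}_j^{\anz-1}$ to be $M_1$--Morse. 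Your proof bypasses the polygon: by tracking the $\lambda$--parameters of the endpoints of $\tilde{\beta}_i$ you show they are both within a uniform distance of the longer contracting geodesic $\zeta_j$, so Lemma~\ref{lemma:hausdorff_contraction} and Lemma~\ref{lem:theorem1.4} directly produce a universal Morse gauge $M^\ast$ for $\hat{\beta}_{B_i}$. This is cleaner and conceptually transparent, and the parameter bookkeeping (the inequality $A_j + \delta_{A_j}-T_0 \ge P_i$ via the super-linear recursion on $A_k$) works out.

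There is, however, a genuine gap at the last step. You write that because $(M_n)$ is a strong exhaustion, there is $n^\ast$ with $\ms^{M^\ast}X'\subset\ms^{M_{n^\ast}}X'$ and that therefore $\hat{\beta}_{B_i}$ is $M_{B_i}$--Morse. But the containment of strata is a statement about \emph{geodesic rays based at $x_0'$}, not about finite geodesic segments, and $\hat{\beta}_{B_i}$ is a finite segment that is not a subray of anything in particular. The implication \say{$\ms^{M^\ast}X'\subset\ms^{M_{n^\ast}}X'$ $\Rightarrow$ every $M^\ast$--Morse finite geodesic is $M_{n^\ast}$--Morse} is simply false in general (the strata can even all be empty). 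The paper avoids this by never comparing $M^\ast$--Morse \emph{segments} against the exhaustion directly; instead it pre-installs finite-polygon conditions on $(M_n)$ using Lemma~\ref{new-monster}\ref{monster:triangle} when the exhaustion is first chosen, so that the comparison is made through a polygon whose other sides are $C$--contracting or of length at most $2C'$ — and your short sides have length $\kappa+C'>2C'$, so even the paper's pre-installed condition does not cover your configuration. Adding \say{$B_1\ge n^\ast$} to the standing hypotheses, as you do, does not repair this. The correct fix is different and is available to you: $M^\ast$ depends only on $C$, $C'$, $T_0$, $\delta$, none of which depend on the exhaustion, so one may enlarge $(M_n)$ pointwise (which preserves being a strong exhaustion) to ensure $M_1\ge M^\ast$, or equivalently impose an additional polygon condition with short sides of length at most $\kappa+C'$ when the exhaustion is constructed. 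With that modification your argument closes.
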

    The proof of Lemma~\ref{lem:at-most-one-large-projection} is analogous to the proof of Lemma~\ref{lem:small-pairwise-projection-of-etas}.
   \begin{proof}
        We prove the first part of the statement, and the moreover part follows from an analogous proof. Let $\tilde{\eta}_j$ be a translate of $\eta_j$ such that $\tilde{\eta}_j$ and $\eta_i$ have the same endpoint, which we denote by $b$. We denote by $\tilde{\gamma}_j^k$ and $\tilde{\beta}_j^k$ the translates of $\gamma_j^k$ and $\beta_j^k$ such that $\tilde{\eta}_j = \tilde{\gamma}_j^1\ast\tilde{\beta}_j^1\ast \ldots\ast \tilde{\gamma}_j^{\anz}$. We have to prove that at most one of the following holds,
       \begin{align*}
            \diam{\pi_{\eta_i}(\lambda_i)}\geq 4A_i + D_{B_i}\quad \text{or}\quad   \diam{\pi_{\tilde{\eta}_j}(\lambda_i)}\geq 4A_j + D_{B_j}.
        \end{align*}
        Without loss of generality, we may assume that $i > j$. Lemma~\ref{lemma:large-proj-implies-fellow} implies the following 
        \begin{itemize}
            \item there exists a suffix $\lambda'$ of $\lambda_i$ which is at Hausdorff distance at most $C'$ from $\gamma_i^{\anz}[T_0, A_i + \delta_{A_i}]$. 
            \item there exists a point $x\in \lambda_i$ and $y = \tilde{\gamma}_j^{\anz - 1}(t)$ such that $d(x, y)\leq C'$. 
        \end{itemize}
        By the triangle inequality, we have that $d(x, b)\leq C' + 2A_i + B_i$. Since $\lambda'$ is at Hausdorff distance at most $C'$ from $\gamma_i^{\anz}[\T, A_i]$, it has length at least $A_i - T_0 - 2C' >  C' + 2A_j + B_j$. Consequently, $x\in \lambda'$ and there exists $z\in \gamma_i^{}$ with $d(x, z)\leq C'$. Now consider the $5$--gon with sides $\tilde{\gamma}_j^{\anz - 1}[t, A_j + \delta_{A_j}], \tilde{\beta}_j^{\anz -1}, \tilde{\gamma}_j^{\anz}, [b, z]_{\gamma_i^{\anz}}$ and $[z, y]$. Each of these sides except $\tilde{\beta}_j^{\anz -1}$ is either $C$--contracting or has length at most $2C'$. Hence by the choice of $M_1$, we know that $\tilde{\beta}_j^{\anz -1}$ and hence its translate $\hat{\beta}_{B_j}$ is $M_1$--Morse and hence $M_{B_j}$--Morse. A contradiction to $\beta_{B_j}$ not being $M_{B_j}$--Morse by construction.
    \end{proof}

    Fix a geodesic $\lambda$. If they exists, let $i_0, i_1$ be the unique integers such that
    \begin{align*}
        \diam{\pi_{\eta_{i_0}}(\lambda_{i_0})}\geq 4A_{i_0} + D_{B_{i_0}},\\
        \diam{\pi_{\eta_{i_1}}(\lambda)}\geq 4A_{i_1} + D_{B_{i_1}},
    \end{align*}

    where $\lambda_i$ again denotes the appropriate translate of $\lambda$ starting at the endpoint of $\eta_i$.
    
    \begin{lem}\label{lem:strange-projection}
        For all $i\neq j\in \N\setminus\{i_0, i_1\}$, we have that $\pi_{\eta_i}(\lambda_j)\leq 4A_i + D_{B_i}$.
    \end{lem}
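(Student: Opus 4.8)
The plan is to argue by contradiction, along the lines of Lemma~\ref{lem:small-pairwise-projection-of-etas} and Lemma~\ref{lem:at-most-one-large-projection}. So I would assume $\diam(\pi_{\eta_i}(\lambda_j))> 4A_i+D_{B_i}$ and apply Lemma~\ref{lemma:large-proj-implies-fellow} to obtain $1\le k<\anz$ and points $x,y\in\lambda_j$ with $x'\in\gamma_i^k[\T,A_i-\T]$, $y'\in\gamma_i^{k+1}[\T,A_i-\T]$ satisfying $d(x,x'),d(y,y')\le C'$. Since $i\neq j$, I would split into the cases $i<j$ and $i>j$, keeping in mind that the recursion $A_m\ge \anz(A_{m-1}+\abs{\hat\beta_{B_{m-1}}}+\dots)+1+L_{B_m}$ fixed in the construction gives $A_i\ge \anz A_j$ whenever $i>j$, and also $D_{B_m}\le A_m/\anz$, while Lemma~\ref{lem:length-of-eta} gives $\abs{\eta_m}\ge(\anz-1)A_m$.

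The case $i<j$ should be the easy one, with no disk diagrams. Since $\gamma_i^k$ is one of only $\anz$ geodesic pieces of the quasi-geodesic $q_{A_i,B_i}$ issuing from $x_0'$, every point of it -- in particular $x$ -- lies at distance at most a fixed multiple of $A_i$ from $x_0'$. On the other hand $\lambda_j$ starts at $b_j=\lambda_j(0)\in\eta_j$, and $d(x_0',b_j)=\abs{\eta_j}\ge(\anz-1)A_j$. As $j\neq i_0$, we have $\diam(\pi_{\eta_j}(\lambda_j))<4A_j+D_{B_j}$, and since $b_j=\pi_{\eta_j}(b_j)\in\pi_{\eta_j}(\lambda_j)$ this forces $\pi_{\eta_j}(x)$ to lie within $4A_j+D_{B_j}$ of $b_j$; combining with $d(\pi_{\eta_j}(x),x_0')\le 2\,d(x,x_0')$ (which holds because $x_0'\in\eta_j$) I would deduce $\abs{\eta_j}\le O(A_i)+4A_j+D_{B_j}$, contradicting $\abs{\eta_j}\ge(\anz-1)A_j$ once $D_{B_j}\le A_j/\anz$ and $A_j\ge\anz A_i$ are used.

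The case $i>j$ is where the real work is, and I expect it to parallel the disk-diagram argument in the proof of Lemma~\ref{lem:at-most-one-large-projection}. Now $b_j$ is very close to $x_0'$ on the scale of $A_i$, since $d(x_0',b_j)=\abs{\eta_j}\le\anz A_j\ll A_i$. First, from $j\neq i_0$, $b_j\in\eta_j$ and Lemma~\ref{lem:length-of-eta}, I would extract the fact that every point of $\lambda_j$ lies at distance more than $7A_j$ from $x_0'$; this already rules out $k=1$, since otherwise a point of $\lambda_j$ would lie within $C'+\T$ of $\gamma_i^1(\T)$, hence within $C'+\T$ of $x_0'$, impossible for $A_j$ large. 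For $2\le k\le\anz-1$ I would first propagate the fellow-travelling backwards: using that $\gamma_i^2,\dots,\gamma_i^k$ are $C$-contracting, that $b_j$ is far from each of them on the $x_0'$ side and that $x$ lies past them, a closest-point-projection argument (as in Lemma~\ref{lem:paths-are-rectangles} together with Corollary~\ref{cor:eta-stays-close}) shows $\diam(\pi_{\gamma_i^m}(\lambda_j))\ge A_i-O(A_j)$ for $2\le m\le k+1$, so $\lambda_j$ fellow-travels the whole sub-path $\gamma_i^2\ast\beta_i^2\ast\dots\ast\gamma_i^{k+1}$ of $q_{A_i,B_i}$, and hence (via Lemma~\ref{new-monster}\ref{monster:hausdorff}, since $\eta_i$ stays close to $q_{A_i,B_i}$) a long connected sub-stretch of $\eta_i$ running through some bad piece $\beta_i^m$. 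Then I would build a geodesic polygon exactly as in Lemma~\ref{lem:at-most-one-large-projection}, whose sides are sub-segments of the $C$-contracting pieces $\gamma_i^\bullet$ (and, near $x_0'$, of $\gamma_j^\bullet$), finitely many segments of length a fixed multiple of $C'$ recording where $\lambda_j$ meets these pieces, and exactly one bad piece $\beta_i^m$; the choice of the exhaustion $(M_n)_{n\in\N}$ then forces $\beta_i^m$, and so its model $\hat\beta_{B_i}$, to be $M_1$-Morse, contradicting its construction.

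The hard part will be precisely this polygon in the $i>j$ case: as in Lemma~\ref{lem:at-most-one-large-projection}, it must be closed up without $\lambda_j$ itself ever serving as a side, since the portion of $\lambda_j$ running alongside a bad piece $\beta$ is neither short nor Morse for any controlled gauge, so it cannot play the role of a polygon side. The fix, again following Lemma~\ref{lem:at-most-one-large-projection}, is to let $\lambda_j$ enter only through the short witnessing segments between points lying on the contracting pieces $\gamma_i^\bullet$ and $\gamma_j^\bullet$, and to check that all remaining sides can be routed along contracting pieces; verifying that such a routing is available is where I expect to need the full hypothesis $i,j\notin\{i_0,i_1\}$ (and not merely $j\neq i_0$), since it is exactly what prevents $\lambda_j$, respectively $\lambda$, from fellow-travelling $\eta_j$, respectively $\eta_i$, in an uncontrolled way near $x_0'$.
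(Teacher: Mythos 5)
The paper's proof is quite different from what you propose and is considerably simpler: it is a single unified projection argument that makes no case split on $i$ versus $j$ and never invokes disk diagrams or the exhaustion $(M_n)$ at all. The key observation you missed is Corollary~\ref{cor:eta-stays-close}: the piece $\gamma_i^k[\T, A_i-\T]$ that fellow-travels $\lambda_j$ also lies within $C'$ of $\eta_i$ itself. This produces a point $x\in\eta_i$ within $2C'$ of a point $y\in\lambda_j$. One then looks at the broken path $p=[x_0',x]_{\eta_i}\ast[x,y]\ast[y,b]_{\lambda_j}$ from $x_0'$ to the endpoint $b$ of $\eta_j$, and projects each piece to $\eta_j$. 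The first piece has small projection by Lemma~\ref{lem:small-pairwise-projection-of-etas} (using $i\neq j$); the last piece has small projection because $j\neq i_0$. Since $p$ joins the two endpoints of $\eta_j$ and $\abs{\eta_j}\geq(\anz-1)A_j$, the middle segment $[x,y]$ is forced to have projection of diameter larger than $4A_j+D_{B_j}$, which by Lemma~\ref{lemma:large-proj-implies-fellow} forces $[x,y]$ to $C'$-fellow-travel a whole $\gamma_j^k[\T,A_j-\T]$ and thus to have length at least $A_j-2\T-2C'$, contradicting $d(x,y)\leq 2C'$.

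Your $i<j$ argument is correct and is close in spirit to this, but your $i>j$ case has a genuine gap. You want to build a polygon with $\beta_i^m$ as its sole uncontrolled side, closed up by $C$-contracting pieces and short connectors, following the pattern of Lemma~\ref{lem:at-most-one-large-projection}. But in that lemma the polygon could be closed because $\lambda$ was simultaneously fellow-travelling two contracting pieces ($\gamma_i^\anz$ from one translate and $\tilde\gamma_j^{\anz-1}$ from the other), and the polygon was routed entirely through those contracting pieces plus segments of length at most $2C'$. Here, in the $i>j$ case, $\lambda_j$ fellow-travels $\gamma_i^k$ and $\gamma_i^{k+1}$, but the piece of $\lambda_j$ running alongside $\beta_i^k$ has length roughly $\abs{\hat\beta_{B_i}}+2\T$, which is neither short (it is not $\leq 2C'$) nor controlled in a gauge that the chosen exhaustion accounts for; and there is no second quasi-geodesic near $\beta_i^k$ to reroute through, since $\eta_j$ lives entirely near $x_0'$ at this scale. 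So the polygon cannot be closed off $\lambda_j$, which is exactly the difficulty you flag but do not resolve. The paper avoids all of this by projecting to $\eta_j$ rather than trying to control $\beta_i^m$.

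Two minor remarks: the hypotheses you single out as essential to the $i>j$ case are not in fact all used --- the paper's proof only needs $i\neq j$ and $j\neq i_0$; and your claim that $j\neq i_0$ gives "every point of $\lambda_j$ lies at distance more than $7A_j$ from $x_0'$" is not literally correct (the bound on $\diam(\pi_{\eta_j}(\lambda_j))$ does not directly bound distances to $x_0'$ without the extra projection step you correctly make in the $i<j$ case), though this is not where the real difficulty lies.
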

    \begin{proof}
        If $i = j$, this is implied by Lemma~\ref{lem:at-most-one-large-projection}. So from now on we assume that $i\neq j$.
        Assume that 
        \[
        \pi_{\eta_i}(\lambda_j)> 4A_i + D_{B_i}.
        \]
        Lemma~\ref{lemma:large-proj-implies-fellow}, implies that there exists $k$ such that $\gamma_i^k[ \T, A_i -  \T]$ is in the $C'$--neighbourhood of $\lambda_j$. Corollary~\ref{cor:eta-stays-close} implies that $\gamma_i^k[\T, A_i - \T]$ is also in the $C'$--neighbourhood of $\eta_i$. In particular, there exists points $x\in \eta_i$ and $y\in \lambda_j$ such that $d(x, y)\leq 2C'$. Let $b$ be the endpoint of $\eta_j$. Our observation above shows that the path $p = [x_0', x]_{\eta_i}\ast [x, y]\ast[y, b]_{\lambda_j}$ connects the two endpoints of $\eta_j$. Lemma~\ref{lem:small-pairwise-projection-of-etas} implies that 
        \[
             \diam(\pi_{\eta_j}([x_0', x]_{\eta_i})) \leq 4A_j + D_{B_j}.
        \]
        Lemma~\ref{lem:at-most-one-large-projection} implies that 
        \[
             \diam{\pi_{\eta_j}([y, b]_{\lambda_j})}\leq 4A_j + D_{B_j}.
        \]
        Consequently, 
        \[
         \diam{\pi_{\eta_j}([x, y])}\geq \abs{\eta_j} - 2( 4A_j + D_{B_j}) > 4A_j + D_{B_j}.
        \]
        By Lemma~\ref{lemma:large-proj-implies-fellow}, there exists an integer $k$ and a subsegment $\zeta$ of $[x, y]$ which has $C'$--bounded Hausdorff distance from $\gamma_j^k[\T, A_j - \T]$, implying that the length of $\zeta$ (and hence the length of $[x, y]$) is at least $A_j - 2T_0 -2C' > 2C'$, a contradiction to $d(x, y)\leq 2C'$. 
    \end{proof}

\begin{proof}[Proof of Proposition~\ref{prop:pentagon}]
        We show that the $\eta_i$ constructed above satisfy the desired properties. Let $\lambda$ be a geodesic starting at $x_0'$ and ending in $G\cdot x_0'$. If they exists, let $i_0, i_1$ be the unique integers such that
        \begin{align*}
            \diam{\pi_{\eta_{i_0}}(\lambda_{i_0})}\geq 4A_{i_0} + D_{B_{i_0}},\\
            \diam{\pi_{\eta_{i_1}}(\lambda)}\geq 4A_{i_1} + D_{B_{i_1}}.
        \end{align*}
        Lemma~\ref{fig:large-projection} shows that those integers are indeed unique. Let $i\neq j$ be integers such that $i, j, i_0, i_1$ are distinct. We have that 
        \begin{align*}
            p = \lambda_i\ast[x, y]\ast \lambda_j^{-1}\ast \eta_j^{-1}
        \end{align*}
        is a path connecting the endpoints of $\eta_i$. By Lemma~\ref{lem:strange-projection} we have that 
        \begin{align*}
            \pi_{\eta_i}(\lambda_j)\leq 4A_i + D_{B_i}.
        \end{align*}
        By Lemma~\ref{lem:small-pairwise-projection-of-etas} we have that
        \[
        \diam(\pi_{\eta_i}(\eta_j)) \leq 4A_i + D_{B_i}.
        \]
        Finally, by Lemma~\ref{lem:at-most-one-large-projection} we have that
        \begin{align*}
            \diam{\pi_{\eta_i}(\lambda_i)}\leq 4A_i + D_{B_i}.
        \end{align*}
        Consequently, we have that 
        \begin{align*}
            \diam{\pi_{\eta_i}([x, y])}\geq \abs{\eta_i} - 3(4A_i + D_{B_i} ) > 4A_i + D_{B_i}.
        \end{align*}
        Lemma~\ref{lem:morse-projects-short} implies that $[x, y]$ cannot be $N$--Morse. The proof that $[x', y']$ cannot be $N$--Morse is analogous.
\end{proof}

\subsection*{Proof of the theorem}

We are now ready to use Proposition~\ref{prop:pentagon} to prove Theorem~\ref{thm:vanishing_measure}. 

\begin{figure}
    \centering
    \includegraphics[width=\linewidth]{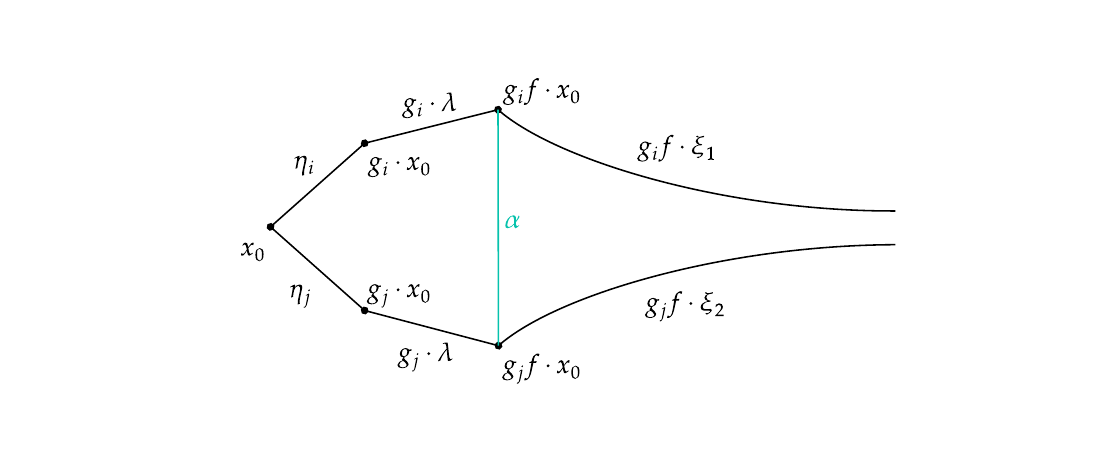}
    \caption{If $g_i f \cdot\xi_1$ and $g_j f \cdot\xi_2$ were translates of geodesic rays from the same Morse stratum, then $\alpha$ would be Morse. But we constructed $g_i$ to guarantee that does not happen.}
    \label{fig:pentagon}
\end{figure}

\begin{proof}[Proof of Theorem \ref{thm:vanishing_measure}] 
Let $N''$ be a Morse gauge and let $\mc{M}_{N''}(X)$ be the set of $N''$--Morse geodesic rays starting at $x_0$. Our goal is to apply Lemma~\ref{lem:-Tiozzo} to the set $Y = \rho(\mc{M}_{N''} (X))$. The condition on $\rho$ that $ \rho(\mc{M}_{N''} (X))$ has to be compact ensures that it is measurable.

Since $G$ has contraction, there exists a geodesic metric space $X'$ with basepoint $x_0'$ satisfying the Morse-local-to-global property and containing a strongly contracting geodesic ray $\gamma:[0, \infty) \to X'$. Fix a quasi-isometry $\phi : X \to X'$ which sends $x_0$ to $x_0'$. By \cite[Lemma~2.9]{C:Morse}, $\phi$ induces a map $\tilde{\phi} : \mc M(X)\to \mb X'$ and there exists a Morse gauge $N'$ such that $\tilde{\phi}(\mc M_{N''}(X)) \subset \partial_{x_0'}^{N'}X'$. Since $G$ acts geometrically on both $X$ and $X'$ we can assume that $\phi$ was chosen in a way such that $\tilde{\phi}$ is $G$--equivariant. Lastly, by Lemma \ref{new-monster}\ref{monster:triangle}, there exists a Morse gauge $N$ depending on $N'$ such if a triangle has two sides that are $N'$--Morse geodesic rays, the third side is $N$--Morse. 

Let $(\eta_i)_{i\in \N}$ be a sequence of geodesics as in Proposition~\ref{prop:pentagon} constructed in the space $X'$ with respect to the Morse gauge $N$. Let $g_i\in G$ be an element such that $g_i\cdot x_0'$ is the endpoint of $\eta_i$. Since the support of $\mu$ generates $G$ as a semigroup, for each $g_i$, there exists $m_i$ such that $\mu_{m_i}(g_i)>0$. Up to reordering, we may assume that 
\[
\mu_{m_i}(g_i) \geq \mu_{m_{i+1}}(g_{i+1}).
\]
Define the sequence $\epsilon_n := \mu_{m_{n+2}}(g_{n+2})$. Fix $f\in G$, and let $\lambda = [x_0',f\cdot x_0']$ and let $i_0, i_1$ be the forbidden indices of Proposition~\ref{prop:pentagon} for $\lambda$, and let $(g_{n_k})_{k\in \N}$ be the subsequence obtained from $(g_i)_{i\in \N}$ removing the elements $g_{i_0}$ and $g_{i_1}$. Observe that $i\leq n_i\leq i+2$ and hence 
\[
\mu_{m_{n_i}}(g_{n_i})\geq \mu_{m_{i+2}}(g_{i+2}) = \eps_{i},
\] 
which is a condition we need to be able to apply Lemma~\ref{lem:-Tiozzo}.

The only thing left in order to apply Lemma~\ref{lem:-Tiozzo} is to show that for any $g_i\neq  g_j \in (g_{n_k})_{k\in \N}$, 
\[
g_i f\rho(\mc{M}_{N''} (X)) \cap g_j f\rho(\mc{M}_{N''} (X)) = \emptyset
\qquad \text{and} \qquad
g_i f\rho(\mc{M}_{N''} (X)) \cap f\rho(\mc{M}_{N''} (X)) = \emptyset.
\]

Since $\rho$ is $G$--equivariant and maps rays that represent different elements in the Morse boundary to different points in $Z$, it suffices to show that for any pair of rays $\xi_1, \xi_2\in  \mc{M}_{N''} (X)$, 
\begin{itemize}
    \item the rays $ g_i f \xi_1$ and $ g_{j} f \xi_2$ must represent different points in the Morse boundary,
    \item the rays $ g_i f \xi_1$ and $f \xi_2$ must represent different points in the Morse boundary.
\end{itemize}

To show the first bullet point, note that if $[ g_i f \xi_1] =  [g_{j} f \xi_2]$, then $g_i f \tilde{\phi}([\xi_1]) = g_j f \tilde{\phi}([\xi_2])$. Let $\eta_i =[x_0',g_i \cdot x_0']$ and $\eta_j =[x_0',g_j \cdot x_0']$. Consider a geodesic segment $\alpha = [g_if\cdot x_0',g_jf\cdot x_0']$, this is depicted in Figure \ref{fig:pentagon}. By Proposition \ref{prop:pentagon}, $\alpha$ is not $N$--Morse, but if $g_i f \tilde{\phi}([\xi_1]) = g_j f \tilde{\phi}([\xi_2])$, $\alpha$ must be $N$--Morse by Lemma \ref{new-monster}\ref{monster:triangle}, a contradiction. The proof for the second bullet point is the same, replacing $g_j$ by the identity. Thus, Lemma~\ref{lem:-Tiozzo} yields that for any Morse gauge $N''$, we have $\nu( \rho(\mc{M}_{N''}(X))) = 0$. Since the Morse boundary is $\sigma$-compact by Theorem~\ref{thmintro:MLTG_implies_sigma_cpt} and the measure $\nu$ is subadditive, we conclude $\nu(\rho(\mc{M}(X))) =0$.
\end{proof}
\bibliography{mybib}
\bibliographystyle{alpha}

\end{document}